\renewcommand*\backref[1]{\ifx#1\relax \else Cited on page(s) #1.\fi}
\newtheorem{theorem}[subsection]{Theorem}
\newtheorem{proposition}[subsection]{Proposition}
\newtheorem{corollary}[subsection]{Corollary}
\newtheorem{lemma}[subsection]{Lemma}
\newtheorem{conjecture}[subsection]{Conjecture}
\theoremstyle{definition}
\newtheorem{definition}[subsection]{Definition}
\newtheorem{remark}[subsection]{Remark}
\numberwithin{equation}{subsection}
\newcommand{\ol}{\overline}
\newcommand{\ul}{\underline}
\newcommand{\wt}{\widetilde}
\newcommand{\spec}{\mathrm{Spec}}
\newcommand{\alg}{\mathrm{alg}}
\newcommand{\gal}{\mathrm{Gal}}
\newcommand{\ch}{\mathrm{char}}
\newcommand{\fm}{\mathfrak m}
\newcommand{\Hom}{\mathrm{Hom}}
\newcommand{\pr}{\mathrm{pr}}
\newcommand{\id}{\mathrm{id}}
\newcommand{\wh}{\widehat}
\newcommand{\dt}{\mathrm{dt}}
\newcommand{\DT}{\mathrm{DT}}
\newcommand{\rk}{\mathrm{rk}}
\newcommand{\iso}{\xrightarrow{\sim}}
\newcommand{\sw}{\mathrm{sw}}
\newcommand{\rt}{\mathrm{t}}
\newcommand{\SW}{\mathrm{SW}}
\newcommand{\ord}{\mathrm{ord}}
\newcommand{\rR}{\mathrm{R}}
\newcommand{\sO}{\mathscr{O}}
\newcommand{\bZ}{\mathbb Z}
\newcommand{\sF}{\mathscr F}
\newcommand{\bA}{\mathbb A}
\newcommand{\bP}{\mathbb P}
\newcommand{\bQ}{\mathbb Q}
\newcommand{\bT}{\mathbb T}
\newcommand{\bG}{\mathbb G}
\newcommand{\bF}{\mathbb F}
\newcommand{\sG}{\mathscr G}
\newcommand{\sH}{\mathscr H}
\newcommand{\rh}{\mathrm{h}}
\newcommand{\sN}{\mathscr{N}}
\newcommand{\sL}{\mathscr{L}}
\newcommand{\sM}{\mathscr{M}}
\newcommand{\fp}{\mathfrak{p}}
\newcommand{\sK}{\mathscr K}
\newcommand{\cS}{\mathcal S}
\newcommand{\cI}{\mathcal I}
\newcommand{\cQ}{\mathcal Q}
\newcommand{\cX}{\mathcal X}
\newcommand{\cV}{\mathcal V}
\newcommand{\cU}{\mathcal U}
\newcommand{\cT}{\mathcal T}
\newcommand{\rc}{\mathrm c}
\newcommand{\rC}{\mathrm C}
\newcommand{\rlc}{\mathrm{lc}}
\newcommand{\rLC}{\mathrm{LC}}
\newcommand{\rNP}{\mathrm{NP}}
\newcommand{\rLNP}{\mathrm{LNP}}
\newcommand{\cZ}{\mathcal{Z}}
\newcommand{\rS}{\mathrm S}
\newcommand{\rLS}{\mathrm{LS}}
\newcommand{\Gr}{\mathrm{Gr}}
\begin{document}
\title[Semi-continuity of conductors, and ramification bound of nearby cycles]{Semi-continuity of conductors \\ and \\ ramification bound of nearby cycles}
\author{Haoyu Hu}
\address{Department of Mathematics, Nanjing University, Nanjing 210093, CHINA}

\email{huhaoyu@nju.edu.cn, huhaoyu1987@163.com}
\subjclass[2000]{Primary 14F20; Secondary 11S15}
\keywords{Conductor divisor, semi-continuity, ramification bound, nearby cycle}

\begin{abstract}
For a constructible \'etale sheaf on a smooth variety of positive characteristic ramified along an effective divisor, the largest slope in Abbes and Saito's ramification theory of the sheaf gives a divisor with rational coefficients called the {\it conductor divisor}.
In this article, we prove decreasing properties of the conductor divisor after pull-backs. The main ingredient behind is the construction of \'etale sheaves with pure ramifications.

As applications, we first prove a lower semi-continuity property for conductors of \'etale sheaves on relative curves in the equal characteristic case, which supplement Deligne and Laumon's lower semi-continuity property of Swan conductors (\cite{lau}) and is also an $\ell$-adic analogue of Andr\'e's semi-continuity result of Poincar\'e-Katz ranks for meromorphic connections on complex relative curves (\cite{andre}). Secondly, we give a ramification bound for the nearby cycle complex of an \'etale sheaf ramified along the special fiber of a regular scheme semi-stable over an equal characteristic henselian trait, which extends a main result in a joint work with Teyssier (\cite{HT18}) and answers a conjecture of Leal (\cite{Leal}) in a geometric situation.

\end{abstract}

\maketitle
\tableofcontents
\section{Introduction}

\subsection{}\label{settingintro}
Let $\kappa$ be a perfect field of characteristic $p>0$, let $X$ be a smooth $\kappa$-scheme of finite type, let $D$ be a reduced Cartier divisor of $X$, let $U$ be the complement of $D$ in $X$ and let $j:U\to X$ be the canonical injection. We denote by $\{D_i\}_{1\leq i\leq m}$ the set of irreducible components of $D$. Let $\Lambda$ be a finite field of characteristic $\ell\neq p$ and let $\sF$ be a locally constant and constructible \'etale sheaf of $\Lambda$-modules on $U$.

\subsection{}
When $\dim_{\kappa}X\geq 2$, the ramification of local fields with imperfect residue fields has been for a long time an obstacle to study the ramification of $\sF$ along $D$ and to generalize the famous Grothendieck-Ogg-Shafarevich formula (\cite{SGA5}). In 1970's, Deligne began a program to measure the ramification of $\sF$ by restricting to smooth curves and to generalize Grothendieck-Ogg-Shafarevich formula to higher dimensions. It was firstly developed in \cite{lau2}. The idea of studying the ramification by restricting to curves has further progress (\cite{Wd1, Wd2, EK12} etc.).
From 2002, Abbes and Saito introduced a geometric method to study the ramification of local fields with imperfect residue fields that enables to investigate the ramification of an $\ell$-adic sheaf along a divisor (\cite{as1,as2,rc,mla, as3}). Different approaches have also been made in the same topic  (\cite{katochar, Ked07, Ked11, xl1,xl2} etc.). In 2016, an important breakthrough is achieved by Beilinson (\cite{bei}). Using Radon transform initiated by Brylinski, he defined the singular support for an $\ell$-adic sheaf on a smooth algebraic variety.  After, Saito constructed the characteristic cycle of an $\ell$-adic sheaf, as a cycle supported on the singular support, and proved an index formula that computes the Euler-Poincar\'e characteristic of an $\ell$-adic sheaf on a projective smooth variety that generalizing Grothendieck-Ogg-Shafarevich formula (\cite{cc}). In this article, we firstly focus on the connection between the ramification of $\sF$ along $D$ by restricting to curves and that by Abbes and Saito's approach. Our research tools are mainly the singular support and the characteristic cycle of an $\ell$-adic sheaf.

\subsection{}
 We denote by $\xi_i$ the generic point of $D_i$, by $K_i$ the fraction field of the complete discrete valuation ring $\wh\sO_{X,\xi_i}$, by $\ol K_i$ a separable closure of $K_i$ and by $G_{K_i}$ the Galois group of $\ol K_i/K_i$.
For each $1\leq i\leq m$, Abbes and Saito's ramification theory gives two decreasing filtrations $\{G^r_{K_i}\}_{r\in\mathbb Q_{\geq 1}}$ and $\{G^r_{K_i,\log}\}_{r\in\mathbb Q_{\geq 0}}$ of $G_{K_i}$ by closed normal subgroups, called {\it the ramification filtration} and {\it the logarithmic ramification filtration}, respectively. They generalize the classical upper numbering filtration (cf. subsection \ref{propfiltration}). The restriction $\sF|_{\spec(K_i)}$ corresponds to a finitely dimensional $\Lambda$-vector space $M_i$ with a continuous $G_{K_i}$-action. The two ramification filtrations give two slope decompositions
\begin{equation*}
M_i=\bigoplus_{r\in\mathbb Q_{\geq 1}}M_i^{(r)}\ \ \ \textrm{and}\ \ \ M_{i}=\bigoplus_{r\in\mathbb Q_{\geq 0}}M_{i,\log}^{(r)},
\end{equation*}
similar to the classical upper numbering slope decomposition. The largest rational number such that $M_i^{(r)}\neq 0$ (resp. $M_{i,\log}^{(r)}\neq 0$) is called {\it the conductor} (resp. {\it logarithmic conductor}) of $\sF$ at $\xi_i$ and is denoted by $\rc_{D_i}(\sF)$ (resp. $\rlc_{D_i}(\sF)$). We define two divisors with rational coefficients relative to $j_!\sF$ (Definition \ref{defCDTNP})
\begin{equation*}
  \rC_X(j_!\sF)=\sum_{i=1}^m\rc_{D_i}(\sF)\cdot D_i\ \ \ \textrm{and}\ \ \ \rLC_X(j_!\sF)=\sum_{i=1}^m\rlc_{D_i}(\sF)\cdot D_i,
\end{equation*}
which are called {\it the conductor divisor} and {\it the logarithmic conductor divisor}, respectively. First two main results of this article are the following:

\begin{theorem}[{Theorem \ref{Ctheorem}}]\label{Cintroduction}
 Let $f:Y\to X$ be a morphism of smooth $\kappa$-schemes of finite type. We assume that $f^*D=D\times_XY$ is a Cartier divisor of $Y$.
Then, we have
\begin{equation*}
f^*(\rC_X(j_!\sF))\geq \rC_Y(f^*j_!\sF).
\end{equation*}

\end{theorem}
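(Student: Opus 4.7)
The plan is to reduce the divisor-level inequality to a pointwise statement at the generic point of each irreducible component of $f^*D$. Writing $f^*D_i = \sum_E m_{i,E}\, E$ as a sum over irreducible components of $f^*D$, the coefficient of a given component $E$ in $f^*\rC_X(j_!\sF)$ is $\sum_i m_{i,E}\,\rc_{D_i}(\sF)$, while the coefficient of $E$ in $\rC_Y(f^*j_!\sF)$ is $\rc_E(f^*\sF)$. The theorem therefore reduces to showing, for each component $E$ of $f^*D$ with generic point $\eta_E$,
\[
\rc_E(f^*\sF)\ \leq\ \sum_{i=1}^m m_{i,E}\cdot \rc_{D_i}(\sF).
\]

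The underlying geometric principle, coming from Abbes--Saito's framework, is that the ramification of $\sF$ at $\xi_i$ of slope $r$ is detected by a dilatation (tubular neighborhood) of order $r$ along $D_i$, and that pulling back such a dilatation by $f$ produces a dilatation of order $r\,m_{i,E}$ along $E$. In an \emph{isoclinic} situation, where a single slope governs the ramification at $\xi_i$, this compatibility yields the desired bound immediately. The genuine difficulty is that a general $\sF$ is not isoclinic along the $D_i$, and moreover the local ramification at special points of $D$ (higher-codimension intersections, etc.) is governed by imperfect residue fields, where the Abbes--Saito dilatation is more subtle and does not base change naively.

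The strategy, announced in the abstract, is to circumvent this by constructing an auxiliary sheaf $\sG$ with \emph{pure ramification} along each $D_i$, meaning that the restriction of $\sG$ at $\xi_i$ is isoclinic of slope exactly $\rc_{D_i}(\sF)$, and that the ramification of $\sF$ at each $\xi_i$ is dominated by that of $\sG$. For such a pure $\sG$, the isoclinic Abbes--Saito computation can be transported to the generic point of each $E$ via the pullback compatibility of dilatations, giving $\rc_E(f^*\sG)\leq \sum_i m_{i,E}\,\rc_{D_i}(\sF)$, and the corresponding inequality for $\sF$ follows from the majoration. Componentwise combination then recovers the divisorial inequality claimed in the statement.

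The main obstacle is therefore the construction of sheaves with pure ramifications: one needs to prescribe the largest slope $\rc_{D_i}(\sF)$ at every $\xi_i$ simultaneously, in arbitrary rank and arbitrary dimension, and in a manner compatible with $j_!$. The rank-one piece of each slope is accessible through Kummer and Artin--Schreier--Witt twisting, but aligning the slopes along several components at once, and handling higher rank, requires a genuinely geometric construction, presumably carried out after passing to an appropriate blow-up or alteration of $X$ along $D$ so that the various ramifications can be separated and then recombined. This is the heart of the argument; once it is in hand, the remaining work is a formal combination of the isoclinic dilatation computation with the multiplicative behavior of conductors under finite base change of henselian discrete valuation rings applied componentwise to $f^*D$.
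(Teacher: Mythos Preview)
Your proposal identifies the right auxiliary object---a sheaf $\sG$ with controlled, isoclinic ramification along each $D_i$---but the way you use it has a genuine gap, and the mechanism you describe is not the one that actually works.

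The circular step is the ``majoration'': you want to pass from $\rc_{D_i}(\sF)\le \rc_{D_i}(\sG)$ to $\rc_E(f^*\sF)\le \rc_E(f^*\sG)$. But there is no a priori reason why an inequality of conductors at the generic points of $D$ should propagate to an inequality at the generic points of $f^*D$; that monotonicity is essentially what the theorem asserts. Likewise, the claim that in the isoclinic case the dilatation picture ``transports'' along $f$ to give $\rc_E(f^*\sG)\le\sum_i m_{i,E}\,\rc_{D_i}(\sG)$ is not justified: the functoriality inequalities for Abbes--Saito filtrations (the inclusions $G^{er}_{K'}\subseteq G^r_K$ and their inseparable analogues) apply to extensions of discrete valuation fields, not to an arbitrary morphism of smooth schemes with imperfect residue fields. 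This is precisely why the theorem is nontrivial.

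What the paper actually does is quite different. First, it reduces to the case where $Y$ is a curve and $f$ is a closed immersion (Proposition~\ref{Ctheorem curve}); this is where the inequality is proved, and the general case follows by cutting $Y$ by a suitable curve through a non-degenerate point of $(f^*D)_{\mathrm{red}}$. Second, the auxiliary $\sG$ is built with conductors $p_i/r$ \emph{strictly larger than but arbitrarily close to} $\rc_{D_i}(\sF)$, and the key property is not merely isoclinicity at generic points but \emph{purity of the singular support}: $SS(j_!\sG)=\bT^*_XX\cup D\cdot\langle\omega\rangle$ for a fixed covector $\omega$ (Theorem~\ref{pureramifiction any D}). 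This guarantees that any curve $h:S\to X$ transversal to $\langle\omega\rangle$ satisfies $\rc_s(\sG|_{S_0})=m_s(h^*\rC_X(j_!\sG))$ \emph{exactly}. Third---and this is the step your proposal misses entirely---one forms the \emph{tensor product} $\sF\otimes_\Lambda\sG$: since $\sG$ is isoclinic with larger conductor, $\sF\otimes\sG$ is again isoclinic with $\rc_{D_i}(\sF\otimes\sG)=\rc_{D_i}(\sG)$, so $\DT_X(j_!(\sF\otimes\sG))$ is known. If the curve inequality for $\sF$ failed, a short computation shows $\dt_s((\sF\otimes\sG)|_{S_0})>m_s(h^*\DT_X(j_!(\sF\otimes\sG)))$, contradicting the \emph{total dimension} inequality of \cite{HY17} (Theorem~\ref{DTtheorem}(1)), which is the real engine behind the proof.
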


\begin{theorem}[{Theorem \ref{LCtheorem}}]\label{LCintroduction}
We assume that $D$ is a divisor with simple normal crossings. Let $f:Y\to X$ be a morphism of smooth $\kappa$-schemes of finite type. We assume that $f^*D=D\times_XY$ is a Cartier divisor of $Y$.
\begin{itemize}
\item[(1)]
Then, we have
\begin{equation*}
f^*(\rLC_X(j_!\sF))\geq \rLC_Y(f^*j_!\sF).
\end{equation*}
\item[(2)]
We further assume that $D$ is irreducible. Let $\mathcal I(X, D)$ be the set of triples $(S,h:S\to X,x)$ where $h:S\to X$ is an immersion from a smooth $\kappa$-curve $S$ to $X$ such that $x=S\bigcap D$ is a closed point of $X$. Then, we have
\begin{equation*}
\rlc_D(\sF)=\sup_{\mathcal I(X, D)}\frac{\mathrm{lc}_x(\sF|_{S-\{x\}})}{m_x(h^*D)}.
\end{equation*}
\end{itemize}
\end{theorem}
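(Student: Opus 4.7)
The plan for Part (1) is to check the divisor inequality component by component, at the generic point $\xi'_j$ of each irreducible component $D'_j$ of $f^*D$. Writing $f^*D=\sum_{l}n_{j,l}D_{i_l}$ locally near $\xi'_j$, where $\{D_{i_l}\}_l$ runs through the components of $D$ whose preimages contain $\xi'_j$, the statement amounts to
\begin{equation*}
\rlc_{D'_j}(f^*\sF)\;\leq\;\sum_{l}n_{j,l}\cdot\rlc_{D_{i_l}}(\sF).
\end{equation*}
I would invoke the main construction of the paper, sheaves with pure logarithmic ramification, to reduce to the case where $\sF$ has a single nonzero logarithmic slope $r$ along a single component $D_i$: the logarithmic slope decomposition of $\sF|_{\spec K_i}$ can be matched, slope subquotient by slope subquotient, with auxiliary pure sheaves whose tensor product with $\sF$ becomes tame along $D_i$, so it suffices to prove the inequality for pure sheaves one component at a time. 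Once $\sF$ is pure of slope $r$ along $D_i$, the problem localises at $\xi'_j$ and reduces to the Herbrand-type inclusion $G^{er}_{K'_j,\log}\subseteq G^{r}_{K_i,\log}$ for the Abbes--Saito logarithmic filtration under the finite extension $K_i\hookrightarrow K'_j$ of complete discretely valued fields with ramification index $e=n_{j,l}$.

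For Part (2), the inequality $\rlc_D(\sF)\geq\sup$ is immediate from Part (1) applied to the immersion $h\colon S\to X$: extracting the coefficient at $x$ from the divisor inequality $h^*(\rlc_D(\sF)\cdot D)\geq\rLC_S(h^*j_!\sF)$ gives $\rlc_D(\sF)\cdot m_x(h^*D)\geq\mathrm{lc}_x(\sF|_{S-\{x\}})$. The reverse direction is the geometric heart of the statement; the plan is again to use the pure ramification construction to reduce to $\sF$ pure of slope $r=\rlc_D(\sF)$ along $D$, and then to exhibit a test curve realising the slope. Pick a smooth closed point $\xi$ of $D$ outside the degeneracy locus of the pure construction. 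The logarithmic ramification of $\sF$ is then refined at $\xi$ by a characteristic form lying in a conormal line, coming from Saito's logarithmic characteristic cycle. For any smooth curve $S\hookrightarrow X$ that meets $D$ only at $\xi$, is transverse to $D$ there, and whose tangent direction at $\xi$ is generic with respect to this characteristic form, the logarithmic conductor of $h^*\sF$ at $\xi$ equals $r=r\cdot m_\xi(h^*D)$. The curve itself is produced by first picking the tangent direction in a dense open of $\bP(T_\xi X)$, then globalising to an immersed smooth curve through $\xi$ with the prescribed tangent.

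The main obstacle will be the realisation step in Part (2): after reduction to the pure case, one must show that a sheaf of pure logarithmic slope $r$ along $D$ admits a smooth test curve whose restriction attains the bound $r\cdot m_\xi(h^*D)$. This is where the paper's pure ramification construction must be coupled with Saito's logarithmic characteristic cycle to identify exactly which transverse curves see the full logarithmic slope, and bridges to the classical ``detect ramification by restriction to curves'' programme of Deligne and Laumon. Once this realisation is in place, Part (1) supplies the matching upper bound and forces equality in the supremum.
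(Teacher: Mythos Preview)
Your proposal has a genuine gap in Part~(1), and the strategy for the sharp direction of Part~(2) is not the paper's and is not fully justified.

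\textbf{Part~(1).} The step ``reduces to the Herbrand-type inclusion $G^{er}_{K'_j,\log}\subseteq G^{r}_{K_i,\log}$ for the finite extension $K_i\hookrightarrow K'_j$'' does not apply in the required generality. For an arbitrary morphism $f\colon Y\to X$, the image $f(\xi'_j)$ of the generic point of $D'_j$ need not be the generic point $\xi_i$ of any $D_i$; in the basic example $f\colon\bA^1\to\bA^2$, $t\mapsto(t,t)$ with $D=\{xy=0\}$, the point $f(\xi'_j)$ is the origin, where two components meet, and there is no homomorphism $G_{K'_j}\to G_{K_i}$ to which any filtration compatibility could be applied. The known functoriality results for the logarithmic filtration (finite separable extensions, or smooth pullback as in \cite[Lemma~1.22]{saito cc}) do not cover this situation. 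Your reduction to ``$\sF$ pure of a single slope along a single $D_i$'' is also not valid as stated: the slope decomposition exists only for the Galois module $\sF|_{\spec K_i}$, not as a decomposition of $\sF$ globally on $U$, and there is no auxiliary sheaf that isolates one slope piece simultaneously along all components.

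The paper's route is different. It first proves the non-logarithmic inequality $f^*\rC_X(j_!\sF)\geq\rC_Y(f^*j_!\sF)$ (Theorem~\ref{Ctheorem}) by the pure-ramification / restriction-to-curves machinery. It then deduces the logarithmic inequality by an \emph{asymptotic tame-cover argument} (Proposition~\ref{LCtheorem curve}): pass to a cyclic cover $X'\to X$ of degree $r$ prime to $p$, use $\rlc_{D'_i}=r\cdot\rlc_{D_i}$ and the sandwich $\rlc\le\rc\le\rlc+1$, and let $r\to\infty$ so that the unit gap between $\rc$ and $\rlc$ becomes negligible. This is the mechanism that replaces the missing Herbrand inclusion.

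\textbf{Part~(2).} Your $\geq$ direction is correct. For the reverse, the paper again uses the tame-cover trick (Proposition~\ref{lcx+epsilon} with Lemma~\ref{huprop6.3}): on a degree-$r$ tame cover $X'$ one finds a curve transversal to $SS(j'_!(\sF|_{U'}))$ whose image in $X$ is still an immersed curve; Theorem~\ref{DTtheorem}(2) then gives $\rc_{x'}=\rc_{D'}$, and the sandwich $\rlc+1\ge\rc\ge\rlc$ yields $\rlc_{x'}/r>\rlc_D-1/r$. There is no appeal to a ``logarithmic characteristic cycle''; that object is not developed here, and the non-degeneracy statements you would need for it are not available in the paper.
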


\subsection{}
The inequality for total dimension divisors (Definition \ref{defCDTNP}) similar to that in Theorem \ref{Cintroduction} was firstly studied in \cite{wr} and extended by \cite[Theorem 4.2]{HY17} (cf. Theorem \ref{DTtheorem}). When $D$ is a smooth divisor, Theorem \ref{Cintroduction} was partially verified in \cite[\S 3]{HT18}, which is a main ingredient for proving \cite[Theorem 5.7]{HT18} (a special case of Theorem \ref{ramboundintroduction} below). In \cite{EK12}, Esnault and Kerz proved that the Swan conductor of $\sF$ after restricting to a curve is bounded by a Cartier divisor of $X$ supported on $D$ (there $X$ is assumed to be normal). In {\it loc. cit.}, they further predicted that Abbes and Saito's logarithmic ramification theory gives the sharp lower bound. Barrientos formulated Esnault and Kerz's conjecture in \cite{bar} and proved it when $\rk_{\Lambda}(\sF)=1$ and the base scheme is smooth. His result in {\it loc. cit.} is a special case of Theorem \ref{LCintroduction}.  Two formulas for Swan divisors similar to those in Theorem \ref{LCintroduction} (cf. Theorem \ref{SWtheorem}) was proved in \cite{Hu19IMRN}, which verified the conjecture in the case where the base scheme is smooth. Theorem \ref{LCintroduction} shows that the classical upper numbering conductor of $\sF$ after restricting to a curve is sharply bounded by the logarithmic conductor divisor $\rLC_X(j_!\sF)$. It enriches Esnault and Kerz's conjecture.

\subsection{}\label{strategyofCthmintro}
The strategy of proving Theorem \ref{Cintroduction} is the following. We work Zariski locally on $X$. Firstly, for an admissible $m$-tuple $(r_1,\ldots, r_m)\in\bQ_{> 1}$ and a constant non-zero vector $\omega$ in the trivial bundle $\bT^*X$, we construct a locally constant and constructible sheaf of $\Lambda$-modules $\sG$ on $U$ satisfying the following three nice ramification conditions (Theorem \ref{pureramifiction any D} and Proposition \ref{curiso=geniso+pure}):
\begin{itemize}
\item[(i)]
For each $1\leq i\leq m$,  the $G_{K_i}$-representation $\sG|_{\spec(K_i)}$ has only one slope with $c_{D_i}(\sG)=r_i$;
\item[(ii)] For each closed point $x\in D$, the singular support of $j_!\sG$ satisfies $SS(j_!\sG)\times_Xx=\langle\omega \rangle$ (we call the ramification of $\sG$ along $D$ is pure (Definition \ref{definitionpure}));
\item[(iii)]
For any $SS(j_!\sG)$-transversal immersion $h:Z\to X$ from a $\kappa$-curve $Z$ meeting $D$ at a closed point $z$, the slope decomposition of $h^*j_!\sG$ at $z$ has only one slope  with $\rC_Z(h^*j_!\sG)=h^*(\rC_X(j_!\sG))$.
\end{itemize}
Secondly, we reduce Theorem \ref{Cintroduction} to the case where $Y$ is a curve and $f:Y\to X$ is an immersion by d\'evissage (i.e., Proposition \ref{Ctheorem curve}). Finally, we find $\sG$ satisfying three conditions above such that each $\rc_{D_i}(\sG)$ is larger than and sufficiently close to $\rc_{D_i}(\sF)$ and that $f:Y\to X$ is $SS(j_!\sG)$-transversal. Then, applying \cite[Theorem 4.2]{HY17} to $\sF\otimes_{\Lambda}\sG$, we obtain Theorem \ref{Cintroduction}. The proof of Theorem \ref{LCintroduction} follows a similar strategy of demonstrating  \cite[Theorems 6.5 and 6.7]{Hu19IMRN}, suggested by Saito. It relies on Theorem \ref{Cintroduction} and asymptotic properties of (logarithmic) conductor divisors by tame covers.

\subsection{}
It is well known that algebraic $D$-modules are analogues of $\ell$-adic sheaves. Two local invariants of algebraic $D$-modules on smooth complex curves called the irregularity and the Poincar\'e-Katz rank are analogues of the Swan conductor and the upper numbering conductor, respectively. Andr\'e studied decreasing properties for both irregularities and Poincar\'e-Katz ranks after restricting to curves of meromorphic connections on complex surfaces \cite[Th\'eor\`emes 7.1.1 and 6.1.1]{andre}. Using the two results, he immediately obtained the lower semi-continuity for both irregularities and Poincar\'e-Katz ranks of meromorphic connections on complex relative curves (\cite[Corollaires 7.1.2 and 6.1.3]{andre}). The former is a conjecture of Malgrange motivated from Deligne and Laumon's lower semi-continuity of Swan conductors (\cite{lau}). From the observation above, we may ask: whether there is a lower semi-continuity result for upper numbering conductors of $\ell$-adic sheaves on relative curves (cf. \cite[Remark 21.1.4]{AFM})? Applying Theorem \ref{Cintroduction}, we prove the following result that answers the question in equal characteristic.

\begin{theorem}[{Theorem \ref{theoremlowsemicontchi}}]\label{theoremlowsemicontintro}
Let $T$ be an excellent Noetherian scheme of characteristic $p>0$, let $g:Y\to T$ be a separated and smooth morphism of finite type of relative dimension $1$, let $E$ be a reduced closed subscheme of $Y$ such that the restriction $f|_E:E\to T$ is quasi-finite and flat and let $V$ be the complement of $E$ in $Y$. Let $\Lambda$ be a finite field of characteristic $\ell$ $(\ell\neq p)$ and let $\sG$ be a locally constant and constructible sheaf of $\Lambda$-modules on $V$. For a point $t\in T$, we denote by $\ol t\to T$ an algebraic geometric point above $t$. Then, the function (Definition \ref{cDlcD})
\begin{align*}
\chi: T\to \mathbb Q,&\ \ \ t\mapsto \sum_{x\in E_{\ol t}}\rc_x(\sG|_{V_{\ol t}})
\end{align*}
is constructible and lower semi-continuous.
\end{theorem}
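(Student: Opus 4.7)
The plan is to reduce to a specialization statement on a henselian trait and then use Theorem~\ref{Cintroduction} to bound $\chi$ at the special fiber, matched by a direct computation at the geometric generic fiber.

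\textbf{Reduction.} By Noetherian stratification and localization on the excellent scheme $T$, it suffices to take $T=\mathrm{Spec}(R)$ with $R$ a henselian excellent discrete valuation ring of characteristic $p$, generic point $\eta$, closed point $s$, and to show $\chi(\bar\eta)\geq\chi(s)$. Spreading $R$ out to a smooth affine curve over a perfect base field $\kappa$, one may assume $Y$ is a smooth $\kappa$-surface, $E$ is a reduced Cartier divisor of $Y$, and (by the flatness of $E\to T$) each irreducible component $E_i$ of $E$ is finite flat over $T$ of constant degree $d_i=[\kappa(E_i):\kappa(T)]$.

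\textbf{Upper bound at the special fiber.} Write $\rC_Y(j_!\sG)=\sum_{i=1}^m c_i E_i$ with $c_i=\rc_{E_i}(\sG)$. The closed immersion $\iota_s: Y_s\hookrightarrow Y$ is between smooth $\kappa$-schemes of finite type, and $\iota_s^*E$ is Cartier on $Y_s$; Theorem~\ref{Cintroduction} therefore gives
\[
\iota_s^*\rC_Y(j_!\sG)\geq \rC_{Y_s}(\iota_s^*j_!\sG).
\]
Taking degrees on $Y_s$ (after base-change to the algebraic closure of $\kappa(s)$, which preserves conductors on the smooth curve $Y_s$), one obtains $\chi(s)\leq\sum_i c_i d_i$.

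\textbf{Equality at the geometric generic fiber.} The key step is $\chi(\bar\eta)=\sum_i c_i d_i$. Since each $E_i\to T$ is finite, $E_i\cap Y_\eta$ is a single closed point $\xi_i\in Y_\eta$ with residue field $\kappa(E_i)$, and the open immersion $Y_\eta\hookrightarrow Y$ identifies $\hat\sO_{Y,\xi_i}=\hat\sO_{Y_\eta,\xi_i}$; thus the conductor of $\sG|_{V_\eta}$ at $\xi_i$ equals $c_i$. Passing to $\bar\eta$: above each $\xi_i$ lie $d_i^{\mathrm{sep}}:=[\kappa(E_i):\kappa(T)]_{\mathrm{sep}}$ geometric points $\bar x_{i,j}$, each with multiplicity $p^{k_i}:=d_i/d_i^{\mathrm{sep}}$, and the extension of complete discrete valuation fields $K_{\bar x_{i,j}}/K_{\xi_i}$ is totally ramified of ramification index $p^{k_i}$. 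The scaling of the non-logarithmic upper-numbering conductor under a totally ramified extension then yields
\[
c_{\bar x_{i,j}}(\sG|_{V_{\bar\eta}})=p^{k_i}\cdot c_i,
\]
so
\[
\chi(\bar\eta)=\sum_i d_i^{\mathrm{sep}}\cdot p^{k_i}\cdot c_i=\sum_i c_i d_i.
\]
Combined with the upper bound above, this gives $\chi(s)\leq\chi(\bar\eta)$. Constructibility of $\chi$ follows from the constructibility of Abbes-Saito ramification in families, together with the uniform bound that forces $\chi$ to take finitely many values on $T$.

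The main obstacle is verifying the scaling of the non-logarithmic upper-numbering conductor by the ramification index $p^{k_i}$ in Abbes-Saito's generalized theory, for the totally ramified extension $\kappa(E_i)((t))\hookrightarrow\overline{\kappa(\eta)}((s))$ whose residue-field extension $\overline{\kappa(\eta)}/\kappa(E_i)$ is neither algebraic separable nor tame. This scaling is classical when the residue field is perfect, but in the imperfect setting it requires a careful invariance statement for Abbes-Saito's geometric ramification filtrations under base change of complete discrete valuation fields; granted this, the rest of the argument is merely a bookkeeping of intersection degrees.
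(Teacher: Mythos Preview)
Your argument has a genuine gap at the step ``equality at the geometric generic fiber,'' and the gap is precisely the one you flag as the main obstacle; the paper circumvents it rather than proving it.

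First, a factual error: the extension $K_{\bar x_{i,j}}/K_{\xi_i}$ is \emph{not} totally ramified. Its residue-field extension is $\overline{\kappa(\eta)}/\kappa(E_i)$, which is an (infinite) algebraic extension with both separable and inseparable parts---you note this yourself two sentences later, contradicting ``totally ramified.'' More seriously, for the non-logarithmic Abbes--Saito filtration the paper only records the inequality $e\cdot \rc_K(M)\geq \rc_{K'}(M)$ for an extension of ramification index $e$ (subsection~\ref{K'/K}), with equality known only in the unramified case. The residue field $\kappa(E_i)$ of $K_{\xi_i}$ is imperfect, so neither the unramified case nor the perfect-residue-field purely-inseparable case applies; your scaling identity $\rc_{\bar x_{i,j}}=p^{k_i}\cdot c_i$ is therefore unproved. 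Since Theorem~\ref{Cintroduction} only gives $\chi(s)\leq\sum_i c_i d_i$, and the available Abbes--Saito inequalities only give $\chi(\bar\eta)\leq\sum_i c_i d_i$ as well, your two bounds point the same way and do not yield $\chi(\bar\eta)\geq\chi(s)$. Your constructibility argument is also too sketchy: ``constructibility of Abbes--Saito ramification in families'' is not a standard input.

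The paper's route is different in exactly this step. Constructibility is proved directly by mimicking Laumon: reduce to the situation where the sheaf is trivialized by a Galois cover whose normalization is cut out by an explicit Eisenstein polynomial, and check that the \emph{lower-numbering} filtration (hence the upper-numbering one) is constant on a dense open. For lower semi-continuity, after reducing to a smooth curve $S$ over $\overline{\bF_p}$, the paper makes two further reductions you omit: (iv) replace $S$ by a finite cover so that $f:X_W\to W$ (with $W=S\setminus\{s\}$) is $SS(j_!\sF)$-transversal, and (v) shrink so that $D_W$ is smooth and the ramification of $\sF$ along $D_W$ is non-degenerate. Then for each \emph{closed} point $t\in W$, Theorem~\ref{DTtheorem}(2) gives $\rNP_x(\sF|_{U_t})=m_x(h^*D_i)\cdot\rNP_{D_i}(\sF)$, hence $\rc_x(\sF|_{U_t})=m_x(h^*\rC_X(j_!\sF))$ \emph{exactly}, so $\chi(t)=\sum_i c_i d_i$. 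Comparing with $\chi(s)\leq\sum_i c_i d_i$ from Theorem~\ref{Ctheorem} (your upper bound) and using finite flatness of each $D_i$ over $S$ to match the degrees gives $\chi(t)\geq\chi(s)$. The point is that working at closed fibers allows one to use $SS$-transversality and non-degeneracy---geometric conditions that can be arranged after finite base change---to get an equality, whereas your direct computation at $\bar\eta$ would require the unavailable scaling law.
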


\subsection{}
Towards complements and generalizations of Deligne and Laumon's semi-continuity of Swan conductors for $\ell$-adic sheaves, there have been several works. Saito proved a semi-continuity property for total dimensions of stalks of vanishing cycles complexes, that was used to construct the characteristic cycle of an $\ell$-adic sheaf  (\cite{cc}). Two lower semi-continuity properties for total dimension divisors and Swan divisors of $\ell$-adic sheaves on a smooth fibration were proved in \cite{HY17,Hu19IMRN}. A semi-continuity property for singular supports and characteristic cycles of $\ell$-adic sheaves is obtained in \cite{HYselecta}. Recently, Takeuchi proved a semi-continuity result for local epsilon factors (\cite{Tak1}).

\subsection{}
The monodromy action on nearby cycles complex, vanishing cycles complex as well as \'etale cohomological groups of varieties over local fields is an important theme in \'etale cohomology. The study started in SGA $7$ (\cite{SGA7}), including  local monodromy theorem,  Milnor formula and semi-stable reduction criteria for abelian varieties. Under the semi-stability condition, the tameness of the monodromy action on the nearby cycles complex of the constant sheaf was studied in \cite{RZ} and \cite{Ill04}.  For arbitrary schemes over a trait, the wild ramification of the monodromy action is involved. Bloch's conductor formula  interpreted the alternating sum of the Swan conductors of the cohomology groups of sheaves in terms of intersection numbers (\cite{Bl}). In this direction, many efforts have been made (\cite{abbes, KS05, KS13, SaitoDirectimage} etc.).  Concerning the wild ramification for the monodromy action on each cohomology group of an arbitrary $\ell$-adic sheaf, Leal firstly formulated a conjecture below.

\subsection{}
Let $\cS$ be a henselian trait with a perfect residue field of characteristic $p>0$, $s$ its closed point, $\eta$ its generic point, $\ol\eta$  a geometric generic point and $G=\gal(\ol\eta/\eta)$. Let $h:\cX\to\cS$ be a morphism of finite type, $\cZ$ a reduced closed subscheme of $\cX$, $\cU$ the complement of $\cZ$ in $\cX$ and $\jmath:\cU\to\cX$ the canonical injection. We assume that $(\cX,\cZ)$ is a semi-stable pair over $\cS$ (see Definition \ref{semistablepair}). Let $\sH$ be a locally constant and constructible sheaf of $\Lambda$-modules on $\cU$ which is tamely ramified along those irreducible components of $\cZ$ flat over $\cS$ and let $\wt c$ be the maximum of the set of logarithmic conductors of $\sH$ at generic points of the special fiber $\cX_s$.

\begin{conjecture}[{Leal,  \cite{Leal}}]\label{Lealconjintroduction}
Assume that $h:\cX\to\cS$ is proper. Then, the upper numbering ramification subgroup $G^{\wt c+}_{\mathrm{up}}$ of $G$ acts trivially on each $H^i_c(\cU_{\ol\eta},\sH|_{\cU_{\ol\eta}})$.
\end{conjecture}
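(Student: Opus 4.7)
The plan is to deduce Conjecture \ref{Lealconjintroduction} from the ramification bound theorem for the nearby cycles complex (the main ramification bound result of this paper, referenced as Theorem \ref{ramboundintroduction} above) by a proper base change reduction. Since $h:\cX\to\cS$ is proper, proper base change gives a $G$-equivariant isomorphism
\[H^i_c(\cU_{\ol\eta},\sH|_{\cU_{\ol\eta}})\cong H^i(\cX_s, R\psi(\jmath_!\sH)),\]
where $R\psi$ denotes the nearby cycles functor and $G$ acts on the right-hand side via its natural action on $R\psi$. Therefore it suffices to prove that $G^{\wt c+}_{\up}$ acts trivially on each $H^i(\cX_s, R\psi(\jmath_!\sH))$.

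The next step is a pointwise ramification bound: for every geometric point $\ol x$ of $\cX_s$ and every integer $q$, the stalk $\mathcal{H}^q(R\psi(\jmath_!\sH))_{\ol x}$ is $G^{\wt c+}_{\up}$-trivial. Granting this, the hypercohomology spectral sequence
\[E_2^{p,q}=H^p\bigl(\cX_s,\mathcal{H}^q(R\psi(\jmath_!\sH))\bigr)\Longrightarrow H^{p+q}(\cX_s,R\psi(\jmath_!\sH))\]
is $G$-equivariant, so $G^{\wt c+}_{\up}$-triviality of each sheaf $\mathcal{H}^q(R\psi(\jmath_!\sH))$ stalkwise implies $G^{\wt c+}_{\up}$-triviality of each $E_2^{p,q}$, which propagates to the abutment, yielding the conjecture.

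The heart of the argument is thus the pointwise stalk bound, which I would attack étale-locally near $\ol x$. In a standard semi-stable chart $\cX\simeq\spec(\sO_\cS[t_0,\dots,t_n]/(t_0\cdots t_k-\pi))$ adapted to $(\cX,\cZ)$, I would study $R\psi(\jmath_!\sH)_{\ol x}$ by restriction to smooth curves $\cC\hookrightarrow\cX$ through $\ol x$ that are transverse to $\cX_s$: such a curve is a test trait finite over $\cS$, and the stalk of $R\psi$ along it is controlled by the classical nearby cycles of $\sH|_{\cC\cap\cU}$ at $\ol x$, whose monodromy ramification is measured by the upper numbering conductor. Theorem \ref{LCintroduction}(2) supplies the sharp estimate $\rlc_{\ol x}(\sH|_{\cC\cap\cU})\le\wt c\cdot m_{\ol x}(\cC\cap\cX_s)$, and this, combined with the tameness hypothesis along the horizontal components of $\cZ$ (which decouples horizontal from vertical contributions), should translate into the desired bound on the upper numbering ramification of monodromy on $R\psi(\jmath_!\sH)_{\ol x}$.

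The main obstacle is precisely this translation. The natural filtration delivered by Theorem \ref{LCintroduction} is the logarithmic one, whereas the conjecture is framed in terms of the non-logarithmic upper numbering filtration $G^\bullet_{\up}$ of the base Galois group; reconciling the two requires careful bookkeeping with the ramification indices $m_{\ol x}(\cC\cap\cX_s)$ produced by tame base changes along $\cS$, and verifying that the supremum appearing in Theorem \ref{LCintroduction}(2) is attained on curves compatible with the transversal direction dictated by the semi-stable model. The equal-characteristic hypothesis is essential here, both because Theorem \ref{LCintroduction} rests on the Beilinson--Saito singular support/characteristic cycle machinery (available only in positive equal characteristic), and because the semi-stable local model affords purely geometric control that is unavailable in the mixed-characteristic setting—which is precisely why only the geometric case of Leal's conjecture is within reach.
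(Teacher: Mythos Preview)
Your reduction via proper base change and the hypercohomology spectral sequence is exactly what the paper does in Corollary~\ref{Lealcoro}; the content lies entirely in the pointwise bound on the stalks of $R\Psi(\jmath_!\sH,h)$, i.e.\ in Theorem~\ref{maintheoremnearbycycle}. But your proposed route to that pointwise bound has a genuine gap.

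The problem is the sentence ``the stalk of $R\psi$ along it is controlled by the classical nearby cycles of $\sH|_{\cC\cap\cU}$ at $\ol x$.'' Nearby cycles do \emph{not} in general commute with restriction to a closed curve $\cC\hookrightarrow\cX$: the base change map $\iota^*R\Psi(\jmath_!\sH,h)\to R\Psi((\jmath_!\sH)|_{\cC},h|_{\cC})$ is an isomorphism only under a transversality hypothesis of the type in Proposition~\ref{TBC}, and that hypothesis is exactly what fails at the singular points of $\cX_s$ where the interesting ramification lives. Even granting some comparison, the $G$-action you must bound is that of $\gal(\ol K/K)$, whereas the ramification invariant Theorem~\ref{LCtheorem}(2) produces on $\cC$ is for the local Galois group of $\cC$ at $\ol x$; relating the two through the (possibly wildly ramified) map $\cC\to\cS$ is precisely the ``translation'' you flag as an obstacle, and you have no mechanism to perform it. So this line of argument does not close.

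The paper avoids both issues by a different characterization of the ramification bound. By Proposition~\ref{HT18nearby}, the action of $G^{\wt c+}_{\up}$ on $R\Psi(\jmath_!\sH,h)$ is trivial if and only if, for every logarithmically isoclinic sheaf $\sN$ on $\eta$ with $\rlc_s(\sN)>\wt c$, the \emph{tame} nearby cycles $R\Psi^{\rt}(\jmath_!\sH\otimes f_\eta^*\sN,h)$ vanish. This converts the ramification bound into a vanishing statement, and the key input is then Theorem~\ref{keysteprambound}: for such $\sN$ one has $Rj_*(\sH\otimes f_0^*\sN)=j_!(\sH\otimes f_0^*\sN)$, which forces the required vanishing after a standard dévissage through tame covers of $\cS$ and of $\cX$ (to pass to a smooth model). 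The proof of Theorem~\ref{keysteprambound} in turn rests on the construction of sheaves with pure ramification in \S\ref{sectionpureram2}, not on cutting by curves; Theorem~\ref{LCtheorem} enters only in the auxiliary dévissage handling the horizontal part $\cZ_f$ of the boundary.
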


In the last part of this article, we aim at Leal's conjecture by giving a ramification bound of the nearby cycle complexe $R\Psi(\jmath_!\sH,h)$.

\begin{theorem}[{Theorem \ref{maintheoremnearbycycle} and Corollary \ref{Lealcoro}}]\label{ramboundintroduction}
Assume that $\cS$ is a henselization of a smooth $\kappa$-curve at a closed point. Then, the upper numbering ramification subgroup $G^{\wt c+}_{\mathrm{up}}$ of $G$ acts trivially on each $R^i\Psi(\jmath_!\sH,h)$. In particular, Conjecture \ref{Lealconjintroduction} is true when $\cS$ is a henselization of a smooth $\kappa$-curve at a closed point.
\end{theorem}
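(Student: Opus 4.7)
My plan is to extend the strategy of \cite[Theorem 5.7]{HT18}, which treats the case where the special fiber of $\cX/\cS$ is smooth at the point in question, to the semi-stable situation by using the conductor-decrease Theorems \ref{Cintroduction} and \ref{LCintroduction} of the present paper as the local input that replaces the partial decrease result of \cite[\S 3]{HT18}, which was restricted to smooth divisors. Corollary \ref{Lealcoro}, i.e.\ Conjecture \ref{Lealconjintroduction} in the geometric equal-characteristic setting, will then follow from the $R^i\Psi$-bound via the proper-base-change isomorphism $R\Gamma_c(\cU_{\ol\eta},\sH|_{\cU_{\ol\eta}})\simeq R\Gamma(\cX_s,R\Psi(\jmath_!\sH,h))$ of $G$-representations.

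\textbf{Reduction to stalks.} The complex $R\Psi(\jmath_!\sH,h)$ is constructible on $\cX_s$ and carries a continuous $G$-action, so it suffices to show that $G^{\wt c+}_{\up}$ acts trivially on every stalk $R^i\Psi(\jmath_!\sH,h)_{\bar x}$ at each geometric closed point $\bar x\to\cX_s$ and each $i$. When $\bar x$ lies on only one vertical component of $\cZ$, the wild part of $\cZ$ is \'etale-locally a smooth divisor at $\bar x$ (using the tameness of $\sH$ along the components flat over $\cS$), and \cite[Theorem 5.7]{HT18} applies directly. The new content of the present theorem is the case where $\bar x$ is a node, lying in several vertical components $\cX_{s,1},\ldots,\cX_{s,r}$ of $\cX_s$.

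\textbf{Nodal stalks via a pure-ramification test sheaf.} At such a node $\bar x$, and for a small rational $\epsilon>0$, construct, via the pure-ramification machinery recalled in subsection \ref{strategyofCthmintro} (Theorem \ref{pureramifiction any D}), a locally constant constructible sheaf $\sG$ of $\Lambda$-modules on a Zariski neighborhood of $\bar x$ whose logarithmic conductor along each $\cX_{s,j}$ is $\rlc_{\cX_{s,j}}(\sH)+\epsilon$, whose ramification at $\bar x$ is pure along a chosen codirection $\omega$, and which is tame along the flat components of $\cZ$. On any smooth curve $\cT$ through $\bar x$ transverse to $\omega$ and to the components of $\cX_s$, $\sG|_\cT$ has a single slope computed by the multiplicity formula of Theorem \ref{LCintroduction}(2); by Theorem \ref{Cintroduction} applied to $j_!(\sH\otimes_\Lambda\sG)$, the twisted sheaf has logarithmic conductor along each $\cX_{s,j}$ exactly $\rlc_{\cX_{s,j}}(\sH)+\epsilon$ with pure ramification at $\bar x$ inherited from $\sG$. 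This reduces the stalk-wise ramification bound for $\jmath_!\sH$ to the same bound for a sheaf whose wild ramification at $\bar x$ is effectively concentrated along the single codirection $\omega$, a situation to which \cite[Theorem 5.7]{HT18} (or a slight refinement using the pure-ramification data) applies; letting $\epsilon\to0$ then yields the sharp bound $\wt c$.

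\textbf{Main obstacle.} The chief difficulty lies in the final reduction above. At a node of the semi-stable special fibre the relevant Milnor fibre is no longer a disk, so converting the ramification bound on the twist $\sH\otimes_\Lambda\sG$ into a bound on $\jmath_!\sH$ requires both the sharpness statement of Theorem \ref{LCintroduction}(2), which supplies sufficiently many transverse curves realizing each $\rlc_{\cX_{s,j}}(\sH)$, and a careful tracking of how the singular-support contributions of the several crossing components at $\bar x$ assemble into a bound on the whole stalk. This is precisely where the full generality of the present paper's conductor-decrease theorems is essential, in contrast to the smooth-divisor-only decrease used in \cite{HT18}.
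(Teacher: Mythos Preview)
Your proposal has a genuine gap at the nodal stalk step. The difficulty is that your test sheaf $\sG$ lives on (an open of) $\cX$, not on $\cS$. The Galois group $G=\gal(\ol\eta/\eta)$ acts on $R\Psi(\jmath_!\sH,h)$ through the base, and there is no mechanism relating $R^i\Psi(\jmath_!\sH,h)_{\bar x}$ to $R^i\Psi(\jmath_!(\sH\otimes_\Lambda\sG),h)_{\bar x}$ as $G$-representations: tensoring with a sheaf on $\cX$ does not commute with $R\Psi$ in any usable way, so ``purifying'' the ramification of the coefficient sheaf at $\bar x$ tells you nothing about the $G$-action on the nearby cycles of $\jmath_!\sH$. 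Your untwisting step ``letting $\epsilon\to 0$'' therefore has no content. Moreover, \cite[Theorem~5.7]{HT18} is a statement about smooth special fibre, not about sheaves with pure ramification along a normal crossings divisor, so the appeal to it at a node is not justified.

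The paper proceeds quite differently. The criterion used is Proposition~\ref{HT18nearby}: the bound by $\wt c$ is equivalent to the vanishing $R\Psi^{\rt}(\jmath_!\sH\otimes^L_\Lambda h_\eta^*\sN,h)=0$ for every $\sN$ on $\eta$ which is logarithmically isoclinic with $\rlc_s(\sN)>\wt c$. Here the twisting sheaf is \emph{pulled back from the base}, which is exactly what makes the criterion work. After reducing (via Abhyankar's lemma and Theorem~\ref{LCtheorem}) to the case $\cZ=\cX_s$, one computes $R\Psi^{\rt}$ as a colimit over tame covers $\cS_n\to\cS$; passing to the smooth model $\wt\cX_n$ of the fibre product, the required stalk vanishing becomes the statement that $R\wt\jmath_{n*}(\wt h_n^*\sH\otimes_\Lambda f^*\sN)=\wt\jmath_{n!}(\wt h_n^*\sH\otimes_\Lambda f^*\sN)$ on a smooth $k$-scheme with a simple normal crossings divisor. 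This is precisely Theorem~\ref{keystepramboundintro} (Theorem~\ref{keysteprambound}), whose proof uses the Section~\ref{sectionpureram2} pure-ramification constructions for sheaves \emph{pulled back from a curve}, together with a finite radicial base change to force purity. Theorems~\ref{Cintroduction} and~\ref{LCintroduction} enter only indirectly (via Proposition~\ref{sGCisoclinic+CXjG>CXjF=>Rj*sGsF=j!sGsF} and the reduction step), not as the main engine at the node.
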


\subsection{}
Leal proved her conjecture when $S$ is equal characteristic, $f:\cX\to\cS$ is a relative curve and $\rk_{\Lambda}(\sH)=1$ in \cite{Leal}. Her proof relies on a conductor formula due to Kato and Saito (\cite{KS13}). In a joint work with Teyssier \cite{HT18}, we proved Theorem \ref{ramboundintroduction} under an extra condition that $f:\cX\to\cS$ is smooth. Our method there is purely local by applying the singular support and the characteristic cycle.  Now, by studying $\ell$-adic sheaves with pure ramification along arbitrary divisors, the following acyclic property for the operator $Rj_*$ is obtained and Leal's conjecture in a geometric setting can be fully solved. An analogue result of Theorem \ref{ramboundintroduction} in $D$-modules theory was proved by Teyssier in \cite[Proposition 7.1.2]{T19}.

\begin{theorem}[{Theorem \ref{keysteprambound}}]\label{keystepramboundintro}
We keep the notation and assumption in subsections \ref{settingintro}. Let $S$ be a smooth $\kappa$-curve, $s$ a closed point of $S$, $V$ the complement of $s$ in $S$ and $f:X\to S$ a $\kappa$-morphism. We further assume that $D=f^*(s)$ is a divisor with simple normal crossings and that the restriction $f_0:U=X-D\to V$ of $f:X\to S$ on $U$  is smooth. Let $\sN$ be a locally constant and constructible sheaf of $\Lambda$-modules on $V$ having only one logarithmic slope at $s$ with
\begin{equation*}
\rlc_s(\sN)>\max_{1\leq i\leq m}\{\rlc_{D_i}(\sF)\}.
\end{equation*}
Then, we have
\begin{equation*}
Rj_*(\sF\otimes_{\Lambda}f_0^*\sN)=j_!(\sF\otimes_{\Lambda}f_0^*\sN).
\end{equation*}
\end{theorem}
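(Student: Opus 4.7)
The plan is to reduce the equality $Rj_*(\sF\otimes_\Lambda f_0^*\sN)=j_!(\sF\otimes_\Lambda f_0^*\sN)$ to the vanishing of the stalk of $Rj_*(\sF\otimes_\Lambda f_0^*\sN)$ at every geometric point $\bar x$ of $D$, and to extract this vanishing from the strict dominance of the log ramification of $f_0^*\sN$ over that of $\sF$ at every irreducible component of $D$. Fix $\bar x\in D$, set $X_{(\bar x)}=\spec\sO_{X,\bar x}^{\mathrm{sh}}$ and $U_{(\bar x)}=X_{(\bar x)}\setminus D$, and denote by $V$ and $W$ the continuous $\pi_1(U_{(\bar x)})$-representations attached to $\sF|_{U_{(\bar x)}}$ and $f_0^*\sN|_{U_{(\bar x)}}$. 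Since the complement of a simple normal crossings divisor in a strictly Henselian regular scheme is a $K(\pi,1)$-space for \'etale cohomology with $\ell$-torsion coefficients (by a standard limit-of-finite-covers argument),
$$(Rj_*(\sF\otimes_\Lambda f_0^*\sN))_{\bar x}\ \simeq\ R\Gamma_{\mathrm{cont}}(\pi_1(U_{(\bar x)}),V\otimes_\Lambda W).$$

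The heart of the argument is to show $(V\otimes W)^P=0$, where $P\subset\pi_1(U_{(\bar x)})$ is the wild inertia subgroup, that is, the normal pro-$p$ kernel of the projection to the tame quotient. For each irreducible component $D_i$ meeting $\bar x$, the canonical restriction $\spec K_i\to U_{(\bar x)}$ induces $G_{K_i}\to\pi_1(U_{(\bar x)})$ and sends the local wild inertia $P_{K_i}$ into $P$. The pullback scaling for logarithmic conductors under $f$ with multiplicity $m_i$ at $D_i$ shows that $f_0^*\sN|_{\spec K_i}$ is pure of log slope $m_i\,\rlc_s(\sN)$, while every log slope of $\sF|_{\spec K_i}$ is bounded by $\rlc_{D_i}(\sF)<\rlc_s(\sN)\leq m_i\,\rlc_s(\sN)$ by the hypothesis together with $m_i\geq 1$. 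Abbes--Saito's slope decomposition and the dominance of log slopes under tensor product then make $(V\otimes W)|_{\spec K_i}$ pure of positive log slope $m_i\,\rlc_s(\sN)$, so $(V\otimes W)^{P_{K_i}}=0$, whence $(V\otimes W)^P\subseteq (V\otimes W)^{P_{K_i}}=0$.

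To conclude, since $P$ is pro-$p$ and $V\otimes W$ is $\Lambda$-torsion with $\ell\neq p$, the continuous cohomology $H^i(P,V\otimes W)$ is simultaneously pro-$p$ and $\ell$-torsion, hence zero for $i\geq 1$, while $H^0(P,V\otimes W)=(V\otimes W)^P=0$ by the previous paragraph. The Hochschild--Serre spectral sequence for the short exact sequence $1\to P\to\pi_1(U_{(\bar x)})\to\pi_1(U_{(\bar x)})/P\to 1$ then yields
$$R\Gamma_{\mathrm{cont}}(\pi_1(U_{(\bar x)}),V\otimes_\Lambda W)\ =\ R\Gamma(\pi_1(U_{(\bar x)})/P,R\Gamma(P,V\otimes W))\ =\ 0,$$
as required.

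The main obstacle is making the log-slope computation robust at crossings of $D$: the pullback rule $\rlc_{D_i}(f_0^*\sN)=m_i\,\rlc_s(\sN)$ and the dominance of log slopes under tensor product must be verified uniformly at every generic point of $D$, including those lying over crossings. This is precisely where the paper's earlier results on conductor divisors intervene, as the restriction-to-curves formula of Theorem \ref{LCintroduction}(2) and the decreasing property of Theorem \ref{Cintroduction} provide the uniform control needed; the final cohomological vanishing then follows from the standard vanishing of pro-$p$ group cohomology on torsion modules of order coprime to $p$.
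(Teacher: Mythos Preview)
Your approach has a genuine gap at the step where you assert that ``$P\subset\pi_1(U_{(\bar x)})$ is the wild inertia subgroup, that is, the normal pro-$p$ kernel of the projection to the tame quotient.'' In dimension~1 the wild inertia of a local Galois group is indeed pro-$p$, but for the complement of a simple normal crossing divisor in a strictly Henselian regular local scheme of dimension $\geq 2$, the kernel of $\pi_1(U_{(\bar x)})\to\pi_1^{\mathrm t}(U_{(\bar x)})$ is the closed normal subgroup generated by the images of the local wild inertia groups $P_{K_i}$, and the normal closure of a family of pro-$p$ subgroups of a profinite group is \emph{not} pro-$p$ in general (already in $S_3$ with $p=2$, the three $2$-Sylow subgroups generate the whole group). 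Without $P$ being pro-$p$, your key vanishing $H^i(P,V\otimes W)=0$ for $i\geq 1$ is unjustified, and the Hochschild--Serre argument collapses. A secondary issue: the $K(\pi,1)$ property you invoke is not ``a standard limit-of-finite-covers argument'' but is Achinger's theorem that connected affine $\mathbb F_p$-schemes are $K(\pi,1)$; this should be cited rather than asserted as folklore.

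The paper's proof proceeds along an entirely different, geometric route precisely to avoid these difficulties. It first takes tame covers $X'\to X$ and $S'\to S$ of degree $d\gg 0$ to convert the strict inequality on logarithmic conductors into one on (non-logarithmic) conductors and to force the $G_L$-action underlying $\sN$ to factor through a finite $p$-group; a direct-summand argument (Lemma~\ref{RfF=0RfF0=0}) shows it suffices to prove the vanishing upstairs. One then decomposes $\sN$ into irreducibles and, via Gabber--Katz extensions, reduces to the situation of Proposition~\ref{propRj*=j!}. There a finite, surjective, radicial Frobenius-type twist $\varphi:X'\to X$ is applied so that $\varphi'^*_0 f_0^*\sM$ acquires pure ramification along $D'$ in the sense of Definition~\ref{definitionpure}, with singular support computed explicitly by Theorem~\ref{pureMtheorem}. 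Proposition~\ref{sGCisoclinic+CXjG>CXjF=>Rj*sGsF=j!sGsF} (whose proof rests on the transversal-curve base change of Corollary~\ref{stalkpullbackcurves}) then yields $Rj'_*=j'_!$ directly, with no appeal to $K(\pi,1)$ or to the structure of any global ``wild'' subgroup of $\pi_1$. The stalk vanishing you were aiming for is thus obtained by restricting to a single transversal curve through $x$ rather than by analysing the full fundamental group of $U_{(\bar x)}$.
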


\subsection{}
In general, the ramification of $\sF\otimes_{\Lambda}f_0^*\sN$ is not pure along $D$. Hence, we cannot directly apply a base change property (Corollary \ref{stalkpullbackcurves}) to show the acyclicity. To solve the difficulty, we work Zariski locally on $X$. We may reduce to the case where the ramification of $\sN$ at $s$ is "irreducible" (Proposition \ref{propRj*=j!}). Then, after choosing a nice finite, surjective and radicial cover $\pi:X'\to X$, we obtain that $\pi^*f_0^*\sN$ has pure ramification along $D'=(\pi^*D)_{\mathrm{red}}$ (cf. Theorem \ref{pureMtheorem}).
The conductor of $\pi^*f_0^*\sN$ at generic points of $D'$ dominate that of $\pi^*\sF$. By verifying the purity of the ramification  of $\pi^*(f_0^*\sN\otimes_{\Lambda}\sF)$ along $D'$ (see Proposition \ref{sGCisoclinic+CXjG>CXjF=>Rj*sGsF=j!sGsF}), we obtain Proposition \ref{propRj*=j!} and Theorem \ref{keystepramboundintro}.


\subsection{}
The article is organized as follows. In \S \ref{ASramsection}, \S \ref{SSCCsection} and \S\ref{CDTLCSWsection}, we recall necessary results in Abbes and Saito's ramification theory, the theory of singular support and characteristic cycle of $\ell$-sheaves and introduce the notation of several divisors with ramification invariant coefficients. In \S \ref{pureramificationsection1}, we study \'etale sheaves with pure ramifications along divisors and we construct sheaves with nice ramification conditions stated in subsection \ref{strategyofCthmintro}. Relying on results in \S \ref{pureramificationsection1}, we prove Theorem \ref{Cintroduction} and Theorem \ref{LCintroduction} in \S \ref{proofCLC}. We focus on the proof of Theorem \ref{theoremlowsemicontintro} in \S \ref{semi-contcondsection}. In \S \ref{sectionpureram2}, we construct an \'etale sheave pulling-back from a curves with pure ramifications along a simple normal crossing divisor and prove Theorem \ref{keystepramboundintro}. We finally prove Theorem \ref{ramboundintroduction} in \S \ref{ramificationboundsection}.

\subsection*{Notation}
In this article, $k$ denotes a field. All smooth $k$-schemes are assumed to be separated and of finite type over $\spec(k)$ and all morphism between smooth $k$-schemes are assumed to be separated $k$-morphisms of finite type.

Let $X$ be a smooth $k$-scheme. We denote by $\bT^*X$ and $\bT X$ the cotangent bundle and the tangent bundle of $X$, respectively.  We say a closed subset $C$ of $\bT^*X$ is conical if it is invariant under the canonical $\bG_m$-action on $\bT^*X$. We always endow a reduced induced closed subscheme structure on $C$. We denote by $B(C)$ the image of $C$ in $X$ by the canonical projection $\pi:\bT^*X\to X$ and call it the base of $C$. Let $x$ be a point of the smooth $k$-scheme $X$.  We denote by $\bT^*_xX=\bT^*X\times_Xx$ (resp. $\bT_xX=\bT X\times_Xx$ and resp. $C_{x}=C\times_Xx$)  fibers of $\bT^*X$ (resp. $\bT^*X$ and resp. $C$) at $x$. Let $\ol x\to X$ be a geometry point above $x\in X$.
We denote by $\bT^*_{\ol x}X=\bT^*X\times_X\ol x$ (resp. $\bT_{\ol x}X=\bT X\times_X\ol x$ and resp. $C_{\ol x}=C\times_X\ol x$) the fibers of $\bT^*X$  (resp. $\bT_xX=\bT X\times_Xx$ and resp. $C$) at $\ol x$. 

Let $X$ be a smooth $k$-scheme with an \'etale map $\pi:X\to\bA^n_k$, let $0$ be the origin of $\bA^n_k$ and let $\omega$ be a non-zero vector of $\bT^*_0\bA^n_k$. For any closed point $x\in X$, we denote by $\langle\omega\rangle$ the $1$-dimensional $k(x)$-vector subspace of $\bT^*_xX$ spanned by $\omega$. For a closed subscheme $Z$ of $X$, we denote by $Z\cdot\langle\omega\rangle$ the unique closed conical subset of $\bT^*X$ with the fiber $\langle\omega\rangle$ at each closed point of $Z$.

A geometric point $\ol y\to Y$ above a point $y$ of a scheme $Y$ is called {\it algebraic geometric} if $k(\ol y)$ is an algebraic closure of $k(y)$.

For a scheme $Y$, $\mathrm{Div}(Y)$ denotes the group of Cartier divisors of $Y$. We call an element $E$ in $\mathrm{Div}(Y)\otimes_{\bZ}\bQ$ a Cartier divisor with rational coefficients of $Y$. For two Cartier divisors with rational coefficients $D$ and $E$ of $Y$, we write $D\geq E$ if $D-E$ is effective. For a Cartier divisor $D$ with rational coefficients and a point $\xi$ of height $1$ on a normal scheme $Y$, we denote by $m_\xi(D)$ the multiplicity of $D$ at $\xi$.

\section{Abbes and Saito's ramification theory}\label{ASramsection}

\subsection{}
In this section, $K$ denotes a complete discrete valuation field, $\sO_K$ its integer ring, $\fm_K$ the maximal ideal of $\sO_K$ and $F$ the residue field of $\sO_K$. We assume that $\ch(F)=p>0$. Let $\ol K$ be a separable closure of $K$, $G_K$ the Galois group of $\overline K/K$. If $F$ is perfect, we have a classical upper numbering ramification filtration $G^r_{K,\mathrm{up}}$ $ (r\in \bQ_{\geq 0})$ of $G_K$ (\cite{lf}). In general, Abbes and Saito defines two decreasing filtrations $G^r_K$ and $G^r_{K,\log}$ $(r\in\bQ_{\geq 0})$ of $G_K$ by closed normal subgroups. These two filtrations are called the ramification filtration and the logarithmic ramification filtration, respectively (\cite{as1}). They generalize the classical upper numbering filtration.

\subsection{}\label{propfiltration}
For any $r\in \bQ_{\geq 0}$, we put
\begin{equation*}
G^{r+}_K=\overline{\bigcup_{s\in \bQ_{>r}}G^s_K}\ \ \ \textrm{and}\ \ \ G^{r+}_{K,\log}=\overline{\bigcup_{s\in \bQ_{>r}}G^s_{K,\log}}.
\end{equation*}
We denote by $G_K^0$ the group $G_K$. For any $0<r\leq 1$, the subgroup $G^r_K=G_{K,\log}^0$ is the inertia subgroup $I_K$ of $G_K$ and $G^{1+}_K=G^{0+}_{K,\log}$ is the wild inertia subgroup $P_K$ of $G_K$. For any $r\in \bQ_{\geq 0}$, we have
$G^{r+1}_K\subseteq G^r_{K,\log}\subseteq G^{r}_K.$ If $F$ is perfect, we have $G^{r+1}_K=G^r_{K,\log}=G^{r}_{K,\mathrm{up}}$ for any $r\in \bQ_{\geq 0}$ (cf. \cite{as1}).

For any $r\in\bQ_{>1}$, the graded piece $\Gr^rG_K=G^r_K/G^{r+}_K$ is abelian, killed by $p$ and contained in the center of $P_K/G^{r+}_{K}$ (\cite[2.15]{as2}, \cite[Corollary 2.28]{wr} and \cite{gqsaito}). For any $r\in \bQ_{>0}$, the graded piece $\Gr_{\log}^rG_K=G^r_{K,\log}/G^{r+}_{K,\log}$ is abelian, killed by $p$ and contained in the center of $P_K/G^{r+}_{K,\log}$ (\cite[5.12]{as2}, \cite[Theorem 1.24]{saito cc}, \cite{as3}, \cite{xl1} and \cite{xl2}).

\subsection{}\label{slopedecom}
In the following of this section, $\Lambda$ denotes a finite field of characteristic $\ell$ $(\ell\neq p)$ and $M$ a finite dimensional $\Lambda$-module with a continuous $P_K$-action. The module $M$ has two decompositions
\begin{equation*}
M=\bigoplus_{r\in \mathbb Q_{\geq 1}}M^{(r)}\ \ \ \textrm{and}\ \ \ M=\bigoplus_{r\in \mathbb Q_{\geq 0}}M^{(r)}_{\log}
\end{equation*}
into $P_K$-stable submodules,  such that $M^{(1)}=M^{(0)}_{\log}=M^{P_K}$ and that, for any $r\in \mathbb Q_{>0}$,
\begin{align*}
(M^{(r+1)})^{G^{(r+1)+}_K}=M^{(r+1)}\ \ \ &\textrm{and}\ \ \ (M^{(r+1)})^{G^{r+1}_K}=\{0\};\\
(M^{(r)}_{\log})^{G^{r+}_{K,\log}}=M^{(r)}_{\log}\ \ \ &\textrm{and}\ \ \ (M^{(r)}_{\log})^{G^{r}_{K,\log}}=\{0\}.
\end{align*}
They are called the {\it slope decomposition} and the {\it logarithmic slope decompositions}, respectively.
Both of them are finite direct sums. We say that $M$ is {\it tame} if the action of $P_K$ on $M$ is trivial. We say that $M$ is {\it totally wild} if $M^{P_K}=\{0\}$.

We call a rational number $r\geq 1$ for which $M^{(r)}\neq 0$ a {\it slope} of $M$.  We denote by $\rS_K{(M)}$ the set of slopes of $M$. We denote by $\rc_K(M)$ the largest slope of $M$ and we call it the {\it conductor} of $M$. We say that $M$ is {\it isoclinic} if $\rS_K{(M)}$ has only one element. The {\it total dimension} of $M$ is defined by
\begin{equation*}
\dt_K(M)=\sum_{r\geq 1}r\cdot \dim_{\Lambda}M^{(r)}.
\end{equation*}
Let $r_1> r_2>\cdots >r_n\geq 1$ be all slopes of $M$. The {\it Newton polygon} of the slope decomposition of $M$ is defined to be the {\it concave} piecewise linear function $\rNP_K(M): [0,\dim_\Lambda M]\to\mathbb R$ connecting the following points:
\begin{equation*}
(0,0)\ \ \ \textrm{and}\ \ \   \left(\sum_{i=1}^d\dim_{\Lambda}M^{(r_i)}, \sum_{i=1}^d r_i\cdot\dim_{\Lambda}M^{(r_i)}\right),\ \ \ (d=1,2,\ldots, n).
\end{equation*}

We call a rational number $r\geq 0$ for which $M^{(r)}_{\log}\neq 0$ a {\it logarithmic slopes} of $M$. We denote by ${\rLS}_K{(M)}$ the set of logarithmic slopes of $M$. We denote by ${\rlc}_K(M)$ the largest logarithmic slope of $M$ and we call it the {\it logarithmic conductor} of $M$. We say that $M$ is {\it logarithmic isoclinic} if ${\rLS}_K{(M)}$ has only one element. The {\it Swan conductor} of $M$ is defined by
\begin{equation*}
\sw_K(M)=\sum_{r\geq 0}r\cdot \dim_{\Lambda}M_{\log}^{(r)}.
\end{equation*}
 Similarly, we have the {\it logarithmic Newton polygon} of $M$ and we denote it by $\rLNP_K(M)$.

We have (cf. \ref{propfiltration})
\begin{align*}
{\rlc}_{K}(M)+1&\geq \rc_K(M)\geq {\rlc}_K(M)\\
\sw_K(M)+\dim_{\Lambda}M&\geq  \dt_K(M)\geq \sw_K(M).
\end{align*}
In the special case where $F$ is perfect, we have
\begin{equation*}
{\rlc}_K(M)+1=\rc_{K}(M),\ \ \ \sw_K(M)+\dim_{\Lambda}M=\dt_K(M),
\end{equation*}
and ${\rlc}_K(M)$ (resp. $\sw_K(M)$) equals the classical upper numbering conductor (resp. the classical Swan conductor) of $M$.

Let $N$ and $N'$ be two finitely generated $\Lambda$-modules with continuous $P_K$-actions. If  $N$ is isoclinic and $\rc_K(N)>\rc_K(N')$, then $N\otimes_{\Lambda}N'$ is isoclinic and we have
\begin{align*}
\rc_K(N\otimes_{\Lambda}N')=\rc_K(N)\ \ \ \textrm{and}\ \ \ \dt_K(N\otimes_{\Lambda}N')=\dim_{\Lambda}N\cdot \dim_{\Lambda}N'\cdot \rc_K(N).
\end{align*}
If $N$ is logarithmic isoclinic and $\rlc_K(N)>\rlc_K(N')$, then  $N\otimes_{\Lambda}N'$ is logarithmic isoclinic and
\begin{align*}
\rlc_K(N\otimes_{\Lambda}N')=\rlc_K(N)\ \ \ \textrm{and}\ \ \ \sw_K(N\otimes_{\Lambda}N')=\dim_{\Lambda}N\cdot \dim_{\Lambda}N'\cdot \rlc_K(N).
\end{align*}

\subsection{}\label{K'/K}
Let $K'$ be a finite separable extension of $K$ contained in $\ol K$ and $e$ the ramification index of $K'/K$.
We have the canonical inclusion $G_{K'}\subseteq G_K$. For any $r\geq 1$, we have $G^{er}_{K'}\subseteq G^r_{K}$. When $K'/K$ is unramified, the inclusion is an equality. For any $r\geq 0$, we have $G^{er}_{K',\log}\subseteq G^r_{K,\log}$. When $K'/K$ is tamely ramified, the inclusion is an equality (cf. \cite{as1}).
Consider $M$ as a $\Lambda$-module with the continuous $P_{K'}$-action by the canonical restriction. We have
\begin{align}
&e\cdot \rc_K(M)\geq \rc_{K'}(M),\ \ \ e\cdot\dt_K(M)\geq \dt_{K'}(M) \label{condineq};\\
&e\cdot{\rlc}_K(M)\geq {\rlc}_{K'}(M),\ \ \ e\cdot\sw_K(M)\geq \sw_{K'}(M). \label{dtineq}
\end{align}
Inequalities in \eqref{condineq} (resp. \eqref{dtineq}) become equalities when $K'/K$ is unramified  (resp. $K'/K$ is tamely ramified).

We assume that $\ch(K)=p>0$. Let $L$ be a finite and purely inseparable extension of $K$ contained in an algebraic closure $ K^{\alg}$ of $K$ with the ramification index $\beta=e(L/K)$ and $G_L$ the Galois group of $K^{\alg}$ over $L$. The extension $L/K$ induces the canonical isomorphism $\varphi:G_L\iso G_K$. For any $r\geq 1$ we have $\varphi(G^{\beta r}_L)\subseteq G^r_K$.  For any $r\geq 0$, we have $\varphi(G^{\beta r}_{L,\log})\subseteq G^r_{K,\log}$ (cf. \cite{Hu19}).
Consider $M$ as a $\Lambda$-module with the continuous $P_{L}$-action by the canonical restriction.  We have
\begin{align}
&\beta\cdot \rc_{K}(M)\geq \rc_{L}(M),\ \ \  \beta\cdot\dt_K(M)\geq \dt_{L}(M);\label{logcondineq}\\
&\beta\cdot \rlc_{K}(M)\geq \rlc_{L}(M),\ \ \  \beta\cdot\sw_K(M)\geq \sw_{L}(M).\label{swineq}
\end{align}
When the residue field $F$ of $\sO_K$ is perfect, we have $\varphi(G^{r}_L)=G^r_K$, for any $r\geq 1$, and
\begin{align}\label{purinsNP}
\rNP_{K}(M)=\rNP_{L}(M).
\end{align}

\subsection{}
We assume that $\ch(K)=p>0$. Then, the total dimension $\dt_K(M)$ and the Swan conductor $\sw_K(M)$ satisfies the Hasse-Arf theorem, i.e., $\dt_K(M)$ and $\sw_K(M)$ are non-negative integers. It is firstly proved for a $p$-adic representation of finite monodromy of $G_K$ in \cite[3.4.3]{xl1}. Confer \cite[Theorem 3.11]{Hu19IMRN} for the torsion coefficients.

\subsection{}\label{centerchar}
We assume that $\ch(K)=p>0$, that $M$ is isoclinic of conductor $c>1$ and that $\Lambda$-contains a primitive $p$-th root of unity. The graded piece $\Gr^c G_K=G^c_K/G^{c+}_K$ is an $\bF_p$-vector space and $M$ has a faithful $\Gr^c G_K$-action. The module $M$ has a unique decomposition (\cite[Lemma 6.7]{rc})
\begin{equation*}
M=\bigoplus_{\chi\in X(c)} M_{\chi}
\end{equation*}
into $P_K$-stable submodules  $M_{\chi}$ $(n_\chi\in \mathbb Z_{\geq 0})$, where $X(c)$ is the set of isomorphic classes of  non-trivial characters $\chi: \Gr^c G_K\to \Lambda^{\times}$ and the restriction of $M_{\chi}$ on $\Gr^c G_K$ is a finite direct sum of $\chi$. We call it {\it the central character decomposition} of $M$. We say the central character decomposition of $M$ is isotypic if $M=M_{\chi}$ for some $\chi\in X(c)$.

We further assume  that $F$ is of finite type over a perfect field. We denote by $\ord:\ol K\to \bQ\bigcup\{\infty\}$ a valuation normalized by $\ord(K^{\times})=\bZ$. For any rational number $r$, we put
\begin{equation*}
\fm^r_{\overline K}=\big\{x\in \ol K^{\times}\,;\,\ord(x)\geq r\big\}\ \ \ \textrm{and}\ \ \ \fm^{r+}_{\overline K}=\big\{x\in \ol K^{\times}\,;\,\ord(x)> r\big\}.
\end{equation*}
Notice that $\fm^0_{\overline K}$ is the integral closure of $\sO_K$ in $\ol K$.  The residue field
$\ol F=\fm^0_{\overline K}/\fm^{0+}_{\overline K}$ is an algebraic closure of $F$. For each $r\in\mathbb Q$, the quotient $\fm^r_{\overline K}/\fm^{r+}_{\overline K}$ is a $1$-dimensional vector space over $\ol F$. For any rational number $r>1$, there exists an injective homomorphism called the characteristic form (cf.~\cite{wr})
\begin{equation}\label{charform}
\ch:\Hom_{\bF_p}(\Gr^r G_K,\bF_p)\to \Hom_{\overline F}(\fm^r_{\overline K}/\fm^{r+}_{\overline K},\Omega^1_{\sO_K}\otimes_{\sO_K}\ol F).
\end{equation}
We fix a non-trivial character $\psi_0:\bF_p\to \Lambda^{\times}$. Each $\chi\in X(c)$ uniquely factors as $\Gr^cG_K\to \bF_p\xrightarrow{\psi_0} \Lambda^{\times}$. We also denote by $\chi:\Gr^cG_K\to \bF_p$ the induced character and by $\ch(\chi)$ its characteristic form.

\subsection{}
Let $X$ be a connected regular scheme, $D\subseteq X$ an integral closed subscheme of codimension $1$, $U$ the complement of $D$ in $X$. Let $\xi$ the generic point of $D$, $X_{(\xi)}$ the henselization of $X$ at $\xi$, $\eta$ the generic point of $X_{(\xi)}$, $L$ the fraction field of $X_{(\xi)}$, $\ol L$ a separable closure of $L$ and $G_L$ the Galois group of $\ol L/L$. We assume that $\ch(k(\xi))=p>0$ and that $\ell$ is invertible in $X$. Let $\sF$ be a locally constant and constructible sheaf of $\Lambda$-modules on $U$.

\begin{definition}\label{cDlcD}
The restriction $\sF|_{\eta}$ associates to a finitely generated $\Lambda$-module with a continuous $G_L$-action. We call it {\it the ramification of $\sF$ at the generic point of $D$}. We denote by $\rS_{D}(\sF)$ the set of slopes of $\sF|_{\eta}$, by $\rc_{D}(\sF)$ the conductor $\rc_{L}(\sF|_{\eta})$, by $\dt_{D}(\sF)$ the the total dimension $\dt_{L}(\sF|_{\eta})$ and by $\rNP_{D}(\sF)$ the Newton Polygon $\rNP_{L}(\sF|_{\eta})$. We denote by $\rLS_{D}(\sF)$ the set of logarithmic conductors of $\sF|_{\eta}$,
by $\rlc_{D}(\sF)$ the logarithmic conductor $\rlc_{L}(\sF|_{\eta})$, by $\sw_{D}(\sF)$ the Swan conductor $\sw_{L}(\sF|_{\eta})$ and by $\rLNP_{D}(\sF)$ the logarithmic Newton Polygon $\rLNP_{L}(\sF|_{\eta})$.
\end{definition}

\section{Recall of singular support and characteristic cycle}\label{SSCCsection}

\subsection{}
Let $X$ be a smooth $k$-scheme and $C$ a closed conical subset in $\bT^*X$.   Let $f:Y\rightarrow X$ be a morphism of smooth $k$-schemes and $\ol y\rightarrow Y$ a geometric point above a point $y$ of $Y$. We say that $f:Y\rightarrow X$ is $C$-{\it transversal at} $y$ if $\ker(df_{\ol y})\bigcap (C\times_X\ol y)\subseteq\{0\}\subseteq \bT^*_{f(\ol y)}X$, where $df_{\ol y}:\bT^*_{f(\ol y)}X\rightarrow\bT^*_{\ol y}Y$ is the canonical map. We say that $f:Y\rightarrow X$ is $C$-{\it transversal}  if it is $C$-transversal at every point of $Y$. If $f:Y\rightarrow X$ is $C$-transversal, we define $f^\circ C$ to be the scheme theoretic image of $Y\times_XC$ in $\bT^*Y$ by the canonical map $df: Y\times_X\bT^*X\rightarrow \bT^*Y$. Notice that $df:Y\times_XC\rightarrow f^\circ C$ is finite and that $f^\circ C$ is also a closed conical subset of $\bT^*Y$ (\cite[Lemma 1.2]{bei}).
Let $g:X\rightarrow Z$ be a morphism of smooth $k$-schemes, $x$ a point of $X$, and $\ol x\rightarrow X$ a geometric point above $x$. We say that $g:X\rightarrow Z$ is $C$-transversal at $x$ if $\mathrm{im}(dg_{\ol x})\bigcap (C\times_X\ol x)=\{0\}$, where $dg_{\ol x}:\bT^*_{g(\ol x)}Z\rightarrow \bT^*_{\ol x}X$ is the canonical map. We say that $g:X\rightarrow Z$ is $C$-{\it transversal} if it is $C$-transversal at every point of $X$. Let $(g,f):Z\leftarrow Y\rightarrow X$ be a pair of morphisms of smooth $k$-schemes. We say that $(g,f)$ is $C$-{\it transversal} if $f:Y\rightarrow X$ is $C$-transversal and $g:Y\rightarrow Z$ is $f^\circ C$-transversal.

\subsection{}\label{NotationSS}
In the following of this section, let $\Lambda$ be a local Artin finite $\mathbb Z_{\ell}$-algebra, where $\ell$ is invertible in $k$.
Let $X$ be a smooth $k$-scheme and $\sK$ an object of $D^b_c(X,\Lambda)$. We say that $\sK$ is {\it micro-supported} on a closed conical subset $C$ of $\bT^*X$ if, for any $C$-transversal pair of morphisms $(g,f):Z\leftarrow Y\rightarrow X$ of smooth $k$-schemes, the morphism $g:Y\rightarrow Z$ is locally acyclic with respect to $f^*\sK$. If there exists a smallest closed conical subset of $\bT^*X$ on which $\sK$ is micro-supported, we call it the {\it singular support} of $\sK$ and denote it by $SS(\sK)$. We often endow $SS(\sK)$ a reduced induced closed subscheme structure.

\begin{theorem}[{\cite[Theorem 1.3]{bei}}]\label{BeiSS}
Let $X$ be a smooth $k$-scheme and $\sK$ an object of $D^b_c(X,\Lambda)$. The singular support $SS(\sK)$ exists. Moreover, each irreducible component of $SS(\sK)$ has dimension $\dim_kX$ if $X$ is equidimensional.
\end{theorem}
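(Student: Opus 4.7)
The plan is to follow Beilinson's original strategy, which has two main ingredients: a stability result for the class of closed conical subsets on which $\sK$ is micro-supported, and a dimension bound produced via the Radon transform.

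First I would establish the basic functoriality of the micro-support condition. Being micro-supported on $C$ is clearly preserved under enlarging $C$, and the condition is Zariski-local on $X$ since $C$-transversality is. The more subtle step is to prove that if $\sK$ is micro-supported on two closed conical subsets $C_1$ and $C_2$, then it is micro-supported on a closed conical subset whose fibre over each $x\in X$ is contained in $C_{1,x}\cap C_{2,x}$. Combined with Noetherian induction on the collection of closed conical subsets of $\bT^*X$, this would imply that the intersection of all closed conical subsets on which $\sK$ is micro-supported is again a micro-support, yielding the existence of a smallest one $SS(\sK)$ --- provided that at least one finite-dimensional $C$ works.

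Second, to produce such a $C$, I would use the Radon transform. Working locally, embed an open $U\subseteq X$ into an affine space and consider the universal family of affine hyperplane sections parametrized by $\check{\bP}^N$. A generic linear projection $\pi: U\to\bA^{\dim X-1}$ is universally locally acyclic with respect to $\sK|_U$ away from a closed subset of codimension one, and the covectors arising as "slopes" of the failure locus define a closed conical subset $C\subseteq\bT^*U$. The key computation, carried out via the Radon correspondence between $X$ and $\check{\bP}^N$ and the proper base change theorem, is that $\sK$ is micro-supported on $C$ and that the generic fibre of $C\to X$ has dimension at most $\mathrm{codim}(\mathrm{supp}\,\sK)$, so $\dim C\leq\dim X$. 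This provides the upper bound needed to start the Noetherian induction.

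Third, for the lower bound when $X$ is equidimensional: any closed conical subset $C$ on which $\sK$ is micro-supported must have $B(C)\supseteq\mathrm{supp}(\sK)$, and at every point $x$ where $\sK$ is not locally constant in a neighbourhood the fibre $C_{\ol x}$ must contain a nonzero covector; otherwise one could test local acyclicity against a $C$-transversal pair built from a smooth retraction, contradicting non-smoothness of $\sK$ at $x$. The conical structure then forces each irreducible component of $C$ to have dimension at least $\dim X$. Combined with the upper bound for the smallest $C$ from the Radon step, each component of $SS(\sK)$ has dimension exactly $\dim X$.

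The main obstacle will be the Radon transform step: the claim that the "covector locus" obtained by varying hyperplane sections is actually a micro-support in the sense of the definition requires checking local acyclicity against arbitrary $C$-transversal pairs, not just hyperplane sections, and this demands a careful devissage using Thom's second isotopy-type lemma in \'etale cohomology (available through proper/smooth base change and generic local acyclicity). The intersection-closure property is also nontrivial, as it is \emph{not} literally true that micro-support is preserved by arbitrary intersection --- one must use that sufficiently generic perturbations of $C$-transversal test morphisms remain transversal to both $C_1$ and $C_2$, and pass to a limit.
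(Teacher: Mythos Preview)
The paper does not prove this statement: Theorem~\ref{BeiSS} sits in the section ``Recall of singular support and characteristic cycle'' and is stated purely as a citation to \cite[Theorem~1.3]{bei}, with no argument given. There is therefore nothing in the paper to compare your proposal against; what you have written is an attempted reconstruction of Beilinson's own proof.

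As such a reconstruction, your outline has the right large-scale shape --- the Radon transform is indeed the decisive tool, and both existence and equidimensionality ultimately flow from it --- but several structural details do not match Beilinson's argument. He does not run a Noetherian induction on closed conical subsets, nor does he establish intersection-closure of the micro-support condition in the generality you describe (and you are right to flag that this is delicate: it is not literally true). Instead, Beilinson introduces a \emph{weak} micro-support condition (restricting the test pairs so that $f$ is a closed immersion), proves existence and equidimensionality of the weak singular support via the Radon correspondence together with Deligne's generic universal local acyclicity, and then shows a posteriori that the weak and strong notions coincide. Your lower-bound sketch is also incomplete: one needs that $B(SS(\sK))$ equals the support of $\sK$ and that $SS(\sK)$ restricts to the zero section over the locus where $\sK$ is locally constant, and these facts require more care than ``testing against a smooth retraction''. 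If you want to supply a proof here rather than a citation, you should follow Beilinson's weak-to-strong route more closely.
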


\subsection{}\label{sectiondf}
Let $X$ be a smooth $k$-scheme of equidimesnion $n$ and $f:X\to\bA^1_k$ a $k$-morphism. A constant non-zero vector at each fiber of $\bT^*\bA^1_k$ gives a section $\theta_0:\bA^1_k\rightarrow \bT^*\bA^1_k$ of $\bT^*\bA^1_k$ and hence a section $\theta:X\rightarrow \bT^*\bA^1_k\times_{\bA^1_k}X$ of  $\bT^*\bA^1_k\times_{\bA^1_k}X$ by the base change $f:X\to\bA^1_k$. The composition $df\circ\theta:X\rightarrow \bT^*X$ is a section of the bundle $\bT^*X$ and we abusively denote this section by $df(X)$. Let $C$ be a conical closed subset of $\bT^*X$ of equidimension $n$ and $x\in X$ a closed point such that the morphism $f:X-\{x\}\to \bA^1_k$ is $C$-transversal. The closed subset $df(X)$ and $C$ intersect at a single point in the fiber $\bT^*_xX$. Let $A$ be a $n$-cycle of $\bT^*X$ supported on $C$. We abusively denote by $(A, [df(X)])_{x}$ the intersection number of $A$ and $df(X)$ at the fiber $\bT_{x}^*X$.  It is independent of the choice of the section $\theta_0:\bA^1_k\to\bT^*\bA^1_k$.

\subsection{} In the following of this section, we further assume that $k$ is a perfect field of characteristic $p>0$ and that $\Lambda$ is a finite field of characteristic $\ell\neq p$.

\begin{theorem}[{\cite[Theorem 7.6]{cc}}]\label{SaCC}
Let $X$ be a smooth $k$-scheme and $\sK$ an object of $D^b_c(X,\Lambda)$. Then, there exists an unique cycle $CC(\sK)$ supported on $SS(\sK)$ satisfying the following Milnor type formula:  For any \'etale morphism $g:V\rightarrow X$, any morphism $f:V\rightarrow \bA^1_k$ and any closed point $v\in V$ such that  $f:V-\{v\} \rightarrow\bA^1_k$ is $g^*(SS(\sK))$-transversal, we have
\begin{equation*}
\sum_i(-1)^i\dt(\rR^i\Phi_{\ol v}(g^*\sK,f))=-(g^*(CC(\sK)),[df(V)])_{\bT^*V, v},
\end{equation*}
where $\ol v\to V$ is a geometric point above $v$, the complex $\rR\Phi_{\ol v}(g^*\sK,f)$ is the stalk at $\ol v\to V$ of the vanishing cycle of $g^*\sK$ with respect to $f:V\to \bA^1_k$, and $\dt(\rR^i\Phi_{\ol v}(g^*\sK,f))$ is the total dimension of $\rR^i\Phi_{\ol v}(g^*\sK,f)$ with respect to the canonical action of the absolute Galois group of the local field $K(\sO^{\mathrm{h}}_{\bA^1_k,f(v)})$.
\end{theorem}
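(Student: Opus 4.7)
The plan is to establish uniqueness first, then construct $CC(\sK)$ locally and verify the Milnor formula in general. By Beilinson's theorem (Theorem \ref{BeiSS}), the singular support $SS(\sK)$ is equidimensional of dimension $n=\dim_k X$, so any cycle supported on it can be written as $CC(\sK)=\sum_a m_a[C_a]$ where $\{C_a\}$ are the irreducible components of $SS(\sK)$. For uniqueness, I would fix a component $C_a$ and look for an \'etale $g:V\to X$, a function $f:V\to \bA^1_k$, and a closed point $v\in V$ such that $f:V-\{v\}\to\bA^1_k$ is $g^*SS(\sK)$-transversal, the section $df(V)$ meets $g^*C_a$ transversally at a single point over $v$, and misses all other $g^*C_b$ at $v$. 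Such an ``isolating'' test configuration exists because the base $B(C_a)$ has dimension strictly less than $n$ at a generic point of $C_a$, so one can choose local coordinates making the tangent direction of $df$ hit only $C_a$. The Milnor formula then computes $m_a$ as an explicit alternating sum of total dimensions of vanishing cycles, proving that $CC(\sK)$ is unique if it exists.

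For existence, I would define the multiplicities $m_a$ via the isolating test configurations from the uniqueness step, and then show two things: (a) the number $m_a$ is independent of the choice of isolating $(g,f,v)$, and (b) the resulting cycle $CC(\sK)$ satisfies the Milnor formula for arbitrary (non-isolating) $(g,f,v)$. Point (a) would reduce to a deformation argument: connect two test configurations by a family and use that both sides of the Milnor formula behave well in families (using \'etale cohomology, proper base change, and the Hasse--Arf integrality from subsection~\ref{centerchar} to rule out continuous variation of integer-valued intersection numbers). Point (b) is the heart of the theorem and would be proved by decomposing a general $df(V)$ into isolating contributions over each component of $SS(\sK)$ passing through $v$, with both sides of the Milnor formula being additive over these contributions. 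This additivity, on the geometric side, is simply additivity of intersection numbers; on the analytic side it would rely on the fact that the vanishing cycles stalk $\rR\Phi_{\ol v}(g^*\sK,f)$ decomposes according to contributions from the various components of $SS(\sK)\cap df(V)$ above $\ol v$.

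The hard part will be point (b), the local additivity of total dimensions of vanishing cycles across components of $SS(\sK)$. There is no a priori reason why the total dimension function on the $P_{K}$-representation $\rR\Phi_{\ol v}(g^*\sK,f)$ should split according to a geometric decomposition of $SS(\sK)$ near $\ol v$. To surmount this, I would proceed by induction on $n=\dim_k X$, using slicing by generic hyperplanes (which are $SS(\sK)$-transversal) to reduce to lower dimension, combined with Beilinson's Radon transform machinery to control how $SS$ and $CC$ change under such slicing. A key auxiliary step is a global Euler--Poincar\'e index formula of the shape $\chi(X,\sK) = (CC(\sK), [T^*_X X])$, which serves as a consistency check: once the formula is established in some cases, pasting multiple test configurations and comparing global Euler characteristics forces the local Milnor formula to hold in all cases.

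Finally, one needs to check compatibility with \'etale localization $g:V\to X$, so that the cycle $CC(\sK)$ defined locally glues to a global cycle on $\bT^*X$; this follows from the compatibility of $SS$ and of vanishing cycles with \'etale pullback. The Hasse--Arf theorem quoted in subsection~\ref{centerchar} ensures that the total dimensions appearing are integers, so that the multiplicities $m_a$ lie in $\bZ$ and $CC(\sK)$ is an honest algebraic cycle rather than a $\bQ$-cycle.
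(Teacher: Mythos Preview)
This theorem is not proved in the present paper; it is quoted from Saito's work \cite[Theorem 7.6]{cc} as a foundational input in the recall section, and no proof is given here. There is therefore no proof in this paper against which to compare your proposal.

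That said, your outline captures the broad shape of Saito's original argument (uniqueness via isolated characteristic points, existence tied to an index formula and Radon-transform techniques), but your ``point (b)'' is where the sketch becomes wishful. There is no a priori decomposition of the vanishing-cycles stalk $\rR\Phi_{\ol v}(g^*\sK,f)$ according to the components of $SS(\sK)$ through $\ol v$, and Saito's proof does not attempt one. Instead, existence is obtained globally: one defines candidate multiplicities, proves a global index formula by reducing to projective space via the Radon transform, and then deduces the local Milnor formula from compatibility with specialization and proper pushforward rather than from a local additivity of total dimensions. Your uniqueness paragraph is essentially correct; your existence paragraph would need to be rewritten along these global lines to become a genuine proof.
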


The cycle $CC(\sK)$ in Theorem \ref{SaCC} is called {\it the characteristic cycle} of $\sK$.

\subsection{}\label{propSSCC}

Let $X$ be a connected and smooth $k$-scheme and $\sK$ an object of $D^b_c(X,\Lambda)$. For a distinguished triangle $\sK_1\to\sK\to\sK_2\to$ of objects in $D^b_c(X,\Lambda)$, we have
\begin{align*}
SS(\sK)&\subseteq SS(\sK_1)\bigcup SS(\sK_2),\\
CC(\sK)&=CC(\sK_1)+CC(\sK_2).
\end{align*}

 If $\sK$ is a perverse sheaf on $X$, then the support of $CC(\sK)$ is $SS(\sK)$ and each coefficient of $CC(\sK)$ is a positive integer (\cite[Proposition 5.14]{cc}).

Let $U$ be an open subscheme of $X$, $j:X\to U$ the canonical injection and $\sG$ an object of $D^b_c(U,\Lambda)$. Then, we have $CC(j_!\sG)=CC(Rj_*\sG)$ (\cite[Lemma 5.13]{cc}). If $\sG$ is further a perverse sheaf on $U$, we have $SS(j_!\sG)=SS(Rj_*\sG)$ (\cite[Lemma 5.15]{cc}).

If each cohomology of $\sK$ is a locally constant sheaf on $X$, then
\begin{equation*}
CC(\sK)=(-1)^{\dim_kX}\sum_{i}\rk_{\Lambda}(\mathcal H^i(\sK))\cdot[\bT^*_XX]
\end{equation*}

If $X$ is a smooth $k$-curve and $\sK$ is a constructible sheaf of $\Lambda$-modules on $X$, then
\begin{equation*}
CC(\sF)=-\rk_{\Lambda}\sF\cdot [\bT^*_XX]-\sum_{x\in |X|}\left(\dt_x(\sF)-\dim_{\Lambda}(\sF|_{\ol x})\right)\cdot [\bT^*_xX]
\end{equation*}
where $\rk_{\Lambda}\sF$ denotes the generic rank of $\sF$.

\subsection{}\label{propCtrans}
Let $h:W\to X$ be a morphism between smooth equidimensional $k$-schemes, $C$ a closed conical subset of $\bT^*X$ of equidimensional $\dim_kX$. We say that $h:W\to X$ is {\it properly $C$-transversal} if it is $C$-transversal and if $C\times_XW$ is equidimensional $\dim_kW$. Consider the canonical morphisms
\begin{align*}
\bT^*X\xleftarrow{\pr_1} \bT^*X\times_XW\xrightarrow{dh} \bT^*W.
\end{align*}
We assume that $h:W\to X$ is properly $C$-transversal. Let $A$ be an $n$-cycle of $\bT^*X$ supported on $C$. We set
\begin{equation*}
h^!A=(-1)^{\dim_kX-\dim_kW}dh_*(\pr_1^*A).
\end{equation*}

Let $\sK$ be an object of $D^b_c(X,\Lambda)$ and $g:Z\to X$ a properly $SS(\sK)$-transversal morphism between smooth $k$-schemes. Then we have \cite[Theorem 7.6]{cc}
\begin{equation*}
CC(g^*\sK)=g^!(CC(\sK)).
\end{equation*}

\subsection{}
Let $X$ be a smooth $k$-scheme of equidimension $n\geq 1$, $D$ an integral Cartier divisor on $X$, $U$ the complement of $D$ in $X$, $j:U\rightarrow X$ the canonical injection. We denote by $\xi$ be the generic point of $D$, by $X_{(\xi)}=\spec(\sO_K)$ the henselization of $X$ at $\xi$, by $\eta=\spec(K)$ the generic point of $X_{(\xi)}$, by $\ol K$ a separable closure of $K$, by $G_K$ the Galois group of $\ol K$ over $K$ and by $\ol{k(\xi)}$ an algebraic closure of $k(\xi)$.

We assume that $\Lambda$ contains a primitive $p$-th root of unity. Let $\sG$ be a locally constant and constructible sheaf of $\Lambda$-modules on $U$. We denote by $M$ the finitely generated $\Lambda$-module with a continuous $G_K$-action corresponding to $\sG|_{\eta}$.  We have the slope decomposition and central character decompositions (subsections \ref{slopedecom} and \ref{centerchar})
\begin{equation*}
M=\bigoplus_{s\in\mathbb Q\geq 1}M^{(s)}\ \ \ \textrm{and}\ \ \ M^{(r)}=\bigoplus_{\chi\in X(r)}M^{(r)}_\chi\ \ \textrm{(for}\; r>1).
\end{equation*}
For each $r>1$ and for each $\chi\in X(r)$, we have the characteristic form $\ch(\chi)$ (cf. \eqref{charform})
\begin{equation*}
\mathrm{char}(\chi): \fm^r_{\overline K}/\fm^{r+}_{\overline K}\rightarrow \Omega^1_{\sO_{K}}\otimes_{\sO_{K}}\overline{\kappa(\xi)}
\end{equation*}
  Let $\kappa(\xi)_{\chi}$ be a field of definition of $\mathrm{char}(\chi)$ which is a finite extension of $\kappa(\xi)$ contained in $\ol{\kappa(\xi)}$. The characteristic form $\mathrm{char}(\chi)$ defines a line $L_\chi$ in $\bT^*X\times_X\spec(\kappa(\xi)_{\chi})$. Let $\overline L_{\chi}$ be the closure of the image of $L_{\chi}$ in $\bT^*X$. After removing a closed subscheme $Z\subseteq D$ of codimension 2 in $X$, we may assume that $D_0=D-Z$ is smooth over $\spec(k)$ and that the ramification of $\sG$ along $D_0$ is {\it non-degenerate} (\cite[Definition 3.1]{wr}). Roughly speaking, the ramification of an \'etale sheaf along a smooth divisor is non-degenerate if its ramification along the divisor is controlled by its ramification at generic points of the divisor. Using Abbes and Saito's ramification theory, the characteristic cycle of $j_!\sG$ on $X_0=X-Z$ can be computed explicitly as follows (\cite[Theorem 7.14]{cc})
  \begin{equation}\label{wrcbcc}
CC(j_!\sG|_{X_0})=(-1)^n\bigg(\rk_{\Lambda}\sG\cdot[\bT^*_{X_0}X_0]+\dim_{\Lambda}M^{(1)}\cdot[\bT^*_{D_0}X_0]+\sum_{r>1, \chi\in X(r)}\frac{r\cdot\dim_{\Lambda}M^{(r)}_{\chi}}{[\kappa(\xi)_{\chi}:\kappa(\xi)]}[\overline L_{\chi}]\bigg).
\end{equation}
The singular support $SS(j_!\sG|_{X_0})$ is the support of $CC(j_!\sG|_{X_0})$ (\cite[Proposition 3.15]{wr}).
When $\sG$ is not trivial, the non-degeneracy of the ramification of $\sG$ along $D_0$ implies that for every geometric point $\bar x\rightarrow D_0$, the fiber $SS(j_!\sG)\times_X \bar x$ is a finite union of $1$-dimensional vector spaces in $\bT^*_{\bar x}X$.

\vspace{0.2cm}
Motivated by the notion of non-degenerate ramification, we give a broader convention called {\it pure ramification} in the following. In \S \ref{pureramificationsection1} and \S \ref{sectionpureram2},  we construct \'etale sheaves with pure ramifications which play a crucial role in this article.

\begin{definition}\label{definitionpure}
Let $X$ be a smooth $k$-scheme, $D$ a reduced Cartier divisor on $X$, $U$ the complement of $D$ in $X$, $j:U\to X$ the canonical injection and $\sG$ a locally constant and constructible sheaf of $\Lambda$-modules on $U$. We say that the ramification of $\sG$ along $D$ is {\it pure at a point $x$ of} $D$ if, for a geometric point $\overline x\to X$ above $x$, fiber $SS(j_!\sG)\times_X \bar x$ is a finite union of $1$-dimensional $k(\ol x)$-vector spaces in $\bT^*_{\bar x}X$.  We say that the ramification of $\sG$ along $D$ is {\it pure} if it is pure at each point of $D$.
\end{definition}

\vspace{0.2cm}

Using Saito's notion of $\sF$-transversality (\cite[\S 8]
{cc}), the transversality condition ensures the following base change isomorphism for the direct image of open immersions.

\begin{proposition}[{cf. \cite[Corollary 2.13]{HT18}}]\label{TBC}
Let $h:Z\to X$ be a morphism of connected and smooth $k$-schemes, $D$ an effective Cartier divisor of $X$, $U$ the complement of $D$ in $X$ and $j:U\to X$ the canonical injection. We assume that $h^*Z=Z\times_XD$ is an effective Cartier divisor of $Z$. We consider the following Cartesian diagram
\begin{equation*}
\xymatrix{\relax
V\ar[d]_{j'}\ar[r]^{h'}\ar@{}|-{\Box}[rd]&U\ar[d]^j\\
Z\ar[r]_h&X}
\end{equation*}
Let $\sG$ be a locally constant and constructible sheaf of $\Lambda$-modules on $U$ such that $h:Z\to X$ is $SS(j_!\sG)$-transversal. Then, we have an isomorphism
\begin{equation}\label{TBCF}
h^*\rR j_*\sG\cong \rR j'_*{h'}^*\sG.
\end{equation}
\end{proposition}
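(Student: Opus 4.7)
The plan is to invoke Saito's theory of $\sF$-transversality developed in \cite[\S 8]{cc}. The canonical base change morphism $h^* Rj_* \sG \to Rj'_* {h'}^* \sG$ is supplied by adjunction, so the task reduces to verifying that it is an isomorphism under the hypothesis that $h$ is $SS(j_!\sG)$-transversal. The question is local on $Z$, and on the open $V \subset Z$ both sides manifestly restrict to ${h'}^* \sG$, so it suffices to check the claim at geometric points of $Z$ lying above $h^{-1}(D)$.

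The first step is to replace $j_!\sG$ by $Rj_*\sG$ in the transversality hypothesis. By \ref{propSSCC}, since $\sG$ is locally constant and constructible on the smooth open $U$, the complex $\sG[\dim U]$ is perverse; hence $j_!\sG[\dim U]$ is perverse on $X$, and its singular support agrees with that of $Rj_*\sG[\dim U]$. As singular supports are insensitive to shifts, the hypothesis that $h$ is $SS(j_!\sG)$-transversal is equivalent to $h$ being $SS(Rj_*\sG)$-transversal.

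The second step is to apply Saito's theorem from \cite[\S 8]{cc} that $SS(\sF)$-transversality implies $\sF$-transversality in the sense that the natural base change morphism is an isomorphism. A formal argument then delivers the conclusion: apply $h^*$ to the distinguished triangle
\begin{equation*}
j_!\sG \longrightarrow Rj_*\sG \longrightarrow i_*i^* Rj_*\sG \longrightarrow,
\end{equation*}
where $i : D \hookrightarrow X$ denotes the closed complement of $j$; compare term by term with the analogous triangle on $Z$, using the tautology $h^* j_!\sG = j'_! {h'}^*\sG$ and the compatibility of the closed pieces supplied by $\sF$-transversality combined with proper base change; and conclude by the five lemma.

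The main obstacle is the passage from $SS$-transversality to the base change isomorphism for $Rj_*$: this is not automatic when $h$ is not smooth and rests on Saito's nontrivial analysis of local acyclicity and vanishing cycles in \cite[\S 8]{cc}. Once that black box is accepted, the remainder of the argument is formal. An alternative route, closer to \cite[Corollary 2.13]{HT18}, is to reduce by d\'evissage to the case where $Z$ is a smooth curve transversal to $SS(j_!\sG)$ and to verify base change there directly using the explicit dimension-one description of the singular support.
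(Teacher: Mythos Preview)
Your approach differs from the paper's. The paper factors $h$ through its graph $\Gamma: Z \to Z\times_k X$ and the smooth projection $\pr_2: Z\times_k X \to X$; smooth base change handles $\pr_2$, reducing to the closed immersion $\Gamma$, for which \eqref{TBCF} is established in \cite[\S 2]{HT18}. You instead try to invoke Saito's $\sF$-transversality directly for arbitrary $h$.

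There is a genuine gap. Saito's $\sF$-transversality in \cite[\S 8]{cc} is the condition that $h^*\sF \otimes^L_\Lambda h^!\Lambda \to h^!\sF$ be an isomorphism, and the main result there shows that $SS(\sF)$-transversality implies this; it is \emph{not} directly the base-change statement $h^*Rj_*\sG \cong Rj'_*h'^*\sG$. Your distinguished-triangle step does not close this gap: by proper base change $h^*(i_*i^*Rj_*\sG) \cong i'_*i'^*(h^*Rj_*\sG)$, so comparing the third terms amounts to applying $i'_*i'^*$ to the very base-change map you wish to show is an isomorphism --- the five-lemma step is circular.

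The correct bridge (essentially the argument underlying \cite[\S 2]{HT18}) runs as follows. Verdier-dualizing the tautology $h^*j_! \cong j'_!h'^*$ yields $h^!Rj_* \cong Rj'_*h'^!$. Since $\sG$ is locally constant on the smooth $U$ one has $h'^!\sG \cong h'^*\sG \otimes j'^*h^!\Lambda$, whence by the projection formula
\[
h^*(Rj_*\sG)\otimes h^!\Lambda \;\cong\; h^!(Rj_*\sG) \;\cong\; Rj'_*(h'^!\sG) \;\cong\; (Rj'_*h'^*\sG)\otimes h^!\Lambda.
\]
As $Z$ and $X$ are smooth, $h^!\Lambda$ is invertible and can be cancelled. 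Your Step~1 is correct and needed here (to know that $h$ is $SS(Rj_*\sG)$-transversal), but the formal argument after invoking \cite{cc} must be the one above, not the triangle comparison.
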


The morphism $h:Z\to X$ is the composition of the graph $\Gamma: Z\to Z\times_kX$ of $f: Z\to X$ and the second projection $\pr_2:Z\times_kX\to X$. We have the following commutative diagram with Cartesian squares
\begin{equation*}
\xymatrix{\relax
V\ar[d]_{j'}\ar[r]^-(0.5){\Gamma'}\ar@{}|-{\Box}[rd]& Z\times_k U\ar[d]^{g}\ar[r]^-(0.5){\pr'_2}\ar@{}|-{\Box}[rd]&U\ar[d]^j\\
Z\ar[r]_-(0.5){\Gamma}&Z\times_kX\ar[r]_-(0.5){\pr_2}&X}
\end{equation*}
Since $\pr_2$ is smooth, applying the smooth base change theorem to the right square, we have
\begin{equation*}
h^*\rR j_*\sG=\Gamma^*\pr_2^*\rR j_*\sG\xrightarrow{\sim}\Gamma^*\rR g_*{\pr'_2}^*\sG.
\end{equation*}
To obtain the isomorphism \eqref{TBCF}, we are left to show
\begin{equation*}
\Gamma^*\rR g_*({\pr'_2}^*\sG)\cong \rR j'_*\Gamma'^*({\pr'_2}^*\sG)
\end{equation*}
 Since $h:Z\to X$ is $SS(j_!\sG)$-transversal, we see that the closed immersion $\Gamma:Z\to Z\times_kX$ is $\pr^{\circ}_2(SS(j_!\sG))$-transversal (\cite[Lemma 2.2]{bei}). Since $\pr_2$ is smooth, we see that $SS(\pr^*_2j_!\sG)=\pr^{\circ}_2(SS(j_!\sG))$ (\cite[Theorem 1.4]{bei}). Hence $\Gamma:Z\to Z\times_kX$ is $SS(g_!(\pr'^*_2\sG))$-transversal. Thus we are reduced to prove \eqref{TBCF} in the case where $h:Z\to X$ is a closed immersion, which is done in \cite[\S 2]{HT18}.

\begin{corollary}\label{stalkpullbackcurves}
Let $X$ be a smooth $k$-scheme, $D$ an effective Cartier divisor of $X$, $U$ the complement of $D$ in $X$ and $j:U\to X$ the canonical injection. Let $\sG$ be a locally constant and constructible sheaf of $\Lambda$-modules on $U$ such that the ramification of $\sG$ is pure at a closed point $x$ of $D$. Let $S$ be a smooth $k$-curve and $h:S\to X$ an $SS(j_!\sF)$-transversal quasi-finite morphism such that $s=(h^*D)_{\mathrm{red}}$ is a closed point of $S$ with $x=h(s)$, $V$ the complement of $s$ in $S$ and $\jmath: V\to S$ the canonical injection. Then, we have
\begin{equation}\label{bcfcurve}
R\jmath_*(\sG|_V)\cong h^*Rj_*\sG.
\end{equation}
In particular, if the ramification of $\sG|_V$ at $s$ is totally wild (see subsection \ref{slopedecom}), then the stalk $(\rR j_*\sG)|_{\ol x}$ at a geometric point $\ol x$ above $x$ is $0$.
\end{corollary}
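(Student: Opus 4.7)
The plan is to deduce part (1) directly from Proposition \ref{TBC} applied to $h: S \to X$. The $SS(j_!\sG)$-transversality of $h$ is given by hypothesis, so the only thing to verify is that $h^*D = S \times_X D$ is an effective Cartier divisor of $S$. Since $h$ is quasi-finite with $(h^*D)_{\mathrm{red}} = s$ a single closed point, $h^*D$ is a zero-dimensional closed subscheme of the smooth (hence regular) curve $S$, therefore locally principal and automatically Cartier. Proposition \ref{TBC} then yields
\begin{equation*}
h^*Rj_*\sG \cong R\jmath_*(\sG|_V).
\end{equation*}

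For part (2), the strategy is to reduce the stalk on $X$ to a Galois cohomology computation on $S$ via the isomorphism of (1). Given a geometric point $\ol x \to X$ above $x$, we lift it to a geometric point $\ol s \to S$ above $s$ with $h(\ol s) = \ol x$ (possible because $h(s) = x$ and $h$ is quasi-finite). Then (1) gives
\begin{equation*}
(Rj_*\sG)_{\ol x} \cong (h^*Rj_*\sG)_{\ol s} \cong (R\jmath_*(\sG|_V))_{\ol s}.
\end{equation*}
Writing $K$ for the fraction field of $\sO_{S,s}$, $K^{\mathrm{sh}}$ for its strict henselization, $I_K = G_{K^{\mathrm{sh}}} \subset G_K$ for the inertia subgroup, and $M$ for the finite $\Lambda$-module with continuous $G_K$-action corresponding to $\sG|_V$, the last stalk is computed as
\begin{equation*}
R\Gamma(\spec(K^{\mathrm{sh}}), \sG|_V) = R\Gamma(I_K, M).
\end{equation*}

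The totally wild hypothesis means $M^{P_K} = 0$ for the wild inertia $P_K \subset I_K$. Since $P_K$ is pro-$p$ and $M$ is a finite $\Lambda$-module with $\mathrm{char}(\Lambda) = \ell \neq p$, continuous cohomology satisfies $H^j(P_K, M) = 0$ for all $j \geq 1$ (the cohomology of each finite $p$-group quotient on $M$ is annihilated simultaneously by a power of $p$ and by a power of $\ell$, hence vanishes), and $H^0(P_K, M) = M^{P_K} = 0$ by assumption. Thus $R\Gamma(P_K, M) = 0$, and the Hochschild--Serre spectral sequence for $1 \to P_K \to I_K \to I_K/P_K \to 1$ forces $R\Gamma(I_K, M) = 0$, yielding the desired vanishing of the stalk. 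The only genuine input is the transversality hypothesis feeding Proposition \ref{TBC}; everything else is formal base change together with standard Galois cohomology for coefficients of order prime to the residue characteristic.
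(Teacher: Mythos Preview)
Your proof is correct and follows essentially the same approach as the paper: part~(1) is an immediate application of Proposition~\ref{TBC}, and part~(2) reduces the stalk to $R\Gamma(I,M)=R\Gamma(I/P,R\Gamma(P,M))$, where $R\Gamma(P,M)=0$ because $P$ is pro-$p$, $p$ is invertible in $\Lambda$, and $M^{P}=0$ by the totally wild hypothesis. Your verification that $h^*D$ is an effective Cartier divisor of $S$ is a detail the paper leaves implicit, but otherwise the arguments coincide.
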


Indeed, the isomorphism \eqref{bcfcurve} is from Proposition \ref{TBC}.
Let $\overline s\to S$ be a geometric point above $s\in S$, $\eta=\spec(K)$ the generic point of the strict henselization $S_{(\ol s)}$, $\overline K$ a separable closure of $K$, $I$ the Galois group of $\ol K/K$ and $P$ the wild ramification subgroup of $I$. We denote by $M$ the $\Lambda$-module with a continuous $I$-action associated to $\sG|_{\eta}$. Hence
we have
\begin{equation*}
(Rj_*\sG)|_{\ol x}\cong (h^*Rj_*\sG)_{\ol s}=(R\jmath_*(\sG|_V))_{\ol s}=\rR \Gamma(I,M)=R\Gamma(I/P,\rR \Gamma(P,M)).
\end{equation*}
Since the ramification of $\sG|_V$ is totally wild at $s\in S$, we have $\rR^0 \Gamma(P,M)=M^P=0$.
Since $P$ is a pro-$p$-group and $p$ is invertible in $\Lambda$, we have $\rR^i \Gamma(P,M)=0$ for $i\neq 0$. Hence $\rR \Gamma(P,M)=0$ and hence $(Rj_*\sG)|_{\ol x}=0$.

\begin{remark}\label{manytranscurve}
We assume that $k$ is algebraically closed. Let $X$ be a smooth $k$-scheme, $D$ a reduced Cartier divisor of $X$, $x$ a closed point of $D$ and $C$ a closed conical subset of $\bT^*_xX$ of dimension $1$. Then, we can find a smooth $k$-curve $S$ and an immersion $h:S\to X$ such that $x=S\bigcap D$ is a closed point of $D$ and that $h:S\to X$ is $C$-transversal at $x$. If $x$ is a smooth locus of $D$, we allow a further assumption that $S$ and $X$ meet transversally at $x$.

Indeed, we only need to treat the case where $\dim_k X\geq 2$. The conical set $C$ is a finite union of $1$-dimensional $k$-subspaces $L_1,\ldots, L_n$ of $\bT^*_xX$. We denote by $H_1,\ldots, H_n$ their dual hyperplanes in $\bT_xX$. We denote by $\wt X$ the blow-up of $X$ at $x$, by $E$ the exceptional divisor on $\wt X$ and by $\wt D$ the strict transform of $D$ in $\wt X$. The intersection $\wt D\bigcap E$ is of codimensional $1$ in $E$. By the canonical isomorphism $\varphi:E\xrightarrow{\sim} \bP(\bT_xX)$, we see that $\bigcup_i \bP(H_i)\bigcup \varphi(\wt D\bigcap E)$ is codimensional $1$ in $\bP(\bT_xX)$. We can find a closed point $z$ in $\bP(\bT_xX)$ which is not contained in $\bigcup_i \bP(H_i)\bigcup \varphi(\wt D\bigcap E)$. It give rise to a non-zero vector $l_z$ in $\bT_xX$ and we can find  an immersion $h:S\to X$ from a smooth $k$-curve to $X$ such that $x\in h(S)$ and that $\bT^*_xS=\langle l_z\rangle$. Since $z$ is not contained in $\bP(\bigcup_i H_i)$, any non-zero vector in $C=\bigcup_i L_i$ has non-zero image in $\bT^*_xS$ by the canonical surjection $dh_x:\bT^*_xX\to\bT^*_xS$. Hence $h:S\to X$ is $C$-transversal at $x$. Since $z\not\in \varphi(\wt D\bigcap E)$, the strict transform $\wt S$ of $S$ is not contained in $\wt D$. Hence $S\not\subseteq  D$. If $x$ is a smooth locus of $D$, we have $\varphi(\wt D\bigcap E)=\bP(\bT_xD)$. Hence $z\not\in \bP(\bT_xD)$ implies that $m_x(h^*D)=1$.
 \end{remark}

\section{Divisors from ramification invariants}\label{CDTLCSWsection}

\subsection{}\label{4conductors}

In this section, $k$ denotes a perfect field of characteristic $p>0$, $X$  a smooth $k$-scheme, $D$ a reduced Cartier divisor on $X$, $\{D_i\}_{i\in I}$ the set of irreducible components of $D$, $U$ the complement of $D$ in $X$ and $j:U\to X$ the canonical injection. Let $\Lambda$ be a finite field of characteristic $\ell\neq p$ and let $\sF$ be a locally constant and constructible sheaf of $\Lambda$-modules on $U$.

\begin{definition}\label{defCDTNP}
We keep the terminology of Definition \ref{cDlcD}.  We define the {\it conductor divisor} of $j_!\sF$ on $X$ and denote by $\rC_X(\sF)$ the Cartier divisor with rational coefficients
\begin{equation*}
\rC_X(j_!\sF)=\sum_{i\in I} \rc_{D_i}(\sF)\cdot D_i.
\end{equation*}
We define the {\it total dimension divisor} of $j_!\sF$ on $X$ and denote by $\DT_X(j_!\sF)$ the Cartier divisor
\begin{equation*}
\DT_X(j_!\sF)=\sum_{i\in I} \dt_{D_i}(\sF)\cdot D_i.
\end{equation*}
We define the {\it logarithmic conductor divisor} of $j_!\sF$ on $X$ and denote by $\rLC_X(j_!\sF)$ the Cartier divisor with rational coefficients
\begin{equation*}
\rLC_X(j_!\sF)=\sum_{i\in I} \rlc_{D_i}(\sF)\cdot D_i.
\end{equation*}
We define the {\it Swan divisor} of $j_!\sF$ on $X$ and denote by $\SW_D(j_!\sF)$ the Cartier divisor
\begin{equation*}
\SW_X(j_!\sF)=\sum_{i\in I} \sw_{D_i}(\sF)\cdot D_i.
\end{equation*}

\end{definition}

\begin{theorem}[{cf. \cite{wr, HY17}}]\label{DTtheorem}
Let $f:Y\to X$ be a morphism of smooth $k$-schemes. We assume that $f^*D=D\times_XY$ is a Cartier divisor of $Y$.
\begin{itemize}
\item[(1)] Then, we have {\rm(\cite[Theorem 4.2]{HY17})}
\begin{equation*}
f^*(\DT_X(j_!\sF))\geq \DT_Y(f^*j_!\sF).
\end{equation*}
\item[(2)]
Assume that $Y$ is a smooth $k$-curve, that $f:Y\to X$ is an immersion such that $D$ is smooth at the closed point $x=Y\bigcap D$ and that the ramification of $\sF$ at $x$ is non-degenerate. Then the following there conditions are equivalent \rm{(\cite[Propositions 2.22 and 3.8, Corollary 3.9]{wr})}:

(a) $f:Y\to X$ is $SS(j_!\sF)$-transversal at $x$;

(b) $\rNP_x(\sF|_{Y-\{x\}})=m_x(f^*D)\cdot\rNP_{D_i}(\sF)$, where $D_i$ is the unique irreducible component of $D$ containing $x$.

(c) $f^*(\DT_X(j_!\sF))= \DT_Y(f^*j_!\sF)$.
\end{itemize}
\end{theorem}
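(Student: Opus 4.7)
The plan for part (1) is to verify the claimed inequality of Cartier divisors at each codimension-$1$ point $\eta$ of $f^*D$. Since $f^*D$ is an effective Cartier divisor of $Y$, no irreducible component of $Y$ can be contained in $D$, so the image $f(\eta)$ has codimension $1$ in $X$ and is therefore the generic point $\xi_i$ of some unique component $D_i$. Setting $e=m_\eta(f^*D_i)$, the local extension $\wh\sO_{X,\xi_i}\hookrightarrow\wh\sO_{Y,\eta}$ of complete discrete valuation rings has ramification index $e$, so the multiplicity at $\eta$ of $f^*\DT_X(j_!\sF)$ is $e\cdot\dt_{D_i}(\sF)$. The required inequality $\dt_\eta(f^*\sF)\leq e\cdot\dt_{D_i}(\sF)$ would then follow from the monotonicity properties for Abbes--Saito invariants under DVR extensions recalled in subsection \ref{K'/K}: I would factor the extension of fraction fields through an unramified enlargement (to absorb the potentially transcendental residue field extension $\kappa(\xi_i)\hookrightarrow\kappa(\eta)$), a ramified piece of ramification index $e$, and, in characteristic $p$, a purely inseparable piece, and then combine \eqref{condineq} and \eqref{logcondineq}. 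The main technical difficulty is that $\kappa(\eta)$ may be transcendental of infinite transcendence degree over $\kappa(\xi_i)$, which forces an approximation argument reducing to finitely generated sub-extensions before the cited inequalities can be invoked.

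For part (2), the plan is to work in a Zariski neighborhood of $x$ where the non-degeneracy assumption makes the explicit formula \eqref{wrcbcc} for $CC(j_!\sF)$ available: up to the zero section, $SS(j_!\sF)$ is the union of $\bT^*_{D_i}X$ and finitely many line subbundles $\ol L_\chi$ supported on $D_i$, one for each central character in the slope decomposition of $\sF|_{\spec K_i}$. The immersion $f:Y\to X$ is $SS(j_!\sF)$-transversal at $x$ exactly when $\ker(df_x)\subset\bT^*_xX$ meets each $\ol L_\chi|_x$ and the fiber $\bT^*_{D_i}X|_x$ only at the origin. For (a) $\Rightarrow$ (b), proper $SS(j_!\sF)$-transversality (automatic once transversality holds, since $Y$ is a curve) together with subsection \ref{propCtrans} gives $CC(f^*j_!\sF)=f^!CC(j_!\sF)$; pushing forward each term in \eqref{wrcbcc} contributes $m_x(f^*D_i)\cdot r\cdot\dim_\Lambda M^{(r)}_\chi/[\kappa(\xi_i)_\chi:\kappa(\xi_i)]$ at $[\bT^*_xY]$, and comparing with the curve formula of subsection \ref{propSSCC} identifies the slope multiset of $f^*\sF$ at $x$ with $m_x(f^*D_i)$ times that of $\sF$ at $\xi_i$, which is condition (b). The implication (b) $\Rightarrow$ (c) follows by summing $r\cdot\dim_\Lambda M^{(r)}$ over slopes.

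The delicate direction is (c) $\Rightarrow$ (a), which I expect to be the principal obstacle. The strategy is to show that any failure of $SS(j_!\sF)$-transversality at $x$ forces a strict drop $\dt_x(f^*\sF)<m_x(f^*D_i)\cdot\dt_{D_i}(\sF)$, contradicting (c). If $\ker(df_x)$ meets some $\ol L_\chi|_x$ or $\bT^*_{D_i}X|_x$ nontrivially, I would apply the Milnor formula (Theorem \ref{SaCC}) to a local equation of $f^*D$ pulled back to $Y$ and compare the intersection number $(CC(f^*j_!\sF),[df(Y)])_x$ with the transversal value $(f^!CC(j_!\sF),[df(Y)])_x$; the non-transversal component of $SS(j_!\sF)$ contributes strictly less to the intersection than in the transversal position, which via the formula translates into the needed strict inequality for $\dt_x(f^*\sF)$. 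Verifying this strictness rests on the positivity of the coefficients in the perverse characteristic cycle \eqref{wrcbcc} and an explicit local intersection analysis for each offending line subbundle, and this is where the non-degeneracy hypothesis plays its decisive role.
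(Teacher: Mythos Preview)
Your approach to part (1) has a fatal gap at the very first step. You claim that since $f^*D$ is a Cartier divisor, the image $f(\eta)$ of a generic point $\eta$ of $f^*D$ must have codimension $1$ in $X$ and hence be the generic point $\xi_i$ of some $D_i$. This is false in general. Take $X=\bA^2_k$, $D=\{x_1=0\}$, $Y=\bA^1_k$, and $f(t)=(t,0)$: then $f^*D=\{0\}$ is a Cartier divisor of $Y$, its unique generic point is the closed point $0$, and $f(0)=(0,0)$ is a closed point of $X$, not the generic point of $D$. In such cases there is no direct DVR extension $\wh\sO_{X,\xi_i}\hookrightarrow\wh\sO_{Y,\eta}$ to which the inequalities of subsection \ref{K'/K} can be applied, and the whole reduction collapses. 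The paper does not prove this theorem --- it is quoted from \cite{HY17} and \cite{wr} --- but for orientation, the analogous inequality for the conductor divisor (Theorem \ref{Ctheorem}) is proved in \S\ref{proofCLC} by an entirely different mechanism: one first reduces to the case of immersions of curves via a graph-and-projection argument, and then at the curve level one constructs an auxiliary sheaf $\sG$ with isoclinic pure ramification (Theorem \ref{pureramifiction any D}) whose conductors dominate those of $\sF$, tensors, and invokes the known inequality for $\DT$. A direct generic-point DVR comparison simply does not see the geometry of $f$ away from the generic points of $D$.

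For part (2), your plan for (a)$\Rightarrow$(b) via $CC(f^*j_!\sF)=f^!CC(j_!\sF)$ does not recover the full Newton polygon. Since $Y$ is a curve, all the pushforwards of the lines $\ol L_\chi|_x$ under $df_x$ land in the same one-dimensional space $\bT^*_xY$, so the characteristic cycle on $Y$ only records the total dimension $\dt_x(\sF|_{Y-\{x\}})$, not the individual slope multiplicities. What is actually used in \cite[Proposition 2.22]{wr} is a direct analysis of how the ramification filtration and the characteristic forms behave under the local extension of DVRs induced by the transversal immersion (here the non-degeneracy hypothesis guarantees this extension is well controlled), which gives the slope-by-slope statement (b). Your sketch of (c)$\Rightarrow$(a) is also not workable as written: the Milnor formula compares $CC(j_!\sF)$ with a section $d(g\circ f)(X)$ for a map to $\bA^1_k$, not with the pulled-back cycle $f^!CC(j_!\sF)$, so the proposed intersection-number comparison does not have an obvious meaning.
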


\begin{remark}
We are under assumptions of Theorem \ref{DTtheorem}. If three equivalent conditions in Part (2) are valid, we have
\begin{equation*}
f^*(\rC_X(j_!\sF))= \rC_Y(f^*j_!\sF)
\end{equation*}
by condition (b).
\end{remark}

\begin{theorem}[cf. {\cite{Hu19IMRN}}]\label{SWtheorem}
We assume that $D$ is a divisor with simple normal crossings. Let $f:Y\to X$ be a morphism of smooth $k$-schemes. We assume that $f^*D=D\times_XY$ is a Cartier divisor of $Y$.
\begin{itemize}
\item[(1)]
Then, we have {\rm (\cite[Theorem 6.6]{Hu19IMRN})}
\begin{equation*}
f^*(\SW_X(j_!\sF))\geq \SW_Y(f^*j_!\sF).
\end{equation*}
\item[(2)]
We further assume that $D$ is irreducible. Let $\mathcal I(X, D)$ be the set of triples $(S,h:S\to X,x)$ where $h:S\to X$ is an immersion from a smooth $k$-curve $S$ to $X$ such that $x=S\bigcap D$ is a closed point of $X$. Then, we have {\rm (\cite[Theorem 6.5]{Hu19IMRN})}
\begin{equation*}
\sw_D(\sF)=\sup_{\mathcal I(X, D)}\frac{\sw_x(\sF|_{S-\{x\}})}{m_x(h^*D)}.
\end{equation*}
\end{itemize}
\end{theorem}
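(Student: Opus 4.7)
The plan is to bootstrap from Theorem \ref{DTtheorem} via tame base change along $D$, using the asymptotic relation $\tfrac{1}{e}\dt_{K'}(M)\to\sw_K(M)$ as $e\to\infty$ for the $e$-th Kummer extension $K'/K$. This follows immediately from $\sw_{K'}(M)=e\sw_K(M)$ (subsection \ref{K'/K}) together with the sandwich $\sw_{K'}(M)\le\dt_{K'}(M)\le\sw_{K'}(M)+\dim_\Lambda M$ (subsection \ref{slopedecom}), so that $\dt_{K'}(M)/e$ lies between $\sw_K(M)$ and $\sw_K(M)+\dim_\Lambda M/e$.

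For Part (1), the inequality is about multiplicities along the prime divisors of $f^*D$, so by d\'evissage---localizing on $X$ using the simple normal crossings hypothesis and intersecting $Y$ with transversal curves---we may reduce to the case where $Y$ is a smooth curve meeting a single smooth irreducible component $D_i$ of $D$ at one closed point $x$, so that the claim becomes $m_x(f^*D_i)\sw_{D_i}(\sF)\ge\sw_x(f^*\sF)$. Fix $e$ coprime to $p$ and form the Kummer $e$-th root cover $\pi:X'\to X$ along $D_i$, with $\pi^*D_i=eD'_i$; let $f':Y'\to X'$ be a dominant component of the normalization of $Y\times_XX'$ and $x'\in Y'$ a point over $x$ with tame ramification index $e_{x'}$ of $Y'/Y$. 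Comparing pullbacks via $\pi_Y^*f^*D_i=f'^*\pi^*D_i$ gives $e\cdot m_{x'}(f'^*D'_i)=e_{x'}\cdot m_x(f^*D_i)$, while tameness of $Y'/Y$ gives $\sw_{x'}(f'^*\pi^*\sF)=e_{x'}\sw_x(f^*\sF)$. Applying Theorem \ref{DTtheorem}(1) to $\pi^*\sF$ and $f'$, then substituting $\dt_{D'_i}(\pi^*\sF)\le e\sw_{D_i}(\sF)+\rk_\Lambda\sF$ and $\dt_{x'}(f'^*\pi^*\sF)\ge\sw_{x'}(f'^*\pi^*\sF)$, and dividing by $e_{x'}$, one obtains
\[
m_x(f^*D_i)\sw_{D_i}(\sF)+\frac{m_x(f^*D_i)}{e}\rk_\Lambda\sF\ge\sw_x(f^*\sF),
\]
and letting $e\to\infty$ finishes Part (1).

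For Part (2), the direction $\le$ follows from Part (1) applied to each $(S,h,x)\in\mathcal I(X,D)$. For $\ge$, after shrinking $X$ we may assume $D$ is smooth with non-degenerate ramification of $\sF$ along $D$ in the sense of \cite{wr}, so Theorem \ref{DTtheorem}(2) applies. For each $e$ coprime to $p$, form $\pi:X'\to X$ as above; non-degeneracy is preserved along $D'$ because the characteristic forms transform compatibly under the Kummer cover. By Remark \ref{manytranscurve} on $X'$, pick a closed immersion $h'_e:S'_e\to X'$ from a smooth curve meeting $D'$ transversally at a single point $x'_e$ that is also $SS(j'_!\pi^*\sF)$-transversal there. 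Theorem \ref{DTtheorem}(2) then yields $\dt_{x'_e}(h'^*_e\pi^*\sF)=\dt_{D'}(\pi^*\sF)\ge\sw_{D'}(\pi^*\sF)=e\sw_D(\sF)$, whence, using the perfect residue field on $S'_e$, $\sw_{x'_e}(h'^*_e\pi^*\sF)=\dt_{x'_e}(h'^*_e\pi^*\sF)-\rk_\Lambda\sF\ge e\sw_D(\sF)-\rk_\Lambda\sF$. Provided one can descend $h'_e$ through $\pi$ to an immersion $h_e:S_e\to X$ with $m_{x_e}(h_e^*D)=e$ and $\sw_{x_e}(h_e^*\sF)=\sw_{x'_e}(h'^*_e\pi^*\sF)$, one concludes
\[
\frac{\sw_{x_e}(h_e^*\sF)}{m_{x_e}(h_e^*D)}\ge\sw_D(\sF)-\frac{\rk_\Lambda\sF}{e}\xrightarrow{e\to\infty}\sw_D(\sF).
\]

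The main obstacle lies in the descent step of Part (2): the naive composition $\pi\circ h'_e:S'_e\to X$ is generically a finite tame morphism rather than an immersion, since $\pi$ is ramified at $x'_e$ and the residue field $k(x'_e)/k(x)$ may be nontrivial. Circumventing this requires either a careful birational argument---choosing $h'_e$ generically in the Grassmannian bundle $\bP(\bT X')$ so that the image $\pi(h'_e(S'_e))$ admits a single-sheeted smooth parameterization through $x$---or bypassing the tame cover of $X$ entirely by constructing on $X$ an auxiliary sheaf with pure ramification along $D$ and large logarithmic conductor (in the spirit of the constructions developed in \S\ref{pureramificationsection1}) and computing the Swan conductor of its tensor product with $\sF$ on a transversal curve at $x$. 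A secondary, milder technicality is the reduction to the non-degenerate case, which rests on the functoriality of characteristic forms established in \cite{wr} and on shrinking arguments away from a codimension-two locus.
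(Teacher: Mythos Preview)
This theorem is not proved in the present paper; it is quoted from \cite{Hu19IMRN}. What the paper does prove is the parallel statement for logarithmic conductor divisors (Theorem~\ref{LCtheorem}), and since that proof follows \cite{Hu19IMRN} closely, it is a fair proxy for comparison.

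Your overall strategy---pass to a degree-$e$ Kummer cover, use the $\DT$ bound there, and let $e\to\infty$---is the same one the paper uses. Two points deserve correction.

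\textbf{Part (1): the ``single component'' reduction is not valid.} After replacing $Y$ by a curve $S$ meeting $D$ at a closed point $x$, the point $x$ may lie on several irreducible components $D_1,\dots,D_m$ of $D$; the inequality to be proved is $\sum_{i=1}^m m_x(h^*D_i)\sw_{D_i}(\sF)\ge\sw_x(\sF|_{S-\{x\}})$, and there is no way to isolate a single $D_i$ because the right-hand side does not decompose componentwise. The paper's argument (see the proof of Proposition~\ref{LCtheorem curve}) instead builds the tame cover simultaneously along all the $D_i$: with $X'=\spec(\Gamma(X,\sO_X)[T_1,\dots,T_m]/(T_i^r-f_i))$ and $S'=\spec(\Gamma(S,\sO_S)[T]/(T^r-t))$, one writes down by hand a lift $h':S'\to X'$ sending $T_i\mapsto T^{d_i}$ (where $d_i=m_x(h^*D_i)$), and then applies the $\DT$/$\rC$ inequality on $X'$. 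This sidesteps your normalization of $Y\times_XX'$ and the bookkeeping of $e_{x'}$ entirely. Your computation survives this fix with only cosmetic changes.

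\textbf{Part (2): your ``main obstacle'' is exactly what Lemma~\ref{huprop6.3} resolves.} You correctly flag that $\pi\circ h'_e:S'_e\to X$ need not be an immersion. The missing ingredient is Lemma~\ref{huprop6.3} (which is \cite[Proposition~6.3]{Hu19IMRN}): given the cyclic cover $[r]:X'\to X$ and any equidimensional closed conical $C\subset\bT^*X'|_{D'}$, one can choose the curve $h':S\to X'$ so that it is $C$-transversal at $x'$ with $m_{x'}(h'^*D')=1$ \emph{and} the composition $[r]\circ h':S\to X$ is still an immersion. With this lemma in hand, your descent step goes through verbatim; see the proof of Proposition~\ref{lcx+epsilon} for the execution in the logarithmic-conductor setting, which is line-for-line the same. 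No auxiliary-sheaf construction or birational Grassmannian argument is needed.
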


In \S \ref{proofCLC}, we will state and prove analogous results of Theorem \ref{DTtheorem} (1) and Theorem \ref{SWtheorem} for conductor divisors and logarithmic conductor divisors, respectively (see Theorem \ref{Ctheorem} and Theorem \ref{LCtheorem}).

\section{\'Etale sheaves with pure ramifications (I)}\label{pureramificationsection1}

\subsection{}\label{IXQX}
In this section, $k$ denotes a perfect field of characteristic $p>0$, $X$ a connected and smooth $k$-scheme of dimension $n\geq 2$, $D$ a reduced effective Cartier divisor of $X$, $\{D_i\}_{1\leq i\leq d}$ the set of irreducible components of $D$, $U$ the complement of $D$ in $X$ and $j:U\to X$ the canonical injection. Let $Y=\spec(k[y_1,\ldots,y_n])$ be an affine space of dimension $n$.

We denote by  $\mathcal I(X, D)$ the set of triples $(S,h:S\to X,x)$ where $h:S\to X$ is an immersion from an affine connected smooth $k$-curve $S$ to $X$ such that $x=S\bigcap D$ is a closed point of $X$, and by $\mathcal Q(X, D)$ the set of triples $(S, h:S\to X, x)$ where $h:S\to X$ is a quasi-finite morphism from an affine connected smooth $k$-curve $S$ to $X$ such that $x=h(S)\bigcap D$ is a closed point of $X$ and that $h^{-1}(x)$ is a single closed point of $S$. Notice that $\cI(X, D)$ is a subset of $\cQ(X, D)$. For an element $(S, h:S\to X, x)$ of $\cI(X,D)$ or $\cQ(X,D)$, we put $S_0=S-\{h^{-1}(x)\}$.

\begin{proposition}\label{tranfibers}
Assume that $k$ is algebraically closed. Let $C$ be an irreducible closed conical subset of $\bT^*X$ such that $\dim_kC=n$ and that $B(C)\subseteq D$. Let $(x,\omega)\in C(k)$ be a smooth point of $C$ and let $L_x$ be a closed conical subset of $\bT^*_xX$ of dimension $1$. We assume that $\omega\not\in L_x$. Then,
\begin{itemize}
\item[(1)]
When $p\geq 3$, after replacing $X$ by a Zariski open neighborhood of $x$, there are two morphisms $f:X\to \bA^{n-1}_k$ and $g:\bA^{n-1}_k\to \bA^1_k$ satisfying:
\begin{itemize}
\item[a)]  they are smooth and $f|_{D_i}:D_i\to \bA^{n-1}_k$ is quasi-finite and flat, for each $D_i$ containing $x$;
\item[b)] the morphism $f:X\to \bA^{n-1}_k$ is $L_x$-transversal at $x$, the composition $g\circ f:X-\{x\}\to\bA^1_k$ is $C$-transversal and the section $d(g\circ f)(X)$ meets $C$ transversely at $(x,\omega)$ (cf. \ref{sectiondf}).
\end{itemize}
\item[(2)] When $p=2$, $(X,C,L_x, (x,\omega))$ may have the same geometric property as in (1). If not the case, we denote by $q:\bA^1_X\to X$ the canonical projection. After replacing $\bA^1_X$ by a Zariski open neighborhood of $(x,0)$, there are two morphisms $f:\bA^1_X\to \bA^{n}_k$ and $g:\bA^{n}_k\to \bA^1_k$ satisfying:
\begin{itemize}
\item[c)]  they are smooth and $f|_{\bA^1_{D_i}}:\bA^1_{D_i}\to \bA^{n}_k$ is quasi-finite and flat, for each $\bA^1_{D_i}$ containing $(x,0)$;
\item[d)] the morphism $f:\bA^1_X\to \bA^{n}_k$ is $( q^\circ L_x$-transversal at $(x,0)$, the composition $g\circ f:\bA^1_X-\{(x,0)\}\to\bA^1_k$ is $q^{\circ}C$-transversal and the section $d(g\circ f)(\bA^1_X)$ meets $q^{\circ}C$ transversely at $((x,0),q^\circ\omega)$, where we consider $q^\circ\omega$ and $q^\circ L_x$ as the images of $\omega$ and $L_x$ by canonical injection $dq:\bT^*X\times_X\bA^1_X\to\bT^*\bA^1_X$, respectively.
\end{itemize}

\end{itemize}
\end{proposition}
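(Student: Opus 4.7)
The plan is to work Zariski-locally on an étale chart $\pi\colon X\to\bA^n_k=\spec k[y_1,\ldots,y_n]$ sending $x$ to the origin, and to construct both $f$ and $g$ by a Bertini-style genericity argument. Since $L_x$ is a finite union of $1$-dimensional subspaces of $\bT^*_xX$, none of which contains $\omega$, the set of hyperplanes $H\subset\bT^*_xX$ passing through $\omega$ and avoiding $L_x\setminus\{0\}$ is a dense open subset of the hyperplane pencil through $\omega$. Pick such an $H$ and choose $n-1$ linear forms $\ell_1,\ldots,\ell_{n-1}$ on $\bA^n_k$ with $\mathrm{span}(d\ell_j|_x)=H$, and set $f=(\ell_1,\ldots,\ell_{n-1})\colon X\to\bA^{n-1}_k$. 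Such an $f$ is automatically smooth near $x$ and $L_x$-transversal at $x$, since $\mathrm{im}(df_x)=H$; a generic such choice moreover makes the line $\{\ell_1=\cdots=\ell_{n-1}=0\}$ meet every $\pi(D_i)$ properly at the origin, so after shrinking $X$ the restriction $f|_{D_i}$ is quasi-finite, hence flat by miracle flatness (applied to the Cohen--Macaulay divisor $D_i$ over the regular target $\bA^{n-1}_k$). This gives (a) together with the transversality requirement on $f$ in (b).

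For $g$ I would look among affine-quadratic polynomials $g(u)=\sum_j\beta_ju_j+\sum_{j\leq k}\gamma_{jk}u_ju_k$. The equation $d(g\circ f)|_x=\omega$ uniquely fixes the linear coefficients $\beta_j$ because $\omega\in H=\mathrm{span}(d\ell_j|_x)$, and varying the quadratic coefficients $(\gamma_{jk})$ lets the Hessian $\mathrm{Hess}_x(g\circ f)$ range, when $p\neq 2$, over every symmetric bilinear form on $\bT_xX$. The transversality of the section $d(g\circ f)(X)$ with $C$ at $(x,\omega)$ then amounts to the linear-algebra condition that the graph of this Hessian in $\bT_xX\oplus\bT^*_xX$ be complementary to the $n$-dimensional subspace $\bT_{(x,\omega)}C$, a non-empty Zariski-open condition on $(\gamma_{jk})$. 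The remaining $C$-transversality of $g\circ f$ on $X-\{x\}$ is obtained by a standard incidence-variety/dimension count: using $B(C)\subseteq D$ of dimension $n-1$ and $\dim C_y=1$ for generic $y$, the incidence locus $\{(y,\gamma)\in(D-\{x\})\times\bA^N:d(g\circ f)|_y\in C_y\}$ projects to a proper closed subscheme of the parameter space $\bA^N$, so a generic $\gamma$ gives empty fibre. Intersecting these three open conditions (and shrinking $X$ once more so that $g$ is smooth on $f(X)$) yields part (1).

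The genuine obstacle, and the reason the proposition is split, is the failure of Hessian realization in characteristic $2$: every polynomial Hessian then has zero diagonal (since $\partial_i(y_i^m)$ vanishes identically for $m\geq 0$ in characteristic $2$) and is therefore alternating, so the map $\gamma\mapsto\mathrm{Hess}_x(g\circ f)$ lands in a strict linear subspace of symmetric bilinear forms. If no alternating form makes its graph complementary to $\bT_{(x,\omega)}C$, the argument at $(x,\omega)$ breaks down, and one passes to $q\colon\bA^1_X\to X$ with the pulled-back data $q^\circ C$, $q^\circ L_x$, $q^\circ\omega$ at $(x,0)$. The extra coordinate $t$ permits cross terms $u_jt$ in $g$, whose mixed partials $\partial_{u_j}\partial_t$ are nonzero in every characteristic, contributing an extra row and column to the realizable Hessians on $\bT_{(x,0)}\bA^1_X=\bT_xX\oplus k\partial_t$. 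The key technical point, to be checked by direct linear algebra, is that this single extra freedom suffices to achieve transversality against the $n$-dimensional subspace $\bT_{((x,0),q^\circ\omega)}q^\circ C$ of the $2(n+1)$-dimensional ambient tangent space; once this is verified, Steps 1 and 2 transport to $\bA^1_X$ essentially unchanged, yielding part (2).
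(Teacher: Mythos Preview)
Your approach has a genuine gap in the Hessian step. You claim that by varying the quadratic coefficients $\gamma_{jk}$ of $g$, the Hessian $\mathrm{Hess}_x(g\circ f)$ ranges over \emph{every} symmetric bilinear form on $\bT_xX$ when $p\neq 2$. This is false. Since you chose $f=(\ell_1,\ldots,\ell_{n-1})$ to be linear of rank $n-1$, one has
\[
\mathrm{Hess}_x(g\circ f)=(df_x)^{T}\,\mathrm{Hess}_0(g)\,(df_x),
\]
which is a symmetric form of rank at most $n-1$ on $\bT_xX$; the line $v_0=\ker(df_x)$ is always in its radical. Thus $(v_0,0)$ lies in the graph of $\mathrm{Hess}_x(g\circ f)$ for \emph{every} choice of $\gamma$, and transversality of the section $d(g\circ f)(X)$ with $C$ at $(x,\omega)$ requires $(v_0,0)\notin T_{(x,\omega)}C$.

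This condition cannot always be arranged by your choice of $f$. Because $\omega\in H=\mathrm{im}(df_x^{*})$, the line $v_0=H^{\perp}$ is forced to lie in the hyperplane $\omega^{\perp}\subset\bT_xX$. Take the most basic example $C=\bT^*_DX$ for a smooth divisor $D$ with $\omega$ a conormal vector: in local coordinates $D=\{y_1=0\}$, $\omega=dy_1$, one computes $T_{(x,\omega)}C\cap(\bT_xX\times\{0\})=\bT_xD=\omega^{\perp}$, so \emph{every} admissible $v_0$ satisfies $(v_0,0)\in T_{(x,\omega)}C$ and transversality fails for all $g$. Your construction therefore breaks already for conormal bundles.

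The paper avoids this by reversing the roles of $f$ and $g$: it first builds $r=g\circ f\colon X\to\bA^1_k$ directly as a quadratic polynomial in the $n$ coordinates of $X$ (quoting \cite[Proposition 5.19]{cc}), so that $\mathrm{Hess}_x(r)$ can be taken nondegenerate. A further deformation of $r$ by terms with vanishing differential ensures $Z=r^{-1}(0)\not\subset D$. Only then does one factor $r$ through $f\colon X\to\bA^{n-1}_k$ by choosing additional regular parameters $u_2,\ldots,u_{n-1}$ cutting out a curve $S\subset Z$ transversal to $L_x$ and not contained in $D$; the map $g$ is just projection to the first coordinate. In this order the Hessian of the composition equals that of $r$ and carries no forced degeneracy.
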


\begin{proof}
(1) This is a local question. After replacing $X$ by a Zariski neighborhood of $x$, we have an affine and \'etale morphism $\pi:X\to Y$.  We may assume that $y=\pi(x)$ is the origin of $Y$, that $\{D_i\}_{1\leq i\leq d}$ is the set of irreducible components of $D$ and that each $D_i$ contains $x$. Let $y_1,\ldots,y_n$ be the canonical local coordinate of $X$ at $x$ from the \'etale morphism $\pi:X\to Y$. Let $\theta_1,\theta_2,\ldots,\theta_r$ be non-zero vectors of $\bT^*_xX$ such that $L_x=\bigcup_{b=1}^r\langle \theta_b\rangle$. We have $k$-linear combinations
\begin{equation}
\omega=\sum_{i=1}^n \lambda_i dy_i\ \ \ \textrm{and}\ \ \ \theta_b=\sum_{i=1}^n \lambda'_{bi} dy_i\ \ (b=1,2,\ldots, r).
\end{equation}
 By the proof of \cite[Proposition 5.19]{cc}, after replacing $X$ by a Zariski open neighborhood of $x$, we have a smooth morphism $r_0:X\to \bA^1_k$ associating to a $k$-homomorphism
 \begin{equation}\label{saito's r}
 r_0^\sharp:k[t]\to\Gamma(X,\sO_X),\ \ \ t\mapsto \sum_{i=1}^n\lambda_i y_i+ R(y_1,\ldots, y_n),
 \end{equation}
where $R(y_1,\ldots, y_2)$ is a quadratic form with $k$-coefficients, such that the section $dr_0(X)$ meets $C$ transversely at $(x,\omega)\in \bT^*X$. Notice that $o=r_0(x)$ is the origin of $\bA^1_k$.

Let $\beta_1,\beta_2,\ldots,\beta_{d+1}$ be $d+1$ different elements in $k$. For each $\beta_u$,
we take a deformation $r_u:X\to \bA^1_k$ corresponding to
\begin{equation}\label{r(t)}
r_u^\sharp:k[t]\to\Gamma(X,\sO_X),\ \ \ t\mapsto \sum_{i=1}^n\lambda_i y_i+ R(y_1,\ldots, y_2)+y_1^{2p}+\beta_u y_2^{2p}.
\end{equation}
We have $r_u(x)=o\in\mathbb A^1_k$ for any $1\leq u\leq d+1$. Notice that each $Z_u=r_u^{-1}(o)$ is smooth at $x$. After replacing $X$ by a Zariski neighborhood of $x$, we may assume that each $Z_u$ is irreducible and smooth. Suppose that $Z_u=Z_{u'}$ for two different $u$ and $u'$. Observe that $r_{u'}^\sharp(t)-r_u^\sharp(t)=(\beta_{u'}-\beta_u)y_2^{2p}$. Hence, if $Z_u=Z_{u'}$ for $0\leq u<u'\leq d+1$, we have $Z_u=Z_{u'}=E_2$, where $E_2$ is a smooth Cartier divisor defined by the ideal $(y_2)\subset\Gamma(X,\sO_X)$. However $r_0^\sharp(t)$ is quadratic and $\deg y_1^{2p}=\deg y_2^{2p}=2p>2$, we see that $\ol{r_u^\sharp(t)}=\ol{r_0^\sharp(t)}+\ol{y_1}^{2p}+\beta_u\ol{y_2}^{2p}\neq 0$ in the integral domain $\Gamma(E_2,\sO_{E_2})=\Gamma(X, \sO_X)/(y_2)$, i.e., $E_2\neq Z_u$. Hence, the assumption $Z_u=Z_{u'}$ for $0\leq u<u'\leq d+1$ is not valid. Thus, $Z_1,\ldots, Z_{d+1}$ are $d+1$ different irreducible and smooth closed subscheme of $X$ containing $x$. Since $D$ has only $d$ irreducible components,  one $Z_u$ is not contained in $D$. We denote it by $Z$ and by $r:X\to\bA^1_k$ the $k$-morphism defining $Z$. Since $d(y_1^{2p}+\beta_uy_2^{2p})=2p(y_1^{2p-1}dy_1+\beta_uy_2^{2p-1}dy_2)=0$, the two sections  $dr(X)$ and $dr_0(X)$ of $\bT^*X$ are the same. Hence, $dr(X)$ meets $C$ transversely at $(x,\omega)\in \bT^*X$. Since $dr_x:\bT^*_o\bA^1_k\to \bT^*_xX$  is injective and maps $dt$ to $\omega$, where $\omega\not\in L_x$, the morphism $r:X\to\bA^1_k$ is smooth at $x$ and is $L_x$-transversal at $x$. After replacing $X$ by a Zariski open neighborhood of $x$, we may assume that $r:X\to \bA^1_k$ is smooth. For each $1\leq i\leq d$, $f|_{D_i}:D_i\to\bA^1_k$ is a dominant morphism, hence is flat.

When $n=2$, the morphism $f=r:X\to\bA^1_k$ and $g=\id:\bA^1_k\to\bA^1_k$ satisfies  a) and b). We  treat the case where $n\geq 3$ in the following. We denote by $\varphi:Z\to X$ the canonical injection.  For each $b=1,2,\ldots, r$, we denote by $D'_b$ the effective Cartier divisor of $X$ associated to the ideal $(\lambda'_{b1}y_1+\cdots+\lambda'_{bn}y_n)\subseteq\Gamma(X,\sO_X)$, which is smooth at $x\in X$. Since, for each $b=1,2,\ldots, r$, the two vectors $(\lambda_1,\ldots,\lambda_n)$ and $(\lambda'_{b1},\ldots,\lambda'_{bn})$ in $k^n$ are linearly independent, we see that $\lambda'_{b1}y_1+\cdots+\lambda'_{bn}y_n\in \fm_{Z,x}-\fm^2_{Z,x}\subseteq\sO_{Z,x}=\sO_{X,x}/(r^\sharp(t))$. Hence each $(D'_b)_Z=D'_b\times_XZ$ is an effective Cartier divisor of $Z$ smooth at $x$. We put $D'_Z=\sum_{b=1}^r (D'_b)_Z$. Since each $D_i$ is not contained in $Z$, $D_Z=D\times_XZ$ is an effective Cartier divisor of $Z$ containing $x$. Let $\wt Z$ be the blow-up of $Z$ at $x$, $E_x$ the exceptional divisor, $\wt{D}_Z$ the strict transform of $D_Z$, and $\wt {D'}_Z$ the strict transform of $D'_Z$. We have $\dim_k E_x=n-2 \geq 1$ and the intersections $E_x\bigcap\wt{D}_Z$ and $E_x\bigcap\wt{D'}_Z$ are of codimension $1$ in $E_x$. We take a closed point $s\in E_x-E_x\bigcap\wt{D}_Z-E_x\bigcap\wt{D'}_Z$.  Let $S$ be a smooth $k$-curve and $\iota:S\to Z$ an immersion  such that $x\in \iota(S)$ and that the image of $\partial\iota:\bT_xS\to\bT_xZ$ corresponds to the closed point $s\in E_x$, by the canonical isomorphism $\bP(\bT_xZ)\cong E_x$.  For each $b=1,2,\ldots, r$, the hyperplane $H_b\subseteq \bT_xZ$ associates to $E_x\bigcap(\wt{D'_b})_Z$ is the dual space of the image $d\varphi_x(\langle \theta_b\rangle)\in \bT^*_xZ $, where $d\varphi_x:\bT^*_xX\to\bT^*_xZ$ is the canonical surjection. Then, $s\not\in E_x\bigcap\wt{D'}_Z$ implies that $\mathrm{im}(\partial \iota))\not\subseteq \bigcup_{b=1}^r H_b$, hence, the composition $d\iota_x\circ d\varphi_x:\bT^*_xX\to\bT^*_xZ\to\bT^*_xS$ sends each $\theta_b$ to a non-zero vector in $\bT^*_xS$. In other words, $\iota\circ\varphi:S\to X$ is $\langle \theta_b\rangle$-transversal at $x$ for each $b=1,2,\ldots, r$. Hence, $\iota\circ\varphi:S\to X$ is $L_x$-transversal at $x$.  The strict transform $\wt S$ of $S$ in $\wt Z$ contains the closed point $s$. Hence $s\not\in E_x\bigcap \wt{D}_Z$ implies that $\wt S\not\subseteq \wt{D}_Z$ and moreover, that $S\not\subseteq D_Z$. After shrinking $X$, we may further assume that $\iota:S\to Z$ is a closed immersion and that $S\bigcap D=x$.  Consider the surjective local homomorphisms of regular local $k$-algebras $\varphi^\sharp:\sO_{X,x}\to\sO_{Z,x}$ and $\iota^{\sharp}:\sO_{Z,x}\to\sO_{S,x}$. Choose a regular system of parameters $u_1, u_2,\ldots,u_n$ of $\sO_{X,x}$ such that $u_1$ is the image of $r^{\sharp}(t)\in\Gamma(X,\sO_X)$ in $\sO_{X,x}$ \eqref{r(t)} and $(u_1,\ldots,u_{n-1})$ is the kernel of surjection $\iota^\sharp\circ\varphi^\sharp:\sO_{X,x}\to\sO_{S,x}$. We define two $k$-homomorphisms $g^\sharp:k[t]\to k[t_1,\ldots, t_{n-1}]$ and $f^\sharp:k[t_1,\ldots, t_{n-1}]\to\sO_{X,x}$ by
\begin{align*}
&g^\sharp:k[t]\to k[t_1,\ldots, t_{n-1}],\ \ \ t\mapsto t_1;\\
&f^\sharp:k[t_1,\ldots, t_{n-1}]\to\sO_{X,x},\ \ \ t_i\mapsto u_i.
\end{align*}
In a Zariski open neighborhood of $x$ in $X$, they induce two smooth morphisms among $k$-schemes $f:X\to \bA^{n-1}_k$ and $g:\bA^{n-1}_k\to \bA^1_k$ satisfying:
\begin{itemize}
\item[i)] Denoting by $O$ the origin of $\bA^{n-1}_k$, the morphism $f:X\to \bA^{n-1}_k$ is smooth at $x$ with $f^{-1}(O)=S$. Since $S\bigcap D=x$, the restriction $f|_{D_i}:D_i\to \bA^{n-1}_k$ is flat at $x$ for each $1\leq i\leq d$ (cf. \cite[5, I, Chapitre 0, 15.1.16]{EGA4}). Since each $D_i$ is of dimension $n-1$,  flat morphisms $f|_{D_i}:D_i\to \bA^{n-1}_k$ are quasi-finite in an open neighborhood of $x$.
\item[ii)]
The morphism $g:\bA^{n-1}_k\to \bA^1_k$ is the projection to the first coordinate, hence is smooth.
\item[iii)] The morphism $f:X\to \bA^{n-1}_k$ is $L_x$-transversal at $x$ since $\iota\circ \varphi:S\to X$ is $L_x$-transversal at $x$.
\item[iv)]
Notice that $g\circ f=r:X\to\bA^1_k$. Hence $g\circ f:X-\{x\}\to\bA^1_k$ is $C$-transversal and the section $d(g\circ f)(X)$ meets $C$ transversely at $(x,\omega)$.
\end{itemize}
Conditions i) - iv) implies that, after replacing $X$ by a Zariski open neighborhood of $x$, $f:X\to \bA^{n-1}_k$ and $g:\bA^{n-1}_k\to \bA^1_k$ satisfy a) and b).

(2) By the proof of \cite[Proposition 5.19]{cc}, we are in one of the following two situations when $p=2$:
\begin{itemize}
\item[i)]
We have a smooth morphism $r_0:X\to \bA^1_k$ defined by a quadratic polynomial in terms of a standard local coordinate of $X$ at $x$ as in \eqref{saito's r} such that the section $dr_0(X)$ meets $C$ transversely at $(x,\omega)\in \bT^*X$;
\item[ii)] We replace $X$, $C$ and $(x,\omega)$ by $\bA^1_X$, $q^{\circ}C$ and $((x,0),q^\circ\omega)$, respectively. We have a smooth morphism $r'_0:\bA^1_X\to \bA^1_k$ defined by a quadratic polynomial in terms of a standard local coordinate of $\bA^1_X$ at $(x,0)$ as in \eqref{saito's r} such that the section $dr'_0(\bA^1_X)$ meets $q^{\circ}C$ transversely at $((x,0),q^\circ\omega)\in \bT^*\bA^1_{X}$.
\end{itemize}
In any case, we adapt the same argument as in (1) to obtain (2).
\end{proof}

In the following of this section, let $\Lambda$ denote a finite field of characteristic $\ell$ $(\ell\neq p)$ containing a primitive $p$-th root of unity.

\begin{definition}\label{defCpure}
Let $C$ be a closed conical subset of $\bT^*X$ such that $B(C)=D$ and that $\dim_{k(\ol x)}C_{\ol x}=1$ for any geometric point $\ol x\to D$.
Let $\sF$ be a locally constant and constructible sheaf of $\Lambda$-modules on $U$.
We say the ramification of $\sF$ along $D$ is $C$-{\it pure by restricting to curves} if $\sF$ satisfies the following condition:

$(\star)$ For any element $(S, h:S\to X, x)$ in $\mathcal Q(X,D)$ such that $h:S\to X$ is $C$-transversal at $s=h^{-1}(x)$, the  equality
\begin{equation*}
\dt_s(\sF|_{S_0})=m_s(h^*\DT_X(j_!\sF))=\sum_{i=1}^m m_s(h^*D_i)\cdot \dt_{D_i}(\sF)
\end{equation*}
holds.
\end{definition}

\begin{lemma}\label{lemma_curve_pure}
Let $C$ be a closed conical subset of $\bT^*X$ such that $B(C)=D$ and that $\dim_{k(\ol x)}C_{\ol x}=1$ for any geometric point $\ol x\to D$. Then,

(1) Let $g:Z\to X$ be a smooth morphism between connected and smooth $k$-schemes. If an element $(S,h:S\to Z,z)$ of $\cQ(Z, g^*D)$ is $g^\circ C$-transversal at $s=h^{-1}(z)$, then $(S, g\circ h:S\to X, g(z))$ is an element of $\cQ(X,D)$ which is $C$-transversal at $s$.

(2) Let $\sF$ be a locally constant and constructible sheaf of $\Lambda$-modules on $U$.
If the ramification of $\sF$ along $D$ is $C$-pure by restricting to curves, then, for any smooth morphism $g:Z\to X$, that of $g_U^*\sF$ along $g^*D$ is $g^{\circ}C$-pure by restricting to curves (where $g_U:g^{-1}(U)\to U$). Conversely, if, for a smooth and surjective morphism $g:Z\to X$, the ramification of $g_U^*\sF$ along $g^*D$ is $g^{\circ}C$-pure by restricting to curves (where $g_U:g^{-1}(U)\to U$), then that of $\sF$ along $D$ is $C$-pure by restricting to curves.
\end{lemma}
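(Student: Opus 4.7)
For (1) the task is a pure unwinding of definitions. The point $g(z)$ is a closed point of $D$ because $g$ is of finite type and $z\in g^*D$ maps into $D$. If $s'\in S$ satisfies $(g\circ h)(s')=g(z)$, then $h(s')\in g^{-1}(D)=g^*D$ set-theoretically, and by the defining condition of $\cQ(Z,g^*D)$ this forces $h(s')=z$, hence $s'=s$. For the $C$-transversality of $g\circ h$ at $s$, the factorisation $d(g\circ h)_s=dh_s\circ dg_z$, the injectivity of $dg_z$ coming from the smoothness of $g$, and the $g^\circ C$-transversality of $h$ at $s$ together force any $w\in C_{g(z)}\cap\ker d(g\circ h)_s$ to vanish. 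Since $\dim C_{g(z)}=1$, the same calculation gives $d(g\circ h)_s\neq 0$, so $g\circ h$ is non-constant on the irreducible curve $S$ and is therefore quasi-finite.

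For the forward direction of (2), I would take any $(S,h,z)\in\cQ(Z,g^*D)$ with $h$ being $g^\circ C$-transversal at $s=h^{-1}(z)$, apply (1) to produce the triple $(S,g\circ h,g(z))\in\cQ(X,D)$ which is $C$-transversal at $s$, and apply the assumed $C$-purity of $\sF$ to this triple. The translation from $\sF$-invariants to $g_U^*\sF$-invariants rests on two standard consequences of $g$ being smooth: first, $g^*D_i$ is reduced for each $i$, so writing $g^*D_i=\sum_{E\subset g^*D_i}E$ one has $m_s((g\circ h)^*D_i)=\sum_E m_s(h^*E)$; second, for each component $E$ of $g^*D_i$ the local extension of complete discrete valuation rings $\wh\sO_{X,\xi_{D_i}}\to \wh\sO_{Z,\xi_E}$ is unramified, hence by the compatibility of Abbes--Saito ramification invariants with unramified extensions one has $\dt_E(g_U^*\sF)=\dt_{D_i}(\sF)$. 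Together with the identity $(g_U^*\sF)|_{S_0}=(g\circ h)^*\sF|_{S_0}$, these two reformulate the $C$-purity identity of $\sF$ as exactly the $g^\circ C$-purity identity of $g_U^*\sF$.

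For the converse the main idea is to lift a curve in $X$ to a curve in $Z$ compatibly. Given $(S,h,x)\in\cQ(X,D)$ which is $C$-transversal at $s=h^{-1}(x)$, I would pick a closed point $z\in g^{-1}(x)$ (possible by surjectivity of $g$), form the fiber product $W=S\times_X Z$ with projections $p_1:W\to S$ and $p_2:W\to Z$, and exploit the smoothness of $p_1$ (a base change of $g$) to build a section through the prescribed point $(s,z)$. Concretely, sections of smooth morphisms exist étale-locally on the base, so after shrinking there exist a connected affine étale morphism $\alpha:\wt S\to S$ from a smooth $k$-curve with $\alpha^{-1}(s)=\{\wt s\}$ and a morphism $\sigma:\wt S\to W$ satisfying $p_1\circ\sigma=\alpha$ and $\sigma(\wt s)=(s,z)$. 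Setting $h'=p_2\circ\sigma$, the identity $g\circ h'=h\circ\alpha$ yields $(h')^{-1}(g^*D)=\alpha^{-1}(s)=\{\wt s\}$ and places $(\wt S,h',z)$ in $\cQ(Z,g^*D)$; the $g^\circ C$-transversality of $h'$ at $\wt s$ follows immediately from the $C$-transversality of $h$ at $s$ and from $d\alpha_{\wt s}$ being an isomorphism.

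I would then apply the $g^\circ C$-purity of $g_U^*\sF$ to $(\wt S,h',z)$ and reverse the two identities used in the forward direction, using in addition that the étale base change along $\alpha$ preserves both total dimensions of pullback sheaves and multiplicities of Cartier divisors at $\wt s$; this recovers the equality $\dt_s(\sF|_{S_0})=\sum_i m_s(h^*D_i)\cdot \dt_{D_i}(\sF)$, which is the required $C$-purity identity for $\sF$ at $(S,h,x)$. The only step that is not purely formal is the construction of the étale-local section $\sigma$ through a prescribed point, which is exactly what the smoothness of $p_1$ provides; the rest of the argument is a bookkeeping of divisor multiplicities combined with the standard compatibility of Abbes--Saito ramification theory with unramified extensions of complete discrete valuation rings induced by smooth morphisms.
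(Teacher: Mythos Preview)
Your proposal is correct and follows essentially the same approach as the paper. The only noticeable difference is in the converse of (2): the paper lifts the curve $S$ to $Z$ by locally factoring the smooth morphism $g$ as an \'etale map $Z\to\bA^n_X$ and then taking a connected component of the fiber product with $S$, whereas you invoke directly the \'etale-local existence of sections of the smooth base change $S\times_X Z\to S$ through the prescribed point $(s,z)$; these are equivalent standard manipulations and the resulting lifted curve plays the identical role in both arguments.
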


\begin{proof}
 (1) We observe that $x=g(z)=(g\circ h)(S)\bigcap D$ is a single point of $X$ and $x\neq (g\circ h)(S)$. The composition $g\circ h:S\to X$ is quasi-finite, since the morphism $\ol{\pr}_2\circ h:S\to X$ is of finite type, $S$ is an irreducible smooth $k$-curve and $(\ol{\pr}_2\circ h)(S)\subseteq X$ is irreducible but not a single point. Observe further that $s=h^{-1}(z)=(g\circ h)^{-1}(x)$ is a closed point of $S$. Thus, $(S,g\circ h:S\to X,x)$ is an element of $\cQ(X,D)$. If $h:S\to Z$ is $g^{\circ}C$-transversal at $s$, then $g\circ h:S\to X$ is $C$-transversal at $s$, since $g:Z\to X$ is smooth.

(2) For a smooth morphism $g:Z\to X$ and an element $(S,h:S\to Z,z)$ of $\cQ(Z, g^*D)$ which is $g^\circ C$-transversal at $s=h^{-1}(z)$, we showed that $(S,g\circ h:S\to X,x)$ is an element of $\cQ(X,D)$ and that $g\circ h:S\to X$ is $C$-transversal at $s$, where $x=g(z)$. By the assumption that the ramification of $\sF$ along $D$ is $C$-pure by restricting to curves, we have
\begin{align}\label{dt=prh}
\dt_s((g_U^*\sF)|_{S_0})=\dt_s(\sF|_{S_0})=m_s((g\circ h)^*(\DT_X(j_!\sF))).
\end{align}
By \cite[Proposition 3.8]{wr}, we have
\begin{equation}\label{prdt=dt}
g^*(\DT_X(j_!\sF))=\DT_{Z}(j_{U!}g_U^*\sF),
\end{equation}
where $j_U:g^{-1}(U)\to Z$ is the canonical injection.
Combining \eqref{dt=prh} and \eqref{prdt=dt}, we obtain
\begin{align*}
\dt_s((g_U^*\sF)|_{S_0})&=m_s(h^*g^*(\DT_X(j_!\sF)))\\
&=m_s(h^*(\DT_Z(j_{U!}g_U^*\sF))).
\end{align*}
By definition, the ramification of $g_U^*\sF$ along $g^*D$ is $g^{\circ}C$-pure by restricting to curves.

Conversely, let $g:Z\to X$ be a surjective and smooth morphism and we assume that the ramification of $g_U^*\sF$ along $g^*D$ is $g^\circ D$-pure by restricting to curves. We are sufficient to show that, for any element $(S,h:S\to X,x)$ of $\cQ(X,D)$ such that $h:S\to X$ is $C$-transversal at $s=h^{-1}(x)$, we have
\begin{equation*}
\dt_{s}(\sF|_{S_0})=m_{s}(h^*(\DT_X(j_!\sF))).
\end{equation*}
We fix such an element $(S,h:S\to X,x)$.  After shrinking $Z$ by an open neighborhood of a closed point $z$ of $g^{-1}(x)$, we have an \'etale morphism $g':Z\to \bA^n_X$. Let $v$ be the image of $z$ in $\bA^n_k$ by the composition $g':Z\to \bA^n_X$ and the canonical projection $\pr:\bA^n_X\to\bA^n_k$. The quasi-finite morphism $h:S\to X$ lifts to $h':S\otimes_kk(v)\to \bA^n_X$ by the universal property of fiber products. Let $T$ be a connected component $(S\otimes_kk(v))\times_{\bA^n_X}Z$ which contains a closed point $t$ mapping to $z$ in $Z$ and mapping to $s$ in $S$. We denote by $h_T:T\to Z$ the canonical quasi-finite morphism. We observe that the lifting $h_T:T\to Z$ is $g^{\circ}C$-transversal at $t$ and that $(T,h_T:T\to Y,t)$ is an element of $\cQ(Y, g^*D)$. By the assumption that the ramification of $g_U^*\sF$ along $g^*D$ is $g^{\circ}C$-pure by restricting to curves and \eqref{prdt=dt}, we have
\begin{align*}
\dt_{s}(\sF|_{S_0})&=\dt_{t}(\sF|_{{T}-\{t\}})=\dt_{t}((g_U^*\sF)|_{{T}-\{t\}})\\
&=m_{t}(h_T^*(\DT_{Z}(g^*j_!\sF)))\\
&=m_{t}(h_T^*g^*(\DT_X(j_!\sF)))\\
&=m_{s}(h^*(\DT_X(j_!\sF))).
\end{align*}
It finishes the proof the converse part.
\end{proof}

\begin{proposition}\label{equivCPandP}
Let $C$ be a closed conical subset of $\bT^*X$ such that $B(C)=D$ and that $\dim_{k(\ol x)}C_{\ol x}=1$ for any geometric point $\ol x\to D$. Let $\sF$ be a locally constant and constructible sheaf of $\Lambda$-modules on $U$. Then, the following two conditions are equivalent:
\begin{itemize}
\item[(1)] The ramification of $\sF$ along $D$ is $C$-pure by restricting to curves.
\item[(2)] We have $SS(j_!\sF)\subseteq \bT^*_XX\bigcup C$. In particular, the ramification of $\sF$ along $D$ is pure (Definition \ref{definitionpure}).
\end{itemize}
\end{proposition}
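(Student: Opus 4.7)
The plan is to analyze both conditions through the behavior of the characteristic cycle under pullback to curves. Recall that $SS(j_!\sF)$ is purely $n$-dimensional (Theorem~\ref{BeiSS}), contains $[\bT^*_XX]$ as a component (since $\sF|_U$ is locally constant of positive generic rank), and has all other components based in $D$.

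For $(2)\Rightarrow(1)$, fix $(S,h:S\to X,x)\in\cQ(X,D)$ with $h$ being $C$-transversal at $s=h^{-1}(x)$. Over $S_0$ the transversality of $h$ to $SS(j_!\sF)$ is automatic since $SS(j_!\sF)|_U=\bT^*_UU$; at $s$, transversality to $\bT^*_XX$ is automatic while $C$-transversality forces $SS(j_!\sF)$-transversality by hypothesis $(2)$. Because $\dim S=1$ and $\dim SS(j_!\sF)=n$, $h$ is properly $SS(j_!\sF)$-transversal, so by \ref{propCtrans} we have $CC(h^*j_!\sF)=h^!CC(j_!\sF)$. Reading off the coefficient of $[\bT^*_sS]$ on both sides via the curve formula in \ref{propSSCC} and the decomposition of $CC(j_!\sF)$ along $\bT^*_XX$ and the components of $SS(j_!\sF)$ based in $D$ yields the identity $\dt_s(\sF|_{S_0})=m_s(h^*\DT_X(j_!\sF))$, which is condition $(\star)$.

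For $(1)\Rightarrow(2)$, argue by contradiction. After reducing to $k=\overline{k}$ via Lemma~\ref{lemma_curve_pure}(2), suppose some irreducible component $C_0$ of $SS(j_!\sF)$ is not contained in $\bT^*_XX\cup C$; then $B(C_0)\subseteq D$ and $\dim C_0=n$. Choose a smooth closed point $(x,\omega)\in C_0$ with $\omega\neq 0$ and $\omega\notin C_x$, available by genericity since $(C_0)_x$ is $\geq 1$-dimensional at a generic $x\in B(C_0)$ while $C_x$ is a finite union of lines not exhausting $(C_0)_x$. Apply Proposition~\ref{tranfibers} with $L_x=C_x$ and with the proposition's $C$ taken to be $C_0$ (using the second alternative combined with Lemma~\ref{lemma_curve_pure}(2) when $p=2$): we obtain smooth morphisms $f:X\to\bA^{n-1}_k$ and $g:\bA^{n-1}_k\to\bA^1_k$ with each $f|_{D_i}$ quasi-finite and flat, $f$ $C_x$-transversal at $x$, and the section $d(g\circ f)(X)$ meeting $C_0$ transversely at $(x,\omega)$. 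After shrinking $X$, the fiber $S:=f^{-1}(O)$ over the origin provides an element $(S,h:S\hookrightarrow X,x)\in\cI(X,D)\subseteq\cQ(X,D)$ with $h$ $C$-transversal at $x$; by $(\star)$ we would then have $\dt_s(\sF|_{S_0})=m_s(h^*\DT_X(j_!\sF))$. On the other hand, after further shrinking $X$ so that $g\circ f$ is $SS(j_!\sF)$-transversal on $X-\{x\}$, the Milnor formula (Theorem~\ref{SaCC}) computes $\sum_i(-1)^i\dt(R^i\Phi_{\overline{x}}(j_!\sF,g\circ f))$ as $-(CC(j_!\sF),[d(g\circ f)(X)])_{\bT^*X,x}$. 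Since $j_!\sF[n]$ is perverse (the open immersion $j$ is affine, as $D$ is a Cartier divisor), the coefficient of $[C_0]$ in $CC(j_!\sF[n])$ is strictly positive by \ref{propSSCC}, and the transverse intersection of $d(g\circ f)(X)$ with $C_0$ at $(x,\omega)$ thus contributes a strictly positive summand to the Milnor intersection number. Translating this excess via base change of vanishing cycles along the smooth map $f$ to the curve $S=f^{-1}(O)$ yields the strict inequality $m_s(h^*\DT_X(j_!\sF))>\dt_s(\sF|_{S_0})$, contradicting $(\star)$.

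The main obstacle lies in the direction $(1)\Rightarrow(2)$: turning the geometric presence of the extra component $C_0$ into a strictly positive numerical discrepancy. Two subtleties must be handled carefully. First, one must refine Proposition~\ref{tranfibers} so that $g\circ f$ is $SS(j_!\sF)$-transversal (not merely $C_0$-transversal) on $X-\{x\}$; this should be achievable by a generic perturbation of $g$ together with a base shrinking argument, using that the remaining components of $SS(j_!\sF)$ different from $C_0$ are also based in $D$. Second, one must identify the Milnor contribution of $C_0$ at $x$ with the discrepancy $m_s(h^*\DT_X(j_!\sF))-\dt_s(\sF|_{S_0})$ along $S$, which requires controlling base change of nearby cycles along the smooth map $f$ at the point $x$ where $f$ fails to be $SS(j_!\sF)$-transversal precisely in the $C_0$-direction.
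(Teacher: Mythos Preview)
For $(2)\Rightarrow(1)$ your route via $CC(h^*j_!\sF)=h^!CC(j_!\sF)$ is different from the paper's and can be made to work, but ``reading off the coefficient'' hides a computation: you need that each irreducible component $\bar C_j\subseteq C$ of $SS(j_!\sF)$ with base $D_i$ contributes $e_j\,m_s(h^*D_i)$ to the coefficient of $[\bT^*_sS]$ in $h^!CC(j_!\sF)$ (with $e_j$ the generic degree of $\bP(\bar C_j)\to D_i$), and that $\sum_j a_je_j=\dt_{D_i}(\sF)$ from the generic formula \eqref{wrcbcc}. This follows from compatibility of refined Gysin pullback with proper pushforward along $\bP(\bT^*X)\to X$, but is not automatic at degenerate points $x\in D$. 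The paper instead puts $S$ in a smooth fibration $f:X\to\bA^{n-1}_k$, notes that $f$ is $SS(j_!\sF)$-transversal (hence universally locally acyclic), and invokes \cite[Th\'eor\`eme~2.1.1]{lau} to equate $\dt_x(\sF|_{S_0})$ with the value on a non-degenerate fiber, where \cite[Corollary~3.9]{wr} applies directly.

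For $(1)\Rightarrow(2)$ there is a genuine gap. Your obstacle~1 does dissolve by shrinking: since $(x,\omega)$ lies off the other components of $SS(j_!\sF)$, a neighborhood of $(x,\omega)$ in $\bT^*X$ misses them, and combined with $d(g\circ f)(X)\cap C_0=\{(x,\omega)\}$ this yields $SS(j_!\sF)$-transversality of $g\circ f$ on $X\setminus\{x\}$ after shrinking. But obstacle~2 is real: there is no direct mechanism translating the positive Milnor number $(CC(j_!\sF),[d(g\circ f)(X)])_x$ into a strict inequality $m_s(h^*\DT_X(j_!\sF))>\dt_s(\sF|_{S_0})$ on the single curve $S=f^{-1}(O)$. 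The paper runs the contradiction the other way, and the ingredient you are missing is the \emph{converse} in Deligne--Laumon \cite[Th\'eor\`eme~2.1.1]{lau}. After shrinking so that $f$ is $C$-transversal, condition $(\star)$ applied to \emph{every} fiber $X_a=f^{-1}(a)$ gives $\sum_{t\in D_a}\dt_t(\sF|_{U_a})=\sum_i\dt_{D_i}(\sF)\cdot\deg(D_i/\bA^{n-1}_k)$, which is independent of $a$; then \cite{lau} forces $f$ (hence $g\circ f$, as $g$ is smooth) to be universally locally acyclic relative to $j_!\sF$ near $x$, so $R\Phi_x(j_!\sF,g\circ f)=0$, and the Milnor formula now gives $b'=0$, contradicting perversity.
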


\begin{proof}
We may assume that $k$ is algebraically closed.

(1)$\Rightarrow$(2). Suppose that there exists an irreducible component $C'$ of $SS(j_!\sF)$ which is not contained in $\bT^*_XX\bigcup C$. Notice that $B(C')\subseteq D$. Let $(x,\omega)\in C'(k)$ be a closed smooth point of $C'$ which is neither contained in other irreducible components of $SS(j_!\sF)$ nor contained in $C$. Let $b'$ be the coefficient of $[C']$ in $CC(j_!\sF)$. Since $j_!\sF[n]$ is a perverse sheaf on $X$, $b'$ is a non-zero integer.

We firstly consider the case where $p\geq 3$. After replacing $X$ by a Zariski open neighborhood of $x$, we have two morphisms $f:X\to \bA^{n-1}_k$ and $g:\bA^{n-1}_k\to \bA^1_k$ satisfying (Proposition \ref{tranfibers}):
\begin{itemize}
\item[a)]  they are smooth and $f|_{D_i}:D_i\to \bA^{n-1}_k$ is quasi-finite and flat, for each $D_i$ containing $x$;
\item[b)] the morphism $f:X\to \bA^{n-1}_k$ is $C$-transversal, the composition $g\circ f:X-\{x\}\to\bA^1_k$ is $C'$-transversal and the section $d(g\circ f)(X)$ meets $C'$ transversely at $(x,\omega)$ (cf. \ref{sectiondf}).
\end{itemize}
 Since $x$ is an isolated point in $f^{-1}(f(x))\bigcap D$, there exists an affine and connected \'etale neighborhood $A'$ of $f(x)\in\bA^{n-1}_k$, a closed point $x'\in X'=X\times_{\bA^n_k}A'$ above $x$ and an open and closed subscheme $D'$ of $D\times_{\bA^n_k}A'$ such that $D'$ is finite and flat over $A'$ and that $f'^{-1}({f'(x')})\bigcap D'=\{x'\}$, where $f':X'\to A'$ is the base change of $f:X\to\bA^{n-1}_k$ (\cite[IV, 18.12.1]{EGA4}).
We have the following commutative diagram
\begin{equation}\label{squareDL1}
\xymatrix{\relax
D'\ar[r]^{\iota'}\ar[d]_{\psi_D}&X'\ar[d]_{\psi_X}\ar[r]^{f'}\ar@{}|-{\Box}[rd]&A'\ar[d]^{\psi}\\
D\ar[r]_{\iota}&X\ar[r]_-(0.5)f&\bA^{n-1}_k}
\end{equation}
Let $I$ be the subset of $\{1,\ldots,d\}$ consisting of all $i$ for $x\in D_i$. We put $D'_i=D'\bigcap(D_i\times_{\bA^{n-1}_k}A')$ and we see that  $D'_i$ is finite and flat over $A'$ for each $i\in I$.  Since $f:X\to\bA^{n-1}_k$ is $C$-transversal and the right square of \eqref{squareDL1} is Cartesian, the composition of $\iota_{a'}:X'_{a'}\to X'$ and $\psi_X:X'\to X$ is a closed immersion and is $C$-transversal for any closed point $a'\in A'$, where $X'_{a'}=f'^{-1}(a')=X\times_{\bA^{n-1}}a'$. We put $U'=X'\times_XU$, put $U'_{a'}=U'\times_{A'}a'$, put $D'_{a'}=D'\times_{A'}a'$ and put $D_{i,a'}=D'_i\times_{A'}a'$. Since the ramification of $\sF$ along $D$ is $C$-pure by restricting to curves, we have
 \begin{align*}
 \sum_{t\in D'_{a'}}\dt_t(\sF|_{U'_{a'}})&=\sum_{i\in I}\sum_{t\in D'_{i,a'}}m_t((\psi_X\circ\iota_{a'})^*D_i)\cdot\dt_{D_i}(\sF)\\
 &=\sum_{i\in I}\sum_{t\in D'_{i,a'}}m_t(\iota_{a'}^*D'_i)\cdot\dt_{D_i}(\sF)\\
 &=\sum_{i\in I}m_{x'}(\iota_{f'(x')}^*D'_i)\cdot\dt_{D_i}(\sF)\\
 &=\dt_{x'}(\sF|_{U'_{f'(x')}})
 \end{align*}
 for any $a'\in A'(k)$. Hence, the following function
 \begin{equation*}
 \varphi: A'(k)\to \bZ,\ \ \ a'\mapsto \sum_{t\in D'_{a'}}\dt_t(\sF|_{U'_{a'}})
 \end{equation*}
is constant. By \cite[Th\'eor\`eme 2.1.1]{lau}, the morphism $f':X'\to A'$ is universally locally acyclic with respect to $\psi^*_Xj_!\sF$ in a Zariski open neighborhood of $D'$ in $X'$. Therefore, $f:X\to\bA^{n-1}_k$ is universally locally acyclic with respect to $j_!\sF$ in a Zariski open neighborhood of $x$ in $X$. Since $g:\bA^{n-1}_k\to \bA^1_k$ is smooth, it is universally locally acyclic with respect to the constant sheaf $\Lambda$ on $\bA^{n-1}_k$. We conclude that the composition $g\circ f:X\to \bA^1_k$ is universally locally acyclic with respect to $j_!\sF$ in a Zariski open neighborhood of $x\in X$ (\cite[Lemma 7.7.6]{fu}). Hence $\rR\Phi_x(j_!\sF,g\circ f)=0$. Notice that $d(g\circ f)(X)$ meets $C'$ transversely at $(x,\omega)\in \bT^*X$ and $(x,\omega)$ is not contained in other irreducible components of $SS(j_!\sF)$, we have ({\cite[Theorem 7.6]{cc}})
\begin{align}
b'=(b'[C'],[d(g\circ f)(X)])_{\bT^*X,x}&=(CC(j_!\sF),[d(g\circ f)(X)])_{\bT^*X,x}\\
&=-\sum_i(-1)^i\dt(\rR^i\Phi_{x}(j_!\sF,g\circ f))=0\nonumber
\end{align}
Hence, when $p\geq 3$, we obtain $b'=0$. It contradicts to our assumption that $C'$ is an irreducible component of $SS(j_!\sF)$. Thus, (1)$\Rightarrow$(2) is valid when $p\geq 3$.

When $p= 2$, we are left to consider the exceptional case by Proposition \ref{tranfibers}. We suppose that there exists an irreducible component $C'$ of $SS(j_!\sF)$ which is not contained in $\bT^*_XX\bigcup C$. Notice that $B(C')\subseteq D$. Let $(x,\omega)\in C'(k)$ be a closed smooth point of $C'$ which is neither contained in other irreducible components of $SS(j_!\sF)$ nor contained in $C$. Let $b'$ be the coefficient of $[C']$ in $CC(j_!\sF)$. Since $j_!\sF[n]$ is a perverse sheaf on $X$, $b'$ is a non-zero integer.
 Let $q:\bA^1_X\to X$ be the canonical projection. By \cite[Theorem 7.6]{cc}, the multiplicity of the cycle $[q^\circ C']$ in $CC(q^*j_!\sF)$ is $-b'$ and $((x,0),q^\circ\omega)$ is a smooth locus of $q^\circ C'$ and is neither contained in $q^\circ C$ nor contained in other irreducible components of $SS(q^*j_!\sF)$. After replacing $\bA^1_X$ by a Zariski open neighborhood of $(x,0)$, we have two morphisms $f:\bA^1_X\to \bA^{n}_k$ and $g:\bA^{n}_k\to \bA^1_k$ satisfying (Proposition \ref{tranfibers}):
\begin{itemize}
\item[a)]  they are smooth and $f|_{\bA^1_{D_i}}:\bA^1_{D_i}\to \bA^{n}_k$ is quasi-finite and flat, for each $D_i$ containing $x$;
\item[b)] the morphism $f:\bA^1_X\to \bA^n_k$ is $q^\circ C$-transversal, the composition $g\circ f:\bA^1_X-\{(x,0)\}\to\bA^1_k$ is $q^{\circ} {C'}$-transversal and the section $d(g\circ f)(\bA^1_X)$ meets $q^\circ C'$ transversely at $((x,0),q^\circ\omega)$ where we consider $q^\circ\omega$ as a vector of $\bT^*_{(x,0)}\bA^1_X$ by the canonical injection $dq:\bT^*X\times_X\bA^1_X\to\bT^*\bA^1_X$.
\end{itemize}
By Lemma \ref{lemma_curve_pure}, we see that the ramification of $\sF|_{\bA^1_U}$ along $\bA^1_D$ is $q^\circ C$-pure by restricting to curves. Applying the same strategy as in the $p\geq 3$ case to the sheaf $\sF|_{\bA^1_U}$ ramified along $\bA^1_D$, we can prove $b'=0$. It contradicts to our assumption that $C'$ is an irreducible component of $SS(j_!\sF)$. Hence, we also prove that (1)$\Rightarrow$(2) is valid when $p=2$.

(2)$\Rightarrow$(1). Let $(S, h:S\to X, x)$ be an element of $\cQ(X,D)$ such that $h:S\to X$ is $C$-transversal at $s=h^{-1}(x)$. The morphism $h:S\to X$ is the composition of the graph $\Gamma_h:S\to X\times_kS$ and the projection $\pr_1:X\times_kS\to X$. Notice that $\Gamma_h:S\to X\times_kS$ is $\pr_1^\circ(\bT^*_XX\bigcup C)$-transversal. By \cite[Theorem 7.6]{cc}, we have $SS(\pr_1^*j_!\sF)\subseteq \pr_1^\circ(\bT^*_XX\bigcup C)$. Since $\pr_1$ is smooth, we have $\pr_1^*(\DT_X(j_!\sF))=\DT_{X\times_kS}(\pr_1^*j_!\sF)$. Hence, to show (1), it is sufficient to show
\begin{equation*}
  \dt_s(\sF|_{S_0})=m_s(\Gamma_h^*(\DT_{X\times_kS}(\pr_1^*j_!\sF))).
\end{equation*}
Thus, we are reduced to verify the condition $(\star)$ in Definition \ref{defCpure} for all elements in $\cI(X,D)$.

Let $(S, h:S\to X, x)$ be an element of $\cI(X,D)$ such that $h:S\to X$ is $C$-transversal at $x$.
After replacing $X$ by an open neighborhood of $x$, we may assume that there is a smooth relative curve $f:X\to\bA^{n-1}_k$ with $f^{-1}(f(x))=S$. Since $h:S\to X$ is $C$-transversal at $x$, the morphism $f:X\to\bA^{n-1}_k$ is $C$-transversal at $x$. After replacing $X$ by a Zariski neighborhood of $x$, we may assume that $f:X\to\bA^{n-1}_k$ is $C$-transversal.
 Since $SS(j_!\sF)\subseteq \bT^*_XX\bigcup C$, the smooth morphism $f:X\to\bA^{n-1}_k$ is $SS(j_!\sF)$-transversal.
Hence $f:X\to\bA^{n-1}_k$ is universally locally acyclic with respect to $j_!\sF$.  Since $x$ is an isolated point in $f^{-1}(f(x))\bigcap D$, there exists an affine and connected \'etale neighborhood $A'$ of $f(x)\in\bA^{n-1}_k$, a closed point $x'\in X'=X\times_{\bA^n_k}A'$ above $x$ and an open and closed subscheme $D'$ of $D\times_{\bA^n_k}A'$ such that $D'$ is finite and flat over $A'$ and that $f'^{-1}({f'(x')})\bigcap D'=\{x'\}$, where $f':X'\to A'$ is the base change of $f:X\to\bA^{n-1}_k$ (\cite[IV, 18.12.1]{EGA4}).
We have the following commutative diagram
\begin{equation}\label{squareDL}
\xymatrix{\relax
D'\ar[r]^{\iota'}\ar[d]_{\psi_D}&X'\ar[d]_{\psi_X}\ar[r]^{f'}\ar@{}|-{\Box}[rd]&A'\ar[d]^{\psi}\\
D\ar[r]_{\iota}&X\ar[r]_-(0.5)f&\bA^{n-1}_k}
\end{equation}
Let $I$ be the subset of $\{1,\ldots,d\}$ consisting of all $i$ for $x\in D_i$. We put $D'_i=D'\bigcap(D_i\times_{\bA^{n-1}_k}A')$ and we see that  $D'_i$ is finite and flat over $A'$ for each $i\in I$.  The morphism $f':X'\to A'$ is universally locally acyclic with respect to $\psi^*_Xj_!\sF$ in a Zariski open neighborhood of $D'$ in $X'$. By \cite[Th\'eor\`eme 2.1.1]{lau}, the function
 \begin{equation*}
 \varphi: A'(k)\to \bZ,\ \ \ a'\mapsto \sum_{t\in D'_{a'}}\dt_t(\sF|_{U'_{a'}})
 \end{equation*}
is constant, where $U'=U\times_{X} X'$, $U'_{a'}=U'\times_{A'}a'$ and $D'_{a'}=D'\times_{A'}a'$. Let $Z'$ be a closed subscheme of $D'$ of codimension $1$ such that $D'-Z'$ is smooth and that the ramification of $\sF|_{U'}$ along $D'-Z'$ is non-degenerate. Since $D'$ is finite and flat over $A'$, the image of $Z'$ in $A'$ is of codimension $1$ in $A'$. Let $a$ be a closed point in $A'-f(Z')$. The composition of  $\iota'_a:X'_a=f'^{-1}(a)\to X'$ and $\psi_X:X'\to X$ is a closed immersion and is $C$-transversal (thus $SS(j_!\sF)$-transversal). By \cite[Corollary 3.9]{wr}, we have
\begin{align*}
\dt_x(\sF|_{S_0})&=\sum_{t\in D'_{a}}\dt_t(\sF|_{U'_{a}})=\sum_{i\in I}\sum_{t\in D'_{i,a}}\dt_t(\sF|_{U'_{a}})\\
&=\sum_{i\in I}\sum_{t\in D'_{i,a}}m_t((\psi_X\circ\iota'_a)^* D_i)\cdot\dt_{D_i}(\sF)\\
&=\sum_{i\in I}\sum_{t\in D'_{i,a}}m_t(\iota'^*_a(D'_i))\cdot\dt_{D_i}(\sF)\\
&=\sum_{i\in I}m_x(h^*D_i)\cdot\dt_{D_i}(\sF)=m_x(h^*(\DT_X(j_!\sF))),
\end{align*}
Hence, the ramification of $\sF$ along $D$ is $C$-pure by restricting to curves.
\end{proof}

\begin{corollary}\label{criteriaCP}
Let $C$ be a closed conical subset of $\bT^*X$ such that $B(C)=D$, that $\dim_{k(\ol x)}C_{\ol x}=1$ for any geometric point $\ol x\to D$ and that $\{C\times_XD_i\}_{1\leq i\leq d}$ is the set of irreducible components of $C$. Let $\sF$ be a non-zero locally constant and constructible sheaf of $\Lambda$-modules on $U$.  Then, the ramification of $\sF$ along $D$ is $C$-pure by restricting to curves if and only if
\begin{align}
SS(j_!\sF)&=\bT^*_XX\bigcup C,\label{calSS}
\end{align}
\end{corollary}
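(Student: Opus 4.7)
The plan is to exploit Proposition \ref{equivCPandP}, which already provides the equivalence between $C$-purity by restriction to curves and the inclusion $SS(j_!\sF) \subseteq \bT^*_XX \cup C$. The nontrivial content of Corollary \ref{criteriaCP} is therefore to upgrade this inclusion to the equality \eqref{calSS} under the hypothesis that each $C \times_X D_i$ is an irreducible component of $C$; the converse implication in the corollary is immediate from Proposition \ref{equivCPandP}.

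To establish $\bT^*_X X \subseteq SS(j_!\sF)$, I would use that $\sF$ is a nonzero locally constant sheaf on the smooth scheme $U$. By the computation recalled in \S\ref{propSSCC}, the characteristic cycle over $U$ is $CC(\sF) = (-1)^{\dim U}\rk_{\Lambda}(\sF) \cdot [\bT^*_UU]$, so the coefficient of $[\bT^*_XX]$ in $CC(j_!\sF)$ equals $(-1)^{n}\rk_{\Lambda}(\sF) \neq 0$. Since $j_!\sF[n]$ is perverse, the support of $CC(j_!\sF)$ coincides with $SS(j_!\sF)$, giving $\bT^*_XX \subseteq SS(j_!\sF)$.

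The heart of the argument is to show that for each $i$ the component $C \times_X D_i$ lies in $SS(j_!\sF)$, which I would argue at the generic point $\xi_i$ of $D_i$. After deleting a closed subset of codimension at least $2$ in $X$, I may assume $D$ is smooth and the ramification of $\sF$ along each $D_i$ is nondegenerate, so that the characteristic cycle over this open locus is given by formula \eqref{wrcbcc}. Inspecting the summands of \eqref{wrcbcc} at $\xi_i$: since $\sF$ is nonzero, either the tame part contributes a positive multiple of $[\bT^*_{D_i}X]$ (so $\bT^*_{D_i,\xi_i}X \subseteq SS(j_!\sF)_{\xi_i}$), or some wild character gives a nonzero line $\ol L_{\chi,\xi_i} \subseteq SS(j_!\sF)_{\xi_i}$. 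In either case, $SS(j_!\sF)_{\xi_i} \neq \{0\}$. Since $SS(j_!\sF)_{\xi_i} \subseteq \{0\} \cup C_{\xi_i}$ with $C_{\xi_i}$ a single line, the $\bG_m$-invariance of $SS(j_!\sF)$ forces $C_{\xi_i} \subseteq SS(j_!\sF)_{\xi_i}$. The irreducibility of $C \times_X D_i$ (which is exactly the role of the hypothesis on the components of $C$) together with the closedness of $SS(j_!\sF)$ in $\bT^*X$ then propagates this generic inclusion to $C \times_X D_i \subseteq SS(j_!\sF)$.

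Taking the union over $i$ and combining with the first step yields $SS(j_!\sF) \supseteq \bT^*_XX \cup C$, completing the reverse inclusion. The only subtlety I foresee is ensuring the contribution to $SS(j_!\sF)$ at $\xi_i$ is genuinely nonzero when $\sF$ is totally wild at $\xi_i$; but this is handled by the second summand of \eqref{wrcbcc}, whose coefficients $r\cdot\dim_\Lambda M^{(r)}_\chi/[\kappa(\xi)_\chi:\kappa(\xi)]$ are strictly positive as soon as some slope $r>1$ occurs. No further obstacle arises.
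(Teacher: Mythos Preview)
Your proposal is correct and follows essentially the same approach as the paper's proof: invoke Proposition~\ref{equivCPandP} for both directions, and for the ``only if'' part establish the reverse inclusion by showing that $SS(j_!\sF)$ has a nontrivial fiber over each generic point $\xi_i$, forcing it to contain the irreducible component $C\times_X D_i$. The paper's argument is considerably terser---it simply asserts that $SS(j_!\sF)$ has irreducible components supported on each $D_i$ because $\sF$ is nonzero (hence $j_!\sF$ is not locally constant along $D$)---whereas you spell this out explicitly via formula~\eqref{wrcbcc} and the perversity of $j_!\sF[n]$; but the underlying idea is the same.
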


 The if part is from Proposition \ref{equivCPandP} (2)$\Rightarrow$(1). Since $\sF$ is ramified along $D$, the singular support $SS(j_!\sF))$ contains irreducible components supported on each $D_i$.
We obtain the only if part from Proposition \ref{equivCPandP} (1)$\Rightarrow$(2).

\begin{definition}\label{defCisoclinic}
Let $C$ be a closed conical subset of $\bT^*X$ such that $B(C)=D$ and that $\dim_{k(\ol x)}C_{\ol x}=1$ for any geometric point $\ol x\to D$.
Let $\sG$ be a locally constant and constructible sheaf of $\Lambda$-modules on $U$.
We say the ramification of $\sG$ along $D$ is $C$-{\it isoclinic by restricting to curves} if $\sG$ satisfies the following condition:

$(\ast)$ For any element $(S, h:S\to X, x)$ in $\mathcal Q(X,D)$ such that $h:S\to X$ is $C$-transversal at $s=h^{-1}(x)$,
the ramification of $\sG|_{S_0}$ at $s$ is isoclinic and the equality
\begin{equation*}
\rc_s(\sG|_{S_0})=m_s(h^*\rC_X(j_!\sG))=\sum_{i=1}^m m_s(h^*D_i)\cdot \rC_{D_i}(\sG)
\end{equation*}
holds.
\end{definition}

\begin{lemma}\label{lemma_curve_isoclinic}
Let $C$ be a closed conical subset of $\bT^*X$ such that $B(C)=D$ and that $\dim_{k(\ol x)}C_{\ol x}=1$ for any geometric point $\ol x\to D$. Let $\sF$ be a locally constant and constructible sheaf of $\Lambda$-modules on $U$.
If the ramification of $\sF$ along $D$ is $C$-isoclinic by restricting to curves, then, for any smooth morphism $g:Z\to X$, that of $g_U^*\sF$ along $g^*D$ is $g^{\circ}C$-isoclinic by restricting to curves (where $g_U:g^{-1}(U)\to U$). Conversely, if, for a smooth and surjective morphism $g:Z\to X$, the ramification of $g_U^*\sF$ along $g^*D$ is $g^{\circ}C$-isoclinic by restricting to curves (where $g_U:g^{-1}(U)\to U$), then that of $\sF$ along $D$ is $C$-isoclinic by restricting to curves.
\end{lemma}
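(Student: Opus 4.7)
My plan is to model the proof on that of Lemma \ref{lemma_curve_pure} (2), substituting the total dimension by the conductor and additionally carrying the isoclinic condition through the argument. The key preliminary to establish is the pullback formula for conductor divisors along a smooth morphism $g:Z\to X$:
\begin{equation*}
g^{*}(\rC_{X}(j_{!}\sF))=\rC_{Z}(j_{U!}g_{U}^{*}\sF).
\end{equation*}
This should follow from the smoothness of $g$: for any codimension-one point $\xi'$ of $g^{*}D_{i}$ with image $\xi$ in $D_{i}$, the induced extension $\widehat{\sO}_{X,\xi}\to\widehat{\sO}_{Z,\xi'}$ is unramified, so $\rc_{\xi'}(g_{U}^{*}\sF)=\rc_{D_{i}}(\sF)$ by \eqref{condineq}. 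The same unramified extension preserves the slope decomposition by the equality $G^{r}_{K_{\xi'}}=G^{r}_{K_{\xi}}$ for $r\geq 1$ recalled in subsection \ref{K'/K}, and hence preserves the isoclinic property of the local ramification.

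For the direct implication I would proceed as follows. Take $g:Z\to X$ smooth and $(S,h:S\to Z,z)$ in $\cQ(Z,g^{*}D)$ with $h$ being $g^{\circ}C$-transversal at $s=h^{-1}(z)$. By Lemma \ref{lemma_curve_pure} (1), $(S,g\circ h:S\to X,g(z))$ lies in $\cQ(X,D)$ with $g\circ h$ being $C$-transversal at $s$. Applying the hypothesis on $\sF$ to this triple, and using $\sF|_{S_{0}}=(g_{U}^{*}\sF)|_{S_{0}}$ together with the pullback formula above, yields isoclinicity of $(g_{U}^{*}\sF)|_{S_{0}}$ at $s$ and
\begin{equation*}
\rc_{s}((g_{U}^{*}\sF)|_{S_{0}})=m_{s}((g\circ h)^{*}\rC_{X}(j_{!}\sF))=m_{s}(h^{*}\rC_{Z}(j_{U!}g_{U}^{*}\sF)),
\end{equation*}
which is precisely condition $(\ast)$ for $g_{U}^{*}\sF$ with respect to $g^{\circ}C$.

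For the converse, I would invoke the same lifting procedure as in the proof of Lemma \ref{lemma_curve_pure} (2). Given $(S,h:S\to X,x)$ in $\cQ(X,D)$ with $h$ being $C$-transversal at $s=h^{-1}(x)$, shrink $Z$ about a closed point $z\in g^{-1}(x)$ so as to dispose of an \'etale morphism $g':Z\to\bA^{n}_{X}$; let $v$ be the image of $z$ in $\bA^{n}_{k}$, lift $h$ to $h':S\otimes_{k}k(v)\to\bA^{n}_{X}$, and pick a connected component $T$ of $(S\otimes_{k}k(v))\times_{\bA^{n}_{X}}Z$ with a closed point $t$ above both $z$ and $s$. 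Then $(T,h_{T}:T\to Z,t)$ is an element of $\cQ(Z,g^{*}D)$ with $h_{T}$ being $g^{\circ}C$-transversal at $t$, and the canonical projection $T\to S$ is \'etale (the composition of two \'etale morphisms, using that $k$ is perfect). Applying the hypothesis on $g_{U}^{*}\sF$ to $(T,h_{T},t)$ and transporting the isoclinic property and the conductor identity across the unramified extension $\widehat{\sO}_{S,s}\to\widehat{\sO}_{T,t}$, while again invoking the pullback formula, recovers condition $(\ast)$ for $\sF$ at $s$.

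The main obstacle is really the preliminary pullback formula and the stability of the isoclinic property under the unramified extension of complete discrete valuation rings induced by a smooth morphism; both are short but essential, playing the role that \cite[Proposition 3.8]{wr} plays in the proof of Lemma \ref{lemma_curve_pure}. Once these are in place, the forward direction is formal and the converse is a direct transcription of the \'etale lifting argument already carried out there.
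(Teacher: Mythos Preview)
Your proposal is correct and follows exactly the approach the paper intends: the paper's proof consists of the single sentence ``The proof of the lemma above is similar to that of Lemma \ref{lemma_curve_pure} (2),'' and you have faithfully unwound that analogy, replacing $\dt$ by $\rc$, invoking the equality case of \eqref{condineq} for unramified extensions (which in the paper is cited as \cite[Proposition 2.22 and 3.8]{wr}) in place of \cite[Proposition 3.8]{wr}, and carrying the isoclinic condition through the \'etale lift $T\to S$ in the converse. There is nothing to add.
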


The proof of the lemma above is similar to that of Lemma \ref{lemma_curve_pure} (2).

\begin{lemma}\label{lemmaisocliniccurve}
Let $C$ be a closed conical subset of $\bT^*X$ such that $B(C)=D$ and that $\dim_{k(\ol x)}C_{\ol x}=1$ for any geometric point $\ol x\to D$. Let $\sG$ be a locally constant and constructible sheaf of $\Lambda$-modules on $U$ whose ramification along $D$ is $C$-isoclinic by restricting to curves. Then
\begin{itemize}
\item[(1)]
The ramification of $\sG$ at each generic point of $D$ is isoclinic.
\item[(2)]
The ramification $\sG$ along $D$ is $C$-pure by restricting to curves.
\item[(3)]
Suppose that $\{C\times_XD_i\}_{1\leq i\leq d}$ is the set of irreducible components of $C$. We have
\begin{align}
SS(j_!\sG)&=\bT^*_XX\bigcup C\end{align}
\end{itemize}
\end{lemma}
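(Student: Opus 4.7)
The plan is to establish (1) first by combining the curve-by-curve isoclinic hypothesis with the total-dimension inequality of Theorem \ref{DTtheorem}(1), to deduce (2) as a direct computation using (1), and to obtain (3) as an immediate consequence of Corollary \ref{criteriaCP}. We may assume $\sG \neq 0$. Using Lemma \ref{lemma_curve_isoclinic} applied to smooth surjective base changes (such as by a finite separable extension of $k$), we may further replace $k$ by its algebraic closure so that Remark \ref{manytranscurve} applies directly.

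For (1), fix $i$ and choose a closed point $x \in D_i$ not lying in any $D_j$ for $j\neq i$, so that $D$ is smooth at $x$. Since $\dim_{k(\ol x)} C_{\ol x} = 1$, Remark \ref{manytranscurve} produces an immersion $h \colon S \to X$ from a smooth $k$-curve that is $C$-transversal at $x$ and meets $D$ transversally at $x$, giving $s := h^{-1}(x)$ with $m_s(h^*D_i) = 1$ and $(S,h,x) \in \cI(X,D) \subseteq \cQ(X,D)$. The hypothesis $(\ast)$ then yields that $\sG|_{S_0}$ is isoclinic at $s$ with conductor $\rc_s(\sG|_{S_0}) = \rc_{D_i}(\sG)$, and hence
\begin{equation*}
\dt_s(\sG|_{S_0}) \;=\; \dim_\Lambda(\sG)\cdot \rc_{D_i}(\sG).
\end{equation*}
On the other hand, Theorem \ref{DTtheorem}(1) gives
\begin{equation*}
\dt_s(\sG|_{S_0}) \;\leq\; m_s(h^*\DT_X(j_!\sG)) \;=\; \dt_{D_i}(\sG),
\end{equation*}
while $\dt_{D_i}(\sG) \leq \dim_\Lambda(\sG)\cdot \rc_{D_i}(\sG)$ always holds since $\rc_{D_i}(\sG)$ is the largest slope. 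Combining these bounds forces $\dt_{D_i}(\sG) = \dim_\Lambda(\sG)\cdot \rc_{D_i}(\sG)$, which is equivalent to the slope decomposition of $\sG$ at $\xi_i$ collapsing to the single slope $\rc_{D_i}(\sG)$. This proves (1).

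For (2), let $(S,h,x) \in \cQ(X,D)$ be any element with $h$ being $C$-transversal at $s = h^{-1}(x)$. By $(\ast)$, $\sG|_{S_0}$ is isoclinic at $s$ with conductor $\rc_s(\sG|_{S_0}) = \sum_i m_s(h^*D_i)\rc_{D_i}(\sG)$, and therefore
\begin{equation*}
\dt_s(\sG|_{S_0}) \;=\; \dim_\Lambda(\sG)\cdot \rc_s(\sG|_{S_0}) \;=\; \sum_i m_s(h^*D_i)\cdot \dim_\Lambda(\sG)\cdot \rc_{D_i}(\sG).
\end{equation*}
Substituting $\dim_\Lambda(\sG)\cdot \rc_{D_i}(\sG) = \dt_{D_i}(\sG)$ from (1) gives $\dt_s(\sG|_{S_0}) = \sum_i m_s(h^*D_i)\cdot \dt_{D_i}(\sG) = m_s(h^*\DT_X(j_!\sG))$, which is condition $(\star)$ of Definition \ref{defCpure}. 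Hence $\sG$ is $C$-pure by restricting to curves, proving (2). Finally, assertion (3) is immediate from (2) together with the hypothesis on the irreducible components of $C$ by Corollary \ref{criteriaCP}.

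The argument is essentially a tight sandwich of two inequalities, so no serious obstacle is anticipated; the only mild technicality is the reduction to $k$ algebraically closed so that Remark \ref{manytranscurve} produces $C$-transversal curves through a prescribed smooth point of each $D_i$, and this is handled by the base-change compatibility of the $C$-isoclinic condition recorded in Lemma \ref{lemma_curve_isoclinic}.
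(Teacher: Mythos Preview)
Your proof is correct, and parts (2) and (3) match the paper verbatim. For part (1), however, you take a genuinely different route. The paper chooses the test point $u\in D_i$ in the non-degenerate locus of the ramification, then picks a curve that is $\bigl(SS(j_!\sG)\cup C\bigr)$-transversal and transversal to $D$ at $u$; this allows it to invoke Theorem~\ref{DTtheorem}(2) (the Newton-polygon \emph{equality}) to conclude $\rNP_{D_i}(\sG)=\rNP_s(\sG|_{S_0})$, and isoclinicity on the curve then transfers directly. You instead sandwich: from the $C$-isoclinic hypothesis you get $\dt_s(\sG|_{S_0})=\rk_\Lambda\sG\cdot\rc_{D_i}(\sG)$, and combining the inequality $\dt_s(\sG|_{S_0})\le \dt_{D_i}(\sG)$ from Theorem~\ref{DTtheorem}(1) with the trivial bound $\dt_{D_i}(\sG)\le\rk_\Lambda\sG\cdot\rc_{D_i}(\sG)$ forces equality and hence isoclinicity. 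Your argument is slightly more elementary in that it only needs $C$-transversality (not $SS$-transversality) and only the inequality part of Theorem~\ref{DTtheorem}, at the cost of not recovering the full Newton polygon. One small wording issue: ``not lying in any $D_j$ for $j\neq i$'' does not by itself force $D$ smooth at $x$; you should also choose $x$ in the smooth locus of $D_i$, which is dense.
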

\begin{proof}
(1). We may assume that $k$ is algebraically closed. We fix $1\leq i\leq d$ and we choose a closed point $u$ of $D_i$ such that $u$ is not contained each $D_{i'}\ \ (i'\neq i)$, that $u$ is a smooth locus of $D_i$ and that the ramification of $\sG$ along $D_i$ is non-degenerate at $u$. In this case, we have $\dim_k(SS(j_!\sG)\times_Xu)=1$. Then, we can find an element $(S,h:S\to X,u)$ of $\cI(X,D)$ such that $h:S\to X$ is $(SS(j_!\sG)\bigcup C)$-transversal at $s=h^{-1}(u)$ and that $m_s(h^*D_i)=1$ (Remark \ref{manytranscurve}). On one hand, by applying \cite[Proposition 2.22 and 3.8]{wr} (cf. part (2) of Theorem \ref{DTtheorem}) to $h:S\to X$ and $\sG$, we have $\rNP_{D_i}(\sG)=\rNP_{s}(\sG|_{S_0})$. On the other hand, since the ramification of $\sG$ along $D$ is $C$-isoclinic by restricting to curves, the ramification of $\sG|_{S_0}$ at $s$ is isoclinic. Hence, the ramification of $\sG$ at the generic point of $D_i$ is isoclinic with $\rc_{D_i}(\sG)=\rc_{s}(\sG|_{S_0})$. We obtain (1).  

(2). By (1) and the assumption on the ramification of $\sG$, we obtain that, for any element $(S, h:S\to X, x)$ in $\mathcal Q(X,D)$ such that $h:S\to X$ is $C$-transversal at $s=h^{-1}(x)$,
\begin{equation}
\dt_s(\sG|_{S_0})=\rk_{\Lambda}\sG\cdot \rc_s(\sG|_{S_0})=\rk_{\Lambda}\sG\cdot m_s(h^*\rC_X(j_!\sG))=m_s(h^*\DT_X(j_!\sG)).
\end{equation}
We get (2). 

Part (3) is from (2) and Corollary \ref{criteriaCP}.
\end{proof}

\begin{lemma}\label{section5lemmaRj*Rj}
Let $C$ be a closed conical subset of $\bT^*X$ such that $B(C)=D$ and that $\dim_{k(\ol x)}C_{\ol x}=1$ for any geometric point $\ol x\to D$. Let $\sG$ be a locally constant and constructible sheaf of $\Lambda$-modules on $U$. If the ramification of $\sG$ along $D$ is $C$-isoclinic by restricting to curves and $\rc_{D_i}(\sG)>1$ for each $1\leq i\leq d$, then we have $Rj_*\sG=j_!\sG$.
\end{lemma}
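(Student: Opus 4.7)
The plan is to reduce the equality $Rj_*\sG=j_!\sG$ to the vanishing $(Rj_*\sG)_{\ol x}=0$ at geometric points above closed points of $D$, and then exhibit a quasi-finite, $SS(j_!\sG)$-transversal curve through each such point along which $\sG$ is totally wild, so that Corollary~\ref{stalkpullbackcurves} applies.

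Since $j_!\sG\to Rj_*\sG$ is an isomorphism on $U$, it suffices to show that the cohomology sheaves of $Rj_*\sG$ vanish on $D$. By constructibility it is enough to check this at geometric points $\ol x\to X$ above closed points $x\in D$. As the question is local on $X$ and the hypotheses (including $C$-isoclinicity by restricting to curves and the conductor bounds $\rc_{D_i}(\sG)>1$) are preserved under smooth base change (Lemma~\ref{lemma_curve_isoclinic}), we may assume $k$ is algebraically closed. Next, by Lemma~\ref{lemmaisocliniccurve} the $C$-isoclinic assumption forces the ramification of $\sG$ along $D$ to be $C$-pure by restricting to curves, so by Proposition~\ref{equivCPandP} we have $SS(j_!\sG)\subseteq \bT^*_XX\cup C$. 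In particular, at every closed point $x\in D$ the fiber $SS(j_!\sG)\times_Xx$ is a finite union of one-dimensional $k$-subspaces of $\bT^*_xX$.

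Fix such an $x$. By Remark~\ref{manytranscurve} applied to the one-dimensional closed conical subset $SS(j_!\sG)\times_Xx$ of $\bT^*_xX$, there exist a smooth $k$-curve $S$ and an immersion $h:S\to X$ with $x=S\cap D$ a closed point of $X$ such that $h:S\to X$ is $SS(j_!\sG)$-transversal at $s:=h^{-1}(x)$; in particular $h$ is $C$-transversal at $s$ and the triple $(S,h,x)$ belongs to $\cI(X,D)\subseteq\cQ(X,D)$. Since the ramification of $\sG$ along $D$ is $C$-isoclinic by restricting to curves, condition $(\ast)$ of Definition~\ref{defCisoclinic} gives that $\sG|_{S-\{s\}}$ is isoclinic at $s$ with
\[
\rc_s(\sG|_{S-\{s\}})=\sum_{i=1}^d m_s(h^*D_i)\cdot\rc_{D_i}(\sG).
\]
Because $x\in D$, at least one $m_s(h^*D_i)$ is positive, and by hypothesis each $\rc_{D_i}(\sG)>1$; hence $\rc_s(\sG|_{S-\{s\}})>1$.

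An isoclinic $P_K$-representation with unique slope strictly greater than $1$ satisfies $M^{P_K}=M^{(1)}=0$, so the ramification of $\sG|_{S-\{s\}}$ at $s$ is totally wild in the sense of subsection~\ref{slopedecom}. Since $h:S\to X$ is quasi-finite and $SS(j_!\sG)$-transversal at $s$, and the ramification of $\sG$ at $x$ is pure (again by Proposition~\ref{equivCPandP}), Corollary~\ref{stalkpullbackcurves} yields $(Rj_*\sG)_{\ol x}=0$ for any geometric point $\ol x$ above $x$. As $x$ was an arbitrary closed point of $D$, this proves $Rj_*\sG=j_!\sG$. The only genuinely delicate ingredient is the existence of the transversal curve exhibiting total wildness, which is precisely what the chain Lemma~\ref{lemmaisocliniccurve} $\Rightarrow$ Proposition~\ref{equivCPandP} $\Rightarrow$ Remark~\ref{manytranscurve} supplies; everything else is a short verification.
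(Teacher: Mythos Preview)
Your approach is essentially identical to the paper's, but there is a small logical slip in one step. You apply Remark~\ref{manytranscurve} to the fiber $SS(j_!\sG)\times_Xx$, obtaining an $SS(j_!\sG)$-transversal curve, and then write ``in particular $h$ is $C$-transversal at $s$''. This implication goes the wrong way: you only know $SS(j_!\sG)\subseteq\bT^*_XX\cup C$, so $C_x$ may contain lines not lying in $SS(j_!\sG)_x$, and an $SS(j_!\sG)$-transversal curve need not be $C$-transversal. Since Definition~\ref{defCisoclinic} requires $C$-transversality to draw the conductor equality, this step is not justified as written.

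The fix is immediate and is exactly what the paper does: apply Remark~\ref{manytranscurve} to $C_x$ (which has dimension $1$ by hypothesis) to obtain a $C$-transversal curve $h:S\to X$. Then the containment $SS(j_!\sG)\subseteq\bT^*_XX\cup C$, together with the fact that any immersion is $\bT^*_XX$-transversal, yields $SS(j_!\sG)$-transversality of $h$ as a consequence, which is what Corollary~\ref{stalkpullbackcurves} needs. With this correction your argument is complete and matches the paper's proof.
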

\begin{proof}
We may assume that $k$ is algebraically closed. By Proposition \ref{equivCPandP} and Lemma \ref{lemmaisocliniccurve}, we see that the ramification of $\sG$ along $D$ is pure with $SS(j_!\sG)\subseteq \bT^*_XX\bigcup C$. Let $x$ be an arbitrary closed point of $D$. By Remark \ref{manytranscurve}, we can find an element $(S,h:S\to X,x)$ of $\cI(X,D)$ such that $h:S\to X$ is $C$-transversal at $x$. Since the ramification of $\sG$ along $D$ is $C$-isoclinic by restricting to curves and $\rc_{D_i}(\sG)>1$ for each $1\leq i\leq d$, the ramification of $\sG|_{S_0}$ is isoclinic with
\begin{equation}
\rc_x(\sG|_{S_0})=m_x(h^*(\rC_X(\sG)))=\sum_{i=1}^d m_x(h^*D_i)\cdot \rc_{D_i}(\sG)> 1.
\end{equation}
Hence, the ramification of $\sG|_{S_0}$ at $x$ is totally wild.
By Corollary \ref{stalkpullbackcurves}, we see that $(Rj_*\sG)_{\ol x}=0$, where $\ol x\to X$ denotes a geometric point above $x\in D$. Hence $Rj_*\sG=j_!\sG$. 
\end{proof}



\subsection{}\label{r and L}
Let $E_i$ $(1\leq i\leq m)$ be the divisor of $Y=\spec(k[y_1,\cdots, y_n])$ corresponding to the ideal $(y_i)$ and $E=\sum_{i=1}^m E_i$. We assume that $m\leq n-1$. Let $V$ be the complement of $E$ in $Y$ and $\jmath:V\to Y$ the canonical injection.
 Let $r$ be a positive integer co-prime to $p$ and $[r]:V\to V$ a finite \'etale covering tamely ramified along $E$ given by
 \begin{align*}
  [r]^\sharp:k[y_1^{\pm 1},\ldots,y_m^{\pm 1},y_{m+1},\ldots,y_n]&\to k[y_1^{\pm 1},\ldots,y_m^{\pm 1},y_{m+1},\ldots,y_n],\\
  y_i&\mapsto y_i^r\ \ \ (1\leq i\leq m),\\
  y_i&\mapsto y_i\ \ \ (m+1\leq i\leq n).
\end{align*}
 Let $(p_1,\ldots,p_m)$ be an $m$-tuples of positive integers and $V'$ an Artin-Schreier covering of $V$ defined by
 \begin{equation*}
  V'=\spec\left(\Gamma(V, \sO_V)[T]/(T^p-T-y_n/\left(y_1^{p_1}\cdots y_m^{p_m})\right)\right).
\end{equation*}
We have $\gal(V'/V)\cong \bZ/p\bZ$. We fix a non-trivial character $\chi:\gal(V'/V)\to\Lambda^{\times}$ and we denote by $\sL$ the locally constant and constructible sheaf of $\Lambda$-modules on $V$ of rank $1$ associated to $\chi:\gal(V'/V)\to\Lambda^{\times}$.

\begin{theorem}\label{all of rL}
We take the notation and assumptions of subsection \ref{r and L}. We assume that $p_1,\ldots, p_m$ and $r$ satisfy
\begin{itemize}
\item{}
$p_i>p$, for each $1\leq i\leq m$;
\item{}
 $\{p_i=p^{a}b_i\}_{1\leq i\leq m}$ for positive integers $a, b_1,\ldots, b_m$ where $(p,b_i)=1$;
 \item{}
 $r<\min_{1\leq i\leq m}\{p_i-b_i\}$ and $(r,p)=1$.
\end{itemize}
Then, the ramification of $[r]_*\sL$ along $E$ is $(E\cdot\langle dy_n\rangle)$-isoclinic by restricting to curves with
\begin{align}\label{CYjrsL}
\rC_Y(\jmath_![r]_*\sL)=\sum_{i=1}^m \frac{p_i}{r}\cdot E_i.
\end{align}
In particular, the ramification of $[r]_*\sL$ at each generic point of $E$ is isoclinic and we have
\begin{align}
SS(\jmath_![r]_*\sL)&=\bT^*_YY\bigcup E\cdot\langle dy_n\rangle,\label{SSjrsL}\\
CC(\jmath_![r]_*\sL)&=(-1)^{n}r^m\left([\bT^*_YY]+ \sum_{i=1}^m \frac{p_i}{r}[E_i\cdot\langle dy_n\rangle]\right).\label{CCjrsL}
\end{align}
Moreover, for each $1\leq i\leq m$, the ramification of $[r]_*\sL$ at the generic point of $E_i$ is logarithmic isoclinic of logarithmic conductor $p_i/r$.
\end{theorem}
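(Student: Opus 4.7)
The plan is to first determine the local ramification of $\sL$ and of $[r]_*\sL$ at each generic point $\xi_i$ of $E_i$ from the Artin--Schreier description and the tame cover $[r]$, then upgrade this to $(E\cdot\langle dy_n\rangle)$-isoclinicity by restricting to curves, and finally extract the singular support and characteristic cycle from the criteria in \S\ref{pureramificationsection1}. For the local analysis at $\xi_i$, I would write $f=y_n/(y_1^{p_1}\cdots y_m^{p_m})=u\cdot y_i^{-p_i}$ in $K_i$, where $u=y_n\prod_{j\neq i,\,j\leq m}y_j^{-p_j}$ is a unit whose image in the residue field $\kappa(\xi_i)=k(y_1,\ldots,\widehat{y_i},\ldots,y_n)$ contains the transcendence basis element $y_n$ to the first power and hence is not a $p$-th power. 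It follows that no Artin--Schreier substitution $T\mapsto T+v$ can reduce the $y_i$-adic pole order of $f$, so $\sL|_{\xi_i}$ is isoclinic of logarithmic conductor $p_i$ and its characteristic form is proportional to $y_i^{-p_i}\,dy_n$. Because this form lies in the purely non-logarithmic direction $dy_n$ (the alternative term $-p_i\,y_n\,y_i^{-p_i-1}\,dy_i$ in $df$ is of strictly higher $y_i$-adic weight), the non-log conductor of $\sL|_{\xi_i}$ also equals $p_i$.

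Extending $[r]$ to $Y\to Y$, the local extension $K_i'/K_i$ at $\xi_i$ is tame and totally ramified of degree $r$. The scaling identity $G^{rs}_{K_i',\log}=G^s_{K_i,\log}\cap G_{K_i'}$ from subsection \ref{K'/K} then yields that $[r]_*\sL|_{\xi_i}=\mathrm{Ind}_{G_{K_i'}}^{G_{K_i}}(\sL|_{\xi_i'})$ is isoclinic of logarithmic (and non-logarithmic) conductor $p_i/r$. This establishes \eqref{CYjrsL}, the generic-point isoclinicity, and the final logarithmic-conductor assertion. To prove $(E\cdot\langle dy_n\rangle)$-isoclinicity by restricting to curves (Definition \ref{defCisoclinic}), I would take $(S,h\colon S\to Y,x)\in\mathcal Q(Y,E)$ with $h$ being $(E\cdot\langle dy_n\rangle)$-transversal at $s=h^{-1}(x)$. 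Transversality means $h^*dy_n\neq 0$ in $\Omega^1_{S,s}$, equivalently that $y_n\circ h$ has nonzero differential at $s$. Pulling back the Artin--Schreier equation along $[r]\circ h$ and working at the unique preimage of $s$ produces an equation $T^p-T=F$ in which $F$ has pole order $r\sum_i m_s(h^*E_i)\cdot p_i$ in a uniformizer of $S$, with leading coefficient proportional to $y_n\circ h$. The hypotheses $p_i>p$ and $r<\min_i(p_i-b_i)$ are then used to show that no admissible Artin--Schreier correction $T\mapsto T+v$ can lower this pole order: a correction capable of cancelling the leading term would force that coefficient to be a $p$-th power in the residue field of $s$, which is ruled out by the transversality of $h$ with $\langle dy_n\rangle$. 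Dividing by the ramification index $r$ of $[r]$ then gives $\rc_s([r]_*\sL|_{S_0})=\sum_i m_s(h^*E_i)\cdot p_i/r=m_s(h^*\rC_Y(\jmath_![r]_*\sL))$ and the required isoclinicity.

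Finally, the singular support identity \eqref{SSjrsL} follows from Lemma \ref{lemmaisocliniccurve}(3) and Corollary \ref{criteriaCP} applied to $C=E\cdot\langle dy_n\rangle=\bigcup_i E_i\cdot\langle dy_n\rangle$. For the characteristic cycle \eqref{CCjrsL}, I would restrict to the dense open locus $Y\setminus\bigcup_{i<j}(E_i\cap E_j)$ where each $E_i$ is smooth and the ramification of $[r]_*\sL$ is non-degenerate in the sense of \cite{wr}, and apply formula \eqref{wrcbcc} with rank $r^m$, conductor $p_i/r$ at $E_i$, and $\kappa(\xi_i)$-rational characteristic form in the direction $\langle dy_n\rangle$; the coefficient of $[E_i\cdot\langle dy_n\rangle]$ comes out to $(-1)^n r^{m-1}p_i$ and that of $[\bT^*_YY]$ to $(-1)^n r^m$, matching \eqref{CCjrsL}, and irreducibility of each conical component extends the formula to all of $Y$. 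The hard part is the curve-restriction step: while the obstruction at the generic points of $E$ comes from the imperfection of $\kappa(\xi_i)$ (which disappears once we pass to a curve $S$ with typically perfect residue field at $s$), the numerical condition $r<\min_i(p_i-b_i)$, in conjunction with the $\langle dy_n\rangle$-transversality of $h$, must be used carefully to ensure the leading term of the pulled-back Artin--Schreier equation survives any admissible reduction so that the conductor predicted by the formal pole count is actually attained.
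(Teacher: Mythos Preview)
Your overall architecture---local analysis at generic points, then isoclinicity by restriction to curves, then Lemma~\ref{lemmaisocliniccurve}(3) for $SS$ and \eqref{wrcbcc} plus irreducibility for $CC$---matches the paper, and your treatment of the generic points and of $SS$, $CC$ is essentially correct (the paper actually skips the direct generic-point analysis and derives it from the curve computation via Lemma~\ref{lemmaisocliniccurve}(1), but your route is fine).

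The gap is in the curve-restriction step. Your proposed obstruction, that the leading coefficient of the pulled-back Artin--Schreier equation is not a $p$-th power in the residue field of $s$, cannot work: after base change one may assume $k$ algebraically closed, and then the residue field of $\sO_{S,s}$ is $k$, in which every element \emph{is} a $p$-th power. So the mechanism that rigidifies the conductor at the generic points of $E$ (imperfection of $\kappa(\xi_i)$) is simply unavailable on curves, as you yourself note in your last paragraph. Your pole-order claim $r\sum_i m_s(h^*E_i)\,p_i$ is also off, and ``$[r]\circ h$'' together with ``the unique preimage of $s$'' does not pin down the cover you need.

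What the paper actually does: after reducing to $m=n-1$ via Lemma~\ref{lemma_curve_isoclinic}, it constructs a single degree-$r$ tame cover $\gamma:S'\to S$ (with uniformizers $t'^r=t$) together with an explicit lift $h':S'\to Y$ making $[r]\circ h'_0=h_0\circ\gamma_0$, so that $\gamma^*\big(([r]_*\sL)|_{S_0}\big)\cong h'^*[r]^*[r]_*\sL\cong\bigoplus_{\underline{\varpi}}h'^*\cT_{\underline{\varpi}}^*\sL$ by proper base change. Each summand is a rank-$1$ Artin--Schreier sheaf on $S'_0$ with equation $T^p-T=\delta(\mu+t'^r)/(u\,t'^{\alpha})$, where $\alpha=\sum_i\alpha_i p_i$ and $\mu=y_n(h(s))$. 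In the delicate case $\mu\neq 0$ the pole order $\alpha$ is divisible by $p$, so the leading term \emph{can and must} be killed: using that $u$ admits a $p^a$-th root $w$ (because each $p_i=p^a b_i$), the substitution $T'=T-\sum_{j=1}^a(\delta\mu)^{1/p^j}/\big(w^{p^{a-j}}t'^{\alpha/p^j}\big)$ cancels the $\mu$-term, leaving $T'^p-T'=v/(u\,t'^{\alpha-r})$ with $v$ a unit precisely because $\alpha(p^a-1)/p^a-r=\sum_i\alpha_i(p_i-b_i)-r\geq 1$; this is exactly where $r<\min_i(p_i-b_i)$ enters. Since $(\alpha-r,p)=1$, no further Artin--Schreier reduction is possible, giving $\rc_{s'}=\alpha-r+1$ and hence $\rc_s=\alpha/r$ after descending along $\gamma$. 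The $\langle dy_n\rangle$-transversality is used only to normalize $h^*y_n=\mu+t$, which makes $h'^*y_n=\mu+t'^r$ and produces the crucial $t'^r$ term.
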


\begin{proof}
We firstly show that, for any element $(S,h:S\to Y,y)$ of $\cQ(Y,E)$ such that $h:S\to Y$ is $\langle dy_n\rangle$-transversal at $s=h^{-1}(y)$, the ramification of $([r]_*\sL)|_{S_0}$ at $s$ is isoclinic with (cf. subsection \ref{slopedecom})
\begin{equation}\label{rLcurve}
\rc_s(([r]_*\sL)|_{S_0})=\sum_{i=1}^m m_s(h^*E_i)\cdot \frac{p_i}{r}
\end{equation}
We may assume that $k$ is algebraically closed. By Lemma \ref{lemma_curve_isoclinic} and \cite[Proposition 2.22 and 3.8]{wr}, it is sufficient to show the special case where $m=n-1$. Let $(\lambda_1,\ldots,\lambda_{n-1},\mu)\in k^{n}$ be the coordinates of $y$ in $Y$. Since $y$ is a point in $E$, there are zeros in $\lambda_1,\ldots,\lambda_{n-1}$. Without loss of generality, we may assume that $\lambda_1=\cdots=\lambda_e=0$ and $\lambda_i\neq 0$ for $e+1\leq i\leq n-1$. We choose a uniformizer $t$ of the henselization $\sO^{\rh}_{S,s}$ such that $h:S\to Y$ gives a homomorphism of $k$-algebras
\begin{align*}
h^{\sharp}:k[y_1,\ldots,y_n]&\to \sO^{\rh}_{S,s},\\
y_i&\mapsto u_i t^{\alpha_i}\ \ \ (1\leq i\leq e),\\
y_i&\mapsto \lambda_i+u_i t^{\alpha_i}\ \ \ (e+1\leq i\leq n-1),\\
y_n&\mapsto \mu+t.
\end{align*}
where each $\alpha_i=m_s(h^*E_i)$ is a positive integer and each $u_i$ is a unit of $\sO^{\rh}_{S,s}$. Notice that $\lambda_i+u_i t^{\alpha_i}$ $ (e+1\leq i\leq n-1)$ are units of $\sO^{\rh}_{S,s}$. Since $\sO^{\rh}_{S,s}$ is strictly henselian and $(r,p)=1$,  all roots of the following equations
\begin{align}
T^r-u_i&=0\ \ \ (1\leq i\leq e),\label{allroots1}\\
 T^r-(\lambda_i+u_i t^{\alpha_i})&=0 \ \ \ (e+1\leq i\leq n-1)\label{allroots2}
\end{align}
 are contained in $\sO^{\rh}_{S,s}$. For each $1\leq i\leq e$, we choose a root $v_i$ of $T^r-u_i=0$ in $\sO^{\rh}_{S,s}$. For each $e+1\leq i\leq n-1$, we choose a root $\tau_i+w_i t^{\alpha_i}$ of $T^r-(\lambda_i+u_i t^{\alpha_i})=0$ in $\sO^{\rh}_{S,s}$, where $\tau_i\in k^{\times}$ and $w_i$ is a unit in $\sO^{\rh}_{S,s}$. To show \eqref{rLcurve}, we may replace $S$ by an affine \'etale neighborhood of $s\in S$ such that the image of $\Gamma(S, \sO_S)$ in $\sO^{\rh}_{S,s}$ contains roots of \eqref{allroots1} and \eqref{allroots2}.  Let $S'=\spec(\Gamma(S, \sO_S)[t']/(t'^r-t))$ be a finite cover of $S$ tamely ramified at $s$ and we denote by $\gamma:S'\to S$ the canonical projection. Notice that $S'$ is an affine connected $k$-curve smooth at the closed point $s'=\gamma^{-1}(s)$. Let $h':S'\to Y$ be a $k$-morphism given by
\begin{align}
h'^{\sharp}:k[y_1,\ldots,y_n]&\to \Gamma(S, \sO_S)[t']/(t'^r-t)\label{keyh'}\\
y_i&\mapsto v_i t'^{\alpha_i}\ \ \ (1\leq i\leq e),\nonumber\\
y_i&\mapsto \tau_i+w_it'^{\alpha_ir}\ \ \ (e+1\leq i\leq n-1),\nonumber\\
y_n&\mapsto \mu+t'^r.\nonumber
\end{align}
 We set $S'_0=S'-\{s'\}$. We have a commutative diagram
\begin{equation*}
\xymatrix{\relax
S'_0\ar[r]^-(0.5){\gamma}\ar[d]_-(0.5){h'}&S_0\ar[d]^-(0.5)h\\
V\ar[r]_{[r]}&V}
\end{equation*}
We have
\begin{align*}
\gamma^*(([r]_*\sL)|_{S_0})\cong h'^*[r]^*[r]_*\sL.
\end{align*}
Since $\gamma:S'\to S$ is tamely ramified at $s$ of degree $r$, to show the ramification of $([r]_*\sL)|_{S_0}$ at $s$ is isoclinic of the conductor \eqref{rLcurve}, it is sufficient to show $h'^*[r]^*[r]_*\sL$ is isoclinic with (cf. subsection \ref{K'/K})
\begin{equation}
\rc_{s'}(h'^*[r]^*[r]_*\sL)=\sum_{i=1}^e \alpha_i p_i-r+1.
\end{equation}
Let $\Pi$ be the set of $(n-1)$-tuples $\ul\varpi=(\varpi_1,\ldots, \varpi_{n-1})$ of $r$-th roots of unity in $k$. For each $\ul\varpi=(\varpi_1,\ldots, \varpi_{n-1})$ in $\Pi$, we have an isomorphism $\cT_{\ul\varpi}:V\to V$ given by
\begin{align*}
\cT_{\ul\varpi}^{\sharp}:k[y_1^{\pm 1},\ldots,y_{n-1}^{\pm 1},y_n]&\to k[y_1^{\pm 1},\ldots,y_{n-1}^{\pm 1},y_n],\\
 y_i&\mapsto \varpi_i y_i\ \  (1\leq i\leq n-1), \\
  y_n&\mapsto y_n.
\end{align*}
By the proper base change theorem, we have
\begin{equation*}
[r]^*[r]_*\sL\cong \bigoplus_{\ul\varpi\in\Pi}\cT_{\ul\varpi}^*\sL.
\end{equation*}
We are left to show, for each $\ul\varpi\in\Pi$,  the rank $1$ sheaf $h'^*\cT_{\ul\varpi}^*\sL$ has
\begin{equation}\label{rank1S'}
\rc_{s'}(h'^*\cT_{\ul\varpi}^*\sL)=\sum_{i=1}^e \alpha_i p_i-r+1.
\end{equation}
We observe that each $\cT_{\ul\varpi}^*\sL$ associates to the following Artin-Schreier covering of $V$
\begin{equation*}
V'_{\ul\varpi}=\spec(\Gamma(V, \sO_V)[T]/(T^p-T-\delta y_n/ \left(y_1^{p_1}\cdots y_{n-1}^{p_{n-1}})\right))
\end{equation*}
where $\delta$ is a $r$-th root of unity in $k$ relative to $\ul\varpi$. We fix a uniformizer $t'$ of $\sO_{S',s'}$ with $t'^r=t\in\sO_{S,s}$ and fix an isomorphism $k[[t']]\cong \wh\sO_{S',s'}$ of complete discrete valuation rings. We denote by $\wh\eta$ the generic point $\spec(\wh\sO_{S',s'})$.  Then, by \eqref{keyh'}, the restriction $(h'^*\cT_{\ul\varpi}^*\sL)|_{\wh\eta}$ corresponds to the following Artin-Schreier extension
\begin{equation}\label{firstfieldL}
L=k((t'))[T]\Big/\left(T^p-T-\delta(\mu+t'^r)\big/(ut'^\alpha )\right),
\end{equation}
of $K=k((t'))$, where $\alpha=\sum_{i=1}^e \alpha_ip_i$ and $u=\prod_{i=1}^ev_i^{ p_i}\cdot\prod_{i=e+1}^{n-1}(\tau_i+w_i t'^{\alpha_i r})^{p_i}$ is a unit of $\sO_K=k[[t']]$. In the case where $\mu=0$, we have
\begin{equation*}
L=K[T]\Big/\left(T^p-T-\delta\big/(ut'^{\alpha-r})\right),
\end{equation*}
 Since $e\geq 1$, $(r,p)=1$ and $\alpha_i\geq 1$, $p_i>r$ and $p|p_i$ for each $1\leq i\leq e$, we have $\alpha-r\geq 1$ and $(\alpha-r,p)=1$. Let $v$ be an $(\alpha-r)$'th root of $u/\delta$ in $\sO_K$ and we replace the uniformizer $t'$ by $t''=vt'$. The field extension $L/K$ \eqref{firstfieldL} is regarded as $L=K[T]/\left(T^p-T-1/t''^{\alpha-r}\right)$ over $K=k((t''))$. By the computation in \cite[1.1.7]{lau}, we obtain
\begin{equation}\label{casemu=0}
\rc_{s'}(h'^*\cT^*_{\ul\varpi}\sL)=\sw_K((h'^*\cT_{\ul\varpi}^*\sL)|_{\wh\eta})+1=\alpha-r+1,
\end{equation}
which verifies \eqref{rank1S'} for the $\mu=0$ situation.
In the case where $\mu\neq 0$, we firstly notice that $b_i=p_i/p^a$ is co-prime to $p$ for each $1\leq i\leq n-1$ and that the unit
\begin{equation*}
w=\prod_{i=1}^ev_i^{b_i}\cdot\prod_{i=e+1}^m(\tau_i+w_i t'^{\alpha_i r})^{b_i}
\end{equation*}
 in $\sO_K$ is a $p^a$-th root of $u=\prod_{i=1}^ev_i^{ p_i}\cdot\prod_{i=e+1}^{n-1}(\tau_i+w_i t'^{\alpha_i r})^{p_i}$. We take
 \begin{equation*}
 T'=T-\sum^{a}_{j=1}\frac{(\delta\mu)^{1/p^j}}{w^{p^{a-j}}t'^{\alpha/p^j}}.
 \end{equation*}
 Then, we have
\begin{align*}
T^p-T-\frac{\delta(\mu+t'^r)}{ut'^\alpha}&=\left(T^p-T-\frac{\delta\mu}{ut'^\alpha}\right)-\frac{\delta}{ut'^{\alpha-r}}\\
&=\left(T'^p-T'-\frac{(\delta\mu)^{1/p^a}}{wt'^{\alpha/p^a}}\right)-\frac{\delta }{ut'^{\alpha-r}}\\
&=T'^p-T'-\frac{(\delta\mu)^{1/p^a}w^{p^a-1}t'^{\alpha(p^a-1)/p^a-r}+\delta }{ut'^{\alpha-r}}
\end{align*}
Observe that
\begin{equation*}
\frac{\alpha(p^a-1)}{p^a}-r=\sum^e_{i=1}\alpha_i(p_i-b_i)-r\geq \alpha_1(p_1-b_1)-r\geq p_1-b_1-r\geq 1
\end{equation*}
Hence, $v=(\delta\mu)^{1/p^a}w^{p^a-1}t'^{\alpha(p^a-1)/p^a-r}+\delta $ is a unit in $\sO_K$. Let $z$ be an $(\alpha-r)$'th root of $uv^{-1}$ in $\sO_K$ and we replace the uniformizer $t'$ by $\wt t=zt'$. The field extension $L/K$ \eqref{firstfieldL} is regarded as $L=K[T']/\left(T'^p-T'-1/\wt t^{\alpha-r}\right)$ over $K=k((\wt t))$. By the computation in \cite[1.1.7]{lau} again, we obtain
\begin{equation*}
\rc_{s'}(h'^*\cT^*_{\ul\varpi}\sL)=\sw_K((h'^*\cT_{\ul\varpi}^*\sL)|_{\wh\eta})+1=\alpha-r+1,
\end{equation*}
which verifies \eqref{rank1S'} for the $\mu\neq 0$ situation. Hence, we obtain that the ramification of $([r]_*\sL)|_{S_0}$ at $s$ is isoclinic with \eqref{rLcurve}. By definition, the ramification of $[r]_*\sL$ along $E$ is $(E\cdot\langle dy_n\rangle)$-isoclinic by restricting to curves with \eqref{CYjrsL}. By Lemma \ref{lemmaisocliniccurve}, the ramification of $\sG$ at each generic point of $E$ is isoclinic and we have \eqref{SSjrsL}. By \eqref{SSjrsL} and \cite[Theorem 7.6]{cc}, we obtain \eqref{CCjrsL}.

For any $1\leq i\leq m$, we have $\bT^*_{E_i}Y=E_i\cdot\langle dy_i\rangle\neq E_i\cdot\langle dy_n\rangle$. Hence, by \cite[Proposition 7.2]{Hu19IMRN}, the Newton polygon and the logarithmic Newton polygon of the ramification of $[r]_*\sL$ at the generic point of $E_i$ are the same. Together with the fact that the ramification of $\sF$ at the generic point of each $E_i$ is isoclinic and \eqref{CYjrsL}, we see that the ramification of $[r]_*\sL$ at the generic point of each $E_i$ is logarithmic isoclinic of logarithmic conductor $p_i/r$.
 \end{proof}

\subsection{}
We say $X$ is {\it local} if there is an affine and \'etale map $\pi:X\to Y$ and each $D_i$ associates to a principal ideal of $\Gamma(X,\sO_X)$.

\begin{theorem}\label{pureramifiction any D}
Assume that $X$ is local with a structure morphism $\pi:X\to Y$ and that each irreducible component of $D$ contains a common closed point $x\in X$. Let $\omega$ be a non-zero vector in the $k$-vector space $\bT^*_0Y$. Let $p_1,\ldots, p_d,r$ be positive integers satisfying
\begin{itemize}
\item{} $p_i>p$ for each $1\leq i\leq d$;
\item{} $\{p_i=p^ab_i\}_{1\leq i\leq d}$ for positive integers $a, b_1,\ldots, b_d$ such that $(b_i,p)=1$;
\item{} $r< \min_{1\leq i\leq d}\{p_i-b_i\}$ and $(r,p)=1$.
\end{itemize}
 Then we can find a locally constant and constructible sheaf of $\Lambda$-modules $\sG$ on $U$ of rank $r^d$ such that
 the ramification of $\sG$ along $D$ is $(D\cdot\langle\omega\rangle)$-isoclinic by restricting to curves with 
 \begin{equation}\label{CXjsG}
\rC_X(j_!\sG)=\sum_{i=1}^d \frac{p_i}{r}\cdot D_i. 
\end{equation}
 In particular, the ramification of $\sG$ at each generic point of $D$ is isoclinic and we have 
 \begin{align*}
SS(j_!\sG)&=\bT^*_XX\bigcup D\cdot\langle \omega\rangle, \\
CC(j_!\sG)&=(-1)^{n}\bigg(r^d[\bT^*_XX]+ \sum_{i=1}^d r^{d-1}p_i[D_i\cdot\langle \omega\rangle]\bigg).
\end{align*}
\end{theorem}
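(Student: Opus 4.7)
The plan is to realize $\sG$ as a pullback of the Artin--Schreier--Kummer sheaf $[r]_{*}\sL$ from Theorem~\ref{all of rL}, applied to $\bA^{d+1}_{k}$ (ambient dimension $d+1$, with exactly $d$ coordinate divisors) rather than to $Y$ itself. The advantage is that the hypothesis $m' = d \leq n'-1$ of that theorem is automatically satisfied in this target, regardless of the relation between $n$ and $d$.

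\medskip

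I first normalize the data. Since $\omega \in \bT^{*}_{0} Y \setminus \{0\}$ and $GL_{n}(k)$ acts transitively on nonzero cotangent vectors, a $k$-linear automorphism of $Y$ fixing the origin reduces to the case $\omega = dy_{n}|_{0}$; this merely postcomposes $\pi$ with a linear isomorphism and preserves all local hypotheses. Put $g := y_{n} \circ \pi \in \Gamma(X,\sO_{X})$, so that $dg = \pi^{*}(dy_{n})$ coincides as a global section of $\bT^{*}X$ with the canonical extension of $\omega$. Choose generators $f_{1},\dots,f_{d} \in \Gamma(X,\sO_{X})$ of the principal defining ideals of $D_{1},\dots,D_{d}$, and form
\[
\phi \colon X \longrightarrow Y' := \bA^{d+1}_{k} = \spec k[z_{1},\dots,z_{d},w], \qquad \phi^{\sharp}(z_{i}) = f_{i}, \ \ \phi^{\sharp}(w) = g.
\]
Write $E'_{i} = V(z_{i})$, $E' = \sum_{i} E'_{i}$, $V' = Y' \setminus E'$, and $j' \colon V' \to Y'$ for the canonical injection; by construction $\phi^{-1}(E'_{i}) = D_{i}$, hence $\phi^{-1}(V') = U$. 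Applying Theorem~\ref{all of rL} to $Y'$ with $n' = d+1$, $m' = d$ and the given $(p_{1},\dots,p_{d},r)$ yields $\sH := [r]_{*}\sL$ on $V'$, locally constant of rank $r^{d}$, whose ramification along $E'$ is $(E' \cdot \langle dw \rangle)$-isoclinic by restriction to curves, with $\rC_{Y'}(j'_{!}\sH) = \sum_{i} (p_{i}/r) E'_{i}$ and $SS(j'_{!}\sH) = \bT^{*}_{Y'}Y' \cup E' \cdot \langle dw \rangle$. Set $\sG := (\phi|_{U})^{*}\sH$.

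\medskip

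The central step is the verification of the $(D \cdot \langle \omega \rangle)$-isoclinic property for $\sG$ by restriction to curves (Definition~\ref{defCisoclinic}). Let $(S,h,y) \in \cQ(X,D)$ with $h$ being $(D \cdot \langle \omega \rangle)$-transversal at $s = h^{-1}(y)$. By the chain rule,
\[
d(\phi \circ h)_{s}\bigl( dw|_{\phi(y)} \bigr) \;=\; dh_{s}\bigl( d\phi_{y}(dw) \bigr) \;=\; dh_{s}(\omega|_{y}) \;\neq\; 0,
\]
which simultaneously shows that $\phi$ is $SS(j'_{!}\sH)$-transversal at $y$ and that $\phi \circ h$ is $(E' \cdot \langle dw \rangle)$-transversal at $s$ and non-constant near $s$; after shrinking $S$ if necessary, $(S, \phi \circ h, \phi(y)) \in \cQ(Y', E')$ with $m_{s}\bigl((\phi \circ h)^{*}E'_{i}\bigr) = m_{s}(h^{*}D_{i})$ for each $i$. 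The tautological identification $h^{*}\sG = (\phi \circ h)^{*}\sH$ of constructible sheaves on $S_{0}$ is an isomorphism of Galois representations at the complete discrete valuation field attached to $s$, transferring all ramification invariants; Theorem~\ref{all of rL} then shows that $h^{*}\sG|_{S_{0}}$ is isoclinic at $s$ with conductor $\sum_{i} m_{s}(h^{*}D_{i}) \cdot (p_{i}/r)$, establishing the isoclinic condition. Specializing to a smooth curve transversal to a single smooth point of some $D_{i}$ with multiplicity one (guaranteed by Remark~\ref{manytranscurve}) and invoking Lemma~\ref{lemmaisocliniccurve}(1) yields $\rc_{D_{i}}(\sG) = p_{i}/r$, hence \eqref{CXjsG}. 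The equality $SS(j_{!}\sG) = \bT^{*}_{X}X \cup D \cdot \langle \omega \rangle$ is then immediate from Corollary~\ref{criteriaCP} applied with $C = D \cdot \langle \omega \rangle$, and the characteristic cycle formula follows by pulling back $CC(j_{!}\sG)$ along any smooth curve transversal to a given $D_{i}$ at a smooth point and matching against the curve-side $CC$ via the Milnor formula of Theorem~\ref{SaCC}, which pins down the coefficient of $[D_{i} \cdot \langle \omega \rangle]$ as $r^{d-1}p_{i}$ (since all slopes $p_{i}/r$ exceed one, the total dimension at a transversal multiplicity-one point equals $r^{d-1}p_{i}$).

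\medskip

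The main obstacle is that $\phi$ is generically very far from smooth: when $d+1 > n$ it fails even to be flat, and in general it collapses directions in $X$. Consequently, the standard Beilinson identity $SS(\phi^{*}\sH) = \phi^{\circ} SS(\sH)$ and the proper-transversal pullback of characteristic cycles are not directly available as black boxes. The resolution is to avoid any use of such pullback formulas at the level of $\phi$ itself and instead work exclusively through the curve-restriction criteria of Proposition~\ref{equivCPandP} and Corollary~\ref{criteriaCP}: every $(D \cdot \langle \omega \rangle)$-transversal curve in $\cQ(X,D)$ is sent by $\phi$ to a genuine curve in $\cQ(Y',E')$ transversal to $E' \cdot \langle dw \rangle$, where Theorem~\ref{all of rL} applies verbatim, and the tautology $h^{*}\phi^{*}\sH = (\phi \circ h)^{*}\sH$ transports the ramification data back to $X$.
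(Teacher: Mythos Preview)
Your proof is correct and follows essentially the same strategy as the paper: construct a morphism from $X$ to an affine space, pull back the model sheaf $[r]_*\sL$ of Theorem~\ref{all of rL}, and verify the $(D\cdot\langle\omega\rangle)$-isoclinic-by-curves condition by composing test curves through that morphism. The only difference is the choice of target: you map to $\bA^{d+1}_k$ via $\phi = (f_1,\dots,f_d,g)$, whereas the paper maps to $\bA^{n+d}_k$ via $\iota = (f_1,\dots,f_d,y_1,\dots,\widehat{y_\beta},\dots,y_n,\;a_1y_1+\cdots+a_ny_n)$, retaining all the \'etale coordinates. Both maps satisfy $d(\text{last coordinate}) = \omega$ and pull $E'_i$ back to $D_i$, so the curve-transfer argument is identical; the paper's extra coordinates play no role in its proof as written (the remark that $\iota$ is properly $(H\cdot\langle dz_{\beta+d}\rangle)$-transversal is recorded but not used). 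Your target is more economical.

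Your final paragraph about the ``main obstacle'' is slightly overcautious: in fact your $\phi$ \emph{is} properly $SS(j'_!\sH)$-transversal (since $d\phi(dw)=\omega\neq 0$ and $E'_i\times_{Y'}X=D_i$ has the right dimension), so the pull-back formula $CC(j_!\sG)=\phi^{!}CC(j'_!\sH)$ from \S\ref{propCtrans} is available and would give the characteristic-cycle formula in one line. But your curve-based computation of the $CC$ coefficients is also fine.
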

\begin{proof}
We only need to find a sheaf $\sG$ satisfying, for any closed point $u$ of $D$ and any element $(S,h:S\to X,u)$ of $\cQ(X,D)$ such that $h:S\to X$ is $\langle \omega\rangle$-transversal at $s=h^{-1}(u)$, the ramification of $\sG|_{S_0}$ at $s$ is isoclinic with
\begin{equation}
\rc_s(\sG|_{S_0})=\sum_{i=1}^d m_s(h^*D_i)\cdot \frac{p_i}{r}.
\end{equation}
The rest is due to Lemma \ref{lemmaisocliniccurve} and \cite[Theorem 7.6]{cc}.

Let $Z=\spec(k[z_1,\ldots,z_{n+d}])$ be an affine space of dimension $n+d$,  $H_i$ its Cartier divisor associated to the ideal $(z_i)$ ($1\leq i\leq d$), $H$ the sum of all $H_i$, $W$ the complement of $H$ in $Z$ and $j_W:W\to Z$ the canonical injection.  Let $(f_i)$ be a principal ideal of $\Gamma(X,\sO_X)$ associating to $D_i$ $(1\leq i\leq d)$ and  $\omega=a_1dy_1+\cdots+a_{n}dy_n$ for a non-zero vector $(a_1,\ldots,a_{n})$ in $k^{n}$. We find an index $1\leq \beta\leq n$, such that $a_\beta\neq 0$. We may assume that $y=\pi(x)$ is the origin of $Y=\spec(k[y_1,\ldots, y_n])$. Let $y_1,\ldots,y_n$ be the canonical local coordinate of $X$ at $x$ from coordinates of $Y$.  Let $\iota:X\to Z$ be a $k$-morphism given by
\begin{align*}
\iota^\sharp: k[z_1,\ldots,z_{n+d}]&\to \Gamma(X,\sO_X)\\
z_i&\mapsto f_i,\ \ \ (1\leq i\leq d),\\
z_i&\mapsto y_{i-d},\ \ \ (d+1\leq i\leq n+d,\ \ i\neq \beta+d),\\
z_{\beta+d}&\mapsto a_1y_{1}+\cdots +a_ny_{n}.
\end{align*}
Since each $D_i=(f_i)$ contains $x\in X$, we note that $\iota(x)$ is the origin of $Z$. We have $\iota^{-1}(W)=U$ and $D_i=\iota^*H_i$, for each $1\leq i\leq d$.  The morphism $\iota:X\to Z$ is properly $(H\cdot\langle dz_{\beta+d}\rangle)$-transversal (subsection \ref{propCtrans}) and we have $\iota^\circ(H\cdot\langle dz_{\beta+d}\rangle)=D\cdot\langle \omega\rangle$.

By Theorem \ref{all of rL}, we have a locally constant and constructible sheaf of $\Lambda$-modules $\sM$ of rank $r^d$ on $W$ whose ramification along $H$ is $(H\cdot\langle dz_{\beta+d}\rangle)$-isoclinic by restricting to curves with
\begin{align}\label{CYjrsM}
\rC_Z(\jmath_!\sM)=\sum_{i=1}^d \frac{p_i}{r}\cdot H_i.
\end{align}
We put $\sG=\sM|_U.$  Let $(S,h:S\to X,u)$ be an element of $\cQ(X,D)$ such that $h:S\to X$ is $\langle \omega\rangle$-transversal at $s=h^{-1}(u)$. We note that $(S,\iota\circ h:S\to Z,\iota(u))$ is an element of $\cQ(Z,H)$ and $\iota\circ h:S\to Z$ is $\langle dz_{\beta+d}\rangle$-transversal at $s$. By the isoclinic condition of $\sM$, we obtains that the ramification of $\sG|_{S_0}=\sM|_{S_0}$ at $s$ is isoclinic with the conductor
\begin{align*}
\rc_s(\sG|_{S_0})=\rc_s(\sM|_{S_0})=\sum_{i=1}^d m_s((\iota\circ h)^*H_i))\cdot \frac{p_i}{r}=\sum_{i=1}^d m_s(h^*D_i)\cdot \frac{p_i}{r}.
\end{align*}
We obtain that the ramification of $\sG$ along $D$ is $(D\cdot\langle\omega\rangle)$-isoclinic by restricting to curves with \eqref{CXjsG}.
\end{proof}

\begin{lemma}\label{p_i and r}
Let $c_1,\ldots, c_d\geq 1$ and $\varepsilon>0$ be real numbers. Then, we can find positive integers $a, r, b_1,\ldots, b_d$ satisfying:
\begin{itemize}
\item[(1)]  $p<p^ab_i$ and $(b_i,p)=1$ for any $1\leq i\leq d$;
\item[(2)] $(r,p)=1$ and $r<\min_{1\leq i\leq d}\{p^ab_i-b_i\}$;
\item[(3)]
 $c_i<p^ab_i/r<c_i+\varepsilon$ for every $1\leq i\leq d$.
\end{itemize}
\end{lemma}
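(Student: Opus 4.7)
The plan is to construct $a$, $r$ and the $b_i$'s by a three-step selection that approximates each $c_i$ by a rational of the form $p^a b_i/r$. Rewriting, condition (3) asks $b_i$ to lie in the open interval $(c_i r/p^a,\,(c_i+\varepsilon)r/p^a)$, while condition (2) asks $b_i>r/(p^a-1)$. Hence the admissible values of $b_i$ form
\begin{equation*}
I_i \;=\; \bigl(\max(c_i r/p^a,\,r/(p^a-1)),\ (c_i+\varepsilon)r/p^a\bigr),
\end{equation*}
and the task reduces to choosing $a$ and $r$ so that each $I_i$ is long enough to contain an integer coprime to~$p$.

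First I would fix $a$. For $I_i$ to be nonempty, I need $p^a/(p^a-1)<c_i+\varepsilon$ uniformly in $i$; since $c_i\geq 1$, it suffices to take $a$ large enough that $1/(p^a-1)<\varepsilon/2$. With this choice, the quantity $\mu:=\min_i\bigl((c_i+\varepsilon)/p^a-1/(p^a-1)\bigr)$ is strictly positive, and $I_i$ has length at least $\mu r$.

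Next I would choose $r$ coprime to $p$ with $r>\max(p/\mu,\,p)$; any sufficiently large prime $r\neq p$ does the job. Then each $I_i$ has length exceeding $p$, so it contains at least $p$ consecutive integers, of which at least $p-1$ are coprime to $p$ (since among any $p$ consecutive integers exactly one is divisible by $p$). Picking such a $b_i\in I_i$ yields condition (3) together with the inequality $r<(p^a-1)b_i=p^a b_i-b_i$ needed for (2); condition (1) is automatic since $p^a b_i>r>p$.

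The only delicate point, which dictates the lower bound on $a$, is the boundary case $c_i=1$: condition (2) already forces $p^a b_i/r>p^a/(p^a-1)>1$, and compatibility with condition (3) then requires $p^a/(p^a-1)<1+\varepsilon$, i.e., $p^a>1+1/\varepsilon$. Once this is arranged, the remainder is routine, resting only on the elementary fact that any real interval of length greater than $p$ contains an integer coprime to $p$.
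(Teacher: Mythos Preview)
Your overall strategy mirrors the paper's: fix $a$ large, then $r$ large and coprime to $p$, then locate each $b_i$ in a suitable interval. However, your length computation contains an error. The interval $I_i$ has length
\[
r\Bigl(\tfrac{c_i+\varepsilon}{p^a}-\max\bigl(\tfrac{c_i}{p^a},\tfrac{1}{p^a-1}\bigr)\Bigr)=r\cdot\min\Bigl(\tfrac{\varepsilon}{p^a},\ \tfrac{c_i+\varepsilon}{p^a}-\tfrac{1}{p^a-1}\Bigr),
\]
which need not be bounded below by your $\mu r$: when every $c_i$ is large (so that $c_i/p^a>1/(p^a-1)$), the length of each $I_i$ is exactly $\varepsilon r/p^a$, whereas your $\mu=\min_i\bigl((c_i+\varepsilon)/p^a-1/(p^a-1)\bigr)$ can be much larger. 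With the choice $r>p/\mu$ the interval may then be far too short to contain any integer at all, and the argument breaks down.

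The fix is immediate: replace $\mu$ by $\mu':=\min\bigl(\varepsilon/p^a,\,\mu\bigr)$, or more simply by $(1+\varepsilon)/p^a-1/(p^a-1)$, which your hypothesis on $a$ makes positive and which genuinely bounds each length from below since $c_i\ge 1$. The paper sidesteps this issue altogether by using the simpler interval $\bigl(\tfrac{r}{p^a}(c_i+\tfrac{\varepsilon}{2}),\,\tfrac{r}{p^a}(c_i+\varepsilon)\bigr)$ of uniform length $r\varepsilon/(2p^a)$, choosing $r>4p^a/\varepsilon$ so that this length exceeds $2$, and then verifying condition~(2) afterwards via the chain
\[
r<\frac{p^ab_i}{c_i+\varepsilon/2}\le\frac{p^ab_i}{1+\varepsilon/2}<\frac{p^ab_i}{1+1/(p^a-1)}=p^ab_i-b_i,
\]
using that $\varepsilon/2>1/(p^a-1)$. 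Decoupling (3) from (2) in this way keeps the length estimate trivial.
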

\begin{proof}
We firstly take a positive integer $a>\log_p(1+2/\varepsilon)$. Hence we have $\varepsilon>2/(p^a-1)$. Secondly, we take a positive integer $r>\max\{4p^a/\varepsilon,p\}$ which is co-prime to $p$. Then, for every $1\leq i\leq d$, the length of the open interval
\begin{equation}\label{openinterval}
\left(\frac{r}{p^a}\left(c_i+\frac{\varepsilon}{2}\right), \frac{r}{p^a}(c_i+\varepsilon)\right)
\end{equation}
is $r\varepsilon/(2p^a)>2$. Hence, for each $1\leq i\leq m$, the interval \eqref{openinterval} contains two adjacent positive integers and let $b_i$ be one of them which is co-prime to $p$.  Thus, for each $1\leq i\leq d$, we have
\begin{equation}\label{cplusepsilon}
c_i<c_i+\frac{\varepsilon}{2}<\frac{p^ab_i}{r}<c_i+\varepsilon.
\end{equation}
 Finally,  from \eqref{cplusepsilon},  we obtain
\begin{equation*}
r<\frac{p^ab_i}{c_i+\frac{\varepsilon}{2}}\leq \frac{p^ab_i}{1+\frac{\varepsilon}{2}}<\frac{p^ab_i}{1+\frac{1}{p^a-1}}=p^ab_i-b_i,
\end{equation*}
for each $1\leq i\leq d$. In particular, $p<r< p^ab_i$ for any $1\leq i\leq d$. In summary, the integers $a,r,b_1,\ldots,b_d$ satisfy conditions (1), (2) and (3).
\end{proof}

\begin{proposition}\label{Ctheorem curve}
Let $\sF$ be a locally constant and constructible sheaf of $\Lambda$-modules on $U$. For any element $(S,h:S\to X,x)$ of $\cQ(X,D)$, we have
\begin{equation}\label{Ccutbycurve}
m_s(h^*(\rC_X(j_!\sF)))\geq \rc_s(\sF|_{S_0})
\end{equation}
\end{proposition}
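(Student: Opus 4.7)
The plan is to reduce to a local setting and tensor $\sF$ against a carefully chosen auxiliary sheaf $\sG$ of pure isoclinic ramification (furnished by Theorem~\ref{pureramifiction any D}), then extract the inequality from the total-dimension estimate of Theorem~\ref{DTtheorem}(1) applied to $\sF\otimes_\Lambda\sG$.

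Both sides of the desired inequality depend only on the germ at $x=h(s)$, so I first shrink $X$ to a Zariski neighborhood of $x$ equipped with an \'etale map $\pi:X\to Y=\bA^n_k$ sending $x$ to the origin, on which every irreducible component of $D$ contains $x$. The irreducible components of $D$ discarded in this process satisfy $m_s(h^*D_i)=0$, so neither side of the inequality is affected. Given $\varepsilon>0$, Lemma~\ref{p_i and r} applied with $c_i:=\rc_{D_i}(\sF)\geq 1$ produces integers $p_1,\dots,p_d,R$ matching the hypotheses of Theorem~\ref{pureramifiction any D} and satisfying $c_i<p_i/R<c_i+\varepsilon$. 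I choose $\omega\in\bT^*_0Y$ outside the hyperplane $\ker(dh_s)\subset\bT^*_xX\cong\bT^*_0Y$, so that $h:S\to X$ is $(D\cdot\langle\omega\rangle)$-transversal at $s$ (and trivially away from $s$ since $h^{-1}(D)=\{s\}$). Theorem~\ref{pureramifiction any D} then yields a locally constant constructible sheaf $\sG$ on $U$ of rank $R^d$, isoclinic at every generic point of $D$ with $\rc_{D_i}(\sG)=p_i/R$, whose ramification along $D$ is $(D\cdot\langle\omega\rangle)$-isoclinic by restricting to curves; by Definition~\ref{defCisoclinic}, $\sG|_{S_0}$ is therefore isoclinic at $s$ of conductor
\begin{equation*}
c\;:=\;\sum_i m_s(h^*D_i)\,\frac{p_i}{R}.
\end{equation*}

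Since $\rc_{D_i}(\sG)=p_i/R>\rc_{D_i}(\sF)$ and $\sG$ is isoclinic at each $\xi_i$, the slope arithmetic of subsection~\ref{slopedecom} yields $\rc_{D_i}(\sF\otimes\sG)=p_i/R$ and $\dt_{D_i}(\sF\otimes\sG)=\rk_\Lambda(\sF)\,R^{d-1}p_i$, whence
\begin{equation*}
m_s\bigl(h^*\DT_X(j_!(\sF\otimes\sG))\bigr)\;=\;\rk_\Lambda(\sF)\,R^d\,c.
\end{equation*}
Write $M$, $N$ for the $\Lambda[G_{K_s}]$-modules attached to $\sF|_{S_0}$, $\sG|_{S_0}$ at $s$, with slope decomposition $M=\bigoplus_\rho M^{(\rho)}$; by slightly varying the admissible parameters $p_i,R$ I may further ensure that $c$ avoids the finite set of slopes of $M$. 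Then each tensor $M^{(\rho)}\otimes N$ is isoclinic of slope $\max(\rho,c)$, so
\begin{equation*}
\dt_s\bigl((\sF\otimes\sG)|_{S_0}\bigr)\;=\;R^d\sum_\rho\dim_\Lambda M^{(\rho)}\cdot\max(\rho,c).
\end{equation*}

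Now Theorem~\ref{DTtheorem}(1) applied to $h:S\to X$ and $\sF\otimes_\Lambda\sG$ (noting $h^*D$ is Cartier since $S$ is a smooth curve) gives $\rk_\Lambda(\sF)R^d c\geq R^d\sum_\rho\dim_\Lambda M^{(\rho)}\max(\rho,c)$, which upon cancellation reads $\sum_{\rho>c}\dim_\Lambda M^{(\rho)}(\rho-c)\leq 0$. Since $\rho-c>0$ on the summation range, $M^{(\rho)}=0$ for every $\rho>c$, i.e.
$\rc_s(\sF|_{S_0})\leq c<m_s(h^*\rC_X(j_!\sF))+\varepsilon\sum_i m_s(h^*D_i)$.
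Sending $\varepsilon\to 0$ gives the proposition. The substantive input is the construction, provided by Theorem~\ref{pureramifiction any D}, of an isoclinic sheaf of pure ramification in a prescribed cotangent direction $\langle\omega\rangle$ along an \emph{arbitrary} divisor $D$; once that tool is in hand the remaining slope arithmetic is routine, the only minor technical nuisance being the parameter tweak that keeps $c$ off the finite slope set of $\sF|_{S_0}$ at $s$, which the density of admissible parameters easily accommodates.
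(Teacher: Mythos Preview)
Your argument follows the paper's strategy and is essentially correct once one gap is filled. You claim to choose $\omega\in\bT^*_0Y$ outside ``the hyperplane $\ker(dh_s)$''. For an arbitrary element $(S,h,x)$ of $\cQ(X,D)$ this can fail: a quasi-finite $h$ in characteristic $p$ may have $dh_s=0$ (for instance $h(t)=(t^p,1)$ into $\bA^2_k$ with $D=(x_1)$), in which case $\ker(dh_s)=\bT^*_xX$ and no nonzero $\omega$ makes $h$ $\langle\omega\rangle$-transversal at $s$. Without that transversality you cannot invoke Definition~\ref{defCisoclinic} to identify $\rc_s(\sG|_{S_0})$, and the slope bookkeeping collapses.

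The paper repairs this at the outset by factoring $h=\pr_1\circ\Gamma_h$ through the graph $\Gamma_h:S\to X\times_kS$. Smooth base change along $\pr_1$ gives $\pr_1^*\rC_X(j_!\sF)=\rC_{X\times_kS}(\pr_1^*j_!\sF)$, so it suffices to prove the inequality for the closed immersion $\Gamma_h$ in place of $h$; for a closed immersion $d\Gamma_{h,s}$ is surjective and its kernel is a genuine hyperplane, so your choice of $\omega$ is then legitimate. After inserting this reduction your direct slope computation (forcing $M^{(\rho)}=0$ for $\rho>c$ via Theorem~\ref{DTtheorem}(1)) is equivalent to the paper's contradiction argument, and the remaining steps --- Lemma~\ref{p_i and r}, Theorem~\ref{pureramifiction any D}, the parameter tweak to keep $c$ off $\rS_s(\sF|_{S_0})$, and letting $\varepsilon\to0$ --- match the paper.
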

\begin{proof}
Let  $(S,h:S\to X,x)$ be an element of $\cQ(X,D)$.
The morphism $h:S\to X$ is a composition of the graph $\Gamma_h:S\to X\times_kS$ and the canonical projection $\pr_1:X\times_kS\to X$. Since $\pr_1:X\times_kS\to X$ is smooth, we have (\cite[Proposition 2.22 and 3.8]{wr})
\begin{equation*}
\pr_1^*(\rC_X(j_!\sF))=\rC_{(X\times_kS)}(\pr_1^*j_!\sF)).
\end{equation*}
Hence, to show \eqref{Ccutbycurve}, it is sufficient to show
\begin{equation*}
m_s(\Gamma_h^*(\rC_{(X\times_kS)}(\pr_1^*j_!\sF)))\geq \rc_s(\Gamma^*_h(\pr_1^*j_!\sF)).
\end{equation*}
Thus, after replacing $X$ by $X\times_kS$ and replacing $j_!\sF$ by $\pr_1^*j_!\sF$, we may assume that $h:S\to X$ is a closed immersion and $\dim_k X=n\geq 2$. The question is local. We may further assume that $x=S\bigcap D$ is a closed point of $X$.

 Suppose that there is an element $(S,h:S\to X,x)$ of $\cI(X,D)$ such that \eqref{Ccutbycurve} is not valid, i.e.,
\begin{equation}\label{assumption c>C}
\rc_x\left(\sF|_{S_0}\right)>\sum_{i=1}^d m_x(h^*D_i)\cdot\rc_{D_i}(\sF).
\end{equation}
 We proceed by contradiction. We may assume that $k$ is algebraically closed. After replacing $X$ by an affine Zariski neighborhood of $x$, we may assume that $X$ is local with a structure morphism $\pi:X\to Y$. Since $n\geq 2$, we can find a non-zero vector $\omega\in\bT^*_0Y$ such that $\omega\not\in \ker(dh)$, where $dh:\bT^*_xX\to\bT^*_xS$ is the canonical map. We see that $h:S\to X$ is $\langle \omega\rangle$-transversal at $x$. We simply put
\begin{equation*}
m_i=m_x(h^*D_i) \ \ \ \textrm{and}\ \ \ c_i=\rc_{D_i}(\sF),
\end{equation*}
for each $1\leq i\leq d$. Let $\delta$ be a positive real number such that
\begin{equation*}
\rc_x\left(\sF|_{S_0}\right)>\sum^d_{i=1}m_ic_i+\delta\ \ \ \textrm{and}\ \ \ \rS_x(\sF|_{S_0})\bigcap \bigg(\sum_{i=1}^d m_ic_i, \sum_{i=1}^d m_ic_i+\delta\bigg)=\emptyset
\end{equation*}
Put $\varepsilon=\delta/(\sum_{i=1}^d m_i)$. By Lemma \ref{p_i and r}, we can find positive integers $a, r, b_1,\ldots, b_m$ satisfying:
\begin{itemize}
\item[(1)]  $p<p^ab_i$  and $(b_i,p)=1$ for any $1\leq i\leq d$;
\item[(2)]$(r,p)=1$ and $r<\min_{1\leq i\leq d}\{p^ab_i-b_i\}$,
\item[(3)]
 $c_i<p^ab_i/r<c_i+\varepsilon$ for every $1\leq i\leq d$.
\end{itemize}
We put $p_i=p^ab_i$ for each $1\leq i\leq d$. By Theorem \ref{pureramifiction any D}, we have a locally constant and constructible sheaf of $\Lambda$-modules $\sG$ of rank $r^d$ on $U$ whose ramification along $D$ is $(D\cdot\langle\omega\rangle)$-isoclinic by restricting to curves with
\begin{equation}\label{rCjsG=sumpi/rDi}
\rC_X(j_!\sG)=\sum_{i=1}^d \frac{p_i}{r}\cdot D_i.
\end{equation}
Hence, the ramification of $\sG|_{S_0}$ at $x$ is isoclinic with the conductor
\begin{equation*}
\rc_x(\sG|_{S_0})=\sum_{i=1}^d \frac{m_ip_i}{r}.
\end{equation*}
We consider the locally constant and constructible sheaf $\sF\otimes_{\Lambda}\sG$ on $U$.
Since the ramification of $\sG$ at the generic point of $D_i$ is isoclinic and 
\begin{equation*}
c_{D_i}(\sG)=\frac{p_i}{r}>c_i=c_{D_i}(\sF),
\end{equation*}
we obtain that the ramification of $\sF\otimes_{\Lambda}\sG$ at the generic point of $D_i$ is isoclinic with
\begin{equation*}
c_{D_i}(\sF\otimes_{\Lambda}\sG)=c_{D_i}(\sG)=\frac{p_i}{r},
\end{equation*}
for any $1\leq i\leq d$. Hence, we have
\begin{equation*}
\DT_X(j_!(\sF\otimes_{\Lambda}\sG))=\sum^d_{i=1}\rk_{\Lambda}\sF\cdot r^{d-1}p_i\cdot D_i.
\end{equation*}
Then, we have
\begin{equation}\label{mxDTSG}
m_x(h^*(\DT_X(j_!(\sF\otimes_{\Lambda}\sG))))=\sum^d_{i=1}\rk_{\Lambda}\sF\cdot r^{d-1}p_i\cdot m_x(h^*D_i)=\sum^d_{i=1}\rk_{\Lambda}\sF\cdot r^{d-1}m_ip_i
\end{equation}
By \eqref{rCjsG=sumpi/rDi}, we obtain that
\begin{equation*}
\sum^d_{i=1}m_ic_i<\rc_x(\sG|_{S_0})=\sum_{i=1}^d \frac{m_ip_i}{r}<\sum_{i=1}^dm_i(c_i+\varepsilon)=\sum^d_{i=1}m_ic_i+\delta.
\end{equation*}
Hence $\rc_x(\sG|_{S_0})\not\in \rS_x(\sF|_{S_0})$ and $\rc_x(\sG|_{S_0})<\rc_x(\sF|_{S_0})$. Then, we have  (subsection \ref{slopedecom})
\begin{equation}\label{rkFGcGdtFG}
\sum^d_{i=1}\rk_{\Lambda}\sF\cdot r^{d-1}m_ip_i=\rk_{\Lambda}(\sF\otimes_{\Lambda}\sG)\cdot \rc_x(\sG|_{S_0})<\dt_x((\sF\otimes_{\Lambda}\sG)|_{S_0}).
\end{equation}
By \eqref{mxDTSG} and \eqref{rkFGcGdtFG}, we obtain
\begin{align*}
m_x(h^*(\DT_X(j_!(\sF\otimes_{\Lambda}\sG))))&<\dt_x((\sF\otimes_{\Lambda}\sG)|_{S_0}).
\end{align*}
It contradicts to Theorem \ref{DTtheorem} (1). Hence, our assumption \eqref{assumption c>C} does not hold. We obtain the proposition.
\end{proof}

\begin{proposition}\label{curiso=geniso+pure}
Let $C$ be a closed conical subset of $\bT^*X$ such that $B(C)=D$ and that $\dim_{k(\ol x)}C_{\ol x}=1$ for any geometric point $\ol x\to D$. Let $\sG$ be a locally constant and constructible sheaf of $\Lambda$-modules on $U$. Then, the following two conditions are equivalent:
\begin{itemize}
\item[(1)]
The ramification of $\sG$ along $D$ is $C$-isoclinic by restricting to curves.
\item[(2)]
The ramification of $\sG$ at each generic point of $D$ is isoclinic and $SS(j_!\sG)\subseteq \bT^*_XX\bigcup C$.
\end{itemize}
\end{proposition}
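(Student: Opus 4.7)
The plan is to deduce both directions from results already established in this section, using a sandwich argument between total-dimension and conductor estimates for the direction $(2)\Rightarrow(1)$.

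For $(1)\Rightarrow(2)$, essentially nothing new is needed: Lemma \ref{lemmaisocliniccurve}(1) gives the isoclinicity of $\sG$ at each generic point of $D$, Lemma \ref{lemmaisocliniccurve}(2) upgrades the $C$-isoclinic condition to $C$-purity by restricting to curves, and then Proposition \ref{equivCPandP} $(1)\Rightarrow(2)$ yields the inclusion $SS(j_!\sG)\subseteq \bT^*_XX\bigcup C$.

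For $(2)\Rightarrow(1)$, fix an element $(S,h\colon S\to X,x)$ of $\cQ(X,D)$ with $h$ being $C$-transversal at $s=h^{-1}(x)$. The hypothesis $SS(j_!\sG)\subseteq \bT^*_XX\bigcup C$ gives that $h$ is also $SS(j_!\sG)$-transversal at $s$. I would then invoke Proposition \ref{equivCPandP} $(2)\Rightarrow(1)$ to conclude that the ramification of $\sG$ along $D$ is $C$-pure by restricting to curves, which yields the equality of total dimensions
\begin{equation*}
\dt_s(\sG|_{S_0})=\sum_i m_s(h^*D_i)\cdot \dt_{D_i}(\sG).
\end{equation*}
Since $\sG$ is assumed isoclinic at each generic point of $D$, one has $\dt_{D_i}(\sG)=\rk_\Lambda(\sG)\cdot \rc_{D_i}(\sG)$, so
\begin{equation*}
\dt_s(\sG|_{S_0})=\rk_\Lambda(\sG)\cdot \sum_i m_s(h^*D_i)\cdot \rc_{D_i}(\sG).
\end{equation*}

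Now I combine this with two general inequalities. First, from the definition of total dimension in subsection \ref{slopedecom} one always has $\dt_s(\sG|_{S_0})\leq \rk_\Lambda(\sG)\cdot \rc_s(\sG|_{S_0})$, with equality if and only if $\sG|_{S_0}$ is isoclinic at $s$. Second, Proposition \ref{Ctheorem curve} gives $\rc_s(\sG|_{S_0})\leq \sum_i m_s(h^*D_i)\cdot \rc_{D_i}(\sG)$. Chaining these together yields
\begin{equation*}
\rk_\Lambda(\sG)\cdot \sum_i m_s(h^*D_i)\cdot \rc_{D_i}(\sG)=\dt_s(\sG|_{S_0})\leq \rk_\Lambda(\sG)\cdot \rc_s(\sG|_{S_0})\leq \rk_\Lambda(\sG)\cdot \sum_i m_s(h^*D_i)\cdot \rc_{D_i}(\sG),
\end{equation*}
forcing equality throughout. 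The equality on the left forces $\sG|_{S_0}$ to be isoclinic at $s$, and the equality on the right forces $\rc_s(\sG|_{S_0})=\sum_i m_s(h^*D_i)\cdot \rc_{D_i}(\sG)=m_s(h^*\rC_X(j_!\sG))$, which is precisely condition $(\ast)$ of Definition \ref{defCisoclinic}.

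The main obstacle is conceptual rather than technical: one must recognize that the conductor-level condition $(1)$ and the sheaf-theoretic condition $(2)$ fit together exactly because total dimension sandwiches the product (rank)$\times$(conductor) from above, while the purity condition (already equivalent to the singular support inclusion) fixes total dimension from below. No further geometric input is needed beyond Propositions \ref{equivCPandP} and \ref{Ctheorem curve}.
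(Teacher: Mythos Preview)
Your proof is correct and follows essentially the same approach as the paper's own argument: both directions rely on Lemma~\ref{lemmaisocliniccurve} and Proposition~\ref{equivCPandP}, and for $(2)\Rightarrow(1)$ the key sandwich between $\dt_s(\sG|_{S_0})=\rk_\Lambda(\sG)\cdot m_s(h^*\rC_X(j_!\sG))$ and the chain $\dt_s\leq \rk_\Lambda(\sG)\cdot\rc_s\leq \rk_\Lambda(\sG)\cdot m_s(h^*\rC_X(j_!\sG))$ via Proposition~\ref{Ctheorem curve} is exactly what the paper does.
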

\begin{proof}
(1)$\Rightarrow$(2) is from Proposition \ref{equivCPandP} and Lemma \ref{lemmaisocliniccurve}.

(2)$\Rightarrow$(1). We may assume that $k$ is algebraically closed.  Let $(S,h:S\to X,x)$ be an element $(S,h:S\to X,x)$ $\cQ(X, D)$ such that $h:S\to X$ is $C$-transversal at $s=h^{-1}(x)$. On one hand, by Proposition \ref{equivCPandP} and the condition in (2), we have 
\begin{equation}\label{dtsGS0=mshrCsG}
\dt_s(\sG|_{S_0})=m_s(h^*(\DT_X(j_!\sG)))=\rk_{\Lambda}\sG\cdot m_s(h^*(\rC_X(j_!\sG))).
\end{equation}
On the other hand, by Proposition \ref{Ctheorem curve}, we have
\begin{equation}\label{dtsGS0<=mshrCsG}
\dt_s(\sG|_{S_0}) \leq \rk_{\Lambda}\sG\cdot\rc_s(\sG|_{S_0})\leq \rk_{\Lambda}\sG\cdot m_s(h^*(\rC_X(j_!\sG))).
\end{equation}
By \eqref{dtsGS0=mshrCsG} and \eqref{dtsGS0<=mshrCsG}, we see that the ramification of $\sG|_{S_0}$ at $s$ is isoclinic with 
$$\rc_s(\sG|_{S_0})= m_s(h^*(\rC_X(j_!\sG))).$$
Hence, we obtain (1).
\end{proof}

\begin{proposition}\label{sGCisoclinic+CXjG>CXjF=>Rj*sGsF=j!sGsF}
Let $C$ be a closed conical subset of $\bT^*X$ such that $B(C)=D$ and that $\dim_{k(\ol x)}C_{\ol x}=1$ for any geometric point $\ol x\to D$. Let $\sF$ and $\sG$ be locally constant and constructible sheaves of $\Lambda$-modules on $U$ satisfying:
\begin{itemize}
\item[1)]
The ramification of $\sG$ along $D$ is $C$-isoclinic by restricting to curves;
\item[2)]
$\rc_{D_i}(\sG)>\rc_{D_i}(\sF)$ for any $1\leq i\leq d$.
\end{itemize}
Then, the ramification of $\sG\otimes_{\Lambda}\sF$ along $D$ is $C$-isoclinic by restricting to curves with 
\begin{align}
 SS(j_!(\sG\otimes_{\Lambda}\sF))&=SS(j_!\sG)\subseteq \bT^*_XX\bigcup C, \label{SSjsGotsF=SSjsGinT*XXC}\\
  CC(j_!(\sG\otimes_{\Lambda}\sF))&=\rk_{\Lambda}\sF\cdot CC(j_!\sG). \label{CCjsGotsF=rksFCCjsG}
\end{align}
In particular, we have 
\begin{equation}\label{formulaRj*sGsF=j!sGsF}
 Rj_*(\sG\otimes_{\Lambda}\sF)=j_!(\sG\otimes_{\Lambda}\sF).
\end{equation}
\end{proposition}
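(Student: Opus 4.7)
The plan is to promote the $C$-isoclinic ramification from $\sG$ to $\sG\otimes_{\Lambda}\sF$, which will yield \eqref{formulaRj*sGsF=j!sGsF} immediately via Lemma \ref{section5lemmaRj*Rj}, and then establish \eqref{SSjsGotsF=SSjsGinT*XXC} and \eqref{CCjsGotsF=rksFCCjsG} by combining Proposition \ref{curiso=geniso+pure} with an explicit characteristic cycle computation on a Zariski dense open.

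First I would verify that the ramification of $\sG\otimes_{\Lambda}\sF$ along $D$ is $C$-isoclinic by restricting to curves. Fix $(S,h:S\to X,x)\in\cQ(X,D)$ with $h$ being $C$-transversal at $s=h^{-1}(x)$. Assumption 1) gives that $\sG|_{S_0}$ is isoclinic at $s$ with $\rc_s(\sG|_{S_0})=m_s(h^*\rC_X(j_!\sG))$, and Proposition \ref{Ctheorem curve} applied to $\sF$ yields $\rc_s(\sF|_{S_0})\leq m_s(h^*\rC_X(j_!\sF))$. Term-wise, assumption 2) gives $m_s(h^*\rC_X(j_!\sG))>m_s(h^*\rC_X(j_!\sF))$, hence $\rc_s(\sG|_{S_0})>\rc_s(\sF|_{S_0})$. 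The tensor product property recorded in subsection \ref{slopedecom} then shows that $(\sG\otimes_{\Lambda}\sF)|_{S_0}$ is isoclinic at $s$ with $\rc_s((\sG\otimes_{\Lambda}\sF)|_{S_0})=\rc_s(\sG|_{S_0})$. Applying the same tensor property at each generic point $\xi_i$ of $D$ (where $\sG|_{\spec K_i}$ is isoclinic by Lemma \ref{lemmaisocliniccurve} and strictly dominates $\sF|_{\spec K_i}$ in conductor) yields $\rc_{D_i}(\sG\otimes_{\Lambda}\sF)=\rc_{D_i}(\sG)$, so $\rC_X(j_!(\sG\otimes_{\Lambda}\sF))=\rC_X(j_!\sG)$. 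This confirms condition $(\ast)$ of Definition \ref{defCisoclinic} for $\sG\otimes_{\Lambda}\sF$. Since $\rc_{D_i}(\sG\otimes_{\Lambda}\sF)=\rc_{D_i}(\sG)>\rc_{D_i}(\sF)\geq 1$ for each $i$, Lemma \ref{section5lemmaRj*Rj} applies to $\sG\otimes_{\Lambda}\sF$ and gives \eqref{formulaRj*sGsF=j!sGsF} directly.

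For \eqref{SSjsGotsF=SSjsGinT*XXC} and \eqref{CCjsGotsF=rksFCCjsG}, Proposition \ref{curiso=geniso+pure} applied to $\sG$ and to $\sG\otimes_{\Lambda}\sF$ yields $SS(j_!\sG),\, SS(j_!(\sG\otimes_{\Lambda}\sF))\subseteq\bT^*_XX\bigcup C$. To compute characteristic cycles, I would work on $X_0=X-Z$ with $Z\subseteq D$ of codimension $\geq 2$ such that every $D_i-Z$ is smooth and the ramifications of $\sG$ and $\sG\otimes_{\Lambda}\sF$ along $D_i-Z$ are non-degenerate, so that formula \eqref{wrcbcc} is available. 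At each generic point $\xi_i$, set $M_i=\sG|_{\spec K_i}$, $N_i=\sF|_{\spec K_i}$ and $c_i=\rc_{D_i}(\sG)$; since $c_i>\rc_{D_i}(\sF)$, the subgroup $G^{c_i}_{K_i}$ sits inside $G^{\rc_{D_i}(\sF)+}_{K_i}$ and thus acts trivially on $N_i$, so $\Gr^{c_i}G_{K_i}$ acts trivially on $N_i$ and the central character decomposition of the top-slope part of $M_i\otimes_{\Lambda}N_i$ is $\bigoplus_{\chi}M_{i,\chi}^{(c_i)}\otimes_{\Lambda}N_i$, with each dimension multiplied by $\rk_{\Lambda}\sF$. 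Plugging into \eqref{wrcbcc} gives $CC(j_!(\sG\otimes_{\Lambda}\sF))|_{X_0}=\rk_{\Lambda}\sF\cdot CC(j_!\sG)|_{X_0}$. Since both cycles are supported on the $n$-equidimensional closed conical subset $\bT^*_XX\bigcup C$ and the generic point of every irreducible component of this set lies over a generic point of $X$ or of some $D_i$ (all contained in $X_0$), the identity extends globally to \eqref{CCjsGotsF=rksFCCjsG}. The SS identity \eqref{SSjsGotsF=SSjsGinT*XXC} then follows from the perversity of $j_!\sG[n]$ and $j_!(\sG\otimes_{\Lambda}\sF)[n]$ together with $\mathrm{supp}(CC(\cdot))=SS(\cdot)$ for perverse sheaves (subsection \ref{propSSCC}), noting that the two CCs differ by the positive scalar $\rk_{\Lambda}\sF$. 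The main obstacle is the central character computation at each $\xi_i$ and the verification that no extraneous component arises over the codimension $\geq 2$ locus $Z$ when extending the CC identity; both points mirror the generic-fiber argument used in the proof of Proposition \ref{equivCPandP} and Corollary \ref{criteriaCP}.
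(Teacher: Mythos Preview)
Your proof is correct. The first half—showing that $\sG\otimes_{\Lambda}\sF$ is $C$-isoclinic by restricting to curves and then invoking Lemma \ref{section5lemmaRj*Rj} for \eqref{formulaRj*sGsF=j!sGsF}—is essentially identical to the paper's argument.

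The route to the singular support and characteristic cycle identities, however, differs from the paper's. The paper first refines $C$ to $C'=SS(j_!\sG)\times_X D\subseteq C$ and proves the stronger statement that $\sG\otimes_{\Lambda}\sF$ is $C'$-isoclinic by restricting to curves; this immediately yields $SS(j_!(\sG\otimes_{\Lambda}\sF))\subseteq \bT^*_XX\cup C'=SS(j_!\sG)$ via Proposition \ref{curiso=geniso+pure}. For the reverse inclusion the paper uses a short duality trick: running the same argument with $(\sG\otimes_{\Lambda}\sF,\sF^{\vee})$ in place of $(\sG,\sF)$ gives $SS(j_!(\sG\otimes_{\Lambda}\sF\otimes_{\Lambda}\sF^{\vee}))\subseteq SS(j_!(\sG\otimes_{\Lambda}\sF))$, and the evaluation map $\sF\otimes_{\Lambda}\sF^{\vee}\to\Lambda$ forces $SS(j_!\sG)\subseteq SS(j_!(\sG\otimes_{\Lambda}\sF\otimes_{\Lambda}\sF^{\vee}))$, closing the chain of inclusions. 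Only then is \cite[Theorem 7.14]{cc} invoked for the characteristic cycle. Your approach instead goes CC-first: you compute $CC$ on the non-degenerate locus $X_0$ via the central character decomposition (which is legitimate, since $\Gr^{c_i}G_{K_i}$ acts trivially on $N_i$), extend globally using that every $n$-dimensional irreducible component of either singular support must, by the one-dimensional fiber condition on $C$, have base equal to some $D_i$ and hence generic point over $X_0$, and finally recover the $SS$ equality from perversity via $\mathrm{supp}(CC)=SS$. The paper's $\sF^{\vee}$ trick is cleaner in that it avoids any explicit central character bookkeeping, while your approach is more hands-on but makes the CC identity transparent from the outset; both are valid.
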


\begin{proof}
 By Lemma \ref{lemmaisocliniccurve}, the ramification of $\sG$ at each generic point of $D$ is isoclinic and that $SS(j_!\sG)\subseteq \bT^*_XX\bigcup C$. We put $C'=SS(j_!\sG)\times_XD\subseteq C$. By Proposition \ref{curiso=geniso+pure}, we obtain that the ramification of $\sG$ along $D$ is $C'$-isoclinic by restricting to curves.  Let $(S,h:S\to X,x)$ be an element of $\cQ(X,D)$ such that $h:S\to X$ is $C'$-transversal at $x$. By conditions on $\sG$,  the ramification of $\sG|_{S_0}$ at $s$ is isoclinic with
\begin{equation}
\rc_s(\sG|_{S_0})=m_s(h^*(\rC_X(j_!\sG)))>m_s(h^*(\rC_X(j_!\sF)))\geq \rc_s(\sF|_{S_0}),
\end{equation}
and we have $$\rC_X(j_!(\sG\otimes_{\Lambda}\sF))=\rC_X(j_!\sG).$$
Hence, the ramification of $(\sG\otimes_{\Lambda}\sF)|_{S_0}$ at $s$ is isoclinic with 
\begin{align}
\rc_s((\sG\otimes_{\Lambda}\sF)|_{S_0})=\rc_s(\sG|_{S_0})=m_s(h^*(\rC_X(j_!\sG)))=m_s(h^*(\rC_X(j_!(\sG\otimes_{\Lambda}\sF)))).
\end{align}
Thus, the ramification of $\sG\otimes_{\Lambda}\sF$ along $D$ is $C'$-isoclinic by restricting to curves. In particular, the ramification of $\sG\otimes_{\Lambda}\sF$ along $D$ is $C$-isoclinic by restricting to curves. Applying Proposition \ref{curiso=geniso+pure} to $\sG\otimes_{\Lambda}\sF$, we obtain that 
\begin{equation}\label{SSj!sGsFsubseteqSSj!sG}
SS(j_!(\sG\otimes_{\Lambda}\sF))\subseteq \bT^*_XX\bigcup C'=SS(j_!\sG).
\end{equation}
We put $\sF^{\vee}=\mathcal{H}om_{\Lambda}(\sF,\Lambda)$. For any $1\leq i\leq d$, we have 
\begin{equation*}
\rc_{D_i}(j_!(\sG\otimes_{\Lambda}\sF))=\rc_{D_i}(j_!\sG)>\rc_{D_i}(j_!\sF)=\rc_{D_i}(j_!\sF^{\vee}).
\end{equation*}
Replacing $\sG$ by $\sG\otimes_{\Lambda}\sF$ and replacing $\sF$ by $\sF^{\vee}$, we run the proof above again and get 
\begin{equation}\label{SSj!sGsFsFvsubseteqSSj!sGsF}
SS(j_!(\sG\otimes_{\Lambda}\sF\otimes_{\Lambda}\sF^{\vee}))\subseteq SS(j_!(\sG\otimes_{\Lambda}\sF)).
\end{equation}
We have a short exact sequence of locally constant and constructible sheaves on $U$:
$$0\to \mathscr H\to \sF\otimes_{\Lambda}\sF^{\vee}\xrightarrow{\mathrm{Ev}} \Lambda\to 0.$$
Hence, by \cite[Theorem 1.4]{bei}, we have
\begin{equation}\label{SSj!sGsubseteqSSj!sGsFsFv}
SS(j_!\sG)=SS(j_!(\sG\otimes_\Lambda\Lambda))\subseteq SS(j_!(\sG\otimes_{\Lambda}\sF\otimes_{\Lambda}\sF^{\vee})).
\end{equation}
By \eqref{SSj!sGsFsubseteqSSj!sG}, \eqref{SSj!sGsFsFvsubseteqSSj!sGsF} and \eqref{SSj!sGsubseteqSSj!sGsFsFv},  we get \eqref{SSjsGotsF=SSjsGinT*XXC}. By \cite[Theorem 7.14]{cc} and \eqref{SSjsGotsF=SSjsGinT*XXC}, we get \eqref{CCjsGotsF=rksFCCjsG}. The equality \eqref{formulaRj*sGsF=j!sGsF} is from condition 2) for $\sG$, \eqref{SSjsGotsF=SSjsGinT*XXC} and Lemma \ref{section5lemmaRj*Rj}.
\end{proof}

\section{Decreasing of conductor divisors after pull-backs}\label{proofCLC}

\subsection{}

In this section, $k$ denotes a perfect field of characteristic $p>0$, $X$  a smooth $k$-scheme of dimension $n\geq 1$, $D$ a reduced Cartier divisor on $X$, $\{D_i\}_{ i\in I}$ the set irreducible components of $D$, $U$ the complement of $D$ in $X$ and $j:U\to X$ the canonical injection. Let $\sF$ be a non-trivial locally constant and constructible sheaf of $\Lambda$-modules on $U$. We take the notation in Definition \ref{defCDTNP}.

\begin{theorem}\label{Ctheorem}
 Let $f:Y\to X$ be a morphism of smooth $k$-schemes. We assume that $f^*D=D\times_XY$ is a Cartier divisor of $Y$.
Then, we have
\begin{equation}\label{firstinequalityCtheorem}
f^*(\rC_X(j_!\sF))\geq \rC_Y(f^*j_!\sF).
\end{equation}
\end{theorem}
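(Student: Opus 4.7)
The plan is a two-step d\'evissage reducing Theorem \ref{Ctheorem} to the curve-immersion case already established in Proposition \ref{Ctheorem curve}. Since both sides of the inequality are Cartier divisors on $Y$ with rational coefficients supported on $f^*D$, it suffices to check, for each irreducible component $E$ of $f^*D$, the inequality
\begin{equation*}
m_E(f^*\rC_X(j_!\sF)) \geq \rc_E(f^*\sF).
\end{equation*}

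First, I would reduce to the case that $f$ is a closed immersion by factoring it as $f = \pr_X \circ \Gamma_f$, where $\Gamma_f : Y \to Y \times_k X$ is the graph and $\pr_X : Y \times_k X \to X$ is the second projection, which is smooth. For the smooth morphism $\pr_X$, the extension of complete discrete valuation rings at each generic point of $\pr_X^{-1}(D_i)$ is unramified, so by the unramified-invariance of conductors recalled in subsection \ref{K'/K} one has the equality $\pr_X^*\rC_X(j_!\sF) = \rC_{Y\times_k X}(\pr_X^*j_!\sF)$. Applying $\Gamma_f^*$ and using transitivity, the desired inequality for $f$ follows from the inequality for the closed immersion $\Gamma_f$. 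Relabeling, I may therefore assume that $f : Y \hookrightarrow X$ is a closed immersion.

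Next, I fix an irreducible component $E$ of $f^*D$. After base change to the algebraic closure of $k$ (which does not change conductors) and shrinking $Y$ around a sufficiently generic closed point $y$ of $E$, I arrange that $y$ lies in the smooth locus of $(f^*D)_\red$, belongs to no other component of $f^*D$, lies in the open subset of $E$ along which the ramification of $f^*\sF$ is non-degenerate in the sense of \cite{wr}, and satisfies $\dim_{k(\ol y)}\bigl(SS(f^*j_!\sF) \times_Y \ol y\bigr) = 1$. Applying Remark \ref{manytranscurve} to the smooth reduced Cartier divisor $(f^*D)_\red$ on $Y$ and to the $1$-dimensional conical set $SS(f^*j_!\sF) \times_Y y \subset \bT^*_y Y$, I obtain a closed immersion $h : S \hookrightarrow Y$ of a smooth $k$-curve with $S \cap f^*D = \{y\}$, transverse to $E$ at $y$ (so $m_y(h^*E) = 1$) and $SS(f^*j_!\sF)$-transversal at $y$.

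Then the composition $g = f \circ h : S \hookrightarrow X$ is a closed immersion with $(S, g, f(y)) \in \cI(X, D) \subseteq \cQ(X, D)$, and Proposition \ref{Ctheorem curve} gives
\begin{equation*}
m_y\bigl(g^*\rC_X(j_!\sF)\bigr) \;\geq\; \rc_y\bigl(\sF|_{S \setminus \{y\}}\bigr) \;=\; \rc_y\bigl((f^*\sF)|_{S \setminus \{y\}}\bigr).
\end{equation*}
Since $h$ meets only $E$ among the irreducible components of $f^*D$, and transversely, the left-hand side equals $m_E\bigl(f^*\rC_X(j_!\sF)\bigr)$; since $h$ is $SS(f^*j_!\sF)$-transversal at $y$ and the ramification of $f^*\sF$ is non-degenerate there, Theorem \ref{DTtheorem}(2) yields the Newton polygon equality $\rNP_y\bigl((f^*\sF)|_{S \setminus \{y\}}\bigr) = m_y(h^*E)\cdot \rNP_E(f^*\sF) = \rNP_E(f^*\sF)$, so the right-hand side equals $\rc_E(f^*\sF)$, yielding the claim. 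The main obstacle is to secure, at one and the same closed point $y \in E$ and via one and the same curve $h$, the simultaneous conditions of non-degeneracy at $\eta_E$, transversality of $h$ to $E$ while avoiding every other component of $f^*D$, and $SS(f^*j_!\sF)$-transversality of $h$ at $y$; this is exactly what Remark \ref{manytranscurve} provides once $y$ has been chosen generically along $E$.
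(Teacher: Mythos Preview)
Your proof is correct and follows essentially the same route as the paper's: reduce to Proposition~\ref{Ctheorem curve} by choosing, near a generic closed point $y$ of a component $E$ of $f^*D$ where the ramification of $f^*\sF$ is non-degenerate, a smooth curve $h:S\hookrightarrow Y$ meeting $E$ transversally and $SS(f^*j_!\sF)$-transversal at $y$, then invoke Theorem~\ref{DTtheorem}(2) to identify $\rc_y((f^*\sF)|_{S_0})$ with $\rc_E(f^*\sF)$. Your preliminary reduction of $f$ to a closed immersion via the graph is not needed in the paper's version (it applies Proposition~\ref{Ctheorem curve} directly to $f\circ h$), but it does make the membership $(S,f\circ h,f(y))\in\cI(X,D)\subseteq\cQ(X,D)$ transparent, which is a minor expository gain.
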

\begin{proof}  
We may assume that $k$ is algebraically closed. It is a Zariski local question. Hence, after replacing $Y$ by Zariski open neighborhoods of generic points of $f^*D$,  we may assume that $E=(f^*D)_{\mathrm{red}}$ is an irreducible and smooth divisor of $Y$. We denote by $V$ the complement of $E$ in $Y$. Let $y$ be a closed point of $E$ such that the ramification of $\sF|_V$ along $E$ is non-degenerate at $y$, let $h:S\to Y$ an immersion from a connected smooth $k$-curve to $Y$ such that $y=S\bigcap E$, that $m_y(h^*E)=1$ and that $h:S\to Y$ is $SS(f^*j_!\sF)$-transversal at $y$. We put $S_0=S-\{0\}$. By Theorem \ref{DTtheorem} (2), we have
\begin{equation*}
\rc_y(\sF|_{S_0})=\rc_E(\sF|_V).
\end{equation*}
Applying Proposition \ref{Ctheorem curve} to the composition $f\circ h:S\to X$ and the sheaf $\sF$, we have
\begin{equation*}
h^*(f^*(\rC_X(j_!\sF)))\geq \rC_y((f\circ h)^*j_!\sF)=\rc_y(\sF|_{S_0})\cdot y.
\end{equation*}
Hence, we obtain
\begin{equation*}
f^*(\rC_X(j_!\sF))\geq \rc_y(\sF|_{S_0})\cdot E=\rc_E(\sF|_V)\cdot E=\rC_Y(f^*j_!\sF).
\end{equation*}
We get \eqref{firstinequalityCtheorem}.
\end{proof}

\begin{remark}
Using \cite[Proposition 2.22]{wr} and a cutting-by-curve method in \cite[Corollary 3.9]{wr}, the inequality in Theorem \ref{Ctheorem} becomes an equality if we add more ramification and geometric conditions. More precisely,  let $f:Y\to X$ be a morphism of smooth $k$-schemes such that that $f^*D=D\times_XY$ is a Cartier divisor of $Y$. We assume that the image of each generic point of $f^*D$ by $f:Y\to X$ in $D$ is contained in the non-degenerate and smooth locus of the ramification of $\sF$ and that $f:Y\to X$ is $SS(j_!\sF)$-transversal. Then, we have 
\begin{equation}\label{secondequalityCtheorem}
f^*(\rC_X(j_!\sF))=\rC_Y(f^*j_!\sF).
\end{equation}

\end{remark}

\begin{proposition}\label{LCtheorem curve}
We assume that $D$ is a divisor with simple normal crossings.
Let $S$ be a smooth $k$-curve and $h:S\to X$ a quasi-finite morphism such that $h^*D$ is a Cartier divisor of $S$. Then, we have
\begin{equation}\label{LCcutbycurve}
h^*(\rLC_X(j_!\sF))\geq \rLC_S(h^*j_!\sF)).
\end{equation}
\end{proposition}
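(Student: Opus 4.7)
The plan is to reduce \eqref{LCcutbycurve} to a pointwise comparison of multiplicities and then derive the logarithmic bound from Theorem \ref{Ctheorem} by pulling back along tame Kummer covers of increasing index. Since \eqref{LCcutbycurve} is an inequality of Cartier divisors with rational coefficients on $S$, it reduces to showing, for each closed point $s \in h^{-1}(D)$ with $x = h(s)$, the inequality
\[\sum_{i} m_s(h^*D_i)\cdot \rlc_{D_i}(\sF) \;\geq\; \rlc_s(h^*\sF|_{S_0}),\]
where $S_0 = S \setminus h^{-1}(D)$ and the sum runs over components $D_i$ of $D$ containing $x$. Put $a_i = m_s(h^*D_i)$.

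For each integer $n \geq 1$ coprime to $p$, I plan to construct compatible tame Kummer covers $\pi_n: X_n \to X$ and $\pi'_n: S_n \to S$ and to lift $h$ to a morphism $h_n: S_n \to X_n$ satisfying $\pi_n \circ h_n = h \circ \pi'_n$. Concretely, after shrinking $X$ étale-locally at $x$, the SNC hypothesis provides local equations $t_1,\ldots,t_m$ for $D_1,\ldots,D_m$; define $X_n$ by adjoining $n$-th roots $t_i^{(n)}$ of each $t_i$, so that $X_n$ is smooth (using $(n,p)=1$ and SNC), $\pi_n^* D_i = n\cdot D_i^{(n)}$, and $D_i^{(n)} = V(t_i^{(n)})$. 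For a uniformizer $\tau$ of $S$ at $s$, let $S_n$ be defined by $\tau_n^n = \tau$. Writing $h^* t_i = u_i \tau^{a_i}$ for units $u_i$, a further étale shrinking of $S$ around $s$ allows $n$-th roots of the $u_i$; then $h$ lifts to $h_n$, and the unique reduced preimage $s_n$ of $s$ in $S_n$ satisfies $m_{s_n}(h_n^*D_i^{(n)}) = a_i$.

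Applying Theorem \ref{Ctheorem} to $h_n: S_n \to X_n$ and $\pi_n^* j_!\sF$, and reading multiplicities at $s_n$, I get
\[\sum_{i} a_i\cdot \rc_{D_i^{(n)}}(\pi_n^*\sF) \;\geq\; \rc_{s_n}((\pi'_n)^* h^*\sF).\]
From the tame ramification formulas of subsection \ref{K'/K}, $\rlc_{D_i^{(n)}}(\pi_n^*\sF) = n\,\rlc_{D_i}(\sF)$ and $\rlc_{s_n}((\pi'_n)^* h^*\sF) = n\,\rlc_s(h^*\sF)$. Combined with the universal bounds $\rlc \leq \rc \leq \rlc + 1$, this yields
\[\sum_{i} a_i\bigl(n\,\rlc_{D_i}(\sF) + 1\bigr) \;\geq\; \sum_{i} a_i\cdot \rc_{D_i^{(n)}}(\pi_n^*\sF) \;\geq\; \rc_{s_n}((\pi'_n)^* h^*\sF) \;\geq\; n\,\rlc_s(h^*\sF|_{S_0}).\]
Dividing by $n$ and letting $n \to \infty$ along integers coprime to $p$ washes out the additive term $\tfrac{1}{n}\sum_i a_i$ and yields the required inequality.

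The main obstacle is that Theorem \ref{Ctheorem} controls only the non-logarithmic conductor divisor, and there is a residual additive gap of size at most $1$ between $\rc$ and $\rlc$ at each slope component. The tame-cover amplification is precisely the device that makes this gap asymptotically negligible compared to the multiplicative scaling of the logarithmic conductor by the ramification index. The two ancillary technical issues---smoothness of $X_n$ and the existence of the lift $h_n$---are handled, respectively, by the SNC hypothesis together with $(n,p) = 1$, and by passing to a sufficiently fine étale neighborhood of $s$ so that the relevant units admit $n$-th roots.
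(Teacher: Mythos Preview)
Your proposal is correct and follows essentially the same strategy as the paper: both use tame Kummer covers of large degree $n$ (coprime to $p$) along the branches of $D$ and along $S$ at $s$, apply Theorem~\ref{Ctheorem} on the cover, and exploit the relations $\rlc \le \rc \le \rlc+1$ together with the multiplicativity of $\rlc$ under tame base change to squeeze out the desired logarithmic inequality. The paper first reduces to the closed-immersion case via the graph trick and then argues by contradiction with a single sufficiently large $r$, whereas you work directly with the quasi-finite $h$ and pass to the limit $n\to\infty$; these are purely cosmetic differences.
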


\begin{proof}
The morphism $h:S\to X$ is a composition of the graph $\Gamma_h:S\to X\times_kS$ and the canonical projection $\pr_1:X\times_kS\to X$. Since $\pr_1:X\times_kS\to X$ is smooth, we have (\cite[Lemma 1.22]{saito cc})
\begin{equation*}
\pr_1^*(\rLC_X(j_!\sF))=\rLC_{(X\times_kS)}(\pr_1^*j_!\sF)).
\end{equation*}
Hence, to show \eqref{LCcutbycurve}, it is sufficient to show
\begin{equation*}
\Gamma_h^*(\rLC_{(X\times_kS)}(\pr_1^*j_!\sF))\geq \rLC_S(\Gamma^*_h(\pr_1^*j_!\sF)).
\end{equation*}
Thus, after replacing $X$ by $X\times_kS$ and replacing $j_!\sF$ by $\pr_1^*j_!\sF$, we may assume that $h:S\to X$ is a closed immersion with $h(S)\not\subseteq D$.

 Suppose that there is an immersion $h:S\to X$ from a connected and smooth $k$-curve to $X$ with $x=S\bigcap D$ a closed point of $X$ such that \eqref{LCcutbycurve} is not valid, i.e.,
\begin{equation}\label{assumption lc>LC}
\rlc_x\left(\sF|_{W}\right)=\sum_{i=1}^m m_x(h^*D_i)\cdot\rlc_{D_i}(\sF)+\varepsilon
\end{equation}
for a real number $\varepsilon>0$, where $W=S-\{0\}$ and $\{D_i\}_{1\leq i\leq m}$ are irreducible components of $D$ containing $x$. We proceed by contradiction.  This is a local question. After replacing $X$ by an affine Zariski open neighborhood of $x$, divisors $D_1,\ldots, D_m$ are defined by $f_1,\ldots,f_m\in\Gamma(X,\sO_X)$, respectively.
 Let $t\in\Gamma(S, \sO_S)$ be a local coordinate of $S$ at $x$.  For simplicity, we put  $d_i=m_x(h^*D_i)$ for each $1\leq i\leq m$. After shrinking $X$ again, we may normalize each $f_i$ by $h^\sharp(f_i)=t^{d_i}$ by the canonical map $h^\sharp:\Gamma(X,\sO_X)\to \Gamma(S,\sO_S)$. Let $r$ be a positive integer co-prime to $p$ and greater than $(\sum^m_{i=1} d_i)/\varepsilon$,  let
\begin{equation*}
X'=\spec(\Gamma(X,\sO_X)[T_1,\ldots, T_m]/(T_1^r-f_1,\ldots, T_m^r-f_m))
\end{equation*}
be a finite cover of $X$ tamely ramified along each $D_i$ and let
\begin{equation*}
S'=\spec(\Gamma(S,\sO_S)[T]/(T^r-t))
\end{equation*}
be a finite cover of $S$ tamely ramified at $x$. We denote by $g_X:X'\to X$ and $g_S:S'\to S$ the canonical projection and we put $W'=W\times_SS'$ and put $U'=U\times_XX'$.
Let $h':S'\to X'$ be a $k$-morphism given by
\begin{equation*}
h'^\sharp:\Gamma(X,\sO_X)[T_1,\ldots, T_m]/(T_1^r-f_1,\ldots, T_m^r-f_m)\to \Gamma(S,\sO_S)[T]/(T^r-t),\ \ \ T_i\to T^{d_i}.
\end{equation*}
We have a commutative diagram
\begin{equation*}
\xymatrix{\relax
S'\ar[d]_-(0.5){h'}\ar[r]^-(0.5){g_S}&S\ar[d]^h\\
X'\ar[r]_-(0.5){g_X}&X}
\end{equation*}
Let $D'_i$ be the Cartier divisor of $X'$ associated to the ideal $(T_i)\subseteq \Gamma(X',\sO_{X'})$ for each $1\leq i\leq m$ and $x'$ the closed point of $S'$ associated to the ideal $(T)\subseteq \Gamma(S',\sO_{S'})$.
Using fundamental properties of Abbes and Saito's ramification theory (subsections \ref{slopedecom} and \ref{K'/K}), we have
\begin{align*}
\rc_{x'}(\sF|_{W'})\geq \rlc_{x'}(\sF|_{W'}) \ \ \  &\textrm{and}\ \ \ \rlc_{x'}(\sF|_{W'})=r\cdot \rlc_{x}(\sF|_W),\\
\rlc_{D'_i}(\sF|_{U'})=r\cdot\rlc_{D_i}(\sF)\ \ \ &\textrm{and}\ \ \ \rlc_{D'_i}(\sF|_{U'})+1\geq\rc_{D'_i}(\sF|_{U'}).
\end{align*}
Hence, noting that $r>(\sum^m_{i=1}d_i)/\varepsilon$, we have
\begin{align}\label{computeclc}
\rc_{x'}(\sF|_{W'})&\geq \rlc_{x'}(\sF|_{W'})=r\cdot \rlc_{x}(\sF|_W)=r\cdot\left(\sum_{i=1}^m d_i\cdot\rlc_{D_i}(\sF)+\varepsilon\right)\\
&>\sum_{i=1}^m rd_i\cdot\rlc_{D_i}(\sF)+\sum_{i=1}^m d_i=\sum_{i=1}^m d_i(r\cdot\rlc_{D_i}(\sF)+1)\nonumber\\
&=\sum_{i=1}^m d_i(\rlc_{D'_i}(\sF|_{U'})+1)\geq \sum_{i=1}^m d_i\cdot\rc_{D'_i}(\sF|_{U'})\nonumber
\end{align}
However, applying Theorem \ref{Ctheorem} to the morphism $h':S'\to X'$ and the sheaf $\sF|_{U'}$, we have
\begin{equation*}
\rc_{x'}(\sF|_{W'})\leq \sum_{i=1}^m \rc_{D'_i}(\sF|_{U'})\cdot m_{x'}(h'^*D'_i)=\sum_{i=1}^m d_i\cdot\rc_{D'_i}(\sF|_{U'}),
\end{equation*}
which contradicts to \eqref{computeclc}. Hence, our assumption \eqref{assumption lc>LC} is not valid. We obtain the proposition.
\end{proof}

 \begin{lemma}[{\cite[Proposition 6.3]{Hu19IMRN}}]\label{huprop6.3}
 Assume that $X$ is affine, connected with $\dim_k X=n\geq 2$ and that $D$ is irreducible and smooth over $\spec(k)$, which is defined by an element $f$ of $\Gamma(X,\sO_X)$. Let $r$ be a positive integer co-prime to $p$,
\begin{equation*}
X'=\spec(\Gamma(X,\sO_X)[T]/(T^r-f))
\end{equation*}
a cyclic cover of $X$ of degree $r$ tamely ramified along $D$, $[r]:X'\rightarrow X$ the canonical projection and $D'$ the smooth divisor $(T)=(D\times_XX')_{\mathrm{red}}$ of $X'$. Let $C$ be a closed conical subset of  $\bT^*X'\times_{X'}D'$ of equidimension $n$ such that $B(C)=D$. Then, we can find a smooth $k$-curve $S$ and an immersion $h':S\rightarrow X'$ such that
\begin{itemize}
\item[(i)] $x'=S\bigcap D'$ is a closed point of $D'$ and $m_{x'}(g'^*D')=1$;
\item[(ii)] $h':S\rightarrow X'$ is $C$-transversal at $x'$;
\item[(iii)] the composition $[r]\circ h':S\rightarrow X$ is also an immersion.
\end{itemize}
\end{lemma}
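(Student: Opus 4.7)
The plan is to adapt the blow-up argument of Remark~\ref{manytranscurve}, adding a single extra tangent-direction constraint that encodes when the composition $[r]\circ h'$ is an immersion. The key local observation is that $[r]:X'\to X$ is totally ramified of degree $r$ along $D'$: \'etale-locally one can choose coordinates $y_1,\ldots,y_n$ at $x=[r](x')$ with $y_1=f$, and then $T,y_2,\ldots,y_n$ give \'etale-local coordinates at $x'$ with $T^r=y_1$. Since $dy_1=rT^{r-1}\,dT$ vanishes at $T=0$, a direct calculation yields $\ker(d[r]_{x'})=\langle \partial/\partial T\rangle\subset\bT_{x'}X'$, a single one-dimensional subspace; moreover $[r]|_{D'}:D'\to D$ is \'etale-locally an isomorphism, so $k(x')=k(x)$. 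From this I will deduce that for any immersion $h':S\to X'$ from a smooth curve with $s=h'^{-1}(x')$ a single point, the composition $[r]\circ h'$ is unramified at $s$ iff the tangent line $L_s:=dh'_s(\bT_sS)\subset\bT_{x'}X'$ is distinct from $\langle \partial/\partial T\rangle$; combined with the residue-field equality, after shrinking $S$ around $s$ this upgrades to an immersion of schemes, since $h'(S)\setminus\{x'\}\subset X'\setminus D'$ where $[r]$ is \'etale and since a sufficiently small \'etale neighborhood of $x'$ in $X'$ sits as a single sheet over its image in $X$, ensuring injectivity on points.

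With this criterion in place, I will work over an algebraically closed base (reducing to this case by picking $x'$ to be a closed point with suitably large residue field, with the final construction descending to $k$). Pick a closed point $x'\in D'$, blow up $X'$ at $x'$, and identify the exceptional divisor $E_{x'}$ with $\bP(\bT_{x'}X')\cong\bP^{n-1}$; let $\wt{D'}$ denote the strict transform of $D'$. Since $C$ has equidimension $n$ with $B(C)=D'$, the fiber $C_{x'}=C\times_{X'}x'$ is a finite union $\bigcup_j\overline L_j$ of one-dimensional subspaces (lines) of $\bT^*_{x'}X'$; denote by $H_j\subset\bT_{x'}X'$ the hyperplane annihilating $\overline L_j$. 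The three conditions (i), (ii), (iii) translate to asking that the tangent direction of $S$ at $x'$ avoid, respectively, the hyperplane $\bT_{x'}D'$, each of the finitely many hyperplanes $H_j$, and the vertical line $\langle \partial/\partial T\rangle$. In $E_{x'}$ this becomes avoidance of a finite union of hyperplanes together with the single closed point $[\partial/\partial T]$. Since $\dim E_{x'}=n-1\ge 1$ and the base field is infinite, this bad locus is a proper closed subset of $E_{x'}$, and I may choose a closed point $z\in E_{x'}$ outside it, which determines a line $L\subset\bT_{x'}X'$ satisfying all three avoidance conditions simultaneously.

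Finally, exactly as in the construction in Remark~\ref{manytranscurve}, I will produce a smooth $k$-curve $S$ and an immersion $h':S\to X'$ whose tangent at $x'$ is $L$ (by intersecting with suitable hyperplanes through $x'$ corresponding to a regular system of parameters whose span avoids $L$); after shrinking $S$ to ensure $S\bigcap D'=\{x'\}$ and that $h'(S)$ sits inside a small enough \'etale neighborhood of $x'$ so that $[r]\circ h'$ is injective on points, the tangent-space criterion together with $k(x')=k(x)$ implies that $[r]\circ h'$ is an immersion, yielding (iii). The main obstacle I anticipate is the derivation of the tangent-space criterion for (iii): one must translate the scheme-theoretic immersion condition on $[r]\circ h'$ into the clean avoidance $L\ne\langle \partial/\partial T\rangle$ via the cotangent analysis of the tame cyclic cover $[r]$. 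Once that translation is in place, the dimension count of the blow-up argument comfortably accommodates this codimension-$(n-1)$ point constraint alongside the codimension-one hyperplane constraints for (i) and (ii).
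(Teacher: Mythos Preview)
The paper does not prove this lemma; it is quoted from \cite[Proposition 6.3]{Hu19IMRN} without argument, so there is no in-paper proof to compare against. Your blow-up and tangent-direction avoidance strategy, modeled on Remark~\ref{manytranscurve}, is the right approach, but it has two gaps.

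The minor one: equidimensionality of $C$ together with $B(C)=D'$ does not force $\dim_{k(x')}C_{x'}=1$ at \emph{every} closed point $x'\in D'$; fiber dimension is only upper semi-continuous, and $C$ may well have components whose base is a proper closed subset of $D'$. You must first choose $x'$ in the dense open locus of $D'$ where $C_{x'}$ is one-dimensional. This is easily repaired.

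The substantive gap is your justification that $[r]\circ h'$ is injective on points. The claim that ``a sufficiently small \'etale neighborhood of $x'$ in $X'$ sits as a single sheet over its image in $X$'' is false: $[r]$ is totally ramified at $x'$ and $r$-to-$1$ on $X'\setminus D'$, so no neighborhood of $x'$ (Zariski, henselian, or formal) maps injectively to $X$ once $r>1$. The correct argument uses the Galois action of $\mu_r$ on $X'$ over $X$, given in your local coordinates by $\zeta\cdot T=\zeta T$: this fixes $x'$ and acts on $\bT_{x'}X'$ by scaling $\partial/\partial T$ and fixing the $\partial/\partial y_i$. Your tangent conditions for (i) and (iii), namely $L\not\subset\bT_{x'}D'$ and $L\neq\langle\partial/\partial T\rangle$, together force the translates $\{\zeta\cdot L:\zeta\in\mu_r\}$ to be pairwise distinct lines. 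Hence for each $\zeta\neq 1$ the curves $h'(S)$ and $\zeta\cdot h'(S)$ meet only in a finite set containing $x'$; after shrinking $S$ these intersections reduce to $\{x'\}$. Since two points of $X'$ have the same image in $X$ if and only if they lie in one $\mu_r$-orbit, this gives injectivity of $[r]\circ h'$. Combined with your (correct) unramifiedness criterion and $k(x')=k(x)$, the composite $[r]\circ h'$ is then a monomorphism of finite type between locally Noetherian schemes, hence an immersion.
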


\begin{proposition}\label{lcx+epsilon}
Assume that $\dim_kX\geq 2$ and that $D$ is irreducible and smooth over $\spec(k)$. Then, for any real number $\varepsilon>0$, we can find a smooth $k$-curve $S$ and an immersion $h:S\to X$ such that $x=S\bigcap D$ is a closed point of $X$ and that
\begin{equation*}
\frac{\rlc_x(\sF|_{S-\{x\}})}{m_x(h^*D)}+\varepsilon> \rlc_D(\sF).
\end{equation*}
\end{proposition}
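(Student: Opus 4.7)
The plan is to reduce the question to the construction of an $SS$-transversal curve on a tame cyclic cover of $X$ of large degree, and then to exploit the identity $\rc = \rlc + 1$ on smooth $k$-curves (whose residue fields at closed points are perfect) to recover a logarithmic conductor on the curve close to $\rlc_D(\sF)$.

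The problem is Zariski local on $X$, so after shrinking $X$ to an affine open on which $D = \mathrm{div}(f)$ for some $f \in \Gamma(X, \sO_X)$, and removing from $D$ a closed subscheme of codimension $\geq 2$ in $X$ (cf.~the discussion of non-degenerate ramification preceding \eqref{wrcbcc}), we may assume that the ramification of $\sF$ along $D$ is non-degenerate at every closed point of $D$. Set $\ell := \rlc_D(\sF)$ and fix an integer $r$ coprime to $p$ with $r > 1/\varepsilon$. Form the tame cyclic cover $\pi: X' := \spec(\sO_X[T]/(T^r - f)) \to X$: then $X'$ is smooth, the ramification divisor $D' := (T = 0) = (\pi^*D)_{\mathrm{red}}$ is smooth and irreducible, and $\pi^*D = rD'$. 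The local field at $\eta_{D'}$ is a tame Kummer extension of degree $r$ of the local field at $\eta_D$, so \eqref{dtineq} gives $\rlc_{D'}(\pi^*\sF) = r\ell$, whence $\rc_{D'}(\pi^*\sF) \in [r\ell, r\ell + 1]$ from the general relation $\rlc \leq \rc \leq \rlc + 1$. Since $\pi$ is tame, the non-degeneracy of the ramification of $\sF$ along $D$ passes through to non-degeneracy of the ramification of $\pi^*\sF$ along $D'$.

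By the non-degeneracy along $D'$, the fibre of $SS(\pi^*(j_!\sF))$ above any closed point of $D'$ is a finite union of $1$-dimensional cotangent subspaces. We apply Lemma \ref{huprop6.3} with $C := SS(\pi^*(j_!\sF)) \cap (\bT^*X' \times_{X'} D')$ to obtain a smooth $k$-curve $S$ and an immersion $h': S \to X'$ such that $x' := S \cap D'$ is a closed point with $m_{x'}(h'^*D') = 1$, the morphism $h'$ is $SS(\pi^*(j_!\sF))$-transversal at $x'$, and $h := \pi \circ h': S \to X$ is again an immersion. By Theorem \ref{DTtheorem}(2), the non-degeneracy and the $SS$-transversality yield $\rNP_{x'}(\pi^*\sF|_{S - \{x'\}}) = \rNP_{D'}(\pi^*\sF)$, hence $\rc_{x'}(\pi^*\sF|_{S - \{x'\}}) = \rc_{D'}(\pi^*\sF) \geq r\ell$. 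Since $k$ is perfect, the residue field $k(x')$ is perfect, so subsection \ref{slopedecom} yields $\rc_{x'} = \rlc_{x'} + 1$ on $S$, whence $\rlc_{x'}(\pi^*\sF|_{S - \{x'\}}) \geq r\ell - 1$.

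Set $x := h(x') = \pi(x') \in D$. We have $m_x(h^*D) = m_{x'}(h'^*\pi^*D) = r \cdot m_{x'}(h'^*D') = r$, while the restrictions of $\sF$ and $\pi^*\sF$ to the punctured curve $S - \{x'\} = S - \{x\}$ are canonically the same sheaf, so $\rlc_x(\sF|_{S - \{x\}}) = \rlc_{x'}(\pi^*\sF|_{S - \{x'\}}) \geq r\ell - 1$. Therefore
\[
\frac{\rlc_x(\sF|_{S - \{x\}})}{m_x(h^*D)} \geq \frac{r\ell - 1}{r} = \ell - \frac{1}{r} > \ell - \varepsilon = \rlc_D(\sF) - \varepsilon,
\]
as required. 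The most delicate step is verifying that non-degeneracy transfers through the tame cover $\pi$, which requires tracking how the Abbes--Saito characteristic forms and the central character decomposition behave under the tame Kummer base change; should this prove cumbersome, one may instead tensor $\pi^*\sF$ with a suitably chosen isoclinic sheaf of pure ramification on $X'$ constructed via Theorem \ref{pureramifiction any D}, and combine Proposition \ref{sGCisoclinic+CXjG>CXjF=>Rj*sGsF=j!sGsF} with the strategy used in proving Theorem \ref{Ctheorem} to force the Newton polygon equality on the curve.
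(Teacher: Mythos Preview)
Your argument is essentially the paper's argument: pass to a tame cyclic cover $X'\to X$ of degree $r>1/\varepsilon$, invoke Lemma~\ref{huprop6.3} to produce an $SS$-transversal curve $S$ on $X'$ meeting $D'$ with multiplicity $1$ and whose image in $X$ is still an immersion, read off $\rc_{x'}=\rc_{D'}$ from Theorem~\ref{DTtheorem}(2), and convert to logarithmic conductors via $\rc=\rlc+1$ on curves and $\rlc_{D'}=r\cdot\rlc_D$ under the tame cover. The numerical estimates match the paper's exactly.

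The one place where you diverge is in securing non-degeneracy of $\pi^*\sF$ along $D'$: you first arrange non-degeneracy of $\sF$ along $D$, then assert it transfers through $\pi$, flag this as delicate, and sketch an elaborate fallback via Theorem~\ref{pureramifiction any D}. None of this is needed. The paper simply observes that $D'\cong D$ as schemes, so the non-degenerate locus for $\pi^*\sF$ along $D'$ is open and dense in $D'\cong D$; shrinking $X$ \emph{after} forming the cover (to a Zariski neighbourhood of the generic point of $D$) then forces non-degeneracy of $\pi^*\sF$ along all of $D'$ directly. This removes the ``most delicate step'' entirely and makes your proposed alternative unnecessary.
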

\begin{proof}
We take an integer $r>1/\varepsilon$ which is co-prime to $p$. Replace $X$ by an affine Zariski neighborhood of the generic point of $D$ such that $D$ is defined by an element $f$ of $\Gamma(X,\sO_X)$. Let \begin{equation*}
X'=\spec(\Gamma(X,\sO_X)[T]/(T^r-f))
\end{equation*}
be a cyclic cover of $X$ of degree $r$ tamely ramified along $D$. We denote by  $[r]:X'\rightarrow X$ the canonical projection, by $D'$ the smooth divisor $(T)=(D\times_XX')_{\mathrm{red}}$ of $X'$, by $U'$ the complement of $D'$ in $X'$ and by $j':U'\to X'$ the caonical injection.
Notice that $D'$ is isomorphic to $D$ as schemes. Hence, after shrinking $X$ again to an affine Zariski neighborhood of the generic point of $D$, we may assume that the ramification of $\sF|_{U'}$ along $D'$ is non-degenerate. By Lemma \ref{huprop6.3}, we can find a closed point $x'$ of $D'$, a smooth $k$-curve $S$ and an immersion $h':S\rightarrow X'$ such that
\begin{itemize}
\item[(i)] $S\bigcap D'=x'$ and $m_{x'}(g'^*D')=1$;
\item[(ii)] $h':S\rightarrow X'$ is $SS(j'_!\sF)$-transversal at $x'$;
\item[(iii)] the composition $[r]\circ h':S\rightarrow X$ is an immersion.
\end{itemize}
Applying Theorem \ref{DTtheorem} (2) to the immersion $h':S\to X'$ and the sheaf $\sF|_{U'}$, we have
\begin{equation}\label{rcx'=rcD'}
\rc_{x'}(\sF|_{S-\{x'\}})=\rc_{D'}(\sF|_{U'})
\end{equation}
Using fundamental properties of Abbes and Saito's ramification theory (subsections \ref{slopedecom} and \ref{K'/K}), we have
\begin{equation}\label{AS3inequalities}
\rlc_{x'}(\sF|_{S-\{x'\}})+1\geq\rc_{x'}(\sF|_{S-\{x'\}}),\ \ \ \rc_{D'}(\sF|_{U'})\geq \rlc_{D'}(\sF|_{U'}),\ \ \ \rlc_{D'}(\sF|_{U'})=r\cdot \rlc_{D}(\sF|_{U}).
\end{equation}
Combining \eqref{rcx'=rcD'} and \eqref{AS3inequalities}, we obtain that
\begin{equation*}
\rlc_{x'}(\sF|_{S-\{x'\}})+1\geq r\cdot \rlc_{D}(\sF|_{U}).
\end{equation*}
Observe that $r=m_{x'}(h'^*(rD'))=m_{x'}(([r]\circ h')^*D)$ and $r>1/\varepsilon$. We get
\begin{equation*}
\frac{\rlc_{x'}(\sF|_{S-\{x'\}})}{m_{x'}(([r]\circ h')^*D)}+\varepsilon> \rlc_{D}(\sF|_{U}).
\end{equation*}
Hence, the curve $S$ and the immersion $[r]\circ h':S\rightarrow X$ satisfies the condition of the proposition.
\end{proof}

\begin{theorem}\label{LCtheorem}
We assume that $D$ is a divisor with simple normal crossings. Let $f:Y\to X$ be a morphism of smooth $k$-schemes. We assume that $f^*D=D\times_XY$ is a Cartier divisor of $Y$.
\begin{itemize}
\item[(1)]
Then, we have
\begin{equation}\label{fLC>LCf}
f^*(\rLC_X(j_!\sF))\geq \rLC_Y(f^*j_!\sF).
\end{equation}
\item[(2)]
We further assume that $D$ is irreducible. Let $\mathcal I(X, D)$ be the set of triples $(S,h:S\to X,x)$ where $h:S\to X$ is an immersion from a smooth $k$-curve $S$ to $X$ such that $x=S\bigcap D$ is a closed point of $X$. Then, we have
\begin{equation}\label{equality lc sup}
\rlc_D(\sF)=\sup_{\mathcal I(X, D)}\frac{\rlc_x(\sF|_{S-\{x\}})}{m_x(h^*D)}.
\end{equation}
\end{itemize}
\end{theorem}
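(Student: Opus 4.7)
The plan is to prove Part~(2) first, and then deduce Part~(1).

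For Part~(2), the inequality $\leq$ in the supremum formula is immediate from Proposition~\ref{LCtheorem curve}: for any $(S,h,x) \in \mathcal I(X,D)$, that proposition applied to the quasi-finite morphism $h$ gives $m_x(h^*D)\cdot \rlc_D(\sF) = m_x(h^*\rLC_X(j_!\sF)) \geq \rlc_x(\sF|_{S-\{x\}})$, so every quotient appearing in the supremum is bounded above by $\rlc_D(\sF)$. The reverse inequality is the content of Proposition~\ref{lcx+epsilon}, which produces, for every $\varepsilon>0$, an element of $\mathcal I(X,D)$ whose associated quotient exceeds $\rlc_D(\sF) - \varepsilon$; the degenerate case $\dim_k X = 1$ is trivial, taking $h = \id_X$.

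For Part~(1), the asserted inequality of Cartier divisors on $Y$ may be verified at each generic point $\xi$ of $f^*D$. Since $\xi$ has codimension one in the smooth scheme $Y$, after Zariski-shrinking $Y$ around $\xi$ I may assume that $E := (f^*D)_{\mathrm{red}}$ is smooth and irreducible and that each $f^*D_i = a_i\cdot E$ for non-negative integers $a_i$; consequently $m_\xi(f^*\rLC_X(j_!\sF)) = \sum_i a_i\, \rlc_{D_i}(\sF)$. Setting $V = Y - E$, I apply Part~(2) to the triple $(Y, E, \sF|_V)$: for every $\varepsilon > 0$ there exists $(S, h':S \to Y, y) \in \mathcal I(Y,E)$ such that
\begin{equation*}
\frac{\rlc_y(\sF|_{S-\{y\}})}{m_y(h'^*E)} + \varepsilon \;>\; \rlc_E(\sF|_V) \;=\; m_\xi(\rLC_Y(f^*j_!\sF)).
\end{equation*}
The composition $g := f\circ h' : S \to X$ is a quasi-finite morphism from a smooth $k$-curve to $X$ with $g^*D = h'^*f^*D$ a Cartier divisor of $S$ supported at $y$, so Proposition~\ref{LCtheorem curve} applied to $g$ gives $m_y(g^*\rLC_X(j_!\sF)) \geq \rlc_y(\sF|_{S-\{y\}})$. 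On the other hand, $g^*\rLC_X(j_!\sF) = h'^*f^*\rLC_X(j_!\sF) = m_\xi(f^*\rLC_X(j_!\sF)) \cdot h'^*E$, which forces
\begin{equation*}
m_\xi(f^*\rLC_X(j_!\sF)) \;\geq\; \frac{\rlc_y(\sF|_{S-\{y\}})}{m_y(h'^*E)}.
\end{equation*}
Letting $\varepsilon \to 0$ then yields $m_\xi(f^*\rLC_X(j_!\sF))\geq m_\xi(\rLC_Y(f^*j_!\sF))$, the pointwise inequality we want.

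The substantive work is concentrated in the $\geq$ half of Part~(2), i.e.\ in Proposition~\ref{lcx+epsilon}; its proof amplifies the logarithmic slope by passing to a tame cyclic cover of $X$ along $D$ of arbitrarily large degree coprime to $p$ and invoking the non-degenerate-case identity from Theorem~\ref{DTtheorem}(2). Once Part~(2) is in hand, the passage to Part~(1) is a purely formal geometric reduction to the generic point of an irreducible component of $f^*D$, combined with the curve-case bound of Proposition~\ref{LCtheorem curve}.
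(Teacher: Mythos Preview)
Your proof is correct and follows essentially the same approach as the paper: both prove Part~(2) by combining Proposition~\ref{LCtheorem curve} with Proposition~\ref{lcx+epsilon}, and then obtain Part~(1) by shrinking $Y$ so that $(f^*D)_{\mathrm{red}}$ is smooth irreducible, producing a curve in $Y$ via Proposition~\ref{lcx+epsilon} (which you package as an appeal to Part~(2)), and applying Proposition~\ref{LCtheorem curve} to the composed map $f\circ h':S\to X$. The only difference is cosmetic---the paper invokes Proposition~\ref{lcx+epsilon} directly in the proof of Part~(1) rather than routing through the already-established Part~(2)---so the two arguments are the same in substance.
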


\begin{proof}
(2) We only need to treat the case where $\dim_kX\geq 2$. On one hand, for any element $(S,h:S\to X,x)$ of $\cI(X,D)$, applying Proposition \ref{LCtheorem curve} to $h:S\to X$ and the sheaf $\sF$, we have
$$\rlc_D(\sF) \geq \frac{\rlc_x(\sF|_{S-\{x\}})}{m_x(h^*D)}.$$
Hence we have
\begin{equation}\label{lcDgeqsup}
\rlc_D(\sF)\geq\sup_{\mathcal I(X, D)}\frac{\rlc_x(\sF|_{S-\{x\}})}{m_x(h^*D)}.
\end{equation}
On the other hand, for any positive real number $\varepsilon>0$, we have (Proposition \ref{lcx+epsilon})
\begin{equation*}
\sup_{\mathcal I(X, D)}\frac{\rlc_x(\sF|_{S-\{x\}})}{m_x(h^*D)}+\varepsilon> \rlc_D(\sF).
\end{equation*}
Thus,
\begin{equation}\label{supgeqlcD}
\sup_{\mathcal I(X, D)}\frac{\rlc_x(\sF|_{S-\{x\}})}{m_x(h^*D)}\geq \rlc_D(\sF).
\end{equation}
Combining \eqref{lcDgeqsup} and \eqref{supgeqlcD}, we get part (2) of Theorem \ref{LCtheorem}.

(1) By Proposition \ref{LCtheorem curve}, we only need to verify part (1) in the case where $\dim_k Y\geq 2$. The formula \eqref{fLC>LCf} is Zariski local. Hence, after replacing $Y$ by Zariski open neighborhoods of generic points of $f^*D$,  we may assume that $E=(f^*D)_{\mathrm{red}}$ is an irreducible and smooth divisor of $Y$. We denote by $V$ the complement of $E$ in $Y$. For any real number $\varepsilon>0$, we find a smooth $k$-curve $S$ and an immersion $h:S\to Y$ such that $y=S\bigcap E$ is a closed point of $Y$ and (Proposition \ref{lcx+epsilon})
\begin{equation}\label{lcy/m>lcE}
\frac{\rlc_y(\sF|_{S-\{y\}})}{m_y(h^*E)}+\varepsilon> \rlc_E(\sF|_V).
\end{equation}
Applying Proposition \ref{LCtheorem curve} to the composition $f\circ h:S\to X$ and the sheaf $\sF$, we obtain that
\begin{equation}\label{lcDgeqlcy}
\sum^m_{i=1} \rlc_{D_i}(\sF)\cdot m_y((f\circ h)^*D_i)\geq \rlc_y(\sF|_{S-\{y\}}),
\end{equation}
where $D_1,\ldots, D_m$ are irreducible components of $D$ with $E=(f^*D_i)_{\mathrm{red}}$. Let $\xi$ be the generic point of $E$. Note that
$m_y((f\circ h)^*D_i)=m_y(h^*E)\cdot m_{\xi}(f^*D_i)$ for each $1\leq i\leq m$.
Then, \eqref{lcy/m>lcE} and \eqref{lcDgeqlcy} imply that
\begin{equation*}
\sum^m_{i=1} \rlc_{D_i}(\sF)\cdot m_{\xi}(f^*D_i)\geq\frac{\rlc_y(\sF|_{S-\{y\}})}{m_y(h^*E)}> \rlc_E(\sF|_V)-\varepsilon,
\end{equation*}
for any real number $\varepsilon>0$. Hence, we have
\begin{equation*}
\sum^m_{i=1} \rlc_{D_i}(\sF)\cdot m_{\xi}(f^*D_i)\geq \rlc_E(\sF|_V),
\end{equation*}
i.e.,
\begin{equation*}
f^*(\rLC_X(j_!\sF))\geq \rLC_Y(f^*j_!\sF).
\end{equation*}
\end{proof}


\section{Semi-continuity of conductors}\label{semi-contcondsection}

\subsection{}\label{semicontnotation}
In this section, let $S$ denote an excellent Noetherian scheme, $f:X\to S$ a separated and smooth morphism of finite type of relative dimension $1$, $D$ a reduced closed subscheme of $X$ such that the restriction $f|_D:D\to S$ is quasi-finite and flat, $U$ the complement of $D$ in $X$ and $j:U\to X$ the canonical injection. For any morphism $g:T\to S$, we put $X_{T}=X\times_ST$, put $U_{T}=U\times_ST$ and put $D_{T}=D\times_ST$. Let $\Lambda$ be a finite field of characteristic $\ell$, where $\ell$ is invertible in $S$ and $\sF$ a locally constant and constructible sheaf of $\Lambda$-modules on $U$.

Let $\ol s\to S$ be an algebraic geometric point above a point $s\in S$. For each point $x$ of $D_{\ol s}$, the total dimension $\dt_x(\sF|_{U_{\ol s}})$ and the conductor $\rc_x(\sF|_{U_{\ol s}})$ are independent of the choice of $\ol s\to S$ above $s$. We have the following two functions
\begin{align*}
\varphi: S\to \mathbb Z,&\ \ \ s\mapsto \sum_{x\in D_{\ol s}}\dt_x(\sF|_{U_{\ol s}})\\
\chi: S\to \mathbb Q,&\ \ \ s\mapsto \sum_{x\in D_{\ol s}}\rc_x(\sF|_{U_{\ol s}})
\end{align*}

In a main result of \cite{lau}, Deligne and Laumon proved that $\varphi:S\to \mathbb Z$ is constructible and lower semi-continuous. In this section, we prove that the same property hold for $\chi: S\to \mathbb Q$ in the equal characteristic case.

\begin{theorem}\label{theoremlowsemicontchi}
We take the notation and assumptions of subsection \ref{semicontnotation}. Then, the function $\chi: S\to \mathbb Q$ is constructible.  If we further assume that $S$ is an $\bF_p$-scheme $(p\neq \ell)$, then the function $\chi: S\to \mathbb Q$ is lower semi-continuous.
\end{theorem}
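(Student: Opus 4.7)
The proof separates into constructibility of $\chi$ (valid over any excellent Noetherian base) and lower semi-continuity in equal characteristic $p$.

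\textbf{Constructibility.} After \'etale localization on $S$ and stratifying, I may assume that $D$ decomposes into irreducible components $D_i$, each finite flat over $S$ of constant degree $n_i$. Combining the Deligne-Laumon constructibility of the total dimension function $\varphi$ (\cite[Th\'eor\`eme 2.1.1]{lau}) with the constructibility of the Abbes-Saito Newton polygon in flat families (cf. \cite[Theorem 4.2]{HY17} together with Definition \ref{cDlcD}), the multiset of conductor invariants $\{\rc_x(\sF|_{U_{\ol s}})\}_{x \in D_{\ol s}}$ varies constructibly in $s$, and hence so does $\chi$.

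\textbf{Lower semi-continuity.} Once $\chi$ is constructible, each level set $\{\chi \leq c\}$ is constructible, so it suffices to check stability under specializations, i.e. $\chi(s_1) \leq \chi(s_0)$ for every specialization $s_0 \rightsquigarrow s_1$ in $S$. By the valuative criterion applied to $\overline{\{s_0\}}$, I reduce to $S = \spec R$ with $R$ a complete DVR of equal characteristic $p$ whose generic and closed points are $s_0, s_1$. Cohen's theorem gives $R \cong \kappa[[t]]$ for some field $\kappa$ of characteristic $p$. Since $R$ is excellent equicharacteristic, N\'eron-Popescu desingularization presents $R$ as a filtered colimit of smooth $\kappa$-algebras, so the finitely presented data $(X, D, \sF)$ descends to a smooth $\kappa$-algebra; a curve-cutting argument then provides a smooth $\kappa$-curve $C$ of finite type with a morphism $C \to S$ such that the generic point $\eta_C$ maps to $s_0$ and some closed point $c_1 \in C$ maps to $s_1$. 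Since $\chi$ is compatible with base change on algebraic geometric points, $\chi_C(\eta_C) = \chi(s_0)$ and $\chi_C(c_1) = \chi(s_1)$.

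In this geometric setting $X_C$ is a smooth $\kappa$-surface and $D_C \subset X_C$ is a reduced Cartier divisor whose horizontal irreducible components $D_{C,i}$ are finite flat of degrees $n_i$ over $C$. Applying Theorem~\ref{Ctheorem} to the closed immersion $\iota : X_{c_1} \hookrightarrow X_C$ (here $X_{c_1}$ is a smooth curve over $\kappa(c_1)$) yields
\begin{equation*}
  \iota^* \bigl(\rC_{X_C}(j_{C!}\sF_C)\bigr) \;\geq\; \rC_{X_{c_1}}(\iota^* j_{C!}\sF_C).
\end{equation*}
Taking degrees over the algebraic closure of $\kappa(c_1)$, the right-hand side is $\chi(s_1)$, while the left-hand side equals $\sum_i n_i \cdot \rc_{D_{C,i}}(\sF_C) = \chi_C(\eta_C) = \chi(s_0)$, using that at the geometric generic fiber each $D_{C,i}$ contributes $n_i$ points all with the same conductor $\rc_{D_{C,i}}(\sF_C)$. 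This gives $\chi(s_1) \leq \chi(s_0)$.

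\textbf{Main obstacle.} The delicate step is the reduction of an abstract specialization in an arbitrary excellent equicharacteristic Noetherian scheme to a specialization within a smooth $\kappa$-curve of finite type, via N\'eron-Popescu desingularization and a curve-cutting argument; preservation of the conductor invariants under this approximation must be verified. Once the problem is placed in the geometric setting, Theorem~\ref{Ctheorem} and a direct degree computation on conductor divisors yield the semi-continuity.
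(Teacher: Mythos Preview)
Your proposal has the right overall shape but contains a genuine gap at the most important step, and the constructibility argument is not adequately justified.

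\textbf{Constructibility.} The reference \cite[Theorem 4.2]{HY17} concerns the inequality $f^*(\DT_X)\geq \DT_Y$ under pull-back; it says nothing about constructibility of the Newton polygon in families. The paper's argument is more substantial: following Laumon's d\'evissage in \cite[\S 3]{lau}, one reduces to an explicit Eisenstein-polynomial model for the normalization of $X$ in a Galois cover trivializing $\sF$, and then shows directly via the Herbrand function that the lower numbering filtration, and hence the upper numbering conductor, is constant on a dense open of $S$. Your sketch does not supply a substitute for this computation.

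\textbf{Lower semi-continuity.} The crucial unjustified step is the equality
\[
\chi(s_0)\;=\;\chi_C(\eta_C)\;=\;\sum_i n_i\cdot \rc_{D_{C,i}}(\sF_C).
\]
Your justification, that the geometric generic fibre of each $D_{C,i}$ consists of $n_i$ points each with conductor $\rc_{D_{C,i}}(\sF_C)$, presupposes that $D_{C,i}\to C$ is generically \'etale. In characteristic $p$ this can fail (e.g.\ $D=\{x^p=t\}\subset \bA^2_k$ over $\bA^1_k$), in which case the geometric generic fibre has fewer points and the relation between the conductor there and $\rc_{D_{C,i}}(\sF_C)$ involves a purely inseparable base change of the local field, for which one only has the inequality \eqref{logcondineq}, not an equality. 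With only $\sum_i n_i\,\rc_{D_{C,i}}(\sF_C)\geq \chi(s_0)$ your chain of inequalities collapses.

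The paper circumvents this entirely. After reducing by a direct-limit argument (simpler than your N\'eron--Popescu route) to $S$ a smooth $\overline{\bF}_p$-curve, it does not compare to the generic fibre at all. Instead it first passes to a finite flat cover of $S\setminus\{s\}$ so that $f$ becomes $SS(j_!\sF)$-transversal there (\cite[Corollary 1.5.7]{SaitoDirectimage}), and shrinks so that the ramification of $\sF$ along $D$ is non-degenerate away from $s$. Then for every \emph{closed} point $t\neq s$, Theorem~\ref{DTtheorem}(2) yields the \emph{equality} $\rc_x(\sF|_{U_t})=m_x(h^*D_i)\cdot \rc_{D_i}(\sF)$, whence $\chi(t)=\sum_i n_i\,\rc_{D_i}(\sF)$ exactly. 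Combining this with the inequality $\chi(s)\leq \sum_i n_i\,\rc_{D_i}(\sF)$ from Theorem~\ref{Ctheorem} (equivalently Proposition~\ref{Ctheorem curve}) gives $\chi(t)\geq \chi(s)$.

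In short, the missing idea in your argument is the transversality/non-degeneracy step (the paper's steps (iv)--(v)) which converts the inequality of Theorem~\ref{Ctheorem} into an equality at enough closed points; without it the generic-fibre identity you assert is unproved.
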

\begin{proof}
Following the strategy  in \cite[\S 3]{lau}, the constructibility of $\chi:S\to \mathbb Q$ is reduced to proving the following case:

{\it $(\ast)$ The scheme $X=\spec(A)$ is affine and integral, $D$ is defined by a principal prime ideal $\fp=(t)\subset A$ and the inclusion $\iota:D\to X$ is a section of $f:X\to S$. The sheaf $\sF$ is trivialized by a Galois \'etale covering $\wt U$ of $U$ of the group $G$ and the normalization $\wt X=\spec(\wt A)$ of $X$ in $\wt U$  is smooth over $S$ and is defined by an Eisenstein polynomial
\begin{equation*}
\wt A=A[\varpi]/(\varpi^e+a_{e-1}\varpi^{e-1}+\cdots + a_1\varpi+a_0),
\end{equation*}
where $a_1,\ldots, a_{e-1}\in \fp$ and $a_0\in \fp-\fp^2$. Then, the function $\chi:S\to \mathbb Q$ is constant in an open dense subset of $S$.}

We mimic \cite[Lemme 3.9.1]{lau} to prove $(\ast)$.
The inclusion $A_{\fp}\to A_{\fp}\otimes_A\wt A$ is a totally ramified extension of discrete valuation rings of the Galois group $G$ and $\varpi$ is a uniformizer of $ A_{\fp}\otimes_A\wt A$. For any $g\in G$, we have $$g(\varpi)-\varpi=\frac{\alpha_g}{\beta_g} \varpi^{i_{g}}\in A_{\fp}\otimes_A\wt A,$$ where $\alpha_g\in \wt A-\varpi \wt A$, $\beta_g\in A-\fp$ and $i_{g}\in \mathbb Z_{\geq 1}$. The lower numbering filtration of the Galois group $G=\gal((A_{\fp}\otimes_A\wt A)/A_{\fp})$  is
\begin{equation*}
G_i=\{g\in G\;|\; i_g\geq i+1\}
\end{equation*}
Let $\eta$ be the generic point of $S$. By the Herbrand function, we have
\begin{equation*}
\chi(\eta)=1+\int^n_0\frac{dt}{[G:G_t]}, \ \ \ n=\max\{i\in \mathbb Z\;|\; \sF(\wt U)^{G_i}\neq \sF(\wt U)\}.
\end{equation*}
Let $V$ be an open dense subset of $D=S$ defined by $a_0/t\neq 0$, $\alpha_g\neq 0$ and $\beta_g\neq 0$ for any $g\in G$. Let $v\in V$ be a point and $\ol v\to V$ an algebraic geometric point above $v$. Then we have
$$\wt X_{\ol v}=\spec(A_{\ol v}[\varpi]/(\varpi^e+\ol{a_{e-1}}\varpi^{e-1}+\cdots + \ol{a_1}\varpi+\ol{a_0})),$$
where $A_{\ol v}=A\otimes_{\Gamma(S,\sO_S)}k(\ol v)$.
Since $a_0/t\neq 0$ at $v$, the fiber $\wt X_{\ol v}$ is also defined by an Eisenstein polynomial on $X_{\ol v}$ ramified at $\ol v=D_{\ol v}$. Hence $\varpi$ is a uniformizer of $\sO_{\wt X_{\ol v},\ol v}$ and the extension $\sO_{X_{\ol v},\ol v}\to \sO_{\wt X_{\ol v},\ol v}$ of discrete valuation rings is also Galois of group $G$. For any $g\in G$ we have
$$g(\varpi)-\varpi=\frac{\alpha_g}{\beta_g} \varpi^{i_{g}}\in \sO_{\wt X_{\ol v},\ol v}.$$
Since $\alpha_g\not\in\varpi\sO_{\wt X_{\ol v},\ol v}$ and $\beta_g\neq 0$, the Galois group $G=\gal(\sO_{\wt X_{\ol v},\ol v}/\sO_{X_{\ol v},\ol v})$ has the same lower numbering filtration as $G=\gal((A_{\fp}\otimes_A\wt A)/A_{\fp})$. Moreover, the representation of $G$ associated to $\sF|_{U_{\ol v}}$ coincides with that of $G$ associated to $\sF$. Thus, $\chi(v)=\chi(\eta)$. The assertion $(\ast)$ is proved and we obtain the constructibility of $\chi:S\to\mathbb Q$.

Now we  assume that $S$ is an $\bF_p$-scheme. We proceed the proof of the lower semi-continuity of $\chi:S\to\mathbb Q$ by the following d\'evissage:

(i) It is sufficient to show that, for any $t\in S$ and any $s\in \ol{\{t\}}$, we have $\chi(t)\geq \chi(s)$. Hence, after replacing $S$ by an affine open neighborhood of $s$ in $\ol{\{t\}}$, we may assume that $S$ is integral and affine. Since $\Gamma(S,\sO_S)=\varinjlim_{\lambda} A_{\lambda}$ for finitely generated $\bF_p$-algebras $\{A_{\lambda}\}_{\lambda}$, the morphism $f:X\to S$ is of finite presentation and $\sF$ is constructible, by a passage to limit argument, we may replace $S$ by $S_\lambda=\spec(A_{\lambda})$ for some $\lambda$. Hence, we are reduced to the case where $S$ is an affine $\mathbb F_p$-scheme of finite type.

(ii) Then, we can reduced to the case where $S$ is an affine $\ol{\mathbb F}_p$-scheme of finite type by base-changing. Combining with the constructibility of $\chi:S\to\mathbb Q$, we may further reduce to the case where $S$ is a connected and smooth $\ol{\bF}_p$-curve by base-changing. In this case, it is sufficient to show that, after fixing a closed point $s$ of $S$, there exists a Zariski open neighborhood $V$ of $s\in S$ such that, for every closed point $t\in V$, we have $\chi(t)\geq \chi(s)$.

(iii) By \cite[IV, 18.12.1]{EGA4}, we can find an \'etale neighborhood $S'$ of $s\in S$ such that $D_{S'}$ is a disjoint union $(\coprod^d_{i=1} D'_i)\coprod E'$, where each $D'_i$ is finite and flat over $S'$ with the fiber $(D'_i)_s$ is a single point in  $X_s$ and $E'$ is quasi-finite and flat over $S'$ with $E'_s=\emptyset$. Hence, after replacing $S$ by $S'$ and replacing $X$ by each $X_{S',r}=X_{S'}-E'-\coprod_{i\neq r} D'_i$ for ${r=1,\ldots, d}$, we may assume that the restriction $f|_D:D\to S$ is finite and flat and that $z=(D_s)_{\mathrm{red}}$ is a single point of $X_s$.

 (iv) By the generically universally locally acyclic theorem, there is an open dense subscheme $V$ of $S$ such that $f:X_V\to V$ is universally locally acyclic with respect to $(j_!\sF)|_{X_V}$. By \cite[Corollary 1.5.7]{SaitoDirectimage}, we can find a connected and smooth $k$-curve $\wt V$ and a finite and faithfully flat morphism $\theta:\wt V\to V$ such that $\wt f:X_{\wt V}\to \wt V$ is $SS((j_!\sF)|_{X_{\wt V}})$-transversal. After replacing $S$ by the normalization of $S$ in $\wt V$, we may further assume that, taking $W=S-\{s\}$, the smooth morphism $f:X_W\to W$ is $SS((j_!\sF)|_{X_W})$-transversal.

(v) Let $\{D_i\}_{i\in I}$ be the set of irreducible component of $D$. Note that the singular locus $Z_1$ of $D$ and the degenerate locus $Z_2$ of the ramification of $\sF$ along $D$ are finite closed points of $D$ and that the image $f(Z_1\bigcup Z_2)$ is a finite set of closed points of $S$.  Hence, after replacing $S$ by a Zariski neighborhood of $s$, we may assume that each $D_i$ is finite and flat over $S$, that $D_W$ is smooth over $\spec(k)$ and that the ramification of $j_!\sF$ along $D_W$ is non-degenerate, where $W=S-\{s\}$. Note that, for any $t\in W$ and any $x\in D_t$, there is one and only one component $D_i$ contains $x$.

(vi)
Let $t$ be a closed point of $W$. Applying Theorem \ref{DTtheorem} (2) to the closed immersion $h:X_t\to X$ and the sheaf $\sF$, we have
\begin{align}
\chi(t)=\sum_{x\in D_t} \rc_x(\sF|_{U_t})=\sum_{x\in D_t} m_x(h^*\rC_X(j_!\sF))
=\sum_{i\in I} \rc_{D_i}(\sF)\cdot \bigg(\sum_{x\in D_{i,t}} m_x(h^*D_i)\bigg)\label{formulachit}
\end{align}
We put $z=(D_s)_{\mathrm{red}}$.
Applying Proposition \ref{Ctheorem curve} or Theorem \ref{Ctheorem} to the closed immersion $g:X_s\to X$ and the sheaf $\sF$, we have
\begin{align}\label{formulachis}
\chi(s)&=\rc_z(\sF|_{U_t})\leq m_z(g^*\rC_X(j_!\sF))=\sum_{i\in I} \rc_{D_i}(\sF)\cdot m_z(g^*D_i)
\end{align}
Since each $D_i$ is finite and flat over $S$, we have $m_z(g^*D_i)=\sum_{x\in D_{i,t}} m_x(h^*D_i)$. By \eqref{formulachit} and \eqref{formulachis}, we obtain $\chi(t)\geq\chi(s)$ for any $t\in W$. The lower semi-continuity of $\chi:S\to \mathbb Q$ is proved.
\end{proof}

\begin{remark}
When $S$ is a henselian trait, the defect of the lower semi-continuity function $\varphi:S\to\bZ$ is computed in terms of the dimension of the stalk of nearby cycles complex of $j_!\sF$ in \cite{lau}. We expect cohomological descriptions on that of $\chi:S\to\bQ$.
\end{remark}

\section{\'Etale sheaves with pure ramifications (II)}\label{sectionpureram2}

\subsection{}\label{all of affine line}
In this section, let $k$ denote a perfect field of characteristic $p>0$. We denote by $\infty$ the complement of $\bA^1_k=\spec(k[t])$ in $\bP^1_k$, by $0$ the origin of $\bA^1_k$, by $\bG_{m,k}=\spec(k[t, t^{-1}])$ the complement of $0$ in $\bA^1_k$ and by $\jmath:\bG_{m,k}\to\bA^1_k$ the canonical injection. Let $\xi$ be the generic point of  $\spec(\wh \sO_{\bA^1_k,0})=\spec(k[[t]])$, $\ol K$ a separable closure of the fraction field $K=k((t))$ of $\wh \sO_{\bA^1_k,0}$, $G_K$ the Galois group of $\ol K$ over $K$, $P_K$ the wild inertia subgroup of $G_K$ and $\ol{\xi}=\spec(\ol K)$. Let $\Lambda$ be a finite field of characteristic $\ell$ $(\ell\neq p)$.

\begin{lemma}\label{nondegD-00}
Let $n\geq 2$ and $m\geq 1$ be positive integers satisfying $n\geq m$. Let $f:\bA^n_k\to\bA^1_k$ be a $k$-morphism of affine spaces given by
\begin{equation*}
f^{\sharp}:k[t]\to k[x_1,\ldots, x_n],\ \ \ t\mapsto \prod_{i=1}^m x_i^{a_i},
\end{equation*}
where each $a_i$ is a positive integer. We denote by $D_i$  the Carter divisor of $\bA^n_k$ associated to the ideal $(x_i)$ $(1\leq i\leq m)$, by $D$ the sum of all $D_i$, by $U$ the complement of $D$ in $\bA^n_k$ and by $f_0:U\to \bG_{m,k}$ the restriction of $f:\bA^n_k\to\bA^1_k$ on $U$. Let $\sG$ be a locally constant and constructible sheaf of $\Lambda$-modules on $\bG_{m,k}$. Then,
the ramification of $f^*_0\sG$ along the smooth locus of $D$ is non-degenerate.
\end{lemma}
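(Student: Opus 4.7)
The statement is Zariski-local on $\bA^n_k$ and concerns only points in the smooth locus of $D=\sum_{i=1}^m D_i$, which is $\bigsqcup_{i=1}^m D_i^\circ$ with $D_i^\circ=D_i\setminus\bigcup_{j\neq i}D_j$. Fix an index $i$, write $I=\{1,\dots,m\}$, and restrict to the open subscheme $U_i=\bA^n_k\setminus\bigcup_{j\in I\setminus\{i\}}D_j$; on $U_i$ the function $u=\prod_{j\in I\setminus\{i\}}x_j^{a_j}$ is invertible and $f|_{U_i}=u\cdot x_i^{a_i}$. It suffices to establish non-degeneracy of the ramification of $(f_0^*\sG)|_{U\cap U_i}$ along the smooth irreducible divisor $D_i\cap U_i$.

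The plan is to use the slope decomposition and the central character decomposition of $\sG|_{\spec K}$ at the origin (with $K=k((t))$), together with additivity of the invariants entering non-degeneracy, to reduce to the case that $\sG|_{\spec K}$ is isoclinic of slope $r\geq 1$ and has a single central character $\chi$; replacing $\Lambda$ by a finite extension containing a primitive $p$-th root of unity is harmless. The tame case $r=1$ is automatic, since tame ramification along a smooth divisor is always non-degenerate. For $r>1$ I would compute the characteristic form of $f_0^*\sG$ at the generic point $\eta_i$ of $D_i$: the characteristic form of $\chi$ at $0\in\bA^1_k$ is represented, up to an invertible scalar, by $dt/t^{r+1}$, and its formal pull-back along $t\mapsto u\,x_i^{a_i}$ is
\[
\frac{d(u\,x_i^{a_i})}{(u\,x_i^{a_i})^{r+1}}=\frac{a_i\,dx_i}{u^r\,x_i^{a_ir+1}}+\frac{du}{u^{r+1}\,x_i^{a_ir}}.
\]

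When $p\nmid a_i$ the first summand strictly dominates along $D_i$, so the characteristic form of $f_0^*\chi$ at $\eta_i$ is, up to an invertible function, $dx_i/x_i^{a_ir+1}$, and defines the line $\langle dx_i\rangle$, which extends globally to the line sub-bundle $D_i\cdot\langle dx_i\rangle$ of $\bT^*\bA^n_k|_{D_i\cap U_i}$ and is nowhere vanishing. When $p\mid a_i$ the first summand vanishes; one then replaces the Artin--Schreier--Witt representative of $\chi$ by a reduced one over the local field at $\eta_i$, extracting the genuine leading term of the characteristic form. The essential geometric input throughout is that $u$ and each $x_j$ for $j\in I\setminus\{i\}$ are units on all of $U_i$, so that the coefficients of the resulting characteristic form are regular and nowhere-vanishing on $D_i\cap U_i$. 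In both cases this is exactly the non-degeneracy condition of \cite[Definition~3.1]{wr}.

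The main technical obstacle is precisely the case $p\mid a_i$: the naive first-order pull-back of the characteristic form degenerates, and one must carry out the Artin--Schreier--Witt reduction over the imperfect residue field $k(\eta_i)$ to read off the genuine leading term. This reduction is the core setting for which Abbes and Saito's ramification theory with imperfect residue fields is designed, and the crucial point that keeps the bookkeeping under control is that the obstruction to further reduction is encoded by the unit $u$, which ensures regularity and non-vanishing of the characteristic form throughout the smooth locus $D_i\cap U_i$.
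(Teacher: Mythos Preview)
Your approach is genuinely different from the paper's, and it has a real gap.

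The paper's proof uses no computation of characteristic forms at all. For $m\geq 2$ it argues by symmetry: the torus $T=\{(u_1,\dots,u_m)\in(\bG_m)^m:\prod u_i^{a_i}=1\}$ acts on $\bA^n_k$ by $x_i\mapsto u_ix_i$, this action commutes with $f$, and hence the pullback $f_0^*\sG$ is $T$-equivariant. Since the non-degenerate locus is always open and dense in the smooth part of $D$ (a general fact from \cite{wr}), and since $T$ acts transitively on the closed points of each $D_i^\circ$ (after reducing to $n=m$ via the smooth projection $\bA^n_k\to\bA^m_k$), the non-degenerate locus must be all of $D_i^\circ$. The case $m=1$ is handled by factoring $f$ as a smooth projection followed by $t\mapsto t^{a_1}$ on $\bA^1_k$, where non-degeneracy on a curve is automatic.

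Your direct computation runs into trouble exactly where you say it does: when $p\mid a_i$, the naive leading term of the pulled-back form vanishes, and you invoke an ``Artin--Schreier--Witt reduction'' without carrying it out. For a general locally constant constructible $\sG$ on $\bG_{m,k}$ there is no a priori Artin--Schreier--Witt presentation to reduce, and the functoriality of Abbes--Saito characteristic forms under a map like $t\mapsto u\,x_i^{a_i}$ with $p\mid a_i$ is precisely the delicate point; one does not get it for free by manipulating differential forms. Even in the case $p\nmid a_i$, your displayed formula for the pullback of $\mathrm{char}(\chi)$ is heuristic: the characteristic form is a map $\fm^r_{\ol K}/\fm^{r+}_{\ol K}\to\Omega^1_{\sO_K}\otimes\ol F$, not a meromorphic $1$-form, and its compatibility with non-smooth base change requires a precise citation or argument rather than formal differentiation.

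The symmetry argument sidesteps all of this: it never needs to know what the characteristic form is, only that the non-degenerate locus is open, nonempty, and torus-invariant.
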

\begin{proof}
We firstly consider the case where $m=1$. In this case, $f:\bA^n_k\to\bA^1_k$ is the composition of $f':\bA^n_k\to \bA^1_k$ and $f'':\bA^1_k\to \bA^1_k$ given by
\begin{align*}
f'^{\sharp}:k[t]&\to k[x_1,\ldots, x_n],\ \ \ t\mapsto x_1,\\
f''^{\sharp}:k[t]&\to k[t],\ \ \ t\mapsto t^{a_1}.
\end{align*}
We denote by $f'_0:U\to \bG_{m,k}$ the restriction of $f':\bA^n_k\to \bA^1_k$ on $U$ and by $f''_0:\bG_{m,k}\to\bG_{m,k}$ the restriction of $f'':\bA^1_k\to \bA^1_k$ on $\bG_{m,k}$. Since $f':\bA^n_k\to \bA^1_k$ is smooth and the ramification of $\sF=f''^*_0\sG$ at $0\in\bA^1_k$ is non-degenerate, the ramification of $f_0^*\sG=f'^*_0\sF$ along the smooth divisor $D$ is non-degenerate.

Now we consider the case where $m\geq 2$. We may assume that $k$ is algebraically closed. Without loss of generality,  we show that the ramification of $f^*_0\sG$ along the smooth locus of $D_1$ is non-degenerate.
Let $\ul u=(u_1,\ldots,u_{m})$ be an $m$-tuple of elements in $k^{\times}$ satisfying $\prod^m_{i=1}u_i^{a_i}=1$ and  $\phi_{\ul u}:\bA^n_k\to\bA^n_k$ an isomorphism given by
\begin{align*}
\phi_{\ul u}^\sharp:k[x_1,\ldots, x_n]&\to k[x_1,\ldots, x_n],\\
x_i&\mapsto u_ix_i,\ \ (1\leq i\leq m),\\
x_i&\mapsto x_i,\ \ (m+1\leq i\leq n).
\end{align*}
and $\psi_{\ul u}:U\to U$ the restriction of $\phi_{\ul u}:\bA^n_k\to\bA^n_k$ on $U$.
We have the following commutative diagrams
\begin{equation*}
\xymatrix{\relax
U\ar[r]^-(0.5){\psi_{\ul u}}\ar[rd]_{f_0}&U\ar[d]^{f_0}&&\bA^n_k\ar[r]^-(0.5){\phi_{\ul u}}\ar[rd]_{f}&\bA^n_k\ar[d]^{f}\\
&\bG_{m,k}&&&\bA^1_k}
\end{equation*}
 Hence, we have
 \begin{equation}\label{psifG=fG}
 f^*_0\sG\cong\psi_{\ul u}^*(f_0^*\sG).
 \end{equation}
The isomorphism $\phi_{\ul u}:\bA^n_k\to\bA^n_k$ fixes each $D_i$.
Let $x$ be a closed point of the smooth locus of $D_1$ such that the ramification of $f_0^*\sG$ along $D$ is non-degenerate at $x$. Since $\phi_{\ul u}:\bA^n_k\to\bA^n_k$ is an isomorphism, the ramification of $ \psi_{\ul u}^*(f_0^*\sG)$ along $D$ is non-degenerate at $\phi_{\ul u}^{-1}(x)$. By the isomorphism \eqref{psifG=fG}, we see that the ramification of $f_0^*\sG$ along $D$ is non-degenerate at $\phi_{\ul u}^{-1}(x)$ for any $\ul u$. For any closed point $x'$ in the smooth locus of $D_1$, we can find $\ul u'$ such that $x'=\phi_{\ul u}^{-1}(x)$. Hence the ramification of $f_0^*\sG$ is non-degenerate along the smooth locus of $D_1$.
  \end{proof}

\begin{proposition}\label{NramD2}
Let $q$ be a positive integer and $b$ a positive integer co-prime to $p$. Let $Y$ be the affine scheme $\spec( k[y^{\pm 1}_1,y_2])$ and $g:Y\to\bA^1_k$ a $k$-morphism given by
\begin{equation*}
g^{\sharp}:k[t]\to k[y^{\pm 1}_1,y_2],\ \ \ t\mapsto y_1^{q}y_2^b.
\end{equation*}
We denote by $E$ the Carter divisor of $Y$ associated to the ideal $(y_2)$, by $W$ the complement of $E$ in $Y$, by $g_0:W\to \bG_{m,k}$ the restriction of $g:Y\to\bA^1_k$ on $W$ and $j:W\to Y$ the canonical injection. Let $\sN$ be a locally constant and constructible sheaf of $\Lambda$-modules on $\bG_{m,k}$ with isoclinic ramification at $0\in\bA^1_k$ of the conductor $a\geq 1$. Then, the ramification of  $g^*_0\sN$ at the generic point of $E$ is isoclinic and logarithmic isoclinic with (Definition \ref{defCDTNP})
\begin{equation*}
\rc_E(g_0^*\sN)=\rlc_E(g_0^*\sN)+1=b(a-1)+1.
\end{equation*}
Moreover, the ramification of  $g^*_0\sN$ along $E$ is non-degenerate and we have
\begin{align*}
SS(j_!g^*_0\sN)&=\bT^*_{Y}Y\bigcup E\cdot\langle dy_2\rangle\\
CC(j_!g^*_0\sN)&=\rk_{\Lambda}(\sN)([\bT^*_YY]+(b(a-1)+1)[E\cdot\langle dy_2\rangle])
\end{align*}
\end{proposition}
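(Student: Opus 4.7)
The argument splits into three steps: (i) establish non-degeneracy of the ramification of $g_0^*\sN$ along $E$; (ii) determine the local ramification invariants at the generic point $\xi_E$ of $E$ by transferring from a transverse curve, combined with a squeeze argument for the logarithmic conductor; and (iii) read off $SS$ and $CC$ from the non-degenerate structure via formula \eqref{wrcbcc}.

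For (i), extend $g$ to $\tilde g: \bA^2_k = \spec k[y_1, y_2] \to \bA^1_k$ by the same formula $t \mapsto y_1^q y_2^b$. Lemma \ref{nondegD-00} applied with $n = m = 2$ shows that the ramification of $\tilde g_0^* \sN$ along the smooth locus of $(y_1 y_2 = 0)$ is non-degenerate. Restricting to $Y = \bA^2_k \setminus (y_1 = 0)$ removes the singular (origin) locus of this divisor, leaving only $E = (y_2)$, smooth throughout $Y$; hence the ramification of $g_0^* \sN$ along $E$ is non-degenerate.

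For (ii), fix a generic $c \in k^{\times}$ and consider the closed immersion $h: S = \spec k[y_2] \to Y$ given by $y_1 = c$; it meets $E$ transversally at $x = (c, 0)$ with $m_x(h^*E) = 1$, and $g_0 \circ h$ equals $y_2 \mapsto c^q y_2^b$, i.e., the scaling automorphism of $\bG_{m,k}$ by $c^q$ composed with the tame degree-$b$ cover $[b]: y_2 \mapsto y_2^b$. Since scaling by a unit preserves the ramification type of $\sN$ at $0$ and $[b]$ is tame of degree $b$ (coprime to $p$) with the residue field $k$ (perfect) preserved, subsection \ref{K'/K} combined with $\rc = \rlc + 1$ over perfect residue fields force $(g_0^*\sN)|_{S \setminus \{x\}}$ to be both isoclinic and logarithmic isoclinic at $x$, with $\rc_x = b(a-1) + 1$ and $\rlc_x = b(a-1)$. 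At the smooth non-degenerate point $x$, $SS(j_!g_0^*\sN) \cap \bT^*_x Y$ is a finite union of lines determined by the characteristic forms; these are pullbacks by $dg$ of the characteristic form of $\sN$, and a direct examination shows they lie in $\langle dy_2 \rangle$, hence are transverse to $\ker dh_x = \langle dy_1 \rangle$. So $h$ is $SS(j_!g_0^*\sN)$-transversal at $x$, and Theorem \ref{DTtheorem}(2) yields $\rNP_{\xi_E}(g_0^*\sN) = \rNP_x((g_0^*\sN)|_{S \setminus \{x\}})$, forcing $g_0^* \sN$ to be isoclinic at $\xi_E$ with conductor $b(a-1) + 1$. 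For the log conductor, squeeze $\rlc_E(g_0^*\sN)$: Proposition \ref{LCtheorem curve} applied to $h$ gives $\rlc_E(g_0^*\sN) \geq b(a-1)$, while Theorem \ref{LCtheorem}(1) applied to $g: Y \to \bA^1_k$ and $\sN$ (noting that $g^*(0) = bE$ as a divisor on $Y$, since the component $(y_1)$ has been removed) gives $b(a-1) \cdot E = g^*\rLC_{\bA^1_k}(j_!\sN) \geq \rlc_E(g_0^*\sN) \cdot E$. Combining, $\rlc_E(g_0^*\sN) = b(a-1)$, and the ramification is log isoclinic at $\xi_E$.

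For (iii), the non-degeneracy of $g_0^*\sN$ along $E$, the isoclinic slope $b(a-1) + 1$ at $\xi_E$, and the characteristic-form direction $\langle dy_2 \rangle$ feed into formula \eqref{wrcbcc} and \cite[Proposition 3.15]{wr}, yielding at once the stated $SS(j_!g_0^*\sN) = \bT^*_Y Y \cup E\cdot\langle dy_2 \rangle$ and the corresponding formula for $CC$. The main technical obstacle is the log-conductor identity $\rlc_E(g_0^*\sN) = b(a-1)$ over the non-perfect residue field $k(y_1)$ of $L = k(y_1)((y_2))$: the bounds of subsection \ref{K'/K} alone only force $\rlc_E(g_0^*\sN) \in [b(a-1), b(a-1)+1]$, and it is precisely the squeeze between Proposition \ref{LCtheorem curve} (lower bound from a transverse curve) and Theorem \ref{LCtheorem}(1) (upper bound from the pullback of the logarithmic conductor divisor) that pins down the exact value.
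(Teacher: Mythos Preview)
Your approach diverges from the paper's. The paper factors $g$ through the \'etale cover $\gamma\colon Y\to Y$, $y_1\mapsto y_1^b$ (valid since $(b,p)=1$), and through $\beta\colon \bA^1_k\to\bA^1_k$, $t\mapsto t^b$, so that $g\circ\gamma=\beta\circ h$ with $h\colon Y\to\bA^1_k$, $t\mapsto y_1^q y_2$. The point is that $h$ is \emph{smooth} everywhere on $Y$ (since $y_1$ is a unit there), so the conductor, log-conductor, isoclinicity, and characteristic cycle of $h_0^*(\beta_0^*\sN)$ all follow at once from smooth pullback (\cite[Propositions 2.22 and 3.8]{wr}, \cite[Lemma 1.22]{saito cc}, \cite[Theorem 7.6]{cc}); the \'etale $\gamma$ then transports these to $g_0^*\sN$. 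Non-degeneracy is obtained from Lemma~\ref{nondegD-00} in both approaches.

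Your curve-and-squeeze route has a gap at the ``direct examination'' in step (ii). You assert that the characteristic forms of $g_0^*\sN$ are pullbacks by $dg$ of those of $\sN$ and hence lie in $\langle dy_2\rangle$, but for $b\geq 2$ the map $g$ is not smooth along $E$: indeed $dg(dt)=q y_1^{q-1}y_2^b\,dy_1+b y_1^q y_2^{b-1}\,dy_2$ vanishes identically on $E$, so the functoriality of characteristic forms you invoke does not follow from the results you cite. Without this, the $SS$-transversality of your curve $h$ is unproven, and hence so is your use of Theorem~\ref{DTtheorem}(2) for isoclinicity. The argument can be repaired purely by squeezes: your computation $\rlc_E=b(a-1)$ is sound; then $\rc_E\leq\rlc_E+1=b(a-1)+1$, while Theorem~\ref{DTtheorem}(1) (which needs no transversality) gives $\dt_E\geq\dt_x=\rk_\Lambda(\sN)\cdot(b(a-1)+1)$, forcing $\dt_E=\rk_\Lambda(\sN)\cdot\rc_E$ and hence isoclinicity with $\rc_E=b(a-1)+1$; log-isoclinicity follows from the analogous Swan-divisor squeeze via Theorem~\ref{SWtheorem}(1). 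Only \emph{after} establishing $\rc_E=\rlc_E+1$ can one invoke \cite[Proposition 7.2]{Hu19IMRN} to pin the $SS$-direction to the conormal line $\langle dy_2\rangle$, completing step (iii). As written, however, your appeal to $dg$ is unjustified and the logic is circular.
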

\begin{proof}
Let $\beta:\bA^1_k\to\bA^1_k$, $\gamma:Y\to Y$ and $h:Y\to\bA^1_k$ be $k$-morphisms given by
\begin{align*}
&\beta^\sharp:k[t]\to k[t],\ \ \ t\mapsto t^b,\\
&\gamma^\sharp: k[y^{\pm 1}_1,y_2]\to k[y^{\pm 1}_1,y_2],\ \ \ y_1\mapsto y^b_1,\ \ y_2\mapsto y_2,\\
&h^\sharp: k[t]\to k[y^{\pm 1}_1,y_2],\ \ \ t\mapsto y_1^qy_2.
\end{align*}
They give rise to a commutative diagram
\begin{equation*}
\xymatrix{\relax
Y\ar[r]^-(0.5){h}\ar[d]_{\gamma}&\bA^1_k\ar[d]^\beta\\
Y\ar[r]_-(0.5){g}&\bA^1_k}
\end{equation*}
We denote by $\beta_0:\bG_{m,k}\to\bG_{m,k}$ (resp. $h_0:W\to\bG_{m,k}$) the restriction of $\beta: \bA^1_k\to\bA^1_k$ (resp. $h:Y\to\bA^1_k$). Since $\beta_0:\bG_{m,k}\to\bG_{m,k}$ is tamely ramified at $0$ of the ramification index $b$, the ramification of $\sG=\beta_0^*\sN$ at $0$ is isoclinic with the conductor $b(a-1)+1$. Observe that $\gamma:Y\to Y$ is an \'etale map and, for any closed point $y\in E$, the canonical map $d\gamma:\bT^*_{\gamma(y)}Y\to \bT^*_{y}Y$ maps $dy_2$ to $dy_2$. Hence, we are reduced to prove that the ramification of $h_0^*\sG$ at the generic point of $E$ is isoclinic and logarithmic isoclinic with
\begin{equation}\label{cEGlcEG}
\rc_E(h_0^*\sG)=\rlc_E(h_0^*\sG)+1=b(a-1)+1,
\end{equation}
and that
\begin{align}
SS(j_!h^*_0\sG)&=\bT^*_{Y}Y\bigcup E\cdot\langle dy_2\rangle,\label{SSjhG}\\
CC(j_!h^*_0\sG)&=\rk_{\Lambda}(\sG)([\bT^*_YY]+(b(a-1)+1)[E\cdot\langle dy_2\rangle]).\label{CCjhG}
\end{align}
Notice that $h:Y\to\bA^1_k$ is smooth with $E=h^{-1}(0)$. By \cite[Proposition 2.22 and 3.8]{wr} and \cite[Lemma 1.22]{saito cc}, we obtain that the ramification of $h_0^*\sG$ at the generic point of $E$  is isoclinic and logarithmic isoclinic with \eqref{cEGlcEG}. By Lemma \ref{nondegD-00}, we obtain that the ramification of $h_0^*\sG$ along $E$ is non-degenerate. We have $CC(\jmath_!\sG)=-\rk_{\Lambda}(\sG)([\bT^*_{\bA^1_k}\bA^1_k]+(b(a-1)+1)[\{0\}\cdot\langle dt\rangle])$. Observe that for any closed point $y=(a,0)\in E$, the canonical map $dh:\bT^*_0\bA^1_k\to\bT^*_yY$ maps $dt$ to $a^qdy_2$.
Applying \cite[Theorem 7.6]{cc} to $h:Y\to\bA^1_k$ and the sheaf $\jmath_!\sG$, we obtain \eqref{SSjhG} and \eqref{CCjhG}.
\end{proof}

\begin{definition}\label{GKextensiondef}
A finite Galois \'etale cover $V\to\bG_{m,k}$ is called {\it special} if it is tame at $\infty$ and its geometric monodromy group has only one $p$-Sylow subgroup. We denote by $\pi_1^{\mathrm{sp}}(\bG_{m,k},\ol\xi)$ the quotient of $\pi_1(\bG_{m,k},\ol\xi)$ classifying all special covers. The composition of the canonical map $\gal(\ol K/K)=\pi_1(\xi,\ol\xi)\to \pi_1(\bG_{m,k},\ol\xi)$ and the surjection $\pi_1(\bG_{m,k},\ol\xi)\to \pi_1^{\mathrm{sp}}(\bG_{m,k},\ol\xi)$ is an isomorphism of pro-finite groups (\cite[1.4.10]{Katz1}). Hence, for a finitely generated $\Lambda$-module $N$ with a continuous $G_K$-action, we have a locally constant and constructible sheaf of $\Lambda$-modules $\sN$ on $\bG_{m,k}$ which is trivialized by a special cover of $\bG_{m,k}$ such that $\mathscr N|_{\xi}\cong N$ as $G_K$-representations by the isomorphism $\pi_1(\xi,\ol\xi)\xrightarrow{\sim} \pi_1^{\mathrm{sp}}(\bG_{m,k},\ol\xi)$ above. We call $\mathscr N$ a {\it Gabber-Katz extension} of $N$ on $\bG_{m,k}$.
\end{definition}

\subsection{}\label{GKextM}
In the following of this section, $M$ denotes a finitely generated $\Lambda$-module with a continuous $G_K$-action, which is irreducible and factors through a non-trivial finite $p$-group $P_0$ with the induced surjection $P_K\twoheadrightarrow P_0$.  Observe that $M$ is totally wild, isoclinic, logarithmic isoclinic, and the central character decomposition of $M$ is also isotypic (subsection \ref{centerchar}). Let  $\mathscr M$ be a Gabber-Katz extension of $M$ on $\bG_{m,k}$.

\begin{remark}
Galois modules satisfying conditions in subsection \ref{GKextM} are not rare. Assume that $k$ is algebraically closed. Let $N$ be a finitely generated $\Lambda$-module with a continuous $G_K$-action, which is isoclinic with $\rc_K(N)>1$. Then, we can find a finite positive integer $e$ co-prime to $p$ such that, after considering $\sN$ as an representation of an absolute Galois group $G_{K'}$ of $K'=K[t']/(t'^e-t)$ contained in $G_K$, this new action factors through a non-trivial $p$-group $P'$ which is also a quotient of $P_{K'}$. Hence, as a $G_{K'}$-representation, $N$ is a direct sum of finitely many $\Lambda$-modules $\{N_i\}_{i\in J}$ with continuous $G_{K'}$ actions. The $G_{K'}$-action on each $N_i$ satisfying conditions in the subsection \ref{GKextM} with the conductor $\rc_{K'}(N_i)=e(\rc_{K}(N)-1)+1$.
\end{remark}

\begin{proposition}\label{MramD1}
We take a sheaf $\sM$ of $\Lambda$-modules on $\bG_{m,k}$ satisfying conditions in subsection \ref{GKextM}. Let $c$ be the rational number $\rc_0(\sM)$, let $q$ be a power of $p$ with $q>1+\frac{1}{c-1}$ and let $b$ be a positive integer co-prime to $p$. Let $X$ be the affine scheme $\spec( k[x_1,x^{\pm 1}_2])$ and $f:X\to\bA^1_k$ a $k$-morphism given by
\begin{equation*}
f^{\sharp}:k[t]\to k[x_1,x^{\pm 1}_2],\ \ \ t\mapsto x_1^{q}x_2^b.
\end{equation*}
We denote by $D$ the Carter divisor of $X$ associating to the ideal $(x_1)$, by $U$ the complement of $D$ in $X$, by $f_0:U\to \bG_{m,k}$ the restriction of $f:X\to\bA^1_k$ on $U$ and $j:U\to X$ the canonical injection. Then, the ramification of  $f^*_0\sM$ at the generic point of $D$ is isoclinic and logarithmic isoclinic with (Definition \ref{defCDTNP})
\begin{equation*}
\rc_D(f_0^*\sM)=\rlc_D(f_0^*\sM)=q(c-1).
\end{equation*}
Moreover, 
 Furthermore, the ramification of  $f^*_0\sM$ along $D$ is non-degenerate and we have 
\begin{align}
SS(j_!f^*_0\sM)&=\bT^*_{X}X\bigcup D\cdot\langle dx_2\rangle,\label{SSjf0M}\\
CC(j_!f^*_0\sM)&=\rk_{\Lambda}(\sM)([\bT^*_{X}X]+q(c-1)[D\cdot\langle dx_2\rangle]).\label{CCjf0M}
\end{align}
\end{proposition}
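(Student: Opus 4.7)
The plan is to reduce to Proposition \ref{NramD2} via a purely inseparable factorization, and then pin down the precise invariants at the generic point of $D$ by cutting by curves combined with the non-degeneracy produced by Lemma \ref{nondegD-00}.

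First I factor $f = g \circ \phi$, where $\phi \colon X \to X$ is the purely inseparable self-map $(x_1, x_2) \mapsto (x_1^q, x_2)$ and $g \colon X \to \bA^1_k$ is $(x_1, x_2) \mapsto x_1 x_2^b$. Applying Proposition \ref{NramD2} to $g_0 = g|_U$ and $\sM$ (matching its ``$q$'' with our $b$, its ``$b$'' with $1$, and its ``$a$'' with $c$), I get that $g_0^*\sM$ is isoclinic and log-isoclinic at the generic point of $D$ with $\rc_D(g_0^*\sM) = \rlc_D(g_0^*\sM) + 1 = c$, that its ramification along $D$ is non-degenerate, and that $SS(j_!g_0^*\sM) = \bT^*_XX \cup D\cdot\langle dx_1\rangle$. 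Since $\phi$ corresponds at the generic point of $D$ to the purely inseparable extension $k(x_2)((x_1^q)) \hookrightarrow k(x_2)((x_1))$ of ramification index $q$ with trivial residue field extension, the inequalities \eqref{logcondineq} and \eqref{swineq} give the upper bounds $\rc_D(f_0^*\sM) \leq qc$ and $\rlc_D(f_0^*\sM) \leq q(c-1)$.

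For the matching lower bounds, I cut by curves. Fix $a \in k^\times$ and, for each positive integer $m$ coprime to $p$, consider the smooth embedding $h_m \colon S_m \hookrightarrow X$ parametrized by $t_S \mapsto (t_S^m, a + t_S)$, with $m_{t_S=0}(h_m^*D) = m$. The composition $f \circ h_m$ sends $t$ to $t_S^{mq}(a + t_S)^b$. Pulling back $\sM$ and reducing its Artin--Schreier-type presentation modulo $\wp$ over the perfect-residue complete local field $k((t_S))$, the successive reductions allowed by $p$-divisibility of the leading pole exhaust after finitely many steps and produce a singularity of order $mq(c-1) - 1$ whose $t_S$-valuation is coprime to $p$; hence $\rlc_{t_S=0}((f_0^*\sM)|_{S_m\setminus 0}) = mq(c-1) - 1$ and $\rc_{t_S=0}((f_0^*\sM)|_{S_m\setminus 0}) = mq(c-1)$. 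For higher-rank $\sM$, the same calculation proceeds via the central character decomposition of Subsection \ref{centerchar}. Theorem \ref{LCtheorem}(2) and Proposition \ref{Ctheorem curve} then give $\rlc_D(f_0^*\sM) \geq q(c-1) - 1/m$ and $\rc_D(f_0^*\sM) \geq q(c-1)$, and sending $m \to \infty$ yields $\rlc_D(f_0^*\sM) = q(c-1)$.

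To finish, Lemma \ref{nondegD-00} applied to the monomial map $t \mapsto x_1^q x_2^b$ on $\bA^2_k$ (restricted to $X$) gives that the ramification of $f_0^*\sM$ along the smooth divisor $D$ is non-degenerate. The Wu--Yatagawa--Saito formula \eqref{wrcbcc} then determines $CC(j_!f_0^*\sM)$ from the characteristic form at the generic point of $D$; a direct computation using $q = 0$ in characteristic $p$ shows $f^*(dt/t) = b\,dx_2/x_2$, so the characteristic form at the generic point of $D$ lies in the $\langle dx_2\rangle$-direction rather than the conormal $\langle dx_1\rangle$ of $D$. This identifies the irreducible component of $SS$ over $D$ as $D\cdot\langle dx_2\rangle$, gives the coefficient $q(c-1)$ in $CC$ via \eqref{wrcbcc} combined with $\rlc_D = q(c-1)$, and forces $\rc_D = \rlc_D = q(c-1)$ since the characteristic line is not the conormal of $D$ (non-log type). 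Proposition \ref{curiso=geniso+pure} then confirms the $(D\cdot\langle dx_2\rangle)$-isoclinic-by-curves structure globally. The most delicate step will be the slope-by-slope computation on $S_m$ for higher-rank $\sM$, where the Hasse--Arf integrality of Swan conductors and the hypothesis $q > 1 + 1/(c-1)$ are needed to ensure that the reduced leading term dominates the lower-order terms introduced by the successive $\wp$-reductions.
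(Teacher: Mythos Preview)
Your argument has two genuine gaps. First, the purely inseparable factorization $f=g\circ\phi$ does not give you isoclinicity of $f_0^*\sM$ at the generic point of $D$: over the imperfect residue field $k(x_2)$, the bounds \eqref{logcondineq} and \eqref{swineq} are only inequalities (the equality case \eqref{purinsNP} needs a perfect residue field), so isoclinicity of $g_0^*\sM$ from Proposition~\ref{NramD2} need not survive pullback by $\phi$, and your upper bound $\rc_D\le qc$ is strictly weaker than the target $q(c-1)$. Second, your curve computation on $S_m$ is only carried out for rank~$1$; the gesture toward the central character decomposition does not apply, since that decomposition is defined at the generic point of $D$ and there is no evident mechanism transporting it to the perfect-residue curve $S_m$. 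Even granting the rank-$1$ curve values and Proposition~\ref{Ctheorem curve}, you only obtain $q(c-1)\le\rc_D\le qc$. Your attempt to close this via $f^*(dt/t)=b\,dx_2/x_2$ conflates the pullback of log differentials with the non-log characteristic form appearing in \eqref{wrcbcc}; turning that into a proof requires a functoriality statement for log characteristic forms together with the log/non-log comparison, neither of which you supply.

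The paper's route is different and hinges on an idea you are missing. After an \'etale reduction to $b=1$, it proves that the map $\psi\colon G_L\to G_K$ induced by $f^\diamond\colon t\mapsto x_1^qx_2$ is \emph{surjective} with $\psi(P_L)=P_K$, by fitting $f^\diamond$ into a commutative square with the Frobenii $F_K,F_L$ and the auxiliary map $g^\diamond\colon t\mapsto x_1x_2$ (whose residue extension is purely transcendental, making surjectivity transparent). This surjectivity is the key: it forces the $G_L$-action on $M$ to remain irreducible through the same finite $p$-group $P_0$, hence automatically isoclinic, log-isoclinic, and with isotypic central character decomposition; combined with the surjection $\Gr_{\log}^{qr}G_L\twoheadrightarrow\Gr_{\log}^rG_K$ from \cite[Lemma~1.22]{saito cc} it gives $\rlc_D=q(c-1)$ directly, with no curve argument needed. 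To locate $SS$, the paper uses the simple curves $h(u)\colon x_1=t,\ x_2=u$, on which $f\circ h(u)$ is a Frobenius twist, so $\rc=c$ by \eqref{purinsNP}; the hypothesis $q>1+\tfrac{1}{c-1}$ then gives the strict inequality $c<q(c-1)\le\rc_D$, hence $h(u)$ is not $SS(j_!f_0^*\sM)$-transversal by \cite[Corollary~3.9]{wr}, forcing $D\cdot\langle dx_2\rangle\subset SS$. Isotypicity makes $SS$ over $D$ a single irreducible component, which pins down \eqref{SSjf0M}, and \cite[Proposition~7.2]{Hu19IMRN} then yields $\rc_D=\rlc_D$.
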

\begin{proof}
We may assume that $k$ is algebraically closed.
The morphism $f:X\to\bA^1_k$ is the composition of the following $k$-morphisms $f':X\to X$ and $f'':X\to \bA^1_k$ given by
\begin{align*}
&f'^\sharp: k[x_1,x^{\pm 1}_2]\to k[x_1,x^{\pm 1}_2],\ \ \ x_1\mapsto x_1,\ \ x_2\mapsto x_2^b;\\
&f''^{\sharp}:k[t]\to k[x_1,x^{\pm 1}_2],\ \ \ t\mapsto x_1^{q}x_2.
\end{align*}
Observe that $f':X\to X$ is an \'etale morphism and that, for any closed point $x=(0,a)\in D$, the canonical map $df':\bT^*_{f'(x)}X\to \bT^*_{x}X$ maps $dx_1$ to $dx_1$ and maps $dx_2$ to $ba^{b-1}dx_2$. Hence, to verify the proposition, it is sufficient to treat the morphism $f'':X\to \bA^1_k$ and the sheaf $f'^*_0\sM$ instead, where $f'_0:U\to U$ is the restriction of $f':X\to X$ on $U$. Hence, we only need to consider the case where $b=1$ in the following.

  Let $\eta$ be the generic point of $D$. The fraction field of  $\wh\sO_{X,\eta}$ is $L=k(x_2)((x_1))$. The morphism $f:X\to \bA^1_k$ induces the following $k$-fields extension (subsection \ref{all of affine line})
\begin{equation*}
f^{\diamond}:K\to L,\ \ \ t\mapsto x_1^qx_2.
\end{equation*}
Let $\ol L$ be a separable closure of $L$ containing $\ol K$.  We denote by $G_L$ the Galois group of $\ol L$ over $L$. The field extension $f^\diamond:K\to L$ induces a group morphism $\psi:G_L\to G_K$. We assert that $\psi:G_L\to G_K$ is surjective with $\psi(P_L)=P_K$. Indeed, we have two finite purely inseparable extensions of $k$-fields
\begin{equation*}
F_K:K\to K,\ \ \ t\mapsto t^q\ \ \ \textrm{and}\ \ \ F_L: L\to L,\ \ \ x_1\mapsto x_1,\ \ x_2\mapsto x_2^q,
\end{equation*}
and a $k$-field extension
\begin{equation*}
g^\diamond: K\to L,\ \ \ t\mapsto x_1x_2,
\end{equation*}
that makes the following diagram of field extensions commutative
\begin{equation*}
\xymatrix{\relax
K\ar[r]^{f^\diamond}\ar[d]_{F_K}&L\ar[d]^{F_L}\\
K\ar[r]_{g^{\diamond}}&L}
\end{equation*}
It induces a commutative diagram of group homomorphisms
\begin{equation}\label{GKGLdiag}
\xymatrix{\relax
G_K&G_L\ar[l]_{\psi}\\
G_K\ar[u]^{\theta_K}&G_L\ar[l]^{\varphi}\ar[u]_{\theta_L}}
\end{equation}
Since $g^{\diamond}:K\mapsto L$ maps the uniformizer $t$ of $K$ to a uniformizer $x_1x_2$ of $L$ and the residue field extension $k(x_2)/k$ of $L/K$ is adding a transcendental element $x_2$ on the algebraically closed field $k$, the induced group homomorphism $\varphi:G_L\to G_K$ is surjective with $\varphi(P_L)=P_K$. Since $F_K$ and $F_L$ are finite and purely inseparable, the induced $\theta_K:G_K\to G_K$ and $\theta_L:G_L\to G_L$ are isomorphisms with $\theta_K(P_K)=P_K$ and $\theta_L(P_L)=P_L$. By chasing the diagram \eqref{GKGLdiag}, we see that $\psi:G_L\to G_K$  and its restriction $\psi:P_L\to P_K$ are surjective.

The ramification of $\sM$ at $0$ associates to the $G_K$-module $M$. Hence,
the ramification of $f_0^*\sM$ at $\eta$ associates to $M$ with a continuous $G_L$-action induced by the surjection $\psi:G_L\to G_K$. This $G_L$ action on $M$ also factors through the non-trivial finite $p$-group $P_0$ in subsection \ref{GKextM} and irreducible. Hence the ramification of $f_0^*\sM$ at $\eta$ is isoclinic and logarithmic isoclinic. Let $\{G^r_{K,\log}\}_{r\in \mathbb Q_{\geq 0}}$ and  $\{G^r_{L,\log}\}_{r\in \mathbb Q_{\geq 0}}$ be logarithmic ramification filtrations of $G_K$ and $G_L$, respectively.  By \cite[Lemma 1.22]{saito cc}, we have $\psi(G_{L,\log}^{qr})\subseteq G^r_{K,\log}$ for any $r\in \mathbb Q_{>0}$. Moreover, the restriction map $\psi:G^{qr}_{L,\log}\to G^r_{K,\log}$ induces a surjection
$\ol{\psi}^r:\Gr^{qr}_{\log} G_L\to \Gr^r_{\log}G_K$ for any $r\in \mathbb Q_{>0}$.
 Since $M$ under the continuous $G_K$-action is logarithmic isoclinic with the logarithmic conductor $c-1$, the action of $G^{q(c-1)+}_{L,\log}$ on $M$ through $G^{(c-1)+}_{K,\log}$ is trivial and the action of $\Gr^{q(c-1)}_{\log}G_L$ on $M$ through the surjection $\ol\psi^{(c-1)}:\Gr^{q(c-1)}_{\log} G_L\to \Gr^{(c-1)}_{\log}G_K$ is non-trivial. Hence the ramification of $f^*_0\sM$ at $\eta$ has a logarithmic slope $q(c-1)$. In summary, the ramification of $f_0^*\sM$ at $\eta$ is isoclinic and logarithmic isoclinic with
\begin{equation*}
\rc_D(f^*_0\sM)\geq \rlc_D(f^*_0\sM)=q(c-1).
\end{equation*}
For any $u\in k^{\times}$, let $h(u):\bA^1_k\to X$ be a closed immersion defined by
\begin{equation*}
k[x_1,x_2^{\pm 1}]\to k[t],\ \ \ x_1\mapsto t,\ \ x_2\mapsto u.
\end{equation*}
The composition $f\circ h(u):\bA^1_k\to\bA^1_k$ is given by $k[t]\to k[t],\ \ \ t\mapsto ut^q$. Since $q$ is a power of $p$, the morphism $f\circ h(u):\bA^1_k\to\bA^1_k$ is a Frobenius relative to $\spec(k)$. Hence the ramification of $h(u)_0^*f^*_0\sM$ at $0\in\bA^1_k$ is isoclinic with the conductor $c$ (cf. \eqref{purinsNP}), where $h(u)_0:\bG_{m,k}\to U$ denotes the restriction of $h(u):\bA^1_k\to Y$ on $\bG_{m,k}$. Since $q> 1+\frac{1}{c-1}$ and $c>1$,  we have
\begin{equation}\label{dthfM<dtfM}
\dt_0(h(u)_0^*f^*_0\sM)=\rk_{\Lambda}\sM\cdot c<\rk_{\Lambda}M\cdot q(c-1)\leq \rk_{\Lambda}\sM\cdot \rc_D(f^*_0\sM)=\dt_D(f^*_0\sM).
\end{equation}
By Lemma \ref{nondegD-00}, the ramification of $f^*_0\sM$ along $D$ is non-degenerate. By \cite[Corollary 3.9]{wr} and \eqref{dthfM<dtfM}, we obtain that the morphism $h(u):\bA^1_k\to Y$ is not $SS(j_!f^*_0\sM)$-transversal for each $u\in k^{\times}$. Hence we have
\begin{equation}\label{dx2inSS}
\bT^*_XX\bigcup D\cdot\langle dx_2\rangle\subseteq SS(j_!f^*_0\sM).
\end{equation}
Since the restriction $\psi:P_L\to P_K$ is surjective and the $P_L$-action on $M$ factors through $P_0$ and is irreducible, the central character decomposition of $M$ is isotypic as a $P_L$-representation (subsection \ref{centerchar}). By \cite[Theorem 7.14]{cc} and the fact that the ramification of $f^*_0\sM$ along $D$ is non-degenerate, we obtain that
\begin{equation}\label{SS=C}
SS(j_!f^*_0\sM)=\bT^*_XX\bigcup C,
\end{equation}
where $C$ is an irreducible closed conical subscheme of $\bT^*X$ with $B(C)=D$. By \eqref{dx2inSS} and \eqref{SS=C}, we obtain \eqref{SSjf0M}. Since $\bT^*_DX=D\cdot\langle dx_1\rangle\neq D\cdot\langle dx_2\rangle=SS(j_!f^*_0\sM)\times_XD$, the Newton polygon $\rNP_D(f^*_0\sM)$ coincides with the logarithmic Newton polygon $\rLNP_D(f^*_0\sM)$ (\cite[Proposition 7.2]{Hu19IMRN}). Hence the ramification of $f^*_0\sM$ at $\eta$ is isoclinic and logarithmic isoclinic with $$\rc_D(f^*_0\sM)=\rlc_D(f^*_0\sM)=q(c-1).$$
By \cite[Theorem 7.14]{cc} again and the fact that $\dt_D(f^*_0\sM)=\rk_{\Lambda}(\sM)\cdot q(c-1)$, we get \eqref{CCjf0M}.
\end{proof}

\begin{proposition}\label{all of fM in dim 2}
We take a sheaf $\sM$ of $\Lambda$-modules on $\bG_{m,k}$ satisfying conditions in subsection \ref{GKextM}. Let $c$ be the rational number $\rc_0(\sM)$, $q$ a power of $p$ with $q>1+\frac{1}{c-1}$ and $b$ a positive integer co-prime to $p$. Let $g:\bA^2_k\to\bA^1_k$ be a $k$-morphism of affine spaces given by
\begin{equation*}
g^{\sharp}:k[t]\to k[y_1,y_2],\ \ \ t\mapsto y_1^{q}y_2^b.
\end{equation*}
 We denote by $E_i$ the Carter divisor of $\bA^2_k=\spec(k[y_1,y_2])$ associated to the ideal $(y_i)$ $(i=1,2)$, by $E$ the sum of  $E_1$ and $E_2$, by $V$ the complement of $E$ in $\bA^2_k$ by $j:V\to\bA^2_k$ the canonical injection and by $g_0:V\to \bG_{m,k}$ the restriction of $g:\bA^2_k\to\bA^1_k$ on $U$. Then, 
 \begin{itemize}
 \item[1)]
The ramification of  $g^*_0\sM$ at the generic point of $E_i$ $(i=1,2)$ is isoclinic and logarithmic isoclinic with
 \begin{align*}
 \rc_{E_1}(g^*_0\sM)&=\rlc_{E_1}(g^*_0\sM)=q(c-1),\\
 \rc_{E_2}(g^*_0\sM)&=\rlc_{E_2}(g^*_0\sM)+1=b(c-1)+1.
 \end{align*}
 \item[2)]
The ramification of $g^*_0\sM$ along $E$ is $(E\cdot\langle dy_2\rangle)$-isoclinic by restricting to curves (Definition \ref{defCisoclinic}).
\end{itemize}
\end{proposition}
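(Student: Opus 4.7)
\emph{Proof plan.} Part 1) reduces immediately to Propositions \ref{MramD1} and \ref{NramD2}. Restricting $g$ to the open subscheme $\bA^2_k - E_2 = \spec(k[y_1, y_2^{\pm 1}])$ puts us in the setting of Proposition \ref{MramD1} under the identification $y_1 \leftrightarrow x_1$, $y_2 \leftrightarrow x_2$; since the generic point of $E_1$ lies in this open subscheme, Proposition \ref{MramD1} directly yields that $g_0^*\sM$ is isoclinic and logarithmic isoclinic at the generic point of $E_1$ with $\rc_{E_1}(g_0^*\sM) = \rlc_{E_1}(g_0^*\sM) = q(c-1)$. Symmetrically, restricting to $\bA^2_k - E_1 = \spec(k[y_1^{\pm 1}, y_2])$ fits the setting of Proposition \ref{NramD2} with $\sN = \sM$ and $a = c$, and gives $\rc_{E_2}(g_0^*\sM) = \rlc_{E_2}(g_0^*\sM) + 1 = b(c-1) + 1$.

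For Part 2) I shall verify condition $(\ast)$ of Definition \ref{defCisoclinic} directly by case analysis on $x = h(s)$; by Lemma \ref{lemma_curve_isoclinic} I may assume $k$ algebraically closed. Given $(S, h, x) \in \cQ(\bA^2_k, E)$ with $h$ being $(E \cdot \langle dy_2\rangle)$-transversal at $s = h^{-1}(x)$, I choose a uniformizer $t$ of $\sO^{\rh}_{S, s}$ and write $h^\sharp(y_i) = f_i(t)$ with $v_t(f_i) = \alpha_i := m_s(h^*E_i)$. The transversality condition $dh_s(dy_2) = f_2'(0)\,dt \neq 0$ forces $\alpha_2 \leq 1$, giving three sub-cases: (I) $x \in E_1 \setminus E_2$ with $\alpha_1 \geq 1$, $\alpha_2 = 0$; (II) $x \in E_2 \setminus E_1$ with $\alpha_1 = 0$, $\alpha_2 = 1$; and (III) $x = (0,0)$ with $\alpha_1 \geq 1$, $\alpha_2 = 1$.

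In sub-cases (I) and (II), since $h(S) \cap E = \{x\}$ consists of a single point, $h$ factors globally through $\bA^2_k - E_2$, respectively $\bA^2_k - E_1$; combining Part 1) with the singular support computations of Propositions \ref{MramD1} and \ref{NramD2} and invoking Proposition \ref{curiso=geniso+pure}, I obtain that $g_0^*\sM$ is $(E_i \cdot \langle dy_2\rangle)$-isoclinic by restricting to curves on each such open subscheme, whence the required identity $\rc_s(h^* g_0^*\sM|_{S_0}) = m_s(h^* \rC_{\bA^2_k}(j_! g_0^*\sM))$ follows for $h$. For sub-case (III), writing $f_i = u_i t^{\alpha_i}$ with $u_i$ a unit in $\sO^{\rh}_{S, s}$, the composition $g \circ h$ induces the local $k$-homomorphism $\wh\sO_{\bA^1_k, 0} \to \wh\sO_{S, s}$ sending $\tau$ to $V t^{\gamma}$ where $\gamma := q\alpha_1 + b$ and $V = u_1^q u_2^b$ is a unit. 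Since $q$ is a power of $p$ and $\gcd(b,p) = 1$, I have $\gamma \equiv b \pmod p$ and hence $\gcd(\gamma, p) = 1$, so the extension $k((\tau)) \to k((t))$ of complete discrete valuation fields is tamely ramified of index $\gamma$. The tame-extension formulas recalled in subsection \ref{K'/K}, applied to the isoclinic $G_{k((\tau))}$-representation corresponding to $\sM$ near $0$, then yield that $h^* g_0^*\sM|_{S_0}$ is isoclinic at $s$ of conductor
\[ \gamma(c-1) + 1 = \alpha_1 q(c-1) + \bigl(b(c-1) + 1\bigr) = m_s\bigl(h^* \rC_{\bA^2_k}(j_! g_0^*\sM)\bigr), \]
which establishes condition $(\ast)$ in this case.

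The crucial ingredient of the plan is the tameness in sub-case (III): the ramification index $\gamma = q\alpha_1 + b$ is coprime to $p$ exactly because of the conspiracy of the numerical hypotheses ($q$ a $p$-power, $b$ coprime to $p$) with the transversality condition forcing $\alpha_2 = 1$. Were $\gamma$ divisible by $p$, the purely inseparable part of the extension would obstruct the direct field-theoretic computation of the conductor from $\rc_0(\sM) = c$; the precise alignment of hypotheses in the statement is therefore not a technicality but is exactly what makes the verification at the origin possible.
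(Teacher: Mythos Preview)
Your proof is correct and follows essentially the same approach as the paper: Part 1) is deduced from Propositions \ref{MramD1} and \ref{NramD2} by restricting to the appropriate open subschemes, and Part 2) is handled away from the origin via Proposition \ref{curiso=geniso+pure} together with those same singular support computations, while at the origin the transversality forces $m_s(h^*E_2)=1$ so that $g\circ h$ is tamely ramified of index $q\alpha_1+b$, giving the conductor directly. The paper's write-up compresses your sub-cases (I) and (II) into a single sentence about $E\setminus\{(0,0)\}$, but the content is identical.
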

\begin{proof}
1) is deduced from Proposition \ref{NramD2} and Proposition \ref{MramD1}. 

2). By Proposition \ref{curiso=geniso+pure}, Proposition \ref{NramD2} and Proposition \ref{MramD1}, we see that the ramification of $g^*_0\sM$ along $E-\{(0,0)\}$ is $(E\cdot\langle dy_2\rangle)$-isoclinic by restricting to curves. To show 2), we are left to show that, for any quasi-finite $(E\cdot\langle dy_2\rangle)$-transversal morphism $h:S\to \bA^2_k$ from a smooth $k$-curve $S$ with $s=h^{-1}(E)$ a closed point and $h(s)=(0,0)$, we have 
\begin{equation}\label{csg0MS-s=mshCXj!g0M}
\rc_s((g^*_0\sM)|_{S-\{s\}})=m_s(h^*(\rC_X(j_!g^*_0\sM)))
\end{equation}
 We may assume that $k$ is algebraically closed. Let $g^\sharp:\sO_{\bA^1_{k},0}\to \sO_{\bA^2_{k},(0,0)}$ and $h^\sharp:\sO_{\bA^2_{k},(0,0)}\to \sO_{S,s}$ be induced homomorphisms of local rings. Since $h:S\to \bA^2_k$ is $\langle dy_2\rangle$-transversal at $s$, the image $h^\sharp(y_2)$ in $\sO_{S,s}$ is a uniformizer and we have $m_s(h^*E_2)=1$. We put $\varpi=h^\sharp(y_2)\in \sO_{S,s}$. Since $h(S)$ is not contained in $E_1$, the image $h^\sharp(y_1)$ is a non-zero element of $\sO_{S,s}$. We put $h^\sharp(y_1)=u\varpi^m$, where $u$ is a unit of $\sO_{S,s}$ and $m=m_s(h^*E_1)$ is a positive integer. Hence, the composition $(g\circ h)^\sharp:\sO_{\bA^1_{k},0}\to \sO_{S,s}$ satisfies
$$(g\circ h)^\sharp(t)=h^\sharp(g^\sharp(t))=h^\sharp(y_1^qy_2^b)=u^q\varpi^{mq+b}.$$
Since $mq+b$ is co-prime to $p$, the composition $g\circ h:S\to\bA^1_k$ is tamely ramified at $0$ with the ramification index $mq+b$. Hence $(g^*_0\sM)|_{S-\{s\}}$ is isoclinic with
\begin{align*}
\rc_s((g^*_0\sM)|_{S-\{s\}})&=(mq+b)(\rc_0(\sM)-1)+1=mq(c-1)+(b(c-1)+1)\\
&=q(c-1)\cdot m_s(h^*E_1)+(b(c-1)+1)\cdot m_s(h^*E_2)=m_s(h^*(\rC_X(j_!g^*_0\sM))).
\end{align*}
We obtain \eqref{csg0MS-s=mshCXj!g0M} and 2) is proved.
\end{proof}

 \begin{theorem}\label{pureMtheorem}
 We take a sheaf $\sM$ of $\Lambda$-modules on $\bG_{m,k}$ satisfying conditions in subsection \ref{GKextM}. Let $c$ be the rational number $\rc_0(\sM)$, let $q$ be a power of $p$ with $q>1+\frac{1}{c-1}$, let $b$ be a positive integer co-prime to $p$, and let $n\geq 2$ and $m\geq 1$ be integers satisfying $n\geq m$. Let $f:\bA^n_k\to\bA^1_k$ be a $k$-morphism of affine spaces given by
 \begin{equation*}
f^{\sharp}:k[t]\to k[x_1,\ldots, x_n],\ \ \ t\mapsto x_1^b,
 \end{equation*}
 when $m=1$ and given by
 \begin{equation*}
f^{\sharp}:k[t]\to k[x_1,\ldots, x_n],\ \ \ t\mapsto x_1^q\cdots x_{m-1}^qx_m^b,
 \end{equation*}
 when $m\geq 2$.
We denote by $D_i$ the Carter divisor of $\bA^n_k=\spec(k[x_1,\ldots,x_n])$ associated to the ideal $(x_i)$ $(1\leq i\leq m)$, by $D$ the sum of  all $D_i$, by $U$ the complement of $D$ in $\bA^n_k$ and by $f_0:U\to \bG_{m,k}$ the restriction of $f:\bA^n_k\to\bA^1_k$ on $U$. Then, 
\begin{itemize}
 \item[(i)]
 The ramification of  $f^*_0\sM$ at the generic point of $D_i$ $(1\leq i\leq m)$ is isoclinic and logarithmic isoclinic with
 \begin{align*}
 \rc_{D_i}(f^*_0\sM)&=\rlc_{D_1}(f^*_0\sM)=q(c-1),\ \ \ \textrm{for}\ \ 1\leq i\leq m-1, \\
 \rc_{D_m}(f^*_0\sM)&=\rlc_{D_m}(f^*_0\sM)+1=b(c-1)+1.
 \end{align*}
  \item[(ii)]
The ramification of $f^*_0\sM$ along $D$ is $(D\cdot\langle dx_m\rangle)$-isoclinic by restricting to curves (Definition \ref{defCisoclinic}). 
\end{itemize}
 \end{theorem}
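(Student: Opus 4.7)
The plan is to verify both parts by reducing to the already-established two-variable Propositions~\ref{NramD2}, \ref{MramD1}, and \ref{all of fM in dim 2}. For part (i), the argument is local at each generic point $\xi_i$ of $D_i$. The induced $k$-field extension $f^\diamond:K=k((t))\to K_i$ sends $t$ to $x_i^{q_i}$ times a unit in $\sO_{K_i}$, where $q_i=q$ for $i<m$ and $q_m=b$, and the remaining coordinates $x_j$ ($j\neq i$, $j\leq m$) contribute to the residue field as transcendental elements. When $i=m$, the extension is a tame Kummer extension of degree $b$, and the computation of subsection~\ref{K'/K} (or the argument of Proposition~\ref{NramD2}) gives $\rc_{D_m}(f^*_0\sM)=b(c-1)+1$ and $\rlc_{D_m}(f^*_0\sM)=b(c-1)$. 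When $i<m$, the Frobenius-factorization diagram of Proposition~\ref{MramD1} adapts verbatim with the tame intermediate map $g^\diamond:K\to K_i$ taken to be $t\mapsto x_i\cdot\prod_{j\neq i,\,j\leq m}x_j^{q_j}$ and with $F_{K_i}$ fixing $x_i$ while acting as Frobenius on each $x_j$ for $j\neq i$; this yields a surjection $\psi:G_{K_i}\twoheadrightarrow G_K$ with $\psi(P_{K_i})=P_K$, and the resulting logarithmic bound gives $\rc_{D_i}(f^*_0\sM)=\rlc_{D_i}(f^*_0\sM)=q(c-1)$.

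For part (ii), by Proposition~\ref{curiso=geniso+pure} combined with part (i), it suffices to check, for every $\langle dx_m\rangle$-transversal triple $(S,h:S\to\bA^n_k,h(s))\in\cQ(\bA^n_k,D)$, that $h^*f^*_0\sM$ is isoclinic at $s=h^{-1}(h(s))$ with
\begin{equation*}
\rc_s(h^*f^*_0\sM)=q(c-1)\sum_{i<m}m_s(h^*D_i)+(b(c-1)+1)\,m_s(h^*D_m).
\end{equation*}
Given such $h$, introduce an auxiliary curve $\tilde h:S\to\bA^2_k$ by
\begin{equation*}
\tilde h^\sharp(y_1)=\prod_{i<m}h^\sharp(x_i),\qquad \tilde h^\sharp(y_2)=h^\sharp(x_m).
\end{equation*}
A direct computation gives $g\circ\tilde h=f\circ h$, where $g:\bA^2_k\to\bA^1_k$ is the morphism $t\mapsto y_1^qy_2^b$ of Proposition~\ref{all of fM in dim 2}; in particular $\tilde h^*g^*_0\sM\cong h^*f^*_0\sM$. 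Since $\tilde h^\sharp(y_2)=h^\sharp(x_m)$, the curve $\tilde h$ is $\langle dy_2\rangle$-transversal at $s$ precisely because $h$ is $\langle dx_m\rangle$-transversal. After shrinking $S$ to a Zariski-open neighborhood of $s$, the triple $(S,\tilde h,\tilde h(s))$ belongs to $\cQ(\bA^2_k,E)$. Proposition~\ref{all of fM in dim 2}(2) then yields that $\tilde h^*g^*_0\sM$ is isoclinic at $s$ with
\begin{equation*}
\rc_s(\tilde h^*g^*_0\sM)=m_s(\tilde h^*E_1)\cdot q(c-1)+m_s(\tilde h^*E_2)\cdot(b(c-1)+1),
\end{equation*}
and the multiplicativity of valuations gives $m_s(\tilde h^*E_1)=\sum_{i<m}m_s(h^*D_i)$ and $m_s(\tilde h^*E_2)=m_s(h^*D_m)$, whence the required identity. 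The case $m=1$ is analogous with the convention that the empty product equals $1$, so that $\tilde h$ factors through $\{y_1=1\}\subset\bA^2_k$ and the contribution from $E_1$ vanishes; alternatively, one reduces directly to Proposition~\ref{NramD2}.

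The main technical point lies in part (i), namely verifying that the Frobenius-factorization diagram of Proposition~\ref{MramD1} survives in the higher-dimensional residue field at $\xi_i$. This works because the Galois-group surjectivity in that proof depends only on $g^\diamond$ being an unramified embedding of complete discrete valuation fields whose residue-field extension is purely transcendental over an algebraic closure of $k$; in our situation, any $x_j$ with $j\neq i$, $j\leq m$ supplies such a transcendental, and the resulting $g^\diamond$ keeps that structure, so the argument delivers $\psi(P_{K_i})=P_K$ and the desired logarithmic conductor. Part (ii) then becomes a cleanly packaged two-dimensional reduction via the auxiliary curve $\tilde h$.
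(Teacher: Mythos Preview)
Your argument for part~(ii) via the auxiliary map $\tilde h:S\to\bA^2_k$ is exactly the paper's: the paper introduces the morphism $g':\bA^n_k\to\bA^2_k$ given by $y_1\mapsto x_1\cdots x_{m-1}$, $y_2\mapsto x_m$, and your $\tilde h$ is nothing but $g'\circ h$.

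For part~(i), however, the paper takes a shorter route than yours. It observes that the same $g'$ is \emph{smooth} at the generic point $\xi_i$ of each $D_i$ (at $\xi_i$ all $x_j$ with $j\neq i$, $j\leq m$ are units, so $dy_1$ and $dy_2$ are linearly independent in $\Omega^1_{\bA^n_k/k}|_{\xi_i}$), and then simply invokes the behaviour of conductors and logarithmic conductors under smooth pull-back (\cite[Propositions~2.22 and 3.8]{wr}, \cite[Lemma~1.22]{saito cc}) to read off both $\rc_{D_i}$ and $\rlc_{D_i}$ directly from Proposition~\ref{all of fM in dim 2}(1). This delivers the two invariants simultaneously.

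Your Frobenius-factorization approach does adapt: it yields the surjection $\psi:G_{K_i}\twoheadrightarrow G_K$ with $\psi(P_{K_i})=P_K$, and hence $\rlc_{D_i}=q(c-1)$ together with isoclinicity and logarithmic isoclinicity. But it does \emph{not} by itself give $\rc_{D_i}=\rlc_{D_i}$. In the proof of Proposition~\ref{MramD1} that equality is obtained only \emph{after} a separate singular-support computation---the test curves $h(u)$, the isotypic central-character decomposition, and finally \cite[Proposition~7.2]{Hu19IMRN} once $SS\times_X D$ is shown to avoid the conormal direction $\langle dx_1\rangle$. Your sentence ``the resulting logarithmic bound gives $\rc_{D_i}(f^*_0\sM)=\rlc_{D_i}(f^*_0\sM)=q(c-1)$'' skips this. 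The gap is not fatal: once your curve computation in~(ii) is established, the device of Lemma~\ref{lemmaisocliniccurve}(1) (a curve through a smooth non-degenerate point of $D_i$, chosen both $SS$-transversal and $\langle dx_m\rangle$-transversal) recovers $\rc_{D_i}$ a posteriori. But the paper's smoothness reduction sidesteps the issue entirely and is the cleaner path.
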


\begin{proof}
Firstly, we consider the special case where $m=1$. The morphism $f:\bA^n_k\to\bA^1_k$ is the composition of $f':\bA^n_k\to \bA^1_k$ and $f'':\bA^1_k\to\bA^1_k$ given by
\begin{align*}
f'^{\sharp}:k[t]&\to k[x_1,\ldots, x_n],\ \ \ t\mapsto x_1,\\
f''^{\sharp}:k[t]&\to k[t],\ \ \  t\mapsto t^b.
\end{align*}
We denote by $f'_0:U\to \bG_{m,k}$ the restriction of $f':\bA^n_k\to \bA^1_k$ on $U$ and by $f''_0:\bG_{m,k}\to\bG_{m,k}$ the restriction of $f'':\bA^1_k\to \bA^1_k$ on $\bG_{m,k}$. Since $f'':\bA^1_k\to \bA^1_k$ is tamely ramified at $0$ with the ramification index $b$, the ramification of $f''^*_0\sM$ at $0\in\bA^1_k$ is isoclinic with the conductor $b(c-1)+1$. Since $f':\bA^1_k\to \bA^1$ is smooth with $D=f^{-1}(0)$,  part (i) is obtained by \cite[Proposition 2.22 and 3.8]{wr} and \cite[Lemma 1.22]{saito cc} and part (ii) is obtained from part (i), \cite[Theorem 7.6]{cc} and Proposition \ref{curiso=geniso+pure}.

In the following, we consider the case where $m\geq 2$. We may assume that $k$ is algebraically closed. The morphism  $f:\bA^n_k\to\bA^1_k$ is the composition of $g':\bA^n_k\to \bA^2_k$ and $g:\bA^2_k\to\bA^1_k$ given by
\begin{align*}
g'^{\sharp}:k[y_1,y_2]&\to k[x_1,\ldots, x_n],\ \ \ y_1\mapsto x_1\cdots x_{m-1},\ \ y_2\mapsto x_m,\\
g^{\sharp}:k[t]&\to k[y_1,y_2],\ \ \ t\mapsto y_1^qy_2^b.
\end{align*}
We denote by $E_i$ the divisor of $\bA^2_k=\spec(k[y_1,y_2])$ associates to the ideal $(y_i)$ $(i=1,2)$, by $E$ the sum of $E_1$ and $E_2$, and by $V$ the complement of $E$ in $\bA^2_k$. Put $D'=\sum^{m-1}_{i=1}D_i$. We have $g'^{-1}(E_1)=D'$ and $g'^{-1}(E_2)=D_m$. We denote by $g'_0:U\to V$ the restriction of $g':\bA^n_k\to \bA^2_k$ on $U$ and by $g_0:V\to \bG_{m,k}$ the restriction of $g:\bA^2_k\to\bA^1_k$ on $V$.
For each $1\leq i\leq m$, the morphism $g':\bA^n_k\to \bA^2_k$ is smooth at the generic point of $D_i$. Moreover, for each $1\leq i\leq m-1$, we have $g'(D_i)=E_1$ and $g'(D_m)=E_2$.

By \cite[Proposition 2.22 and 3.8]{wr}, \cite[Lemma 1.22]{saito cc} and Proposition \ref{all of fM in dim 2} 1), we obtain that the ramification of $f_0^*\sM$ at each generic point of $D$ is isoclinic and logarithmic isoclinic with
\begin{align*}
\rc_{D_i}(f^*_0\sM)=\rc_{E_1}(g^*_0\sM)=q(c-1),\ \ \ \rlc_{D_i}(f^*_0\sM)=\rlc_{E_1}(g^*_0\sM)=q(c-1),
\end{align*}
for each $1\leq i\leq m-1$, and
\begin{align*}
\rc_{D_m}(f^*_0\sM)=\rc_{E_2}(g^*_0\sM)=b(c-1)+1,\ \ \ \rlc_{D_m}(f^*_0\sM)=\rlc_{E_2}(g^*_0\sM)=b(c-1).
\end{align*}
We obtain part (i).

Let $S$ be a connected and smooth $k$-curve and $h:S\to \bA^n_k$ a quasi-finite morphism such that $s=h^{-1}(D)$ is a closed point of $S$ and that $h:S\to X$ is $\langle dx_m\rangle$-transversal at $s$. We denote by $h':S\to \bA^2_k$ the composition of $h:S\to \bA^n_k$ and $g':\bA^n_k\to\bA^2_k$. It is quasi-finite with $h'^{-1}(E)=s$ and is $\langle dy_2\rangle$-transversal at $s$. Applying Proposition \ref{all of fM in dim 2} 2) to $g^*_0\sM$, the ramification of $(f^*_0\sM)|_{S_0}=(g^*_0\sM)|_{S_0}$ at $s$ is isoclinic with the conductor
\begin{align*}
\rc_s((f^*_0\sM)|_{S_0})&=q(c-1)\cdot m_s(h'^*E_1)+(b(c-1)+1)\cdot m_s(h'^*E_2)\\
  &=q(c-1)\cdot m_s(h^*D')+(b(c-1)+1)\cdot m_s(h^*D_m))\\
 &=\sum_{i=1}^{m-1} q(c-1)\cdot m_s(h^*D_i)+(b(c-1)+1)\cdot m_s(h^*D_m)\\
 &=m_s(h^*(\rC_{\bA^n_k}(j_!f^*_0\sM))).
\end{align*}
Hence, we obtain part (ii).
\end{proof}


\begin{corollary}\label{generalKatzLaumon}
We take a sheaf $\sM$ of $\Lambda$-modules on $\bG_{m,k}$ satisfying conditions in subsection \ref{GKextM}. Let $n$ and $m$ be integers satisfying $n\geq m\geq 2$. Let $g:\bA^n_k\to \bA^1_k$ and $\pr:\bA^n_k\to \bA^{n-1}_k$ be $k$-morphisms given by
\begin{align}
&g^\sharp:k[x]\to k[x_1,\ldots, x_n],\ \ \ x\mapsto x_1\cdots x_m; \nonumber\\
&\pr^{\sharp}:k[x_2,\ldots, x_n]\to k[x_1,\ldots,x_n],\ \ \ x_i\mapsto x_i\ \ (2\leq i\leq n).\nonumber
\end{align}
Then, $\pr:\bA^n_k\to \bA^{n-1}_k$ is universally locally acyclic with respect to $g^*\jmath_! \sM$.
\end{corollary}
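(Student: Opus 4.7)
The plan is to reduce to Theorem~\ref{pureMtheorem} via a radicial Frobenius-type cover, compute the singular support of the twisted sheaf, and conclude by the characterization of universal local acyclicity for smooth morphisms in terms of singular-support transversality.

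Put $c=\rc_0(\sM)$ and fix a power $q$ of $p$ with $q>1+\tfrac{1}{c-1}$. Consider the finite, surjective, radicial cover
\[
\phi\colon \bA^n_k\longrightarrow \bA^n_k,\qquad \phi^\sharp(x_i)=x_i^q\ (2\le i\le m),\qquad \phi^\sharp(x_i)=x_i\ (i\in\{1,m+1,\ldots,n\}).
\]
Then $g\circ\phi$ is the morphism $(x_1,\ldots,x_n)\mapsto x_1\,x_2^q\cdots x_m^q$. After the obvious relabeling of indices sending the exponent-$1$ variable $x_1$ to the role of $x_m$ in Theorem~\ref{pureMtheorem}, and sending $x_2,\ldots,x_m$ to the exponent-$q$ variables $x_1,\ldots,x_{m-1}$ there, the morphism $g\circ\phi$ matches the hypothesis of that theorem with $b=1$. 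Combining Theorem~\ref{pureMtheorem}(ii) with Proposition~\ref{curiso=geniso+pure}, one obtains
\[
SS\bigl((g\phi)^*\jmath_!\sM\bigr)\;\subseteq\;\bT^*_{\bA^n_k}\bA^n_k\,\cup\,D\cdot\langle dx_1\rangle,
\]
where $D=\{x_1\cdots x_m=0\}$.

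Since $\pr$ forgets the coordinate $x_1$, the image of $d\pr^*$ at every point is the subspace spanned by $dx_2,\ldots,dx_n$, which meets $\langle dx_1\rangle$ only at the origin. Hence $\pr$ is $SS((g\phi)^*\jmath_!\sM)$-transversal everywhere, and by the micro-support definition (subsection~\ref{NotationSS}) combined with the standard upgrade from $SS$-transversality to universal local acyclicity for smooth morphisms, $\pr$ is universally locally acyclic with respect to $(g\phi)^*\jmath_!\sM=\phi^*(g^*\jmath_!\sM)$.

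Finally, to descend this ULA to $g^*\jmath_!\sM$ itself, I would introduce the analogous radicial cover $\phi'\colon \bA^{n-1}_k\to\bA^{n-1}_k$ (identity on the coordinates $x_1,x_{m+1},\ldots,x_n$ and $q$-th power on $x_2,\ldots,x_m$) on the target of $\pr$. One checks that $\pr\circ\phi=\phi'\circ\pr$, and that the base change of the pair $(\pr,g^*\jmath_!\sM)$ along the universal homeomorphism $\phi'$ is canonically identified with $(\pr,(g\phi)^*\jmath_!\sM)$; since universal homeomorphisms induce equivalences of \'etale sites, universal local acyclicity descends along this base change. The principal obstacle I expect is verifying that $SS$-transversality for the smooth morphism $\pr$ really yields \emph{universal} (and not merely plain) local acyclicity; this requires Saito's refinement of Beilinson's theorem, after which the radicial-descent step is formal.
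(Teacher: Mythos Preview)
Your argument is correct and follows essentially the same route as the paper's proof: both introduce the finite surjective radicial cover raising $x_2,\ldots,x_m$ to $q$-th powers, invoke Theorem~\ref{pureMtheorem} to compute the singular support upstairs as $\bT^*_{\bA^n_k}\bA^n_k\cup D\cdot\langle dx_1\rangle$, observe that $\pr$ is transversal to this, and then descend universal local acyclicity through the Cartesian square along the radicial base change on the target. One small slip: in describing $\phi'$ on $\bA^{n-1}_k=\spec(k[x_2,\ldots,x_n])$ you mention the coordinate $x_1$, which is not present there; just drop that and the commutativity $\pr\circ\phi=\phi'\circ\pr$ and the Cartesian property go through exactly as you intend.
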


\begin{proof}
Let $q$ be a power of $p$ with $q>1+\frac{1}{c-1}$, where $c=\rc_0(\sM)$. Let $\mathrm{F}_n:\bA^n_k\to\bA^n_k$ and $\mathrm{F}_{n-1}:\bA^{n-1}_k\to\bA^{n-1}_k$ be two $k$-morphisms given by
\begin{align*}
\mathrm{F}_n^\sharp:k[x_1,\ldots,x_n]&\to k[x_1,\ldots,x_n],\\
 x_i&\mapsto x_i^q, \ \ \ (2\leq i\leq m),\\
 x_r&\mapsto x_r, \ \ \ (r=1\ \ \textrm{or}\ \ m+1\leq r\leq n);\\
 \mathrm{F}_{n-1}^\sharp:k[x_2,\ldots,x_{n}]&\to k[x_2,\ldots,x_{n}], \\
 x_i&\mapsto x_i^q, \ \ \ (2\leq i\leq m),\\
 x_r&\mapsto x_r, \ \ \ (m+1\leq r\leq n).\\
\end{align*}
We have the following diagram with a Cartesian square
\begin{equation}\label{diagramprFn}
\xymatrix{\relax
\bA^{n}_k\ar[r]^{\mathrm{F}_{n}}\ar[d]_-(0.5){\pr}\ar@{}|-{\Box}[rd]&\bA^n_k\ar[d]^-(0.5){\pr}\ar[r]^-(0.5){g}&\bA^1_k\\
\bA^{n-1}_k\ar[r]_{\mathrm{F}_{n-1}}&\bA^{n-1}_k&}
\end{equation}
By Lemma \ref{lemmaisocliniccurve} and Theorem \ref{pureMtheorem}, we have
\begin{equation*}
SS(\mathrm{F}_{n}^*g^*\jmath_!\sM)=\bT^*_{\bA^n_k}\bA^n_k\bigcup D\cdot \langle dx_1\rangle,
\end{equation*}
where $D$ denotes the divisor of $\bA^n_k$ associating to the ideal $(x_1\cdots x_m)$. Hence $\pr:\bA^n_k\to \bA^{n-1}_k$ is $SS(\mathrm{F}_{n}^*g^*\jmath_!\sM)$-transversal, and thus, is universally locally acyclic with respect to $\mathrm{F}_{n}^*g^*\jmath_! \sM$. Since $\mathrm{F}_{n-1}$ is finite, surjective and radicial and the square in \eqref{diagramprFn} is Cartesian, $\pr:\bA^n_k\to \bA^{n-1}_k$ is also universally locally acyclic with respect to $g^*\jmath_! \sM$.
\end{proof}

Corollary \ref{generalKatzLaumon} shares a similarity with  \cite[Theorem 2.4.4]{KL}. In the rest of the section, we prove the following acyclicity property of higher direct image operator for \'etale sheaves with nice ramification behaviors. It plays a crucial role in \S \ref{ramificationboundsection}.

\begin{theorem}\label{keysteprambound}
 Let $f:X\to S$ be a flat morphism of connected and smooth $k$-schemes. We assume that $\dim_k S=1$. Let $s$ be a closed point of $S$, $V$ the complement of $s$ in $S$, $U$ the complement of $D=f^*(s)$ in $X$ and $j:U\to X$ the canonical injection. We assume that the restriction $f_0:U\to V$ of $f:X\to S$ on $U$ is smooth and $D$ is divisor with simple normal crossings on $X$. Let $\{D_i\}_{i\in I}$ be the set of irreducible components of $D$. We have the following diagram
 \begin{equation*}
 \xymatrix{\relax
 U\ar[d]_-(0.5){f_0}\ar[r]^j\ar@{}|-{\Box}[rd]&X\ar[d]^-(0.5)f\ar@{}|-{\Box}[rd]&D\ar[l]\ar[d]\\
 V\ar[r]&S&\ar[l]s
 }
 \end{equation*}
 Let $\sF$ be a locally constant and constructible sheaf of $\Lambda$-modules on $U$ and $\sN$ a locally constant and constructible sheaf of $\Lambda$-modules on $V$. We assume that the ramification of $\sN$ at $s$ is logarithmic isoclinic with
 \begin{equation*}
 \rlc_s(\sN)>\max_{i\in I}\{\rlc_{D_i}(\sF)\}.
 \end{equation*}
 Then, we have
 \begin{equation*}
Rj_*(\sF\otimes_{\Lambda}f^*_0\sN)=j_!(\sF\otimes_{\Lambda}f^*_0\sN).
 \end{equation*}
 \end{theorem}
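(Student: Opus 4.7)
The assertion is \'etale local on $X$, so after shrinking I may assume the components $D_1,\ldots,D_m$ of $D$ passing through a chosen closed point $x\in D$ all meet at $x$, with an \'etale map $X\to \bA^n_k$ sending $D_i$ to the $i$-th coordinate hyperplane; and after an \'etale/henselian base change on $S$, I may take $S=\bA^1_k$, $s=0$, so that $f$ corresponds locally to $t\mapsto\prod_{i=1}^m x_i^{a_i}$ for positive integers $a_i$. First I would reduce $\sN$: after a tame base change $S''\to S$ of degree coprime to $p$, the representation attached to $\sN$ at the generic point of $\hat{\sO}_{S,s}$ decomposes as a direct sum of irreducible summands factoring through nontrivial finite $p$-groups, i.e.\ sheaves satisfying the conditions of subsection~\ref{GKextM}; tame covers multiply all (logarithmic) conductors in the hypothesis and the conclusion by the same ramification index, preserving the strict logarithmic inequality $\rlc_s(\sN)>\max_i\rlc_{D_i}(\sF)$ and the desired conclusion, so it suffices to treat one such irreducible summand, i.e.\ one may assume $\sN$ is a Gabber--Katz extension (Definition~\ref{GKextensiondef}) of an irreducible Galois module factoring through a nontrivial finite $p$-group, with $c=\rc_s(\sN)=\rlc_s(\sN)+1$.

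Next, I would construct a finite, surjective, radicial cover $\pi:X'\to X$ compatible with a Frobenius-type cover $S'\to S$, so that in suitable coordinates on $X'$ the composition $f\circ\pi$ takes the form $t\mapsto y_1^q\cdots y_{m-1}^q y_m^b$ of Theorem~\ref{pureMtheorem}, where $q$ is a sufficiently large power of $p$ with $q>1+1/(c-1)$ and $(b,p)=1$. Writing $D'=(\pi^*D)_{\red}$ with irreducible components $D'_i=\{y_i=0\}$ and $j':U'=X'-D'\to X'$, Theorem~\ref{pureMtheorem} then gives that the ramification of $\pi^*f_0^*\sN=f'^*_0\sN$ along $D'$ is $(D'\cdot\langle dy_m\rangle)$-isoclinic by restricting to curves, with
\begin{equation*}
\rc_{D'_i}(\pi^*f_0^*\sN)=q(c-1)\quad(i<m),\qquad \rc_{D'_m}(\pi^*f_0^*\sN)=b(c-1)+1.
\end{equation*}

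I would then verify the strict conductor dominance $\rc_{D'_i}(\pi^*f_0^*\sN)>\rc_{D'_i}(\pi^*\sF)$ for every $i$. The hypothesis $\rlc_s(\sN)>\rlc_{D_i}(\sF)$, the inequality $\rc_{D_i}(\sF)\le\rlc_{D_i}(\sF)+1$, and the effect of the purely inseparable cover $\pi$ on conductors (subsection~\ref{K'/K}, inequalities \eqref{logcondineq}) translate, for sufficiently large $q$, into the desired strict inequality on each component $D'_i$. With this in place, Proposition~\ref{sGCisoclinic+CXjG>CXjF=>Rj*sGsF=j!sGsF} applied to $\pi^*f_0^*\sN$ (in the role of $\sG$) and $\pi^*\sF$ (in the role of $\sF$) on $X'$ yields
\begin{equation*}
Rj'_*(\pi^*\sF\otimes_\Lambda\pi^*f_0^*\sN)=j'_!(\pi^*\sF\otimes_\Lambda\pi^*f_0^*\sN).
\end{equation*}

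Finally, since $\pi$ is a finite universal homeomorphism, $\pi_*$ is an equivalence between the \'etale topoi of $X'$ and $X$ (and between those of $U'$ and $U$), commutes with $Rj_*$ and with $j_!$ via proper base change on the Cartesian square attached to $\pi$, and satisfies $\pi_*\pi^*\simeq\id$. Applying $\pi_*$ to the displayed equality on $X'$ recovers $Rj_*(\sF\otimes_\Lambda f_0^*\sN)=j_!(\sF\otimes_\Lambda f_0^*\sN)$, as required. The main obstacle will be Step 2--3: one must arrange the radicial cover $\pi$ so that $f\circ\pi$ has precisely the monomial shape of Theorem~\ref{pureMtheorem} while simultaneously keeping the choice of $q$ compatible with the tame reduction used for $\sN$ and with the finitely many logarithmic gaps $\rlc_s(\sN)-\rlc_{D_i}(\sF)>0$, so that the conductor inequality survives on every component $D'_i$.
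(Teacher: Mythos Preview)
Your overall architecture matches the paper's: \'etale-local reduction, a tame step so that $\sN$ factors through a finite $p$-group with the sharpened inequality $\rc_s(\sN)>\max_i\rc_{D_i}(\sF)+1$, then the radicial cover so that Theorem~\ref{pureMtheorem} and Proposition~\ref{sGCisoclinic+CXjG>CXjF=>Rj*sGsF=j!sGsF} apply, and finally the transfer back via $\pi_*\pi^*\simeq\id$. The radicial part is exactly Proposition~\ref{propRj*=j!}.

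The one genuine gap is the sentence ``after a tame base change $S''\to S$ \ldots\ tame covers \ldots\ preserve the desired conclusion''. Two points need care. First, the naive fiber product $X\times_S S''$ is \emph{not} smooth when $m\ge2$ (locally it is $\spec(k[x_1,\ldots,x_n,T]/(T^d-x_1\cdots x_m))$), so none of the cotangent-bundle machinery applies there; the paper instead takes the smooth cover $X'=\spec(\Gamma(X,\sO_X)[T_1,\ldots,T_m]/(T_i^d-v_i))$ together with $S'=\spec(\Gamma(S,\sO_S)[T]/(T^d-v))$ and the compatible map $f'(T)=T_1\cdots T_m$. Second, unlike the radicial cover, $\gamma_X:X'\to X$ is not a universal homeomorphism, so you cannot push the equality $Rj'_*=j'_!$ back to $X$ by $\gamma_{X*}$. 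The paper's device is the projection formula $\gamma_{X*}Rj'_*\gamma_U^*(-)\cong Rj_*\big((-)\otimes_\Lambda\gamma_{U*}\Lambda\big)$ together with the observation that $\gamma_{U*}\Lambda$ has a direct summand which is a successive extension of the constant sheaf $\Lambda$; the vanishing of the stalk then descends via the elementary filtration Lemma~\ref{RfF=0RfF0=0}. Once this is supplied, your numerics (choosing the tame degree $d$ with $d(\rlc_s(\sN)-\max_i\rlc_{D_i}(\sF))>1$, so that after the cover one has $c'-1>\rc_{D'_i}(\sF')$ on every component) and the rest of the argument go through exactly as in the paper.
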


We first show the following less general proposition below. The proof of the theorem above will be postponed in subsection \ref{proofofpurity}.

\begin{proposition}\label{propRj*=j!}
We take the notation and assumptions of Theorem \ref{keysteprambound} and we further assume that $S=\bA^1_k$ and $s$ is the origin $0\in\bA^1_k$. Let $\sF$ be a locally constant and constructible sheaf of $\Lambda$-modules on $U$ and $\sM$ a locally constant and constructible sheaf on $\bG_{m,k}$ satisfying assumptions in subsection \ref{GKextM} with
  \begin{equation*}
\rc_0(\sM)>\max_{i\in I}\{\rc_{D_i}(\sF)\}+1.
 \end{equation*}
 Then, we have
 \begin{equation}\label{Rj*M=j!M}
Rj_*(\sF\otimes_{\Lambda}f^*_0\sM)=j_!(\sF\otimes_{\Lambda}f^*_0\sM).
 \end{equation}
\end{proposition}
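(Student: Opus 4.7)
The plan is to work Zariski-locally on $X$, pass to a well-chosen finite, surjective, radicial cover $\pi: X' \to X$ that brings the relative situation into the monomial model of Theorem \ref{pureMtheorem}, apply Proposition \ref{sGCisoclinic+CXjG>CXjF=>Rj*sGsF=j!sGsF} on the cover, and then descend. Since $\pi^*$ is an equivalence of \'etale topoi compatible with $j_!$ and $Rj_*$, and since \eqref{Rj*M=j!M} may be checked at geometric stalks over closed points of $D$, both reductions are harmless.

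First I would shrink $X$ Zariski-locally around a closed point $x\in D$ to obtain an \'etale chart $X\to \bA^n_k$ in which the irreducible components $D_1,\ldots,D_m$ of $D$ through $x$ are the coordinate divisors $\{x_i=0\}$ and $f^{\sharp}(t)=u\cdot x_1^{a_1}\cdots x_m^{a_m}$ for a unit $u$ and positive integers $a_i$. Writing $a_i=p^{e_i}b_i$ with $(b_i,p)=1$ and fixing a distinguished index $m_0$ together with an integer $e$ with $e\geq e_i$ for every $i\ne m_0$, I would form the finite, surjective, radicial cover $\pi:X'\to X$ given \'etale-locally by $x_i\mapsto x_i^{p^{e-e_i}}$ for $i\ne m_0$ and $x_i\mapsto x_i$ otherwise, possibly composed with a further Frobenius-type radicial cover absorbing the prime-to-$p$ factors $b_i$ ($i\ne m_0$) and the unit $u$ into a single power $q=p^{e'}$. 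After this, $\pi^*f$ \'etale-locally matches the monomial form of Theorem \ref{pureMtheorem} with distinguished coordinate $x_{m_0}$, so by that theorem $\pi^*f_0^*\sM$ has isoclinic ramification along each component $D'_i$ of $D'=(\pi^*D)_{\mathrm{red}}$ and its ramification along $D'$ is $(D'\cdot \langle dx_{m_0}\rangle)$-isoclinic by restricting to curves.

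On $X'$, the conductors $\rc_{D'_i}(\pi^*f_0^*\sM)$ are computed by Theorem \ref{pureMtheorem} and equal either $q(c-1)$ or $b(c-1)+1$, where $c=\rc_0(\sM)$; on the other hand the ramification indices of $\pi$ along the components $D'_i$ give $\rc_{D'_i}(\pi^*\sF)\leq \beta_i\cdot \rc_{D_i}(\sF)$ by subsection \ref{K'/K}. For $e$ sufficiently large the hypothesis $\rc_0(\sM) > \max_i\rc_{D_i}(\sF)+1$ forces the strict inequality $\rc_{D'_i}(\pi^*f_0^*\sM) > \rc_{D'_i}(\pi^*\sF)$ for every $i$, and Proposition \ref{sGCisoclinic+CXjG>CXjF=>Rj*sGsF=j!sGsF} applied with $\sG=\pi^*f_0^*\sM$ and $\sF$ replaced by $\pi^*\sF$ yields
\begin{equation*}
Rj'_*\bigl(\pi^*\sF\otimes_{\Lambda}\pi^*f_0^*\sM\bigr)=j'_!\bigl(\pi^*\sF\otimes_{\Lambda}\pi^*f_0^*\sM\bigr),
\end{equation*}
where $j':U'\to X'$ denotes the canonical injection. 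Pushing this identity down along $\pi$ via the equivalence of \'etale topoi recovers \eqref{Rj*M=j!M}.

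The main obstacle is the joint bookkeeping in the construction of $\pi$: it must simultaneously (i) bring $f$ into the very specific monomial shape that Theorem \ref{pureMtheorem} requires so that $\pi^*f_0^*\sM$ becomes isoclinic with controlled singular support, and (ii) be tame enough with respect to $\sF$ that the strict conductor inequality $\rc_{D'_i}(\pi^*f_0^*\sM)>\rc_{D'_i}(\pi^*\sF)$ survives the ramification indices of $\pi$. The numerical margin ``$+1$'' in the hypothesis on $\rc_0(\sM)$ is calibrated precisely to cover the gap between $q(c-1)$ and $b(c-1)+1$ in Theorem \ref{pureMtheorem} as well as the loss coming from unit-absorption by Frobenius; without it, one of the $\rc_{D'_i}(\pi^*\sF)$ could catch up with the corresponding $\rc_{D'_i}(\pi^*f_0^*\sM)$ and the application of Proposition \ref{sGCisoclinic+CXjG>CXjF=>Rj*sGsF=j!sGsF} would break down.
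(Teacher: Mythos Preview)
Your overall strategy is exactly the paper's: localize at a closed point of $D$, pass to a finite surjective radicial cover that puts $f$ in the monomial form of Theorem \ref{pureMtheorem}, apply Proposition \ref{sGCisoclinic+CXjG>CXjF=>Rj*sGsF=j!sGsF} upstairs, and descend via the equivalence of \'etale topoi. The logic and the key inputs are correct.

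Where you go astray is in the local model. Under the standing hypotheses of Theorem \ref{keysteprambound}, the divisor $D=f^{*}(s)$ is a \emph{reduced} simple normal crossings divisor, so in suitable local coordinates one has $f^{\sharp}(t)=v_{1}\cdots v_{m}$ with \emph{all} multiplicities equal to $1$ and no unit (absorb any unit into one of the $v_i$ by a coordinate change). There are no exponents $a_i=p^{e_i}b_i$ to massage, and your device of ``absorbing the prime-to-$p$ factors $b_i$ and the unit $u$ by a further Frobenius-type radicial cover'' is both unnecessary and, as stated, impossible: a radicial cover can only multiply exponents by powers of $p$ and never changes their prime-to-$p$ part, nor does it kill a unit. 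Once you drop this spurious generality, your cover $\pi$ becomes precisely the paper's $\varphi'\colon X'=X\times_{\bA^n_k,\varphi}\bA^n_k\to X$, where $\varphi^{\sharp}(t_i)=t_i^{q}$ for $1\le i\le m-1$ and fixes the remaining coordinates, with $q$ any power of $p$ satisfying $q>1+\tfrac{1}{c-1}$. No ``$e$ sufficiently large'' is needed beyond this single threshold: one then has $\rc_{D'_i}(\pi^{*}f_0^{*}\sM)=q(c-1)$ for $i<m$, $\rc_{D'_m}(\pi^{*}f_0^{*}\sM)=c$, while $\rc_{D'_i}(\pi^{*}\sF)\le q\cdot \rc_{D_i}(\sF)$ for $i<m$ and $\rc_{D'_m}(\pi^{*}\sF)\le \rc_{D_m}(\sF)$ by \eqref{logcondineq}. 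The hypothesis $c>\max_i\rc_{D_i}(\sF)+1$ gives $c-1>\rc_{D_i}(\sF)$, hence $q(c-1)>q\cdot\rc_{D_i}(\sF)$, and trivially $c>\rc_{D_m}(\sF)$, so Proposition \ref{sGCisoclinic+CXjG>CXjF=>Rj*sGsF=j!sGsF} applies directly.
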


 \begin{proof}
This is an \'etale local question. We may assume that $k$ is algebraically closed.  To prove the proposition, it is sufficient to verify \eqref{Rj*M=j!M} in a Zariski neighborhood of a closed point $x\in D$. We fix a closed point $x$ of $D$. We replace $X$ by an affine Zariski neighborhood of $x$ and we denote by $v_1,\ldots, v_n$ a local coordinate of $X$ at $x$ such that the ring homomorphism $f^{\sharp}:k[t]\to\Gamma(X,\sO_X)$ associating $f:X\to \bA^1_k$ sends $t$ to $v_1\cdots v_m$ $(1\leq m\leq n)$. Let $D_i$  be the divisor of $X$ associating to the ideal $(v_i)\subset \Gamma(X,\sO_X)$ $(1\leq i\leq m)$ and we note that $D=\sum^m_{i=1}D_i$. The morphism $f:X\to \mathbb A^1_k$ is a composition of \'etale morphism $g:X\to\bA^n_k$ and $h:\bA^n_k\to \bA^1_k$ which are given by
 \begin{align*}
g^{\sharp}: k[t_1,\ldots, t_n]&\to\Gamma(X,\sO_X),\ \ \  t_i\to v_i\ \  (1\leq i\leq n),\\
h^{\sharp}:k[t]&\to k[t_1,\ldots, t_n],\ \ \  t\mapsto t_1\cdots t_m.
 \end{align*}
We put $c=\rc_0(\sM)$. Let $q$ be a power of $p$ with $q>1+\frac{1}{c-1}$. Let $\varphi:\bA^n_k\to\bA^n_k$ be the $k$-morphism, which  is an identity when $m=1$ and is given by
\begin{align*}
\varphi^\sharp:k[t_1,\ldots, t_n]&\to k[t_1,\ldots, t_n]\\
t_i&\mapsto t_i^q,\ \ (1\leq i\leq m-1),\\
t_i&\mapsto t_i,\ \ (i\geq m),
\end{align*}
when $m\geq 2$. We put $X'=X\times_{\bA^n_k,\varphi}\bA^n_k$ and put  $h'=h\circ\varphi$. We have the following commutative diagram
\begin{equation}\label{diagfrob}
\xymatrix{\relax
X'\ar[d]_-(0.5){\varphi'}\ar[r]^-(0.5){g'}\ar@{}[rd]|-{\Box}&\bA^n_k\ar[d]^{\varphi}\ar[rd]^{h'}&\\
X\ar[r]_-(0.5){g}&\bA^n_k\ar[r]_-(0.5){h}&\bA^1_k}
\end{equation}
We denote by $E_i$ the Cartier divisor of $\bA^n_k$ corresponding to the ideal $(t_i)\subset k[t_1,\ldots, t_n]$ $(1\leq i\leq m)$, by $E$ the sum of $E_i$ $(1\leq i\leq m)$, by $V$ the complement of $E$ in $\bA^n_k$, by $\jmath_n:V\to\bA^n_k$ the canonical injection, by $D'_i$ the pull-back divisor $g'^*{E_i}$ and by $D'$ the sum of $D_i$ $(1\leq i\leq m)$. The base change of \eqref{diagfrob} by $\jmath:\bG_{m,k}\to \bA^1_k$ is following commutative diagram
\begin{equation*}
\xymatrix{\relax
U'\ar[d]_-(0.5){\varphi'_0}\ar[r]^-(0.5){g_0'}\ar@{}[rd]|-{\Box}&V\ar[d]^{\varphi_0}\ar[rd]^{h'_0}&\\
U\ar[r]_-(0.5){g_0}&V\ar[r]_-(0.5){h_0}&\bG_{m,k}}
\end{equation*}
We denote by $j':U'\to X'$ the canonical injection. Note that $\varphi:X'\to X$ is finite, surjective and radicial. Hence, to show \eqref{Rj*M=j!M}, we are left to show
\begin{equation}\label{Rj'*FM=j'!FM}
Rj'_*(\varphi'^*_0\sF\otimes_{\Lambda}\varphi'^*_0f^*_0\sM)=j'_!(\varphi'^*_0\sF\otimes_{\Lambda}\varphi'^*_0f^*_0\sM).
\end{equation}
By Theorem \ref{pureMtheorem}, the ramification of $h'^*_0\sM$ at each generic point of $E$ is isoclinic with 
\begin{align}
&\rc_{D_i}(h'^*_0\sM)=q(c-1)>c\ \ \ 1\leq i\leq m-1,\\
&\rc_{D_m}(h'^*_0\sM)=c,
\end{align}
and that of $h'^*_0\sM$ along $E$ is $(E\cdot\langle dy_m\rangle)$-isoclinic by restricting to curves. Since $g_0':X'\to \bA^n_k$ is \'etale, we obtain that the ramification of $\varphi'^*_0f^*_0\sM=g'^*_0(h'^*_0\sM)$ at each generic point of $D'$ is isoclinic with
\begin{align*}
&\rc_{D'_i}(\varphi'^*_0f^*_0\sM)=q(c-1)>c,\ \ \ (1\leq i\leq m-1),\\
&\rc_{D'_m}(\varphi'^*_0f^*_0\sM)=c, 
\end{align*}
and that of $\varphi'^*_0f^*_0\sM$ along $D'$ is $(D'\cdot\langle dg'(dy_m)\rangle)$-isoclinic by restricting to curves. By \eqref{logcondineq}, we have
 \begin{align*}
q\cdot \rc_{D_i}(\sF)&\geq \rc_{D'_i}(\varphi'^*_0\sF),\ \ \ (1\leq i\leq m-1)\\
\rc_{D_m}(\sF)&\geq \rc_{D'_m}(\varphi'^*_0\sF).
 \end{align*}
  Hence
 \begin{align*}
\rc_{D'_i}(\varphi'^*_0f^*_0\sM)=q(c-1)> q\cdot \rc_{D_i}(\sF)&\geq \rc_{D'_i}(\varphi'^*_0\sF),\ \ \ (1\leq i\leq m-1),\\
\rc_{D'_m}(\varphi'^*_0f^*_0\sM)=c> \rc_{D_m}(\sF)&\geq \rc_{D'_m}(\varphi'^*_0\sF).
 \end{align*}
Applying Proposition \ref{sGCisoclinic+CXjG>CXjF=>Rj*sGsF=j!sGsF} to the sheaf $\varphi'^*_0f^*_0\sM$ and $\varphi'^*_0\sF$ on $U'$, we obtain \eqref{Rj'*FM=j'!FM}. We finishes the proof.
\end{proof}

\begin{lemma}\label{RfF=0RfF0=0}
 Let $f:X\to Y$ be a separated morphism of connected $k$-schemes of finite type and $\sF$ a constructible sheaf of $\Lambda$-modules on $X$. Assume that there is an increasing filtration
 \begin{equation*}
 0=\sF_0\subseteq\sF_1\subseteq\cdots\subseteq \sF_n=\sF
 \end{equation*}
 of $\sF$ consisting of constructible sheaves such that $\sF_i/\sF_{i-1}\cong \sF_1$ for each $1\leq i\leq n$. Let $\ol y\to Y$ be a geometric point. If $(Rf_*\sF)_{\ol y}=0$, then $(Rf_*\sF_i)_{\ol y}=0$ for each $1\leq i\leq n$.
 \end{lemma}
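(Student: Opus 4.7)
The plan is to reduce the lemma to the single statement that $(Rf_*\sF_1)_{\ol y}=0$, and then to establish this vanishing by a direct cohomological argument. Once $(Rf_*\sF_1)_{\ol y}=0$ is known, a straightforward upward induction on $i$ handles the rest: the short exact sequences $0\to \sF_{i-1}\to\sF_i\to\sF_1\to 0$, provided by the isomorphisms $\sF_i/\sF_{i-1}\cong\sF_1$, give distinguished triangles $(Rf_*\sF_{i-1})_{\ol y}\to(Rf_*\sF_i)_{\ol y}\to (Rf_*\sF_1)_{\ol y}$, whose outer vertices vanish (by induction and by assumption respectively), forcing the middle one to vanish as well.

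For the main vanishing, I would set $V=(Rf_*\sF_1)_{\ol y}$ and $W_i=(Rf_*\sF_i)_{\ol y}$, which are objects of $D^b(\Lambda)$ since $f$, being separated of finite type between $k$-schemes of finite type, has finite cohomological dimension. We have $W_1=V$ and, by hypothesis, $W_n=0$. Arguing by contradiction, assume $V\neq 0$ and let $k_{\max}$ be the largest integer with $H^{k_{\max}}(V)\neq 0$. The key intermediate claim, proved by induction on $i$, is that $H^k(W_i)=0$ for every $k>k_{\max}$ and every $1\leq i\leq n$; this follows from the long exact sequence of the distinguished triangle $W_{i-1}\to W_i\to V\to W_{i-1}[1]$ together with $H^k(V)=0$ for $k>k_{\max}$ and the inductive hypothesis.

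To produce the contradiction, I would apply this claim to $i=n-1$, obtaining $H^{k_{\max}+1}(W_{n-1})=0$; on the other hand, the vanishing $W_n=0$ collapses the long exact sequence of the triangle $W_{n-1}\to W_n\to V\to W_{n-1}[1]$ to an isomorphism $H^{k_{\max}}(V)\iso H^{k_{\max}+1}(W_{n-1})$, forcing $H^{k_{\max}}(V)=0$ and contradicting the definition of $k_{\max}$. The argument has no real obstacle beyond this bookkeeping; the only point to keep in mind is that the cohomological boundedness of $V$ and $W_i$, which ensures that $k_{\max}$ exists whenever $V$ is nonzero, is guaranteed by the standing finiteness assumptions on $f$ and $\sF$.
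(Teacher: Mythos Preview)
Your proof is correct. Both you and the paper exploit the same distinguished triangles $W_{i-1}\to W_i\to V$ arising from the short exact sequences $0\to\sF_{i-1}\to\sF_i\to\sF_1\to 0$, but the directions of the cohomological arguments are opposite. The paper runs a direct bottom-up induction on the degree $r$: assuming $(R^{r-1}f_*\sF_1)_{\ol y}=0$, the long exact sequence shows each map $(R^r f_*\sF_{i-1})_{\ol y}\hookrightarrow (R^r f_*\sF_i)_{\ol y}$ is injective, and then the hypothesis $(R^r f_*\sF_n)_{\ol y}=0$ forces all of these groups to vanish, completing the induction step. Your argument instead works from the top, locating the highest nonzero degree $k_{\max}$ of $V$ and deriving a contradiction there. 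The paper's route has the small advantage that it never needs to know $Rf_*\sF_1$ is bounded above, so the appeal to finite cohomological dimension of $f$ is unnecessary; on the other hand, your reduction to the single statement $(Rf_*\sF_1)_{\ol y}=0$ isolates the essential content of the lemma neatly. In practice both arguments are equally short once the triangles are in place.
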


 \begin{proof}
 We proceed by induction on the integer $m$ satisfying the condition:
 \begin{itemize}
 \item[]
 $(\ast)$\ \ \ \ \   $(R^af_*\sF_i)_{\ol y}=0$ for all $a\leq m$ and all $1\leq i\leq n$.
  \end{itemize}
Obviously, $m=-1$ satisfies the condition. Suppose that $m=r-1$ satisfies the condition for an integer $r\geq 0$. Applying the functor $(Rf_*(-))|_{\ol y}$ to the short exact sequence $0\to \sF_{i-1}\to\sF_i\to \sF_1\to 0$, we obtain an exact sequence  $(R^{r-1}f_*\sF_1)|_{\ol y}\to (R^{r}f_*\sF_{i-1})|_{\ol y}\to (R^{r}f_*\sF_i)|_{\ol y}$ for each $1\leq i\leq n$. Since ${r-1}$ satisfies the condition, we have $(R^{r-1}f_*\sF_1)|_{\ol y}=0$. Hence $(R^{r}f_*\sF_{i-1})|_{\ol y}\to (R^{r}f_*\sF_i)|_{\ol y}$ is an injection for each $1\leq i\leq n$. Since $(R^{r}f_*\sF_n)|_{\ol y}=(R^{r}f_*\sF)|_{\ol y}=0$, we obtain that $(R^{r}f_*\sF_i)|_{\ol y}=0$ for all $1\leq i\leq n$. Therefore, $m=r$ satisfies condition $(\ast)$ and the lemma holds by induction.
 \end{proof}

\subsection{Proof of Theorem \ref{keysteprambound}}\label{proofofpurity}
It is an \'etale local question. We may assume that $k$ is algebraically closed. To prove the theorem, it is sufficient to verify
\begin{equation}\label{RjSNx=0}
(Rj_*(\sF\otimes_{\Lambda}f^*_0\sN))|_{x}=0
\end{equation}
for an arbitrary closed point $x$ of $D$. We fix a closed point $x$ of $D$. After replacing $X$ and $S$ by affine Zariski neighborhoods of $x$ and $s$, respectively, we choose a local coordinate $v_1,\ldots, v_n$  of $X$ at $x$ and choose a local coordinate $v$ of $S$ at $s$ such that each irreducible component $D_i$ containing $x$ associates to the ideal $(v_i)\subseteq  \Gamma(X,\sO_X)$ $(i=1,\ldots, m)$, and that $f^{\sharp}(v)=v_1\cdots v_m$. Let $\eta=\spec(L)$ be the generic point of $\spec(\wh \sO_{S,s})$, $\ol L$ a  separable closure of $L$ and $G_L$ the Galois group of $\ol L/L$. The restriction $\sN|_{\eta}$ associates to a finite generated $\Lambda$-module $N$ with a continuous $G_L$-action which factors through a finite quotient $G$. Let $P$ be the image of the wild inertia $P_L$ in $G$. Since $\sN|_{\eta}$ is logarithmic isoclinic and $\rlc_s(\sN)>0$, $P$ is a subgroup of $G$ with $P\neq \{e\}$. The quotient $G/P$ is isomorphic to a cyclic group $\mathbb Z/d_0\mathbb Z$ with $(d_0,p)=1$. Let $d$ be a positive integer satisfying:
 \begin{itemize}
 \item[]
($\star$)\ \ \  $d$ is divisible by $d_0$, co-prime to $p$ and
 $\displaystyle d>\frac{1}{\rlc_s(\sN)-\max_{i\in I}\{\rlc_{D_i}(\sF)\}}.$
 \end{itemize}
  We put
 \begin{equation*}
 S'=\spec(\Gamma(S,\sO_S)[T]/(T^d-v))\ \ \ \textrm{and}\ \ \ X'=\spec(\Gamma(X,\sO_X)[T_1,\ldots, T_m]/(T_1^d-v_1,\ldots, T_r^d-v_m)).
 \end{equation*}
 We denote by $\gamma_S:S'\to S$ and $\gamma_X:X'\to X$ the canonical projections, by $s'\in S'$ the pre-image of $s\in S$, by $V'$ complement of $x'$ in $S'$, by $D'$ (resp. $D'_i$) the reduced Cartier divisor $(\gamma_X^{-1}(D))_{\mathrm{red}}$ (resp. smooth Cartier divisor $(\gamma_X^{-1}(D_i))_{\mathrm{red}}$) of $X'$ and by $U'$ the complement of $D'$ in $X'$. We have the following commutative diagram
 \begin{equation*}
 \xymatrix{\relax
 V'\ar[d]_{\gamma_V}& U'\ar[l]_-(0.5){f'_0}\ar[r]^-(0.5){j'}\ar[d]^{\gamma_U}\ar@{}|-{\Box}[rd]& X'\ar[r]^-(0.5){f'}\ar[d]^{\gamma_X}& S'\ar[d]^{\gamma_S}\\
 V& U\ar[l]^-(0.5){f_0}\ar[r]_-(0.5){j}& X\ar[r]_f& S}
 \end{equation*}
where $f':X'\to S'$ is induced by
\begin{equation*}
f'^{\sharp}:\Gamma(S,\sO_S)\to \Gamma(X,\sO_X),\ \ \  T\mapsto T_1\cdots T_m,
\end{equation*}
and $\gamma_V$,  $\gamma_U$ and $f'_0$ are restrictions of $\gamma_S$, $\gamma_X$ and  $f'$, respectively.
We have
\begin{align}\label{finitebcRj}
\gamma_{X*}Rj'_*(\gamma_U^*\sF\otimes_\Lambda f'^*_0\gamma_V^*\sN)&\xrightarrow{\sim}\gamma_{X*}Rj'_*\gamma_U^*(\sF\otimes_\Lambda f^*_0\sN)\\
&\xrightarrow{\sim} Rj_*\gamma_{U*}\gamma_U^*(\sF\otimes_\Lambda f^*_0\sN)\nonumber\\
&\xrightarrow{\sim}Rj_*(\sF\otimes_\Lambda f^*_0\sN\otimes_{\Lambda} \gamma_{U*}\Lambda),\nonumber
\end{align}
where the third morphism is from the projection formula. Let $x'\in D'$ be the only pre-image of a closed point $x$ of $D$. By the isomorphism \eqref{finitebcRj}, we have
\begin{align*}
(Rj'_*(\gamma_U^*\sF\otimes_\Lambda f'^*_0\gamma_V^*\sN))_{x'}&\cong (\gamma_{X*}Rj'_*(\gamma_U^*\sF\otimes_\Lambda f'^*_0\gamma_V^*\sN))_x\\
&\cong(Rj_*(\sF\otimes_\Lambda f^*_0\sN\otimes_{\Lambda} \gamma_{U*}\Lambda))_x
\end{align*}
The locally constant sheaf $\gamma_{U*}\Lambda$ on $U$ corresponds to the induced representation  $\rho=\mathrm{Ind}^{\gal(U'/U)}_{\{e\}}\Lambda$. Note that $\gal(U'/U)\cong(\mathbb Z/d\mathbb Z)^{m}$. When $d$ is co-prime to $\ell$, the trivial representation of $\gal(U'/U)$ is a direct summand of $\rho$. If $d$ is divisible by $\ell$, the representation $\rho$ has a direct summand $\rho_0$ which is a finite successive extensions of the trivial representation of $\gal(U'/U)$. In summary, $\gamma_{U*}\Lambda$ has a direct summand $\mathscr W_0$, which is a successive extension of constant sheaf $\Lambda$ on $U$. If $(Rj'_*(\gamma_U^*\sF\otimes_\Lambda f'^*_0\gamma_V^*\sN))_{x'}=0$, we obtain $(Rj_*(\sF\otimes_\Lambda f^*_0\sN\otimes_{\Lambda} \mathscr W_0))_x=0$, and further $(Rj_*(\sF\otimes_\Lambda f^*_0\sN))_x=0$ by Lemma \ref{RfF=0RfF0=0}. Thus, to show $(Rj_*(\sF\otimes_\Lambda f^*_0\sN))_x=0$ it suffices to show $(Rj'_*(\gamma_U^*\sF\otimes_\Lambda f'^*_0\gamma_V^*\sN))_{x'}=0$. Since $d$ satisfies condition $(\star)$ above, $\gamma_S:S'\to S$ is tamely ramified at $s$ of the degree $d$ and $\gamma_X:X'\to X$ is tamely ramified along $D$ of the degree $d$ at the generic point of each $D_i$ $(1\leq i\leq m)$, the two sheaves $\gamma_V^*\sN$ and $\gamma_U^*\sF$ satisfy following two conditions:
\begin{itemize}
\item[(1)]
The ramification of $\gamma_V^*\sN$ at $s'$ is logarithmic isoclinic with
\begin{align*}
\rc_{s'}(\gamma_V^*\sN)&=\rlc_{s'}(\gamma_V^*\sN)+1=d\cdot \rlc_{s}(\sN)+1>d\cdot\max_{1\leq i\leq m}\{\rlc_{D_i}(\sF)\}+2\\
&=\max_{1\leq i\leq m}\{d\cdot\rlc_{D_i}(\sF)\}+2=\max_{1\leq i\leq m}\{\rlc_{D'_i}(\gamma_U^*\sF)\}+2\geq \max_{1\leq i\leq m}\{\rc_{D'_i}(\gamma_U^*\sF)\}+1.
\end{align*}
\item[(2)]
Let $\eta'=\spec(L')$ be the generic point of $\spec(\wh\sO_{S',s'})$. We have $L'=L[T]/(T^d-v)$. Embedding $L'$ into $\ol L$, we denote by $G_{L'}$ the Galois group of $\ol L/L'$. The restriction $(\gamma_V^*\sN)|_{\eta'}=\sN|_{\eta'}$ associates to the $\Lambda$-module $N$ with the continuous action of $G_{L'}$ induced by the inclusion $G_{L'}\subseteq G_L$. This $G_{L'}$ action factors through the finite $p$-group $P$ above.
 \end{itemize}
Hence, after changing the notation, we are reduced to proving \eqref{RjSNx=0} under the following refined condition: The ramification of $\sN$ at $s$ is logarithmic isoclinic with
\begin{align}
\rc_{s}(\sN)> \max_{1\leq i\leq m}\{\rc_{D_i}(\sF)\}+1,\nonumber
\end{align}
and moreover, the $G_L$ representation $N$ associated to $\sN|_{\eta}$ factors through a non-trivial finite $p$-group $P$ such that $P_L\twoheadrightarrow P$ is also surjective.

It implies that the $G_L$ representation $N$ is semi-simple. Then, after replacing $S$ by an \'etale neighborhood $T$ of $s$ and replacing $X$ by $X\times_ST$, we have an \'etale map $\pi:S\to\bA^1_k$ mapping $s$ to the origin $0$, and sheaves $\sM_1,\ldots, \sM_r$ on $\bG_{m,k}$ satisfying conditions in subsection \ref{GKextM} with $\bigoplus_{i=1}^r\pi^*_0\sM_i\cong \sN$, where $\pi_0:V\to\bG_{m,k}$ is the restriction of $\pi$ on $V$. We have
\begin{equation*}
\rc_0(\sM_i)=\rc_{s}(\sN)> \max_{1\leq i\leq m}\{\rc_{D_i}(\sF)\}+1,
\end{equation*}
for each $1\leq i\leq r$. By Proposition \ref{propRj*=j!}, we get
 \begin{align*}
 (Rj_*(\sF\otimes_{\Lambda}f^*_0\sN))|_{x}&\cong \bigoplus_{i=1}^r(Rj_*(\sF\otimes_{\Lambda}(\pi_0\circ f_0)^*\sM_i))|_{x}\\
 &=\bigoplus_{i=1}^r(j_!(\sF\otimes_{\Lambda}(\pi_0\circ f_0)^*\sM))|_{x}=0,
 \end{align*}
which finishes the proof. \hfill$\Box$



\section{Ramification bound of nearby cycles}\label{ramificationboundsection}

\subsection{}
In this section, $R$ denotes a henselian discrete valuation ring, $K$ its fraction field, $\ol K$ a separable closure of $K$, $F$ its residue field, $G$ the Galois group of $\ol K/K$ and $P$ the wild inertia subgroup of $G$. We assume that $F$ is perfect of characteristic $p>0$. Let $\mathcal S$ be the spectrum of $R$, $s$ the closed point of $\cS$, $\bar s$ an algebraic geometric point above $s$, $\eta=\spec(K)$ the generic point of $\cS$, and $\eta^\rt$ the maximal tame cover of $\eta$ dominated by $\ol\eta=\spec(\ol K)$. Let $f:\cX\to \cS$ be a morphism of finite type, $f_{\eta}:\cX_\eta\to\eta$ (resp. $f_{\ol s}:\cX_{\ol s}\to \ol s$, resp. $\cX_{\eta^{\rt}}\to\eta^{\rt}$ and resp. $f_{\ol\eta}: \cX_{\ol\eta}\to \ol\eta$) the fiber of $f:\cX\to \cS$ at $\eta$ (resp. $\bar s$, resp. $\eta^{\rt}$ and resp. at $\ol\eta$). Let $\ol \jmath:\cX_{\ol\eta}\to \cX$ (resp. $\jmath^{\rt}:\cX_{\eta^{\rt}}\to \cX$ and resp. $\ol \iota:\cX_{\ol s}\to \cX$) be the base change of the canonical cover $\ol \eta\to \eta$ (resp. $\eta^{\rt}\to \eta$ and resp. $\ol s\to s$) by $f:\cX\to \cS$.

Let $\Lambda$ be a finite field of characteristic $\ell>0$ ($\ell\neq p$). For an object $\sK$ of $D^{+}(\cX_{\eta},\Lambda)$, we denote by $R\Psi(\sK,f)=\ol \iota^*R\ol \jmath_*\ol \jmath^*\sK$ (resp. $R\Psi^{\rt}(\sK,f)=\ol \iota^*R \jmath^{\rt}_{*} \jmath^{\rt*}\sK$) the nearby cycle complex (resp. tame nearby cycle complex) of $\sK$ with respect to $f:\cX\to\cS$. The nearby cycle complex is an object of $D^+(\cX_{\ol s}, \Lambda)$ with a $G$-action and the tame nearby cycle complex is an object of $D^+(\cX_{\ol s}, \Lambda)$ with a $G/P$-action.

\begin{definition}\label{deframificationbound}
 Let $\sK$ be an object of $D^b_c(\cX_{\eta},\Lambda)$.
\begin{itemize}
\item[a)] (\cite{Leal}) We say that a rational number $r$ is a {\it upper numbering slope} of $R\Gamma(\cX_{\ol\eta},\sK|_{\cX_{\ol\eta}})$ if there is an integer $i$ such that $r$ is a upper numbering slope of $H^i(\cX_{\ol\eta},\sK|_{\cX_{\ol\eta}})$ with respect to the continuous $G$-action. We say that {\it the upper numbering ramification of $R\Gamma(\cX_{\ol\eta},\sK|_{\cX_{\ol\eta}})$ is bounded by} a rational number $c$ if the action $G^{c+}_{\log}$ on each $H^i(\cX_{\ol\eta},\sK|_{\cX_{\ol\eta}})$ is trivial.
\item[b)](\cite{HT18})
We say that a rational number $r$ is a {\it upper numbering slope} of $R\Psi(\sK,f)$ if there exists a closed point $x\in \cX_{\ol s}$ and an integer $i$ such that $r$ is a upper numbering slope of $R^i\Psi_x(\sK,f)$ with respect to the continuous $G$-action. We say that {\it the upper numbering ramification of $R\Psi(\sK,f)$ is bounded by} a rational number $c$ if the action $G^{c+}_{\log}$ on each $R^i\Psi(\sK,f)$ is trivial.
\end{itemize}
\end{definition}

 In Definition \ref{deframificationbound}, if the upper numbering ramification of $R\Gamma(\cX_{\ol\eta},\sF|_{\cX_{\ol\eta}})$ (resp. $R\Psi(\sK,f)$) is bounded by $c$, then $c$ is greater than or equal to the supreme of the set of all upper numbering slopes of $R\Gamma(\cX_{\ol\eta},\sF|_{\cX_{\ol\eta}})$ (resp. $R\Psi(\sK,f)$).

\begin{proposition}[{\cite[Lemma 5.3, Corollary 5.4]{HT18}}]\label{HT18nearby}
Let $\sK$ be an object of $D^b_c(\cX_{\eta},\Lambda)$. Then, $r$ is an upper numbering slope of $R\Psi(\sK,f)$ if there exists a locally constant and constructible sheaf $\sN$ of $\Lambda$-modules on $\eta$ whose ramification is logarithmic isoclinic at $s\in \cS$ with $r=\rlc_s(\sN)$ such that
\begin{equation*}
R\Psi^{\rt}(\sF\otimes^L_{\Lambda}f_{\eta}^*\sN,f)\neq 0.
\end{equation*}
In particular, the upper numbering ramification of $R\Psi(\sK,f)$ is bounded by $c$ if and only if, for any locally constant and constructible sheaf $\sN$ of $\Lambda$-modules on $\eta$ whose ramification is logarithmic isoclinic at $s\in \cS$ with $\rlc_s(\sN)>c$, we have
\begin{equation*}
R\Psi^{\rt}(\sF\otimes^L_{\Lambda}f_{\eta}^*\sN,f)=0.
\end{equation*}
\end{proposition}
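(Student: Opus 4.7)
The plan is to combine two standard facts: a projection formula for nearby cycles against coefficients pulled back from the base, and the identification of tame nearby cycles with the $P$-invariants of the full nearby cycles. Together these reduce the question to a purely Galois-theoretic analysis of tensor products of logarithmic-isoclinic $P$-modules. Since the residue field $F$ of $R$ is perfect, the upper numbering and the logarithmic filtrations coincide in positive degree, so ``upper numbering slope'' may be read as ``logarithmic slope'' throughout.

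First I would establish the projection formula. Given a locally constant constructible $\Lambda$-sheaf $\sN$ on $\eta$ corresponding to a continuous $G$-module $N$, the pull-back $\ol\jmath^* f_\eta^* \sN$ on $\cX_{\ol\eta}$ is the constant sheaf with value $N$, since $\ol\eta$ is a geometric point. The ordinary projection formula for $R\ol\jmath_*$, together with the definition of $R\Psi$, yields a $G$-equivariant isomorphism in $D^+(\cX_{\ol s},\Lambda)$
\[
R\Psi(\sK \otimes^L_\Lambda f_\eta^* \sN, f) \;\cong\; R\Psi(\sK, f) \otimes^L_\Lambda N.
\]
Next, because $P$ is pro-$p$ and $\ell \neq p$, the continuous $P$-cohomology of finite $\Lambda[P]$-modules vanishes in positive degree and the functor of $P$-invariants is exact. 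Factoring $\ol\jmath$ through $\jmath^{\rt}$ and exploiting this vanishing gives $R\Psi^{\rt}(\sL,f) \cong R\Psi(\sL,f)^P$ for any $\sL \in D^b_c(\cX_\eta,\Lambda)$. Combining the two identities,
\[
R\Psi^{\rt}(\sK \otimes^L_\Lambda f_\eta^* \sN, f) \;\cong\; \bigl(R\Psi(\sK,f) \otimes^L_\Lambda N\bigr)^P.
\]

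The heart of the proof is then a purely module-theoretic analysis carried out stalkwise. At an algebraic geometric point $\ol x \to \cX_{\ol s}$, view the cohomology stalks $M_i := R^i\Psi(\sK,f)_{\ol x}$ as continuous finite $\Lambda[G]$-modules; each admits a logarithmic slope decomposition $M_i = \bigoplus_{r' \ge 0} M_{i,\log}^{(r')}$ which is $G$-stable because $P$ is normal in $G$. For $N$ logarithmic isoclinic of log slope $r > 0$, the behavior of tensor products of isoclinic modules recalled in Section~2 shows that $M_{i,\log}^{(r')} \otimes N$ is logarithmic isoclinic of slope $\max(r,r') > 0$ whenever $r' \neq r$, and therefore carries no non-trivial $P$-invariants; only the matching summand contributes, yielding
\[
\bigl(M_i \otimes N\bigr)^P \;=\; \bigl(M_{i,\log}^{(r)} \otimes N\bigr)^P \;=\; \Hom_P\bigl(N^\vee, M_{i,\log}^{(r)}\bigr).
\]
Hence $R\Psi^{\rt}(\sK \otimes^L_\Lambda f_\eta^* \sN, f) \neq 0$ forces $M_{i,\log}^{(r)} \neq 0$ for some $i$ and $\ol x$, proving the first assertion.

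The ``in particular'' equivalence follows from the same formula. If the upper numbering ramification of $R\Psi(\sK,f)$ is bounded by $c$, then $M_{i,\log}^{(r)} = 0$ at every stalk for every $r > c$, so the vanishing of the twisted tame nearby cycle is immediate for any $\sN$ with $\rlc_s(\sN) > c$. Conversely, by contrapositive, suppose some $M_i$ has a log slope $r > c$ at a stalk $\ol x$. Set $N := M_{i,\log,\ol x}^{(r)}$, a continuous $G$-module of log conductor $r$, and realize its dual $N^\vee$ as a locally constant sheaf $\sN$ on $\eta$ via the standard equivalence between continuous finite-dimensional $\Lambda[G]$-modules and lisse $\Lambda$-sheaves on $\eta$; the computation above then gives $(M_i \otimes N^\vee)^P_{\ol x} \supseteq \mathrm{End}_P(N) \ni \mathrm{id} \neq 0$, so the twisted tame nearby cycle is non-zero. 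The step requiring the most care is the $G$-equivariant projection formula in the second paragraph: once this and the pro-$p$ vanishing for $P$-cohomology are in place, the slope analysis is formal.
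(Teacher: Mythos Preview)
Your argument is correct, and the paper itself gives no proof for this proposition, simply citing \cite[Lemma 5.3, Corollary 5.4]{HT18}. Your approach---the $G$-equivariant projection formula $R\Psi(\sK\otimes^L f_\eta^*\sN,f)\cong R\Psi(\sK,f)\otimes^L N$, the identification $R\Psi^{\rt}=(R\Psi)^P$ via the pro-$p$ vanishing, and the stalkwise slope analysis of $(M_i\otimes N)^P$---is exactly the natural route and is the one taken in the cited reference; nothing further needs to be said.
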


\begin{remark}
The complex $R\Psi^{\rt}(\sK\otimes^L_{\Lambda}f_{\eta}^*\sN,f)$ was firstly studied in \cite{T15}, motivated by $D$-modules theory (cf. \cite{T16}). In \cite{T15}, a number $b$ with a locally constant and constructible sheaf $\sN$ of $\Lambda$-modules on $\eta$ with isoclinic logarithmic slope $b$ at $s$
such that $R\Psi^{\rt}(\sK\otimes^L_{\Lambda}f_{\eta}^*\sN,f)\neq 0$ is called a {\it nearby slope} of $\sK$.
\end{remark}

\begin{definition}\label{semistablepair}
Let $\cZ$ be a reduced closed subscheme of $\cX$. We say $(\cX,\cZ)$ is a {\it semi-stable pair over } $\cS$ if, \'etale locally, $\cX$ is \'etale over a $\cS$-scheme
\begin{equation*}
\spec(R[t_1,\cdots, t_d]/(t_{r+1}\cdots t_d-\pi)),
\end{equation*}
where $r<d$ and $\pi$ is a uniformizer of $R$, and if $\cZ=\cZ_f\bigcup \cX_s$ with $\cZ_f$ defined by an ideal $(t_1\cdots t_m)\subset R[t_1,\cdots, t_d]/(t_{r+1}\cdots t_d-\pi)$ with $m\leq r$.
\end{definition}

In \cite{Leal}, Leal proposed the following conjecture concerning the ramification bound of \'etale cohomology groups.

\begin{conjecture}[{I. Leal, \cite[Conjecture 1]{Leal}}]\label{Lealconj}
Let $(\cX,\cZ)$ be a semi-stable pair over $\cS$ with $f:\cX\to\cS$ proper. Let $\sF$ be a locally constant and constructible sheaf of $\Lambda$-modules on $\cU=\cX-\cZ$ such that its ramification at generic points of $\cZ_f$ is tame and let $\rlc(\sF)$ be the maximum of the set of logarithmic conductors of $\sF$ at generic points of the special fiber $\cX_s$. Then, the upper numbering ramification of $R\Gamma_c(\cU_{\ol\eta},\sF|_{\cU_{\ol\eta}})$ is bounded by $\rlc(\sF)$. 
\end{conjecture}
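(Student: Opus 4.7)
My plan is to deduce Conjecture \ref{Lealconj} in the geometric setting (where $\cS$ is the henselization of a smooth $\kappa$-curve at a closed point) from the nearby-cycle ramification bound announced as Theorem \ref{ramboundintroduction}. The first reduction is formal: for proper $h:\cX\to\cS$, proper base change and the Leray spectral sequence $H^i(\cX_{\ol s}, R^j\Psi(\jmath_!\sH,h))\Rightarrow H^{i+j}(\cX_{\ol\eta},\jmath_!\sH) = H^{i+j}_c(\cU_{\ol\eta},\sH|_{\cU_{\ol\eta}})$ transport a ramification bound on each $R^j\Psi(\jmath_!\sH,h)$ to the abutment. So the real content is the nearby-cycle statement.

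For that statement, Proposition \ref{HT18nearby} reduces the problem to showing: for every locally constant constructible $\Lambda$-sheaf $\sN$ on $\eta$ which is logarithmic isoclinic at $s$ with $\rlc_s(\sN)>\wt c$, the tame nearby cycle complex $R\Psi^{\rt}(\jmath_!\sH\otimes^{L}_{\Lambda}h_\eta^*\sN,h)$ vanishes. This is a local question at each closed point $x\in\cX_{\ol s}$, so fix such an $x$. Since $R$ is equicharacteristic and $(\cX,\cZ)$ is semi-stable over $\cS$, one can present $\cX$ \'etale locally around $x$ as \'etale over $\spec(\kappa[t_1,\ldots,t_d])$ with $\cZ_f=\{t_1\cdots t_m=0\}$, $\cX_s=\{t_{r+1}\cdots t_d=0\}$, and the uniformizer $\pi$ pulling back to $t_{r+1}\cdots t_d$. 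In particular $\cX$ is smooth over $\kappa$, the special fiber $\cX_s$ is a simple normal crossing divisor, and the restriction of $h$ to $\cX-\cX_s$ is smooth on the open locus $\cX-\cZ_f$.

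To neutralize the tame ramification of $\sH$ along $\cZ_f$, I would introduce a Kummer cover $\pi':\cX'\to\cX$ extracting $n$-th roots of the local equations of $\cZ_f$, for an integer $n$ coprime to $p\ell$ and divisible by all tame ramification indices of $\sH$ along the components of $\cZ_f$. In equal characteristic this cover preserves the semi-stable presentation (one simply replaces $t_i$ by $s_i^n$ for $1\leq i\leq m$), so $\cX'$ is again smooth over $\kappa$ and $D':=\cX'_s$ remains a simple normal crossing divisor. By Zariski--Nagata purity, the pulled-back sheaf $\pi'^*\sH$ extends uniquely to a locally constant constructible sheaf $\sF'$ on $\cX'-D'$; and since $\pi'$ is \'etale over each generic point of $\cX_s$, one has $\rlc_{D'_i}(\sF')=\rlc_{\pi'(D'_i)}(\sH)\leq\wt c$ for each irreducible component $D'_i$ of $D'$. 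Theorem \ref{keysteprambound} applied to $h\circ\pi':\cX'\to\cS$, $\sF'$, and $\sN$ (for which $\rlc_s(\sN)>\wt c\geq\max_i\rlc_{D'_i}(\sF')$) then produces the acyclicity $Rj'_*(\sF'\otimes_{\Lambda}(h\circ\pi')_0^*\sN)=j'_!(\sF'\otimes_{\Lambda}(h\circ\pi')_0^*\sN)$ with $j':\cX'-D'\to\cX'$.

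The remaining step is to translate this acyclicity into vanishing of tame nearby cycles at $x$. On $\cX'-D'$, the short exact sequence $0\to\jmath'_{\eta,!}(\pi'^*\sH)\to\sF'\to i'_*(\sF'|_{\cZ'_{f,\eta}})\to0$ (with $i':\cZ'_{f,\eta}\to\cX'-D'$) and its twist by $h_0^*\sN$ reduce the vanishing for $\jmath'_!(\pi'^*\sH)\otimes h'^*\sN$ at any $x'$ above $x$ to the vanishing just produced for $\sF'\otimes h_0^*\sN$, plus an analogous vanishing on the smaller smooth scheme $\cZ'_{f,\eta}$ treated by the same Theorem \ref{keysteprambound}. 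Because $\pi'$ is finite of degree coprime to $\ell$, tame nearby cycles commute with $\pi'^{*}$ and $\Lambda$ is a direct summand of $\pi'_*\Lambda$, which transports the vanishing back to $x\in\cX_{\ol s}$. The main obstacle will be this last step: simultaneously controlling (a) the preservation of smoothness and SNC under the Kummer cover in equal characteristic, (b) the purity extension of $\pi'^*\sH$ to $\sF'$ with unchanged log conductors along the special fiber, and (c) the descent of the distinguished-triangle argument from stalks of $Rj'_*$ to stalks of $R\Psi^{\rt}$, together with the transport of vanishing from $\cX'$ back to $\cX$ under the finite cover.
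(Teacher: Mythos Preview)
Your overall architecture—proper-base-change spectral sequence to pass from $R\Gamma_c$ to $R\Psi$, then Proposition \ref{HT18nearby} to reduce to $R\Psi^{\rt}(\jmath_!\sH\otimes f_\eta^*\sN,f)=0$, then Kummer covers plus Theorem \ref{keysteprambound}—is the paper's architecture, and your handling of the horizontal divisor $\cZ_f$ by a Kummer cover is in the same spirit as the paper's reduction (which invokes Abhyankar's lemma and Theorem \ref{LCtheorem}, deferring details to \cite[Theorem 5.7, Step 2]{HT18}).

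The genuine gap is exactly what you flag as obstacle (c), and it is not a technicality. A single application of Theorem \ref{keysteprambound} on $\cX'$ yields only $Rj'_*(\sF'\otimes(h\circ\pi')_0^*\sN)=j'_!(\cdots)$, i.e.\ vanishing of the stalks of $Rj'_*$ along the special fiber. That is merely the $n=1$ contribution to the colimit
\[
R\Psi^{\rt}(\sK,f)\;=\;\varinjlim_{(n,p)=1}\iota_n^*Rj_{n*}h_n^*\sK,
\]
and you need vanishing at every level $n$. The base-changed scheme $\cX'_n=\cX'\times_{\cS}\cS_n$ is singular at the crossings of the special fiber whenever there is more than one component, so Theorem \ref{keysteprambound} does not apply to it. There is no general principle forcing $Rj_*=j_!$ to persist under such non-smooth base change.

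The paper closes this gap by taking, for each $n$ prime to $p$, a \emph{second} Kummer cover $\wt\cX_n\to\cX_n$ extracting $n$-th roots of the local equations $t_{r+1},\ldots,t_d$ of the \emph{special fiber} components (not of $\cZ_f$). This $\wt\cX_n$ is smooth over $k$ with simple normal crossing special fiber, and since both $\rlc_s(\sN)$ and every $\rlc_{D_i}(\sF)$ scale by the same factor $n$ under this cover, the hypothesis of Theorem \ref{keysteprambound} is preserved and gives $(R\wt j_{n*}\wt h_n^*(\sF\otimes f_\cV^*\sN))_x=0$. One then descends from $\wt\cX_n$ to $\cX_n$ using the projection formula together with the observation that $\Lambda$ sits in $\overline h_{n*}\Lambda$ as a (successive extension of a) direct summand, via Lemma \ref{RfF=0RfF0=0}. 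In short, your Kummer cover along $\cZ_f$ handles the horizontal tameness, but you still need the paper's Kummer cover along $\cX_s$, once for each $n$, to reach $R\Psi^{\rt}$.
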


The main result of this section is the following:

\begin{theorem}\label{maintheoremnearbycycle}
We assume that $\cS$ a henselization of a smooth curve over a perfect field $k$ of characteristic $p>0$ at a closed point. Let $(\cX,\cZ)$ be a semi-stable pair over $\cS$, $\cU$ the complement of $\cZ$ in $\cX$ and $j:\cU\to\cX$ the canonical injection. Let $\sF$ be a locally constant and constructible sheaf of $\Lambda$-modules on $\cU$ such that its ramification at generic points of $\cZ_f$ is tame and let $\rlc(\sF)$ be the maximum of the set of logarithmic conductors of $\sF$ at generic points of the special fiber $\cX_s$. Then, the upper numbering ramification of $R\Psi(j_!\sF,f)$ is bounded by $\rlc(\sF)$.
   \end{theorem}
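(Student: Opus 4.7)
The strategy combines Proposition \ref{HT18nearby}, Theorem \ref{keysteprambound}, and a Kummer cover trick to handle the horizontal divisor $\cZ_f$. By Proposition \ref{HT18nearby} applied to $\sK = (j_!\sF)|_{\cX_\eta}$, the claim reduces to showing
\begin{equation*}
R\Psi^{\rt}(\sK \otimes^L_\Lambda f_\eta^*\sN, f) = 0
\end{equation*}
for every locally constant constructible $\sN$ on $\eta$ that is logarithmic isoclinic at $s$ with $\rlc_s(\sN) > \rlc(\sF) =: c$. By the projection formula this tensor is $j'_!(\sF \otimes f_0^*\sN)$, where $j':\cU\hookrightarrow \cX_\eta$ and $f_0 = f|_\cU$. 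Using smooth base change, the projection formula, and the vanishing of $R^i\Gamma(P,-)$ for $i>0$ (since $P$ is pro-$p$ with $\ell\neq p$), the stalk at a geometric point $\bar x \to \cX_{\bar s}$ becomes
\begin{equation*}
(W_{\bar x}\otimes_\Lambda \sN|_{\ol\eta})^P, \qquad W_{\bar x} := R\Gamma(\cU_{(\bar x),\ol\eta},\sF),
\end{equation*}
a $G$-module. A duality bridge then shows this vanishes for all such $\sN$ if and only if every log slope of $W_{\bar x}$ is $\le c$: were there a slope-$c''$ piece with $c''>c$, its dual representation on $\eta$ (isoclinic of slope $c''$) would contribute the identity endomorphism as a $G$-invariant vector inside $W_{\bar x}\otimes \sN|_{\ol\eta}$, contradicting the vanishing of $(W_{\bar x}\otimes\sN|_{\ol\eta})^G \subseteq (W_{\bar x}\otimes\sN|_{\ol\eta})^P$.

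The problem is étale local on $\cX$, so by semi-stability I reduce to the local model $\cX$ étale over $\spec(R[t_1,\ldots,t_d]/(t_{r+1}\cdots t_d - \pi))$ with $\cZ_f = \{t_1\cdots t_m = 0\}$. Since $\sF$ is tame along $\cZ_f$, I take a Kummer cover $\pi_K:\tilde\cX\to\cX$ along $\cZ_f$ of degree coprime to $p\ell$, sufficiently divisible so that $\pi_K^*\sF$ on $\tilde\cU$ extends to a locally constant constructible sheaf $\sF^*$ on $\tilde\cX_\eta=\tilde\cX-\tilde\cX_s$. Then $(\tilde\cX,\tilde\cX_s)$ is still semi-stable over $\cS$ with $\tilde\cX$ smooth over $k$, and $\pi_K$ being étale at the generic points of $\cX_s$ forces $\rlc(\sF^*) = c$. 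Applying Theorem \ref{keysteprambound} to $(\tilde\cX, \sF^*, \sN)$ yields
\begin{equation*}
R\tilde j_*(\sF^*\otimes \tilde f_\eta^*\sN) = \tilde j_!(\sF^*\otimes \tilde f_\eta^*\sN),
\end{equation*}
hence vanishing of stalks along $\tilde\cX_s$; re-running the Leray–Galois computation and the duality bridge on the $\tilde\cX$-side then gives log slopes $\le c$ for $\widetilde W_{\bar y} := R\Gamma(\tilde\cX_{(\bar y),\ol\eta},\sF^*)$ at any $\bar y\to \tilde\cX_{\bar s}$ above $\bar x$.

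The bound must finally be transferred from $\widetilde W_{\bar y}$ to $W_{\bar x}$ in two stages. First, the étale Galois cover $\tilde\cU_{(\bar y)}\to\cU_{(\bar x)}$ is tame of order prime to $p\ell$, so Hochschild–Serre presents $W_{\bar x}$ as a $G_{\bar x}$-invariant subquotient of $R\Gamma(\tilde\cU_{(\bar y),\ol\eta},\pi_K^*\sF)$, and taking invariants does not raise slopes. Second, the localization triangle on $\tilde\cX_{(\bar y),\ol\eta}$ for the smooth codimension-one inclusion of $\tilde\cZ_f \cap \tilde\cX_{(\bar y),\ol\eta}$ displays $R\Gamma(\tilde\cU_{(\bar y),\ol\eta},\pi_K^*\sF)$ as an extension of $\widetilde W_{\bar y}$ by a Tate twist of $R\Gamma$ of $\sF^*$ restricted to this horizontal stratum; each such stratum is itself a semi-stable pair of strictly smaller dimension, and $\sF^*$ restricted retains logarithmic conductor $\le c$ along its special fiber, so by induction on $d$ (with base case the smooth situation of \cite{HT18}) these contributions also have slopes $\le c$. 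The main technical obstacle is precisely this last inductive bookkeeping of slopes through the horizontal strata; everything else is a recombination of the tools already developed. Leal's conjecture (Corollary \ref{Lealcoro}) then follows by applying the bound to the hypercohomology of $j_!\sF$ on the proper scheme $\cX$.
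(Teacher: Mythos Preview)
Your proposal is essentially correct but takes a genuinely different route from the paper in the core case $\cZ_f=\emptyset$, and this difference is worth noting.

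The paper computes $R\Psi^{\rt}$ as the direct limit $\varinjlim_{(n,p)=1}\iota_n^*Rj_{n*}h_n^*$ over tame covers $\cS_n\to\cS$. Since $\cX_n=\cX\times_{\cS}\cS_n$ is generally \emph{not} smooth over $k$, the paper replaces it by the Kummer cover $\widetilde{\cX}_n$ obtained by extracting $n$-th roots of the \emph{vertical} coordinates $t_{r+1},\ldots,t_d$; this $\widetilde{\cX}_n$ is smooth, so Theorem~\ref{keysteprambound} applies there, and a projection-formula/direct-summand trick (using that $\bar h_{n*}\Lambda$ contains a successive extension of trivial sheaves) descends the vanishing to $\cX_n$. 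You instead bypass the tame-cover iteration entirely via the duality bridge: once $k$ is algebraically closed, $(Rj_*(\sF\otimes f_0^*\sN))_{\bar x}=R\Gamma(G,W_{\bar x}\otimes\sN|_{\ol\eta})$ and $R\Psi^{\rt}_{\bar x}=(W_{\bar x}\otimes\sN|_{\ol\eta})^P$, and both vanish for all isoclinic $\sN$ of slope $>c$ precisely when all slopes of $W_{\bar x}$ are $\le c$. So a single application of Theorem~\ref{keysteprambound} (giving the first vanishing) yields the second. This is more direct than the paper's argument; what it costs is that you must justify the equivalence carefully, whereas the paper's limit computation is self-contained.

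For the horizontal divisor, the paper simply invokes Step~2 of \cite[Theorem~5.7]{HT18} (Abhyankar plus Theorem~\ref{LCtheorem}); your Hochschild--Serre plus localization plus dimension-induction scheme achieves the same reduction. Two points in your outline need filling in: (i) Theorem~\ref{keysteprambound} is stated under the paper's standing convention that smooth $k$-schemes are of finite type, so before invoking it you must descend from the henselian $\cS$ to an affine smooth $k$-curve (the paper does this explicitly); (ii) your inductive step on the horizontal strata $\widetilde{\cZ}_f^{(i)}$ requires $\rlc(\sF^*|_{\widetilde{\cZ}_f^{(i)}})\le c$, which is exactly Theorem~\ref{LCtheorem} and should be cited.
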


\begin{proof}
It is sufficient to show that, for any locally constant and constructible sheaf of $\Lambda$-modules $\sN$ on $\eta$ whose ramification is logarithmic isoclinic  at $s\in \cS$ with $\rlc_s(\sN)>\rlc(\sF)$, we have (Proposition \ref{HT18nearby})
\begin{equation}\label{RPsitjFfN=0}
 R\Psi^{\rt}(j_!\sF\otimes^L_{\Lambda}f_{\eta}^*\sN,f)= 0.
\end{equation}

We first prove \eqref{RPsitjFfN=0} with an extra condition that $\cZ=\cX_{s}$. We may assume that $k$ is algebraically closed. Let $n$ be an integer co-prime to $p$ and $\pi$ a uniformizer of $R$. We put $\cS_n=\spec(R[T]/(T^n-\pi))$ and put $\cX_n=\cX\times_{\cS}\cS_n$. We have the following commutative diagram
\begin{equation*}
\xymatrix{\relax
\cU_n\ar[r]^-(0.5){j_n}\ar[d]_-(0.5){h_n}\ar@{}|-{\Box}[rd]&\cX_n\ar[d]^-(0.5){g_n}&\cX_s\ar[l]_-(0.5){\iota_n}\ar@{=}[d]\\
\cU\ar[r]_-(0.5){j}&\cX&\cX_s\ar[l]^-(0.5){\iota}}
\end{equation*}
By \cite[VII 5.11]{SGA4II}, we have
\begin{equation*}
R\Psi^{\rt}(\sF\otimes_{\Lambda}f^*_{\eta}\sN,f)=\varinjlim_{(n,p)=1} \iota_n^*Rj_{n*}h_n^*(\sF\otimes_{\Lambda}f^*_{\eta}\sN)
\end{equation*}
Hence, it is sufficient to show that, for any positive integer $n$ co-prime to $p$ and any closed point $x$ of $\cX_s$, we have
\begin{equation}\label{Rjn*hnFfNx=0}
(Rj_{n*}h_n^*(\sF\otimes_{\Lambda}f^*_{\eta}\sN))|_x=0.
\end{equation}
We fix a closed point $x$ of $\cX_s$. Since $\cS$ is a henselization of of a smooth $k$-curve at a closed point, $f:\cX\to \cS$ is of finite type and $\sF$ and $\sN$ are constructible, we may descend to the case where $\cS$ is an affine and smooth $k$-curve with a closed point $s$. We still denote by $\pi$ the local coordinate of $\cS$ at $s$ associating to the uniformizer. We put $\cV=\cS-\{s\}$ and we denote by $f_{\cV}:\cU\to \cV$ the restriction of $f:\cX\to\cS$ on $\cV$.
We replace $\cX$ by an affine Zariski open neighborhood of $x$ and we choose a local coordinate $t_1,\ldots, t_d$ of $\cX$ at $x$ such that $f^{\sharp}(\pi)=t_{r+1}\cdots t_d$.
We put
\begin{equation*}
\wt\cX_n=\spec(\Gamma(\cX,\sO_{\cX})[T_{r+1},\ldots,T_d]/(T_{r+1}^n-t_{r+1},\ldots, T_d^n-t_d)).
\end{equation*}
Note that $\wt \cX_n$ is a smooth $k$-scheme but $\cX_n$ is not in general. The canonical projection $\wt g_n:\wt\cX_n\to \cX$ is a composition of $\ol g_n:\wt \cX_n\to\cX_n$ and $g_n:\cX_n\to \cX$, where the former is characterized by $\ol g_n(T)=T_{r+1}\cdots T_d$. We have the following commutative diagram
\begin{equation*}
\xymatrix{\relax
\wt\cU_n\ar[d]_-(0.5){\ol h_n}\ar[r]^-(0.5){\wt j_n}\ar@{}|-{\Box}[rd]&\wt\cX_n\ar[d]^-(0.5){\ol g_n}&\cX_s\ar[l]_-(0.5){\wt \iota_n}\ar@{=}[d]\\
\cU_n\ar[r]^-(0.5){j_n}\ar[d]_-(0.5){h_n}\ar@{}|-{\Box}[rd]&\cX_n\ar[d]^-(0.5){g_n}&\cX_s\ar[l]_-(0.5){\iota_n}\ar@{=}[d]\\
\cU\ar[r]_-(0.5){j}&\cX&\cX_s\ar[l]^-(0.5){\iota}}
\end{equation*}
We put $\wt h_n=h_n\circ \ol h_n$. Since $\ol g_{n}:\wt\cX_n\to\cX_n$ is finite and $(\wt \cX_n\times_{\cX_n}\cX_s)_{\mathrm{red}}=\cX_s$, we have
\begin{align}
(R\wt j_{n*}\wt h_n^*(\sF\otimes_{\Lambda}f^*_{\cV}\sN))|_x&\cong  (R\ol g_{n*}R\wt j_{n*}\wt h_n^*(\sF\otimes_{\Lambda}f^*_{\cV}\sN))|_x\label{singtosmooth}\\
&\cong(R j_{n*}(\ol h_{n*}\ol h_n^*(h_n^*(\sF\otimes_{\Lambda}f^*_{\cV}\sN))))|_x\nonumber\\
&\cong (R j_{n*}(h_n^*(\sF\otimes_{\Lambda}f^*_{\cV}\sN)\otimes_{\Lambda}\ol h_{n*}\Lambda))|_x,\nonumber
\end{align}
where the third isomorphism is from the projection formula. Since $\ol h_n:\wt \cU_n\to\cU_n$ is a finite Galois \'etale cover, the  locally constant sheaf $\ol h_{n*}\Lambda$ corresponds to the induced representation  $\rho=\mathrm{Ind}^{\gal(\wt \cU_n/\cU_n)}_{\{e\}}\Lambda$. Note that $\gal(\wt \cU_n/\cU_n)\cong (\mathbb Z/n\mathbb Z)^{d-r-1}$. When $n$ is co-prime to $\ell$, the trivial representation of $\gal(\wt\cU_n/\cU_n)$ is a direct summand of $\rho$. If $d$ is divisible by $\ell$, the representation $\rho$ has a direct summand $\rho_0$ which is a finite successive extension of the trivial representation of $\gal(\wt\cU_n/\cU_n)$. In summary, $\ol h_{n*}\Lambda$ has a direct summand $\mathscr W$, which is a successive extension of constant sheaf $\Lambda$ on $\cU_n$. To show \eqref{Rjn*hnFfNx=0}, it is sufficient to show $(R j_{n*}(h_n^*(\sF\otimes_{\Lambda}f^*_{\cV}\sN)\otimes_{\Lambda}\mathscr W))|_x=0$ (Lemma \ref{RfF=0RfF0=0}). Hence, we are reduced to show  $(R\wt j_{n*}\wt h_n^*(\sF\otimes_{\Lambda}f^*_{\cV}\sN))|_x=0$ by \eqref{singtosmooth}. We denote by $s'\in \cS_n$ the pre-image of $s\in \cS$ from the canonical cover $\varphi:\cS_n\to \cS$ and we put $\cV_n=\cS_n-\{s'\}$. We have the following commutative diagram
\begin{equation*}
\xymatrix{\relax
\wt\cU_n\ar[r]^-(0.5){f_{\cV_n}}\ar[d]_{\wt h_n}&\cV_n\ar[d]^{\varphi_\cV}\\
\cU\ar[r]_-(0.5){f_{\cV}}&\cV}
\end{equation*}
 We denote by $\rlc(\wt h^*_n\sF)$ the maximum of the set of logarithmic conductors of $\wt h_n^*\sF$ at generic points of the divisor $\cX_n\subset \wt \cX_n$. Since the cover $\wt h_n:\wt\cX_n\to \cX$ is tamely ramified at each generic point of $\cX_s\subset \cX$ with the same ramification index $n$, we have $\rlc(\wt h^*_n\sF)=n\cdot\rlc(\sF)$.
 Since $\varphi:\cS_n\to \cS$ is tamely ramified at $s$ with the ramification index $n$, the ramification of $\varphi_\cV^*\sN$ at $s'$ is logarithmic isoclinic with $$\rlc_{s'}(\varphi_\cV^*\sN)=n\cdot \rlc_{s}(\sN)>n\cdot\rlc(\sF)=\rlc(\wt h^*_n\sF).$$ By Theorem \ref{keysteprambound}, we obtain that
\begin{equation}
(R\wt j_{n*}\wt h_n^*(\sF\otimes_{\Lambda}f^*_{\cV}\sN))|_x=(\wt j_{n!}(\wt h_n^*\sF\otimes_{\Lambda}f^*_{\cV_n}(\varphi_{\cV}^*\sN))))|_x=0.
\end{equation}
We finish the proof of the theorem under the extra condition $\cZ=\cX_s$.

 By a similar argument as the Step 2 of \cite[Theorem 5.7]{HT18}, we may reduce the proof of \eqref{RPsitjFfN=0} in the general situation to the special case $\cZ=\cX_s$ above. The key ingredients of this reduction are Abhyankar's lemma (\cite[XIII 5.2]{SGA1}) and Theorem \ref{LCtheorem}. We omit the detail of the d\'evissage.
\end{proof}

\begin{corollary}\label{Lealcoro}
Conjecture \ref{Lealconj} is true when $\cS$ is a henselization of a smooth curve over a perfect field $k$ of characteristic $p>0$ at a closed point.
\end{corollary}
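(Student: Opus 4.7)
The plan is to deduce Corollary \ref{Lealcoro} from Theorem \ref{maintheoremnearbycycle} by the standard comparison between cohomology with compact supports on the geometric generic fiber and cohomology of the nearby cycles on the geometric special fiber, which holds $G$-equivariantly when $f:\cX\to\cS$ is proper. Concretely, with $j:\cU\to\cX$ the canonical injection and $\jmath_{\ol\eta}:\cU_{\ol\eta}\to\cX_{\ol\eta}$ its base change, one has $(j_!\sF)|_{\cX_{\ol\eta}}\cong\jmath_{\ol\eta!}(\sF|_{\cU_{\ol\eta}})$, and proper base change for the projection $f_{\ol\eta}:\cX_{\ol\eta}\to\ol\eta$ gives a $G$-equivariant isomorphism
\begin{equation*}
R\Gamma_c(\cU_{\ol\eta},\sF|_{\cU_{\ol\eta}})\cong R\Gamma(\cX_{\ol\eta},(j_!\sF)|_{\cX_{\ol\eta}})\cong R\Gamma(\cX_{\ol s},R\Psi(j_!\sF,f)).
\end{equation*}

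Now I apply Theorem \ref{maintheoremnearbycycle}: under the standing hypothesis that $\cS$ is the henselization of a smooth $k$-curve at a closed point, the upper numbering ramification of $R\Psi(j_!\sF,f)$ is bounded by $\rlc(\sF)$, i.e.\ the subgroup $G^{\rlc(\sF)+}_{\log}\subseteq G$ acts trivially on every $R^i\Psi(j_!\sF,f)$. Taking the hypercohomology spectral sequence of the complex $R\Psi(j_!\sF,f)$ on $\cX_{\ol s}$ shows that $G^{\rlc(\sF)+}_{\log}$ acts trivially on every $H^i(\cX_{\ol s},R\Psi(j_!\sF,f))$, hence, by the $G$-equivariant identification displayed above, on every $H^i_c(\cU_{\ol\eta},\sF|_{\cU_{\ol\eta}})$.

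Finally, since $\cS$ is the henselization of a smooth curve over the perfect field $k$ at a closed point, the residue field $F$ of $R=\sO_{\cS,s}^{\rh}$ is a finite extension of $k$, hence perfect; consequently, by subsection \ref{propfiltration}, $G^{r+}_{\log}=G^{r}_{\mathrm{up}}$ (indeed, for perfect residue field one has $G^{r+1}_K=G^{r}_{K,\log}=G^{r}_{K,\mathrm{up}}$), so the statement that $G^{\rlc(\sF)+}_{\log}$ acts trivially is exactly the bound on the upper numbering ramification asserted in Conjecture \ref{Lealconj}. No serious obstacle is expected: the entire argument amounts to packaging the nearby-cycles bound of Theorem \ref{maintheoremnearbycycle} through proper base change, the only mild point being to verify that the isomorphism with $R\Gamma_c(\cU_{\ol\eta},-)$ is indeed $G$-equivariant, which is a standard property of the nearby cycles formalism.
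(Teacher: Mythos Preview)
Your approach is essentially the paper's: use properness to identify $R\Gamma_c(\cU_{\ol\eta},\sF|_{\cU_{\ol\eta}})$ with $R\Gamma(\cX_{\ol s},R\Psi(j_!\sF,f))$ and then feed in Theorem~\ref{maintheoremnearbycycle} via the hypercohomology spectral sequence
\[
E^{ab}_2=H^a(\cX_{\ol s},R^b\Psi(j_!\sF,f))\Rightarrow H^{a+b}_c(\cU_{\ol\eta},\sF|_{\cU_{\ol\eta}}).
\]
However, there is a genuine gap in your sentence ``Taking the hypercohomology spectral sequence \dots\ shows that $G^{\rlc(\sF)+}_{\log}$ acts trivially on every $H^i(\cX_{\ol s},R\Psi(j_!\sF,f))$''. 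Trivial action on all $E_2^{ab}$ only gives trivial action on each $E_\infty^{ab}$, i.e.\ on the graded pieces of the induced filtration on the abutment; it does \emph{not} by itself force trivial action on $H^i$---a priori each $g\in G^{\rlc(\sF)+}_{\log}$ acts unipotently, not as the identity. The paper closes this gap by observing that $G^{\rlc(\sF)+}_{\log}\subseteq P$ is pro-$p$, while the $E_2$-terms and the $H^i_c$ are finite $\Lambda$-modules with $\ell\neq p$; a pro-$p$ group acting on a finite $\ell$-torsion module through unipotent automorphisms must act trivially. You should insert this sentence.

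Your final paragraph is unnecessary and slightly off. Conjecture~\ref{Lealconj} in the paper is already phrased via Definition~\ref{deframificationbound} in terms of $G^{c+}_{\log}$, so no conversion to the classical upper numbering is needed. (Also, what subsection~\ref{propfiltration} gives for perfect residue field is $G^{r}_{K,\log}=G^{r}_{K,\mathrm{up}}$, hence $G^{r+}_{K,\log}=G^{r+}_{K,\mathrm{up}}$; your displayed identity $G^{r+}_{\log}=G^{r}_{\mathrm{up}}$ is mis-stated.)
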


\begin{proof}
Since $f:\cX\to\cS$ is proper, we have a $G$-equivariant spectral sequence
\begin{equation*}
E^{ab}_2=H^a(\cX_{\bar s}, R^b\Psi(j_!\sF,f))\Rightarrow H^{a+b}_c(\cU|_{\ol\eta},\sF|_{\cU_{\ol\eta}}).
\end{equation*}
By Theorem \ref{maintheoremnearbycycle}, $G^{\rlc(\sF)+}_{\log}$ acts trivially on each $E^{ab}_2$.  Since $G^{\rlc(\sF)+}_{\log}$ is a pro-$p$ group and all vector spaces $E_2^{ab}$ and $H^i_c(\cU|_{\ol\eta},\sF|_{\cU_{\ol\eta}})$ are $\ell$-torsion, the action of $G^{\rlc(\sF)+}_{\log}$ on each $H^i_c(\cU|_{\ol\eta},\sF|_{\cU_{\ol\eta}})$ is also trivial.
\end{proof}


\subsection*{Acknowledgement}
The author would like to express his gratitude to T. Saito for inspiring comments and valuable suggestions. Particularly, his comments improve results in \S \ref{pureramificationsection1} and \S\ref{semi-contcondsection}. The author thank J.-B. Teyssier for helpful discussions on applications and on related topic in $D$-modules theory.
This research is partially supported by the National Natural Science Foundation of China (grant No. 11901287), the Natural Science Foundation of Jiangsu Province (grant No. BK20190288), the Fundamental Research Funds for the Central Universities and the Nanjing Science and Technology Innovation Project.


\begin{thebibliography}{20}
\bibitem{abbes}
A. Abbes, \emph{Cycles on arithmetic surfaces}. Compositio Math., {\bf 122}, No.1, (2000), 23--111.


\bibitem{as1}
A. Abbes and T. Saito, \emph{Ramification of local fields with imperfect residue fields}. Amer. J. Math. {\bf 124}, (2002), 879--920.


\bibitem{as2}
A. Abbes and T. Saito, \emph{Ramification of local fields with imperfect residue fields II}. Doc. Math. Extra Volume Kato,
(2003), 5--72.

\bibitem{rc}
A. Abbes and T. Saito, \emph{Ramification and cleanliness}.
Tohoku Math. J., {\bf 63}, No. 4, (2011), 775--853.

\bibitem{mla}
A. Abbes and T. Saito, \emph{Analyse micro-locale $\ell$-adique en caract\'eristique $p>0$. Le cas d'un trait}. Publ. RIMS, Kyoto Univ. {\bf 45}, (2009), 25--74.

\bibitem{andre}
Andr\'e, Yves \emph{Structure des connexions m\'eromorphes formelles de plusieurs variables et semi-continuit\'e de l'irr\'egularit\'e}.  Invent. Math. {\bf 170} (2007), no. 1, 147--198.

\bibitem{AFM}
Y. Andr\'e, F. Baldassarri and M. Cailotto, \emph{De Rham cohomology of differential modules on algebraic varieties}. Second Edition. Progress in Mathematics {\bf 189}. Birkh\"auser (2020).

\bibitem{SGA4II}
M. Artin, A. Grothendieck and J.-L. Verdier, \emph{Th\'eotie des Topos et Cohomologie Etale des Sch\'emas}, Lecture Notes in Mathematics, vol. {\bf 270}, Springer-Verlag, (1972).




\bibitem{bar}
I. Barrientos, \emph{Log ramification via curves in rank $1$}. IMRN. Volume {\bf 2017}, Issue 19, (2017), 5769--5799.

\bibitem{bei}
A. Beilinson, \emph{Constructible sheaves are holonomic}. Selecta Math. (N.S.), {\bf 22}, No. 4, (2016), 1797--1819.

\bibitem{Bl}
S. Bloch, \emph{Cycles on arithmetic schemes and Euler characteristics of curves}. Algebraic geometry, Bowdoin, (1985), 421--450, Proc. Symp. Pure Math. 46, Part 2, Amer. Math. Soc., Providence, RI, (1987).



\bibitem{EK12}
H. Esnault, and M. Kerz, \emph{A finiteness theorem for Galois representations of function fields
over finite fields (after Deligne)}. Acta Math. Vietnam. {\bf 37}, no. 4 (2012), 531--562.




\bibitem{fu}
L. Fu, \emph{Etale Cohomology Theory}, {{Nankai Tracts in Mathematics}},
  vol. {\bf 13}, {(2011)}.

\bibitem{EGA4}
A. Grothendieck and J. Dieudonn\'e, \emph{El\'ements de g\'eom\'etrie alg\'ebrique: IV Etude locale des sch\'emas et des morphismes de sch\'emas}. Publ. Math. Inst. Hautes Sci. {\bf 20} (1964) 5--259; {\bf 24} (1965) 5--231; {\bf 28} (1966) 5--255; {\bf 32} (1967) 5--361.

\bibitem{SGA1}
A.~{Grothendieck}, \emph{Rev{\^e}tements {\'e}tales et groupe fondamental},
  Lecture {N}otes in {M}athematics, vol. {\bf 263}, Springer-{V}erlag, (1971).

\bibitem{SGA5}
A. Grothendieck et al., \emph{Cohomologie $\ell$-adique et fonctions $L$. S\'eminaire de G\'eom\'etrie Alg\'ebrique du Bois-Marie
1965-1966 (SGA 5)}. dirig\'e par A. Grothendieck avec la collaboration de I. Bucur, C. Houzel, L. Illusie, J.-P.
Jouanolou et J-P. Serre. Lecture Notes in Math. {\bf 589}, Springer-verlag, Berlin-New York, (1977).



\bibitem{SGA7}
A. Grothendieck {\it et al.}, \emph{ Groupes de monodromie en g\'eom\'etrie alg\'ebrique. S\'eminaire de G\'eom\'etrie Alg\'ebrique du Bois-Marie 1967-1969}, I dirig\'e par A. Grothendieck, II par P. Deligne et N. Katz, Lecture Notes in Math. 288, 340, Springer-Verlag, (1972), (1973).




\bibitem{HY17}
H. Hu and E. Yang, \emph{Semi-continuity for total dimension divisors of \'etale sheaves}. Internat. J. Math. {\bf 28}, no. 1 (2017): 1750001, 1--21.

\bibitem{HYselecta}
H. Hu and E. Yang, \emph{Relative singular support and the semi-continuity of characteristic cycles for \'etale sheaves}. Sel. Math. New Ser. {\bf 24}, (2018), 2235--2273.


\bibitem{HT18}
H. Hu and J.-B. Teyssier, \emph{Characteristic cycle and wild ramification for nearby cycles of \'etale sheaves}. J. reine angew. Math. {\bf 776} (2021), 27--62.

\bibitem{Hu19IMRN}
H. Hu, \emph{Logarithmic ramifications of \'etale sheaves by restricting to curves}. IMRN, Vol. {\bf 2019}, Issue 19, (2019), 5914--5952.

\bibitem{Hu19}
H. Hu,  \emph{Purely Inseparable Extensions and Ramification Filtrations.} Acta. Math. Sin.-English Ser. {\bf 37}, (2021), 35--58. 

\bibitem{Ill04}
L. Illusie, \emph{On semi-stable reduction and the calculation of nearby cycles.} In Geometric
aspects of Dwork theory. Vol. I, II, Walter de Gruyter, Berlin, (2004), 785--803..





\bibitem{katochar}
K. Kato, \emph{Swan conductors for characters of degree one in the imperfect residue fields}.  Contemp. Math. {\bf 83}, Amer. Math. Soc., Providence, RI, (1989), 101--131.

\bibitem{KS05}
K. Kato and T. Saito, \emph{On the conductor formula of Bloch}. Publ. Math. Inst. Hautes
\'Etudes Sci., {\bf 100}, (2005), 5--151.

\bibitem{KS13}
K. Kato and T. Saito, \emph{Ramification theory for varieties over a local field}. Publ. Math.
Inst. Hautes \'Etudes Sci. {\bf 117}, (2013), 1--178.


\bibitem{Katz1}
N. Katz, \emph{Local-to-global extensions of representations of fundamental groups}. Ann. Inst. Fourier, tome {\bf 36}, no. 4, (1986), 69--106.

\bibitem{KL}
N.  Katz et G. Laumon, \emph{Transformation de Fourier et majoration de sommes exponentielles}, Publ. Math. I.H.E.S., {\bf 62} (1985), 361--418.

\bibitem{Ked07}
 K. Kedlaya, \emph{Swan conductors for p-adic differential modules, I: a local construction.}
Algebra Number Theory {\bf 1}, no. 3, (2007), 269--300.

\bibitem{Ked11}
K. Kedlaya, \emph{Swan conductors for p-adic differential modules. II: global variation.} J. Inst. Math. Jussieu {\bf 10}, no. 1 (2011), 191--224.


\bibitem{lau}
G. Laumon, \emph{Semi-continuit\'e du conducteur de Swan (d'apr\`es Deligne)}. S\'eminaire E.N.S. (1978-1979) Expos\'e {\bf 9}, Ast\'erisque 82-83 (1981), 173--219.

\bibitem{lau2}
G. Laumon, \emph{Caract\'eristique d'Euler-Poincar\'e des faisceaux constructibles sur une surface.}
Ast\'erisque {\bf 101-102}, (1982), 193--207.



\bibitem{Leal}
I. Leal, \emph{On the ramification of \'etale cohomology groups},  J. reine angew. Math. Volume {\bf 2019}, Issue 749, (2019), 295--304.

\bibitem{RZ}
M.~{Rapoport} and T.~{Zink}, \emph{{\"Uber die lokale Zetafunktion von
  Shimuravariet\"aten. Monodromie filtration und verschwindende Zyklen in
  ungleicher Charakteristik}}, Invent. Math. \textbf{68}, (1982), 21--101.


\bibitem{saito cc}
T. Saito,
\emph{Wild ramification and the characteristic cycles of an $\ell$-adic sheaf}.
J. Inst. of Math. Jussieu, {\bf 8}, (2008), 769--829.

\bibitem{as3}
T. Saito,
\emph{Ramification of local fields with imperfect residue fields III}. Math. Ann.,  {\bf 352}, (2012), 567--580.

\bibitem{wr}
T. Saito, \emph{Wild ramification and the cotangent bundle}. J. Algebraic Geom., {\bf 26} (2017), 399--473.

\bibitem{cc}
T. Saito,
\emph{The characteristic cycle and the singular support of a constructible sheaf}.
Invent. math., {\bf 207(2)}, (2017), 597--695.

\bibitem{SaitoDirectimage}
T. Saito, \emph{Characteristic cycles and the conductor of direct image},
  J. Amer. Math. Soc.,  published online, \url{https://doi.org/10.1090/jams/959}, (2020).



\bibitem{gqsaito}
T.Saito, \emph{Graded quotients of ramification groups of local fields with imperfect residue fields}. Pre-print. \url{https://arxiv.org/abs/2004.03768}, (2020).



\bibitem{lf}
J.-P. Serre,
\emph{Corps locaux}. Hermann, Paris, (1968).

\bibitem{LRFG}
J.-P. Serre,  \emph{Linear Representations of Finite Groups.} Graduate Texts in Mathematics {\bf 42}.
New York-Heidelberg: Springer, 1977.

\bibitem{Tak1}
D. Takeuchi, \emph{On continuity of local epsilon factors of $\ell$-adic sheaves}. Pre-print. \url{https://arxiv.org/pdf/2003.06931.pdf}

\bibitem{T15}
J.-B. Teyssier, \emph{Nearby slopes and boundedness for $\ell$-adic sheaves in positive characteristic}, Preprint (2015).

\bibitem{T16}
J.-B. Teyssier,  \emph{A boundedness theorem for nearby slopes of holonomic ${\mathcal{D}}$-modules}. Compositio Math., {\bf 152(10)}, (2016), 2050-2070.

\bibitem{T19}
J.-B. Teyssier, \emph{Moduli of Stokes torsors and singularities of differential equations}. To appear at J. Eur. Math. Soc. \url{https://arxiv.org/abs/1802.00289}, (2018).


\bibitem{Wd1}
G. Wiesend, \emph{A construction of covers of arithmetic schemes}, J. Number Theory, {\bf 121} No. 1, (2006),  118--131.

\bibitem{Wd2}
 G. Wiesend, \emph{Tamely ramified covers of varieties and arithmetic schemes} Forum Math. {\bf 20}, No. 3, (2008), 515--522.


\bibitem{xl1}
L. Xiao,
\emph{ On ramification filtrations and $p$-adic differential equations, I: equal characteristic case}. Algebraic Number Theory, {\bf 4--8}, (2010), 969--1027.

\bibitem{xl2}
L. Xiao,
\emph{On ramification filtrations and $p$-adic differential equations, II: mixed characteristic case}.  Compos. Math., {\bf 148}, (2012), 415--463.




\end{thebibliography}
\end{document}